\documentclass[oneside,british]{amsart}
\usepackage[latin9]{inputenc}
\usepackage{color}
\usepackage{babel}
\usepackage{mathrsfs}
\usepackage{amstext}
\usepackage{amsthm}
\usepackage{amssymb}
\usepackage{esint}
\usepackage[unicode=true,
 bookmarks=true,bookmarksnumbered=false,bookmarksopen=false,
 breaklinks=false,pdfborder={0 0 0},pdfborderstyle={},backref=false,colorlinks=true]
 {hyperref}

\makeatletter
\numberwithin{equation}{section}
\numberwithin{figure}{section}
\theoremstyle{plain}
\newtheorem{thm}{\protect\theoremname}
\theoremstyle{remark}
\newtheorem{rem}[thm]{\protect\remarkname}
\theoremstyle{plain}
\newtheorem{assumption}[thm]{\protect\assumptionname}
\theoremstyle{plain}
\newtheorem{lem}[thm]{\protect\lemmaname}
\theoremstyle{plain}
\newtheorem{prop}[thm]{\protect\propositionname}
\theoremstyle{plain}
\newtheorem{fact}[thm]{\protect\factname}
\theoremstyle{plain}
\newtheorem{cor}[thm]{\protect\corollaryname}

\usepackage[dvipsnames]{xcolor}
\usepackage{fancyhdr}
\usepackage{amsmath,bm}

\makeatother

\providecommand{\assumptionname}{Assumption}
\providecommand{\corollaryname}{Corollary}
\providecommand{\factname}{Fact}
\providecommand{\lemmaname}{Lemma}
\providecommand{\propositionname}{Proposition}
\providecommand{\remarkname}{Remark}
\providecommand{\theoremname}{Theorem}

\begin{document}
\title{Regularization and asymptotic behaviour of Ornstein-Uhlenbeck evolution operators in infinite dimension}
\author{Francesco Bartaloni}
\pagestyle{fancy}
\fancyhead{}
\fancyhead[C]{\small\textsc{Asymptotic behaviour of Ornstein-Uhlenbeck operators in infinite dimension}}

\hypersetup{linkcolor=red!85!green}

\hypersetup{citecolor=blue}

\maketitle

\begin{abstract}

We are concerned with the properties of Ornstein-Uhlenbeck evolution
operators acting on functions defined in a Hilbert space and $p$-integrable
with respect to a suitable Gaussian measure. These operators provide
solutions to the infinite dimensional, non-autonomous backward Kolmogorov
equation.

The first part of the paper focuses on some general regularization
properties, while the second part carries on a deep analysis of the
asymptotic behaviour in the periodic case. In particular, we identify
the optimal convergence rate of the Ornstein-Uhlenbeck operators and
we give an optimality criterion depending only on the drift term of
the Kolmogorov equation.

\end{abstract}

%\begin{keywords}
%Ornstein-Uhlenbeck equation, non-autonomous, infinite dimension, regularization, periodic case, rate of convergence
%\end{keywords}

In the last decades, the study of autonomous equations in infinite
dimension has been widely developed and now the theory is well-established.
Among all, the most famous operator which has been object of investigation
is the Ornstein-Uhlenbeck operator. The interest in such operator
follows from its importance in applications, in particular in stochastic
analysis. Further, the associated semigroup admits an explicit representation,
which allows direct computations and consequently to deduce the main
properties of both the operator and the semigroup. We refer to the
monographs \cite{DAP-ZAB, DAP-ZAB2} for an exhaustive survey about
the Ornstein-Uhlenbeck operator and its generalizations in infinite
dimension, both in the deterministic and the stochastic case.

The situation completely changes if one considers non-autonomous equations.
Indeed, some results have been proved in finite dimension for initial
data both in the space of continuous functions and in $L^{p}$ spaces
(see for instance \cite{Add13, AddAngLor17, AngLor141, AngLor142, AngLor16, AngLorLun13, DAP-LUN, GEISS-LUN_0, GEISS-LUN, KunLorLun10, KunLorRha14, KunLorRha16,Lor11, LorLunSch16, LorLunZam10,ROL});
on the other hand, the theory of non-autonomous equations in infinite
dimension is far to be complete. In our knowledge, the first steps
in this field were moved in \cite{BigDef23,Cer21,Def23,OuyRoc}, where
some features and functional inequalities related to the Ornstein-Uhlenbeck
evolution operator are studied, such as strong Feller property, hypercontractivity,
Harnack inequality and logarithmic Sobolev inequalities.

In this paper will focus on the the regularization properties and
the asymptotic behaviour of the Ornstein-Uhlenbeck operator in $L^{p}$
spaces with respect to an evolution system of measures. These are
families of measures that generalize the notion of invariant measure
to a non-autonomous context.

Further, we provide an example satisfying all the assumptions of the
paper as well as a criterion of optimality of the convergence rate
of the Ornstein-Uhlenbeck operator.

\vspace{4mm}

\section{Preliminaries}\label{sez: intro}

\subsection{Setting of the problem}\label{subsec: setting}
\leavevmode\label{subsect: setting}

We consider a separable Hilbert space $\left(X,\left\langle \cdot,\cdot\right\rangle _{X}\right)$
and we denote by $\mathcal{L}\left(X\right)$ the set of all linear
bounded operators from $X$ to itself. Let $\left\{ U\left(t,s\right)\mid-\infty<s\leq t<+\infty\right\} $
be the evolution operator in $X$ associated to a family of operators
$A\left(t\right):D\left(A\left(t\right)\right)\subseteq X\to X$,
$t\in\mathbb{R}$.

This means that $U\left(\cdot,\cdot\right)$ satisfies:
\begin{equation}
\begin{cases}
{\displaystyle \frac{\partial U\left(t,s\right)}{\partial t}}=A\left(t\right)U\left(t,s\right) & \forall s<t,\\
U\left(t,s\right)=U\left(t,r\right)U\left(r,s\right) & \forall s\leq r\leq t,\\
U\left(\tau,\tau\right)=I & \forall\tau\in\mathbb{R}.
\end{cases}\label{eq: evoluzione}
\end{equation}

We assume that $U\left(t,s\right)\in\mathcal{L}\left(X\right)$ for
every $s\leq t$ and that the function $U\left(\cdot,\cdot\right)x$
is continuous in the half space $\left\{ \left(t,s\right)\in\mathbb{R}^{2}\mid s\leq t\right\} $
for every $x\in X$, i.e. the evolution operator $\left\{ U\left(t,s\right)\mid-\infty<s\leq t<+\infty\right\} $
is \emph{strongly continuous}.

We consider also a bounded family $\left\{ B\left(t\right)\mid t\in\mathbb{R}\right\} \subseteq\mathcal{L}\left(X\right)$
such that every function $B\left(\cdot\right)x$ is measurable.

Denote by $C_{b}\left(X\right)$ the space of all bounded and continuous
functions defined on $X$, and by $C_{b}^{1}\left(X\right)$ the space
of bounded continuously differentiable functions with bounded gradient.

For $\alpha\in\left(0,1\right)$, we denote also by $C_{b}^{\alpha}\left(X\right)$
the space of bounded $\alpha$-H\"older continuous functions on $X$,
namely the bounded functions $f:X\to\mathbb{R}$ such that:\[
\left[f\right]_{C^{\alpha}\left(X\right)}:=\sup_{\substack{x,y\in X \\ x\neq y}}\frac{\left|f\left(x\right)-f\left(y\right)\right|}{\left|x-y\right|_{X}^{\alpha}}<+\infty.
\]

The \emph{Ornstein-Uhlenbeck evolution operator} is thus defined by
the following formula:
\begin{align}
\left[P_{s,t}\varphi\right]\left(x\right) & :=\int_{X}\varphi\left(y\right)\mathcal{N}\left(U\left(t,s\right)x,Q\left(t,s\right)\right)\left(dy\right)\label{eq: def O-U formula}\\
 & =\int_{X}\varphi\left(U\left(t,s\right)x+y\right)\mathcal{N}\left(0,Q\left(t,s\right)\right)\left(dy\right)\nonumber \\
 & \quad\forall-\infty<s\leq t<+\infty\text{ and }\varphi\in C_{b}\left(X\right),\nonumber 
\end{align}
where the expression $\mathcal{N}\left(U\left(t,s\right)x,Q\left(t,s\right)\right)$
denotes the Gaussian measure on $X$ with mean $U\left(t,s\right)x$
and covariance:
\begin{equation}
Q\left(t,s\right):=\int_{s}^{t}U\left(t,r\right)B\left(r\right)B\left(r\right)^{*}U\left(t,r\right)^{*}\text{d}r.\label{eq: def cov Q}
\end{equation}
Here the integral is in the sense of Bochner. Clearly, the operators
in $\text{\eqref{eq: def cov Q}}$ are linear, bounded, non-negative
self-adjoint operators in $X$, and the $\mathcal{L}\left(X\right)$-valued
functions $Q\left(t,\cdot\right)$ are continuous in $\left(-\infty,t\right]$.

We necessarily have to assume that $Q\left(t,s\right)$ is a \emph{trace
class operator} for every $s\leq t$, as it will be formalized later
on. This means that:
\[
trQ\left(t,s\right):=\sum_{j=1}^{+\infty}\left\langle e_{j},Q\left(t,s\right)e_{j}\right\rangle <+\infty,
\]
where
\[
\mathscr{F}:=\left\{ e_{j}\mid j\in\mathbb{N}\right\} 
\]
is any Hilbert basis of $X$, which is henceforth and throughout the
whole paper to be considered fixed. It is a well-known fact that this
condition guarantees the existence of the Gaussian measure $\mathcal{N}\left(U\left(t,s\right)x,Q\left(t,s\right)\right)$.
\begin{rem}
The Ornstein-Uhlenbeck operator has a natural stochastic meaning.
Take $U\left(t,s\right)$ and $Q$$\left(t,s\right)$ as in $\text{\eqref{eq: evoluzione}}$
and $\text{\eqref{eq: def cov Q}}$, respectively, and let $\left\{ W\left(t\right)\mid t\in\mathbb{R}\right\} $
be a $X$-valued cylindrical Wiener process. Then, for every $x\in X$,
the Gaussian measure $\mathcal{N}\left(U\left(t,s\right)x,Q\left(t,s\right)\right)$
is the law of the random variable
\[
X\left(t;s,x\right)=U\left(t,s\right)x+\int_{s}^{t}U\left(t,r\right)B\left(r\right)dW\left(r\right),
\]
which is the unique mild solution of the stochastic differential equation
\[
\begin{cases}
dX\left(t\right)=A\left(t\right)X\left(t\right)dt+B\left(t\right)dW\left(t\right) & \forall t>s\\
X\left(s\right)=x\in X.
\end{cases}
\]

So for every $t\geq s$, $x\in X$ and $\varphi\in C_{b}\left(X\right)$,
$\left[P_{s,t}\varphi\right]\left(x\right)$ is the expected value
of $\varphi\left(X\left(t;s,x\right)\right)$.

Furthermore, $u\left(s,x;t\right):=\left[P_{s,t}\varphi\right]\left(x\right)$,
in the time variable $s$ and space variable $x$, is the mild solution
of the Kolmogorov backward equation:
\[
\begin{cases}
\partial_{s}u\left(s,x;t\right)+\left[L\left(s\right)u\left(s,\cdot;t\right)\right]\left(x\right)=0 & \forall s<t,\ x\in X\\
u\left(t,x;t\right)=\varphi\left(x\right) & \forall x\in X,
\end{cases}
\]
where the \emph{Orstein-Uhlenbeck differential operator} $L\left(\cdot\right)$
is formally defined by
\begin{equation}
\left[L\left(s\right)v\right]\left(x\right):=\frac{1}{2}tr\left[B\left(s\right)B\left(s\right)^{*}\nabla^{2}v\left(x\right)\right]+\left\langle A\left(s\right)x,\nabla v\left(x\right)\right\rangle _{X}\quad\forall s\in\mathbb{R},\ x\in X.\label{eq: gen di O-U}
\end{equation}
\end{rem}

\vspace{2mm}

\textbf{Note.} We choose to distribute the assumptions along the paper,
introducing them were they are needed to the development of the theory.
This choice is aimed at showing the true relationship between assumptions
and theorems, by highlighting where the former are used to obtain
a specific result, and where they are not.

For the reader's convenience, we provide in advance a complete list
with a brief description.
\begin{itemize}
\item[$\triangleright$] Assumption $\text{\ref{assu: omega_0 neg}}$: existence and exponential
decay of $U\left(t,s\right)$\vspace{1mm}.
\item[$\triangleright$] Assumption $\text{\ref{assu: B(t) limitati}}$: boundedness of $\left\{ B\left(t\right)\mid t\in\mathbb{R}\right\} $
in $\mathcal{L}\left(X\right)$\vspace{1mm}.
\item[$\triangleright$] Assumption $\text{\ref{assu: tracce finite}}$: uniform boundedness
of $trQ\left(t,s\right)$ with respect to $t$\vspace{1mm}.
\item[$\triangleright$] Assumption $\text{\ref{ass: controllabilita}}$: controllability
condition\vspace{1mm}.
\item[$\triangleright$] Assumption $\text{\ref{assu: mu_t non degeneri}}$: non-degeneracy
of the measures $\mathcal{N}\left(0,Q\left(t,-\infty\right)\right)$\vspace{1mm}.
\item[$\triangleright$] Assumption $\text{\ref{assu: periodicita}}$: periodicity of the
system\vspace{1mm}.
\item[$\triangleright$] Assumption $\text{\ref{ass: U* ha autov di mod max}}$, which prescribes
that $U\left(T,0\right)^{*}$ has eigenvalues of maximum modulus.
\end{itemize}
\subsection{Structure of the paper and main results}
\leavevmode\label{subsect: struttura}

Our primary assumption is that the evolution operator $\left\{ U\left(t,s\right)\mid s\leq t\right\} $
has an exponential decay rate, meaning that there exist $M,-\omega>0$
such that 
\begin{equation}
\|U(t,s)\|_{\mathcal{L}(X)}\leq Me^{\omega(t-s)}\quad\forall s\leq t.\label{eq: decadim}
\end{equation}
This allows in particular to define the (bounded) operator $Q\left(t,-\infty\right)$,
which is indeed a trace class operator if we assume that, for every
$t\in\mathbb{R}$, ${\rm Tr}Q(t,s)<+\infty$, uniformly in $s\leq t$.

According to the properties of $Q\left(t,-\infty\right)$, the centered
Gaussian measure $\mu_{t}=\mathcal{N}(0,Q(t,-\infty))$ is well defined
and the family of measures $\{\mu_{t}\mid t\in\mathbb{R}\}$ satisfies
\[
\int_{X}P_{s,t}\varphi d\mu_{s}=\int_{X}\varphi d\mu_{t},\quad\forall-\infty<s<t<+\infty,\ \varphi\in C_{b}(X).
\]
Any such family is called an evolution system of measures with respect
to the evolution operator $\left\{ P_{s,t}\mid s\leq t\right\} $.
Thanks to this property, we will prove that for every $s<t$ and $p\in\left[1,+\infty\right)$,
the operator $P_{s,t}$ - which \emph{a priori} is defined on $C_{b}(X)$
- extends to a contraction operator on $L^{p}(X,\mu_{t})$ with values
in $L^{p}(X,\mu_{s})$. The theory is then developed in relation to
the extended operators, by proving various results.

The first group of results concerns the regularization properties
of $P_{s,t}$. These are significant in themselves, and at the same
time are preparatory to the asymptotic analysis of the periodic case,
that gathers the second group of results.

Let us briefly resume the conclusions of the asymptotic analysis.
Assume that the coefficients $A\left(\cdot\right)$ and $B\left(\cdot\right)$
are periodic with period $T$, and let $\omega_{0}$ be the decay
rate of $\left\{ U\left(t,s\right)\mid s\leq t\right\} $ - namely,
the least $\omega\in\mathbb{R}$ such that there exists $M>0$ satisfying
\eqref{eq: decadim}. By assumption $\omega_{0}<0$. Then, for every
$p\in\left(1,+\infty\right)$ and every $\omega\in\left(\omega_{0},0\right)$,
there exists a positive constant $C_{T,\omega}$ such that for every
$f\in L^{p}(X,\mu_{t})$: 
\begin{equation}
\left\Vert P_{s,t}f-\int_{X}fd\mu_{t}\right\Vert _{L^{p}(X,\mu_{s})}\leq C_{T,\omega}e^{\omega(t-s)}\|f\|_{L^{p}(X,\mu_{t})},\quad\forall s<t.\label{eq: intro_conv_asint}
\end{equation}
It is notable that the regularization properties of $P_{s,t}$ proved
in Section \ref{sez: regolarizz} imply that every such operator is
compact, and that the spectral radius of $P_{0,T}$ is at most $e^{\omega_{0}T}$
(the spectral radius of $U\left(T,0\right)$).

Furthermore, if all the eigenvalues of $P_{0,T}$ with maximum modulus
are semisimple, then there exists $C_{T,\omega_{0}}>0$ such that
for every $f\in L^{p}(X,\mu_{t})$:
\begin{equation}
\left\Vert P_{s,t}f-\int_{X}fd\mu_{t}\right\Vert _{L^{p}(X,\mu_{s})}\leq C_{T,\omega_{0}}e^{\omega_{0}(t-s)}\|f\|_{L^{p}(X,\mu_{t})},\quad\forall s<t.\label{app_opt_rate_conv}
\end{equation}
The above estimate is optimal in the sense that if $\omega<\omega_{0}$,
then there exists some $f\in L^{p}(X,\mu_{t})$ such that $\text{\eqref{eq: intro_conv_asint}}$
is not true, whatever is $C_{T,\omega}$.

We stress that, in general, it is not easy to verify that all the
eigenvalues of $P_{0,T}$ with maximum modulus are semisimple, so
as to obtain the optimal rate of convergence. Fortunately, we are
able to prove that\textcolor{red}{{} }if $U(T,0)^{*}$ has some eigenvalues
of maximum modulus $e^{\omega_{0}T}$ and every such eigenvalue is
semisimple, then the same holds for $P_{0,T}$ - and therefore the
optimal estimate $\text{\eqref{app_opt_rate_conv}}$ holds.
\begin{rem}
We stress that even though $P_{s,t}$ is defined upon two families
of operators $A\left(\cdot\right)$ and $B\left(\cdot\right)$ (through
operators $U\left(t,s\right)$ and $Q\left(t,s\right)$), the above
optimality criterion concerns only the properties of $A\left(\cdot\right)$.
Thus, the fact that $P_{s,t}$ actually reaches the maximum possible
speed of convergence depends only on the first order coefficients
of its generator $L\left(s\right)$ in $\text{\eqref{eq: gen di O-U}}$.
\end{rem}

Finally, we apply this optimality criterion to a concrete example.
Taking $X=L^{2}(\Omega)$, with $\Omega\subseteq\mathbb{R}^{N}$ being
a bounded domain, we consider a family $\{a_{ij}(\cdot,\cdot)\mid i,j=1,\ldots,N\}$
of matrix-valued functions defined in $\mathbb{R}\times\overline{\Omega}$,
H\"older continuous and periodic in time, such that $\left[a_{ij}\left(t,x\right)\right]{}_{i,j=1}^{N}$
is a uniformly elliptic symmetric matrix for every $t\in\mathbb{R}$
and $x\in\overline{\Omega}$. For every $t\in\mathbb{R}$ we denote
by $A(t)$ the operator associated to the bilinear form
\[
a\left(t\right)\left(u,v\right)=-\int_{\Omega}\left\{ \sum_{i,j=1}^{N}D_{i}u\left(x\right)a_{ij}\left(t,x\right)D_{j}v\left(x\right)+u\left(x\right)v\left(x\right)\right\} dx\quad\forall u,v\in H^{1}(\Omega).
\]
Further, for every $t\in\mathbb{R}$ we set
\[
B\left(t\right):=A\left(t\right)^{-\theta}
\]
for a suitable $\theta\in\left(0,1/2\right)$. It follows that $\left\{ A\left(t\right)\right\} _{t\in\mathbb{R}}$
generates an evolution operator $\left\{ U\left(t,s\right)\right\} _{t\geq s}$
satisfying $\left\Vert U\left(t,s\right)\right\Vert _{\mathcal{L}\left(X\right)}\leq e^{-\left(t-s\right)}$.
Moreover, $e^{-\left(t-s\right)}$ is the unique eigenvalue of $U\left(T,0\right)^{*}$
with maximum modulus and it is semisimple. Thus, the associated Ornstein-Uhlenbeck
transition operator $\left\{ P_{s,t}\right\} _{t\geq s}$ fulfills
(\ref{app_opt_rate_conv}).

\vspace{2mm}

The paper is thus organized as follows.

In Section \ref{sez: O-U Cb} we set up the basic theory of the Ornstein-Uhlenbeck
evolution operator, considered as acting on regular bounded functions.
We show that it preserves regularity and we give a first pointwise
convergence result.

Section \ref{sez: regolarizz} is devoted to the study of the regularization
properties of the operator in $L^{p}$ spaces, with respect to a family
of Gaussian measures having the aforementioned invariance-type property
(that of being an evolution system of measures). These are proven
to be the only ones with finite moments of some order. To prove that
the Ornstein-Uhlenbeck operators enhance regularity, we have to consider
both scalar and vector-valued Sobolev spaces: this drives the need
that the related measures are non-degenerate and, ultimately, that
of a proper definition of such spaces, which is not trivial in the
vector-valued case. As a consequence of the scalar regularization
property, we obtain a compactness result which is pivotal for the
subsequent spectral analysis (Proposition \ref{prop: operatore compatto}).
All the results up to Section \ref{sez: regolarizz} hold for a general,
not necessarily time-periodic Ornstein-Uhlenbeck evolution operator.

In Section \ref{sez: tasso conv}, we use the spectral theory of compact
operators to carry on an accurate analysis, in the time-periodic case,
of the rate of convergence of the operator with respect to the $L^{p}$
norm, as outlined above.

The example is discussed in Section \ref{sez: esempio}.

\section{The Ornstein-Uhlenbeck evolution operator in spaces of continuous bounded functions}\label{sez: O-U Cb}

We begin with the structural assumptions.
\begin{assumption}
\label{assu: omega_0 neg}i) For every $t\in\mathbb{R}$, $A\left(t\right):D\left(A\left(t\right)\right)\subseteq X\to X$
is a linear operator such that the family  $\left\{ A\left(t\right)\mid t\in\mathbb{R}\right\} $
generates a strongly continuous evolution operator $\left\{ U\left(t,s\right)\mid s<t\right\} $.

ii) The set
\begin{align*}
\Omega_{U} & :=\left\{ \omega\in\mathbb{R}\mid\exists M>0:\forall s\leq t:\left\Vert U\left(t,s\right)\right\Vert _{\mathcal{L}\left(X\right)}\leq Me^{\omega\left(t-s\right)}\right\} 
\end{align*}
is non-empty and the evolution operator $U\left(\cdot,\cdot\right)$
satisfies the following growth condition:
\[
\omega_{0}:=\inf\Omega_{U}<0.
\]
\end{assumption}

\begin{assumption}
\label{assu: B(t) limitati}For every $t\in\mathbb{R}$, $B\left(t\right)\in\mathcal{L}\left(X\right)$
and the family $\left\{ B\left(t\right)\mid t\in\mathbb{R}\right\} $
is bounded. Further, the function $B\left(\cdot\right)x$ is measurable
for every $x$.
\end{assumption}

These assumptions guarantee in particular that we can pass to the
limit for $s\to-\infty$ in $\text{\eqref{eq: def cov Q}}$. Set,
for every $t\in\mathbb{R}$:
\begin{equation}
Q\left(t,-\infty\right):=\int_{-\infty}^{t}U\left(t,r\right)B\left(r\right)B\left(r\right)^{*}U\left(t,r\right)^{*}dr.\label{eq:def Q(t,-infty)}
\end{equation}
Hence, $Q\left(t,-\infty\right)$ is the $\mathcal{L}\left(X\right)$-limit
of $Q\left(t,s\right)$ for $s\to-\infty$.

The (centered) Gaussian measures with covariance operator defined
in $\text{\eqref{eq:def Q(t,-infty)}}$ are of prominent importance
in this paper. Anyway, at this stage, one cannot be sure that any
operator of the form $Q\left(t,-\infty\right)$ is trace class - and
thus that the measure $\mathcal{N}\left(0,Q\left(t,-\infty\right)\right)$
actually exists. The following assumption ensures that this is the
case, and that every function $trQ\left(t,\cdot\right)$ is continuous
up to $-\infty$ included.

\vspace{2mm}
\begin{assumption}
\label{assu: tracce finite}For every $s\leq t$ the operator $Q\left(t,s\right)$
in $\text{\eqref{eq: def cov Q}}$ is trace class, and:
\[
\sup_{s\leq t}trQ\left(t,s\right)<+\infty\quad\forall t\in\mathbb{R}.
\]
\end{assumption}

As a consequence, we have:
\begin{lem}
\label{lem: Q(t,-infty) trace class}Let $t\in\mathbb{R}$. Then:

i) the operator $Q\left(t,-\infty\right)$ is trace class;

ii) the series of functions $\sum_{j=1}^{+\infty}\left\langle e_{j},Q\left(t,\cdot\right)e_{j}\right\rangle $
converges uniformly to $trQ\left(t,\cdot\right)$ in $\left(-\infty,t\right]$;

iii) the function $trQ\left(t,\cdot\right)$ is non-negative, decreasing
and continuous in $\left[-\infty,t\right]$, and
\begin{equation}
\lim_{s\to-\infty}trQ\left(t,s\right)=\sup_{s\leq t}trQ\left(t,s\right)=trQ\left(t,-\infty\right)\label{eq: sup tracce uguale traccia}
\end{equation}
\end{lem}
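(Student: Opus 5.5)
The plan is to work coordinatewise in the fixed basis $\mathscr{F}$, using monotonicity in $s$ and a Dini-type argument. For fixed $t$ and $j$, set
\[
q_{j}\left(s\right):=\left\langle e_{j},Q\left(t,s\right)e_{j}\right\rangle =\int_{s}^{t}\left|B\left(r\right)^{*}U\left(t,r\right)^{*}e_{j}\right|_{X}^{2}dr .
\]
This is non-negative and non-increasing in $s$, since the integrand is non-negative. It is continuous in $s\in\left(-\infty,t\right]$ because $Q\left(t,\cdot\right)$ is continuous in $\mathcal{L}\left(X\right)$.

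Assumptions \ref{assu: omega_0 neg} and \ref{assu: B(t) limitati} give the bound $\left|B\left(r\right)^{*}U\left(t,r\right)^{*}e_{j}\right|^{2}\leq\sup_{r}\left\Vert B\left(r\right)\right\Vert ^{2}M^{2}e^{2\omega\left(t-r\right)}$ with $\omega<0$, which is integrable on $\left(-\infty,t\right]$. By monotone or dominated convergence, together with the fact that $Q\left(t,s\right)\to Q\left(t,-\infty\right)$ in $\mathcal{L}\left(X\right)$, we get $q_{j}\left(s\right)\uparrow q_{j}\left(-\infty\right):=\left\langle e_{j},Q\left(t,-\infty\right)e_{j}\right\rangle $ as $s\to-\infty$. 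So each $q_{j}$ extends to a continuous, non-increasing function on $\left[-\infty,t\right]$.

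For (i), monotone convergence for series, or Fatou, gives
\[
trQ\left(t,-\infty\right)=\sum_{j}q_{j}\left(-\infty\right)=\sum_{j}\lim_{s\to-\infty}q_{j}\left(s\right)=\lim_{s\to-\infty}\sum_{j}q_{j}\left(s\right)=\sup_{s\leq t}trQ\left(t,s\right)<+\infty ,
\]
where the interchange is justified by monotonicity in both $s$ and the partial-sum index. Since $Q\left(t,-\infty\right)$ is non-negative and self-adjoint, finiteness of this sum in one basis means it is trace class. This already yields \eqref{eq: sup tracce uguale traccia} and the monotonicity in (iii).

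For (ii), which I expect to be the only delicate point, I would apply Dini's theorem on the compact space $\left[-\infty,t\right]$, homeomorphic to a closed interval. The partial sums $S_{n}\left(s\right)=\sum_{j\leq n}q_{j}\left(s\right)$ are continuous on $\left[-\infty,t\right]$ and increase in $n$. The limit $trQ\left(t,s\right)$ must be shown continuous on $\left[-\infty,t\right]$ beforehand. Continuity at $-\infty$ is exactly the identity just proved.

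For continuity at a finite $s_{0}$, I would use trace-class control on the tail rather than Dini itself. Take $s<s'$ near $s_{0}$. Then $Q\left(t,s\right)-Q\left(t,s'\right)=\int_{s}^{s'}\ldots\,dr\geq0$, and this difference equals $U\left(t,s'\right)\left[Q\left(s',s\right)\right]U\left(t,s'\right)^{*}$. Hence
\[
trQ\left(t,s\right)-trQ\left(t,s'\right)=\sum_{j}\left(q_{j}\left(s\right)-q_{j}\left(s'\right)\right).
\]
The $j$-th term is dominated by $q_{j}\left(a\right)-q_{j}\left(t\right)=q_{j}\left(a\right)$, with $a$ a fixed point below both, and $\sum_{j}q_{j}\left(a\right)<\infty$. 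Dominated convergence for series then gives continuity of $trQ\left(t,\cdot\right)$, as each term tends to $0$. The same domination by the summable sequence $q_{j}\left(-\infty\right)$ gives the uniform convergence in (ii) directly, since $\sup_{s}\sum_{j>n}q_{j}\left(s\right)\leq\sum_{j>n}q_{j}\left(-\infty\right)\to0$. So Dini is not even needed: this Weierstrass M-test argument proves (ii) and the continuity in (iii) at once.

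Non-negativity is immediate from $q_{j}\geq0$.
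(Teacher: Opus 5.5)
Your proof is correct and follows essentially the paper's route. Both arguments rest on coordinatewise monotonicity of $\langle e_j,Q(t,s)e_j\rangle$ in $s$, on the tail bound $\sup_{s\le t}\sum_{j>n}\langle e_j,Q(t,s)e_j\rangle\le\sum_{j>n}\langle e_j,Q(t,-\infty)e_j\rangle$ for uniform convergence, and on continuity of the trace as a uniform limit of continuous partial sums. The only minor difference is that you get $\sup_{s\le t}\mathrm{tr}\,Q(t,s)=\mathrm{tr}\,Q(t,-\infty)$ in one step by interchanging the two monotone limits, whereas the paper bounds $\mathrm{tr}\,Q(t,-\infty)$ by that supremum via partial sums and gets the reverse inequality from the operator ordering $Q(t,s)\le Q(t,-\infty)$; your detour through $U(t,s')Q(s',s)U(t,s')^{*}$ is, as you note yourself, superfluous.
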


\begin{proof}
i) By $\text{\eqref{eq: def cov Q}}$, the function $\left\langle x,Q\left(t,\cdot\right)x\right\rangle _{X}$
is non-negative and decreasing in $\left(-\infty,t\right]$ for every
$x\in X$. Hence, for every $N\in\mathbb{N}$:
\begin{align}
\sum_{j=1}^{N}\left\langle e_{j},Q\left(t,-\infty\right)e_{j}\right\rangle _{X} & =\sum_{j=1}^{N}\lim_{s\to-\infty}\left\langle e_{j},Q\left(t,s\right)e_{j}\right\rangle _{X}=\sup_{s\leq t}\sum_{j=1}^{N}\left\langle e_{j},Q\left(t,s\right)e_{j}\right\rangle _{X}\nonumber \\
 & \leq\sup_{s\leq t}trQ\left(t,s\right).\label{eq: trQ(t,-infty) finita}
\end{align}
Therefore, the series $\sum_{j=1}^{\infty}\left\langle e_{j},Q\left(t,-\infty\right)e_{j}\right\rangle _{X}$
converges and $Q\left(t,-\infty\right)$ is trace class.

ii) By definition the series of functions $\sum_{j=1}^{+\infty}\left\langle e_{j},Q\left(t,\cdot\right)e_{j}\right\rangle _{X}$
converges to $trQ\left(t,\cdot\right)$ pointwise in $\left(-\infty,t\right]$;
by point i) we immediately obtain that the convergence is uniform,
because
\[
\sup_{s\leq t}\sum_{j=N}^{+\infty}\left\langle e_{j},Q\left(t,s\right)e_{j}\right\rangle _{X}\leq\sum_{j=N}^{+\infty}\left\langle e_{j},Q\left(t,-\infty\right)e_{j}\right\rangle _{X}\to0\text{ as }N\to\infty.
\]
iii) Clearly, $trQ\left(t,\cdot\right)$ is non-negative and decreasing
because the general term of the series $\left\langle e_{j},Q\left(t,\cdot\right)e_{j}\right\rangle _{X}$
is such.

By the uniform convergence in point ii), we have for every $s_{0}\in\left(-\infty,t\right]$:
\[
\lim_{s\to s_{0}}trQ\left(t,s\right)=\sum_{j=1}^{+\infty}\lim_{s\to s_{0}}\left\langle e_{j},Q\left(t,s\right)e_{j}\right\rangle _{X}=trQ\left(t,s_{0}\right),
\]
so $trQ\left(t,\cdot\right)$ is continuous $\left(-\infty,t\right]$.

Further, since, for every $s\leq t$, $Q\left(t,s\right)\leq Q\left(t,-\infty\right)$
in the operator sense, we have:
\[
\lim_{s\to-\infty}trQ\left(t,s\right)=\sup_{s\leq t}trQ\left(t,s\right)\leq trQ\left(t,-\infty\right).
\]
Since the opposite inequality also holds by $\text{\eqref{eq: trQ(t,-infty) finita}}$,
we obtain the validity of $\text{\eqref{eq: sup tracce uguale traccia}}$,
and $trQ\left(t,\cdot\right)$ is continuous also at $-\infty$.
\end{proof}
\begin{rem}
\label{rem: per Q(t,s) iniettivo} Assumption $\text{\ref{assu: tracce finite}}$
and Lemma $\text{\ref{lem: Q(t,-infty) trace class}}$ ensure that,
for every $s<t$, the centered Gaussian measures with covariance operator
$Q\left(t,s\right)$ and $Q\left(t,-\infty\right)$ are well defined.
We underline two basic facts about the non-degeneracy of these measures. 

First, $KerQ\left(t,-\infty\right)\subseteq KerQ\left(t,s\right)$,
so if $\mathcal{N}\left(0,Q\left(t,s\right)\right)$ is non-degenerate,
then also $\mathcal{N}\left(0,Q\left(t,-\infty\right)\right)$ is
non-degenerate.

Indeed, it is easy to check that any operator $T$ of the form:
\[
T=\int_{a}^{b}S\left(r\right)S\left(r\right)^{*}dr,
\]
with, say, $S\left(r\right)\in\mathcal{L}\left(X\right)$ for almost
every $-\infty\leq a<r<b$ , satisfies, for every $x\in X$:
\begin{equation}
x\in KerT\iff x\in KerS\left(r\right)^{*}\ \text{for a. e. }r\in\left(a,b\right).\label{eq: ker cov}
\end{equation}
Therefore $KerQ\left(t,-\infty\right)\subseteq KerQ\left(t,s\right)$.

Secondly, observe that if there exists $\epsilon>0$ such that the
operator $B\left(r\right)^{*}$ is injective for almost every $r\in\left(t-\epsilon,t\right)$,
then $Q\left(t,s\right)$ is injective, and so is $Q\left(t,-\infty\right)$.

Indeed, fix $x\in KerQ\left(t,s\right)$. Then, by $\text{\eqref{eq: ker cov}}$,
$x\in KerB\left(r\right)^{*}U\left(t,r\right)^{*}$, in particular,
for almost every $r\in\left(t-\epsilon,t\right)$; moreover, $KerB\left(r\right)^{*}=\left\{ 0\right\} $
and thus $KerB\left(r\right)^{*}U\left(t,r\right)^{*}\subseteq KerU\left(t,r\right)^{*}$
for almost every $r\in\left(t-\epsilon,t\right)$. Hence, $U\left(t,\cdot\right)^{*}x=0$
almost everywhere in $\left(t-\epsilon,t\right)$ which implies $x=U\left(t,t\right)x=0$
by the continuity of $U\left(t,\cdot\right)x$.
\end{rem}

We assume also that the so called ``controllability condition'' holds
for our system.
\begin{assumption}
\label{ass: controllabilita}The following condition is satisfied:
\[
ImU\left(t,s\right)\subseteq ImQ\left(t,s\right)^{\frac{1}{2}}
\]
for every $-\infty<s<t<+\infty$.
\end{assumption}

The latter is a rather standard assumption that will be used in Section
$\text{\ref{sez: regolarizz}}$.
\begin{rem}
\label{rem: controll}The reason why Assumption $\text{\ref{ass: controllabilita}}$
is called ``controllability condition'' can be explained in the following
way. Denote by $L_{s,t}:L^{2}\left(s,t;X\right)\to X$ the following
linear operator:
\[
L_{s,t}\varphi:=\int_{s}^{t}U\left(t,r\right)B\left(r\right)\varphi\left(r\right)dr\quad\forall\varphi\in L^{2}\left(s,t;X\right).
\]
Then $L_{s,t}$ is bounded, $B\left(\cdot\right)^{*}U\left(t,\cdot\right)^{*}x\in L^{2}\left(s,t;X\right)$
for every constant $x\in X$ and we have:
\begin{align*}
\left\langle L_{s,t}\varphi,x\right\rangle _{X} & =\left\langle \int_{s}^{t}U\left(t,r\right)B\left(r\right)\varphi\left(r\right)dr,x\right\rangle _{X}\\
 & =\int_{s}^{t}\left\langle \varphi\left(r\right),B\left(r\right)^{*}U\left(t,r\right)^{*}x\right\rangle _{X}dr\\
 & =\left\langle \varphi,B\left(\cdot\right)^{*}U\left(t,\cdot\right)^{*}x\right\rangle _{L^{2}\left(s,t;X\right)},
\end{align*}
for every $\varphi\in L^{2}\left(s,t;X\right)$. Thus the adjoint
operator $L_{s,t}^{*}:X\to L^{2}\left(s,t;X\right)$ is:
\[
L_{s,t}^{*}x=B\left(\cdot\right)^{*}U\left(t,\cdot\right)^{*}x\quad\forall x\in X.
\]
Therefore, for every $x,y\in X$:
\begin{align*}
\left\langle Q\left(t,s\right)x,y\right\rangle _{X} & =\left\langle \int_{s}^{t}U\left(t,r\right)B\left(r\right)B\left(r\right)^{*}U\left(t,r\right)^{*}xdr,y\right\rangle _{X}\\
 & =\left\langle L_{s,t}^{*}x,L_{s,t}^{*}y\right\rangle _{L^{2}\left(s,t;X\right)}\\
 & =\left\langle L_{s,t}L_{s,t}^{*}x,y\right\rangle _{X},
\end{align*}
which implies $Q\left(t,s\right)=L_{s,t}L_{s,t}^{*}$ and $\left|Q\left(t,s\right)^{\frac{1}{2}}z\right|_{X}=\left|L_{s,t}^{*}z\right|_{L^{2}\left(s,t;X\right)}$
for every $z\in X$. We deduce from Proposition $\text{\ref{prop: immagini op Hilbert}}$:
\begin{equation}
ImQ\left(t,s\right)^{\frac{1}{2}}=ImL_{s,t}.\label{eq: immagini uguali Lst}
\end{equation}
Consider the following abstract linear equation in the unknown $\mathrm{u}$:
\begin{equation}
\begin{cases}
\mathrm{u}'\left(t\right)=A\left(t\right)\mathrm{u}\left(t\right)+B\left(t\right)\varphi\left(t\right) & \forall t>s\\
\mathrm{u}\left(s\right)=u_{0}
\end{cases}\label{eq: prob contr}
\end{equation}
with $\varphi\in L^{2}\left(s,t;X\right)$ and $u_{0}\in X$. It may
be considered to be the state equation of a linear control problem
(in case that $B$ is unbounded, a boundary control problem). The
mild solution is
\[
u\left(t;s,u_{0}\right)=U\left(t,s\right)u_{0}+L_{st}\varphi,\quad\forall t\geq s.
\]
The space $ImU\left(t,s\right)$ thus contains the values at time
$t$ of the solutions $u\left(\cdot;s,u_{0}\right)$ of $\text{\eqref{eq: prob contr}}$
with $\varphi=0$ (namely, the ``uncontrolled'' solutions), under
change of the initial datum $u_{0}\in X$. As a result of the identity
in $\eqref{eq: immagini uguali Lst}$, the space $Q\left(t,s\right)^{\frac{1}{2}}$
contains the values at time $t$ of the solutions of $\text{\eqref{eq: prob contr}}$
with $u_{0}=0$, under change of the control $\varphi$. Therefore,
Assumption $\text{\ref{ass: controllabilita}}$ is equivalent to the
statement that every state $x\in X$ reachable from an initial datum
without control (any possible ``natural state'', one could say)
is actually reachable from $0$ if the system is controlled in a suitable
way.

In this sense the system in $\text{\eqref{eq: prob contr}}$ can be
said controllable, under Assumption $\text{\ref{ass: controllabilita}}$
.

We stress that the verification that the example in Section $\text{\ref{sez: esempio}}$
satisfies Assumption $\text{\ref{ass: controllabilita}}$ will be
made by exploiting relation $\text{\eqref{eq: immagini uguali Lst}}$.
\end{rem}

\begin{rem}
\textcolor{red}{\label{rem: mis gauss e trasf F}}Let $\nu_{1}$ and
$\nu_{2}$ be two Borel measures on $X$ and denote by $\mathcal{B}\left(X\right)$
the Borel $\sigma$-algebra. Since $X$ is separable, a basic and
well known result in abstract measure theory ensures that $\nu_{1}=\nu_{2}$
on $\mathcal{B}\left(X\right)$ if and only if $\widehat{\nu_{1}}=\widehat{\nu_{2}}$
on $X^{*}=X$, where the symbol $\widehat{\cdot}$ denotes the Fourier
transform of a measure.

We recall that, for any measure $\mu$ acting on $\mathcal{B}\left(X\right)$,
$\widehat{\mu}$ is defined by
\[
\widehat{\mu}\left(\xi\right):=\int_{X}e^{i\left\langle \xi,y\right\rangle _{X}}\mu\left(dy\right)\quad\forall\xi\in X.
\]

Thus, the Fourier transform $\widehat{\nu}$ fully characterizes the
measure $\nu$.

If $\nu$ is Gaussian, it is possible to identify the mean and the
covariance operator of $\nu$ through $\widehat{\nu}$. Precisely,
$\nu=\mathcal{N}\left(a,Q\right)$ if and only if
\[
\widehat{\nu}\left(\xi\right)=\exp\left(i\left\langle a,\xi\right\rangle _{X}-\frac{1}{2}\left\langle \xi,Q\xi\right\rangle _{X}\right)\quad\forall\xi\in X.
\]

The convolution between two Borel measures $\nu_{1}$ and $\nu_{2}$
on $X$ is defined as the image under summation of the product measure,
on $X\times X$, between $\nu_{1}$ and $\nu_{2}$. Namely
\[
\nu_{1}\star\nu_{2}\left(A\right):=\int_{X}\chi_{A}\left(v+w\right)\nu_{1}\left(dv\right)\nu_{2}\left(dw\right)\quad\forall A\in\mathcal{B}\left(X\right).
\]
It is a consequence of this definition that:
\[
\widehat{\nu_{1}\star\nu_{2}}=\widehat{\nu_{1}}\cdot\widehat{\nu_{2}}.
\]

Finally, we recall that a sequence of Borel measures $\left(\mu_{n}\right)_{n}$
is said to \emph{converge weakly} to the Borel measure $\mu$, and
we write $\mu_{n}\rightharpoonup\mu$, if and only if
\[
\lim_{n\to\infty}\int_{X}f\left(x\right)\mu_{n}\left(dx\right)=\int_{X}f\left(x\right)\mu\left(dx\right)\quad\forall f\in C_{b}\left(X\right).
\]
If $\mu_{n}\rightharpoonup\mu$, then $\widehat{\mu_{n}}\to\widehat{\mu}$
pointwise in $X$, and uniformly on the compact subsets of $X$.

The weak convergence of Gaussian measures can be fully characterized:
if $\mu_{n}=\mathcal{N}\left(m_{n},Q_{n}\right)$ and $\mu=\mathcal{N}\left(m,Q\right)$
it is necessary and sufficient for $\mu_{n}$ to converge weakly to
$\mu$ that:
\begin{align*}
i)\  & m_{n}\to m\text{ in }X,\\
ii)\  & Q_{n}\to Q\text{ in }\mathcal{L}\left(X\right),\\
iii)\  & \int_{X}\left|x\right|_{X}^{2}\mu_{n}\left(dx\right)\to\int_{X}\left|x\right|_{X}^{2}\mu\left(dx\right).
\end{align*}
For a proof of the latter result, see e.g. \cite[Example 3.8.15]{BOG;GM}.

In particular, if all the Gaussian measures $\mu_{n}$ and $\mu$
are centered, $\mu_{n}\rightharpoonup\mu$ is equivalent to the convergence
of $Q_{n}$ to $Q$ in $\mathcal{L}\left(X\right)$ plus the convergence
of $trQ_{n}$ to $trQ$.
\end{rem}

\vspace{2mm}
\begin{rem}
\label{rem: conv deb gaussiane}For every fixed $t\in\mathbb{R}$,
as observed after the definitions $\text{\eqref{eq: def cov Q}}$
and $\text{\eqref{eq:def Q(t,-infty)}}$, and as a consequence of
point iii) in Lemma $\text{\ref{lem: Q(t,-infty) trace class}}$,
we have for every $s_{0}\in\left[-\infty,t\right)$:
\begin{align*}
 & \lim_{s\to s_{0}}\left\Vert Q\left(t,s\right)-Q\left(t,s_{0}\right)\right\Vert _{\mathcal{L}\left(X\right)}=0,\\
 & \lim_{s\to s_{0}}trQ\left(t,s\right)=trQ\left(t,s_{0}\right).
\end{align*}
We deduce from the final part of Remark $\text{\ref{rem: mis gauss e trasf F}}$
that:
\[
\mathcal{N}\left(0,Q\left(t,s\right)\right)\rightharpoonup\mathcal{N}\left(0,Q\left(t,s_{0}\right)\right)\quad\text{for }s\to s_{0}.
\]
\end{rem}

The following Proposition states some basic properties of the Ornstein-Uhlenbeck
transition operators as defined by formula $\text{\eqref{eq: def O-U formula}}$. 

First, such family forms a backward evolution operator in $C_{b}\left(X\right)$,
which is the natural space onto which these operators act.

Secondly, $C_{b}^{1}\left(X\right)$ is an invariant space for every
operator $P_{s,t}$; in other words, the $P_{s,t}$'s preserve the
$C^{1}$-regularity. For any $f\in C_{b}^{1}\left(X\right)$ we can
derive an explicit formula for $\nabla\left(P_{s,t}f\right)$ in terms
of $\nabla f$. In order to do this, it is convenient to define the
vector-valued counterpart of $P_{s,t}$, which will appear in this
formula. 

To this aim we denote
\begin{equation}
\overrightarrow{P_{s,t}}\Phi:=\int_{X}\Phi\left(U\left(t,s\right)\left(\cdot\right)+y\right)\mathcal{N}\left(0,Q\left(t,s\right)\right)\left(dy\right),\quad\forall\Phi\in C_{b}\left(X;X\right),\label{eq: def O-U vett formula}
\end{equation}
where the above integral is again in the sense of Bochner.

We also prove a first pointwise convergence result about $P_{s,t}$,
for $t-s\to\infty$.

We recall that a bounded operator is called a \emph{contraction} when
its norm is less then or equal to $1$.
\begin{prop}
\label{prop: op evoluzione}i) Let $s<t$ and $\overrightarrow{P_{s,t}}$
be defined by $\text{\eqref{eq: def O-U vett formula}}$. Then $P_{s,t}\left(C_{b}\left(X\right)\right)\subseteq C_{b}\left(X\right)$,
$\overrightarrow{P_{s,t}}\left(C_{b}\left(X;X\right)\right)\subseteq C_{b}\left(X;X\right)$,
and the operators:
\begin{align*}
P_{s,t} & :C_{b}\left(X\right)\to C_{b}\left(X\right)\\
\overrightarrow{P_{s,t}} & :C_{b}\left(X;X\right)\to C_{b}\left(X;X\right)
\end{align*}
are bounded and indeed contractions.

Further, the family $\left\{ P_{s,t}\mid-\infty<s\leq t<+\infty\right\} $
forms a backward evolution operator in $C_{b}\left(X\right)$, meaning
that
\begin{equation}
P_{s,t}f=P_{s,\tau}P_{\tau,t}f\quad\forall s<\tau<t,\ f\in C_{b}\left(X\right).\label{eq: op evoluzione}
\end{equation}

ii) For every $s<t$, $P_{s,t}\left(C_{b}^{1}\left(X\right)\right)\subseteq C_{b}^{1}\left(X\right)$
and the following formula holds:
\begin{equation}
\left[\nabla\left(P_{s,t}\varphi\right)\right]\left(x\right)=U\left(t,s\right)^{*}\left[\overrightarrow{P_{s,t}}\left(\nabla\varphi\right)\right]\left(x\right)\quad\forall\varphi\in C_{b}^{1}\left(X\right),\ x\in X.\label{eq: formula grad C1}
\end{equation}

iii) For each $f\in C_{b}\left(X\right)$ and $x\in X$:
\begin{equation}
\lim_{s\to-\infty}\left[P_{s,t}f\right]\left(x\right)=\int_{X}f\left(y\right)\mathcal{N}\left(0,Q\left(t,-\infty\right)\right)\left(dy\right)\quad\forall t\in\mathbb{R},\label{eq: conv P_s,t s a -infty}
\end{equation}
and
\begin{equation}
\lim_{t\to+\infty}\left\{ \left[P_{s,t}f\right]\left(x\right)-\int_{X}f\left(y\right)\mathcal{N}\left(0,Q\left(t,-\infty\right)\right)\left(dy\right)\right\} =0\quad\forall s\in\mathbb{R}\label{eq: conv P_s,t t a +infty}
\end{equation}
if $f\in C_{b}^{\alpha}\left(X\right)$ for some $\alpha\in\left(0,1\right)$.
\end{prop}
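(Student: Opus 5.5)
For i), I would argue directly from the second form of \eqref{eq: def O-U formula}. If $f\in C_{b}(X)$ and $x_{n}\to x$, then $U(t,s)x_{n}\to U(t,s)x$. Hence $f(U(t,s)x_{n}+y)\to f(U(t,s)x+y)$ for every $y$, and dominated convergence with bound $\|f\|_{\infty}$ gives continuity of $P_{s,t}f$. The bound $|P_{s,t}f(x)|\le\|f\|_{\infty}$ holds because $\mathcal{N}(0,Q(t,s))$ is a probability measure, so $P_{s,t}$ is a contraction. The vector-valued case $\overrightarrow{P_{s,t}}$ is identical, using dominated convergence for Bochner integrals and $|\int\Phi\,d\mu|_{X}\le\int|\Phi|_{X}d\mu$.

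For the evolution property \eqref{eq: op evoluzione}, I would compute
\[
P_{s,\tau}P_{\tau,t}f(x)=\int_{X}\int_{X}f\left(U(t,\tau)U(\tau,s)x+U(t,\tau)z+y\right)\mathcal{N}(0,Q(t,\tau))(dy)\,\mathcal{N}(0,Q(\tau,s))(dz).
\]
Since $U(t,\tau)U(\tau,s)=U(t,s)$, the identity reduces to a statement about measures: the convolution of $\mathcal{N}(0,Q(t,\tau))$ with the image of $\mathcal{N}(0,Q(\tau,s))$ under $U(t,\tau)$ equals $\mathcal{N}(0,Q(t,s))$. By Remark \ref{rem: mis gauss e trasf F} it suffices to compare Fourier transforms. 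The image measure is $\mathcal{N}(0,U(t,\tau)Q(\tau,s)U(t,\tau)^{*})$, and the needed covariance identity
\[
Q(t,\tau)+U(t,\tau)Q(\tau,s)U(t,\tau)^{*}=Q(t,s)
\]
follows from \eqref{eq: def cov Q} by splitting $\int_{s}^{t}=\int_{s}^{\tau}+\int_{\tau}^{t}$ and using $U(t,r)=U(t,\tau)U(\tau,r)$ for $r\le\tau$.

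For ii), I would differentiate under the integral sign. For $\varphi\in C_{b}^{1}(X)$ and $h\in X$, the difference quotient
\[
\frac{\varphi(U(t,s)(x+\epsilon h)+y)-\varphi(U(t,s)x+y)}{\epsilon}
\]
is bounded by $\|\nabla\varphi\|_{\infty}\|U(t,s)\|\,|h|$ and converges to $\langle\nabla\varphi(U(t,s)x+y),U(t,s)h\rangle$. Dominated convergence then gives the Gateaux derivative $\langle U(t,s)^{*}[\overrightarrow{P_{s,t}}\nabla\varphi](x),h\rangle$. This expression is linear and bounded in $h$, and it is continuous in $x$ by part i) applied to $\nabla\varphi\in C_{b}(X;X)$. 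Hence $P_{s,t}\varphi$ is Fréchet differentiable with gradient \eqref{eq: formula grad C1}, and the gradient is bounded by $\|U(t,s)\|\,\|\nabla\varphi\|_{\infty}$.

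For iii), the first limit \eqref{eq: conv P_s,t s a -infty}: we have $U(t,s)x\to0$ by Assumption \ref{assu: omega_0 neg}, and $\mathcal{N}(0,Q(t,s))\rightharpoonup\mathcal{N}(0,Q(t,-\infty))$ by Remark \ref{rem: conv deb gaussiane}. The translated measures $\mathcal{N}(U(t,s)x,Q(t,s))$ then also converge weakly: the means converge, the covariances converge in $\mathcal{L}(X)$, and the second moments $|U(t,s)x|^{2}+trQ(t,s)$ converge, which is exactly the criterion recalled in Remark \ref{rem: mis gauss e trasf F}.

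The main obstacle is the second limit \eqref{eq: conv P_s,t t a +infty}, where the limiting measure moves with $t$, so weak convergence alone is not enough. Here I would use the Hölder continuity. Write $\mu_{t}=\mathcal{N}(0,Q(t,-\infty))$. By the same covariance splitting as in i), applied with $\tau=s$ and initial time $-\infty$,
\[
\mu_{t}=\mathcal{N}(0,Q(t,s))\star\mathcal{N}(0,U(t,s)Q(s,-\infty)U(t,s)^{*}),
\]
so that
\[
\int f\,d\mu_{t}=\int_{X}\int_{X}f\left(y+U(t,s)z\right)\mathcal{N}(0,Q(t,s))(dy)\,\mu_{s}(dz).
\]
Subtracting this from $P_{s,t}f(x)$ gives
\[
\Big|P_{s,t}f(x)-\int f\,d\mu_{t}\Big|\le[f]_{C^{\alpha}}\int_{X}|U(t,s)(x-z)|^{\alpha}\mu_{s}(dz)\le[f]_{C^{\alpha}}M^{\alpha}e^{\alpha\omega(t-s)}\int_{X}|x-z|^{\alpha}\mu_{s}(dz).
\]
The last integral is finite because $\int|z|^{\alpha}d\mu_{s}\le(trQ(s,-\infty))^{\alpha/2}<\infty$ by Lemma \ref{lem: Q(t,-infty) trace class}. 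Since $s$ is fixed and $\omega<0$, the right-hand side tends to $0$ as $t\to+\infty$.
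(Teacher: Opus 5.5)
Your proposal is correct and follows the paper's proof essentially step by step: dominated convergence gives continuity and contractivity, and the convolution $\mathcal{N}(0,Q(t,\tau))\star U(t,\tau)_{\sharp}\mathcal{N}(0,Q(\tau,s))$ is identified with $\mathcal{N}(0,Q(t,s))$ through Fourier transforms and the splitting $Q(t,s)=Q(t,\tau)+U(t,\tau)Q(\tau,s)U(t,\tau)^{*}$; for iii) you use the Gaussian weak-convergence criterion as $s\to-\infty$, and the decomposition $\mu_{t}=\mathcal{N}(0,Q(t,s))\star\mathcal{N}(0,U(t,s)Q(s,-\infty)U(t,s)^{*})$ combined with H\"older continuity as $t\to+\infty$. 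The only deviations are cosmetic: in ii) your Gateaux-derivative-plus-continuity argument is actually a more careful justification than the paper's integration of a $y$-dependent $o(h)$ remainder, and in iii) you integrate against $\mu_{s}$ instead of its push-forward $\nu_{t,s}=\mathcal{N}(0,U(t,s)Q(s,-\infty)U(t,s)^{*})$ and bound the $\alpha$-moment directly rather than via Jensen.
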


\begin{proof}
See Appendix \ref{appendice dim}.
\end{proof}
The asymptotic behaviour of the Ornstein-Uhlenbeck evolution operator
on continuous bounded functions is qualitatively described by relations
$\text{\eqref{eq: conv P_s,t s a -infty}}$ and $\text{\eqref{eq: conv P_s,t t a +infty}}$.
The main goal of this paper is to estimate the rate of the convergences
$\text{\eqref{eq: conv P_s,t s a -infty}}$ and $\text{\eqref{eq: conv P_s,t t a +infty}}$
for an extension of $P_{s,t}$ defined on spaces of possibly unbounded
functions that are integrable with respect to a suitable family of
Gaussian measures, under the assumption that the system is time-periodic.

To do this, we prove some regularization result for the extended operators,
which, besides, are interesting in themselves.

\vspace{2mm}

When there will be no room for confusion, we will omit the square
brackets. In particular, from now on, we will write $P_{s,t}\varphi\left(x\right)$
instead of $\left[P_{s,t}\varphi\right]\left(x\right)$ and $\overrightarrow{P_{s,t}}\Phi\left(x\right)$
instead of $\left[\overrightarrow{P_{s,t}}\Phi\right]\left(x\right)$.

\section{Regularization properties of the Ornstein-Uhlenbeck evolution operator in $L^p$ spaces}\label{sez: regolarizz}

This section is devoted to the extension of $\left\{ P_{s,t}\mid s\leq t\right\} $
and of its vectorial counterpart to spaces of possibly unbounded functions
that are integrable with respect to a suitable Gaussian measure, and
to the regularization property of the extended operators.

First, we prove a regularization result for the original evolution
operator, the one defined in $C_{b}\left(X\right)$. We already know
from point ii) in Proposition $\text{\ref{prop: op evoluzione}}$
that $P_{s,t}$ preserves the $C_{b}^{1}$-regularity; in order to
improve this result and pass from $C_{b}\left(X\right)$ to $C_{b}^{1}\left(X\right)$,
we need to apply the Cameron-Martin theorem to the Gaussian measure
appearing in $P_{s,t}$. This way we will prove in Proposition $\text{\ref{prop: CM formula}}$
that $P_{s,t}\left(C_{b}\left(X\right)\right)\subseteq C_{b}^{1}\left(X\right)$
and that, for every $f\in C_{b}\left(X\right)$, the directional derivative
of $P_{s,t}f$ can be explicitly described by means of the isometry
between the reproducing kernel and the Cameron-Martin space of the
measure $\mathcal{N}\left(0,Q\left(t,s\right)\right)$. This result
exploits the ``controllability condition'' in Assumption $\text{\ref{ass: controllabilita}}$.

From now on, we denote
\begin{equation}
\gamma_{t,s}:=\mathcal{N}\left(0,Q\left(t,s\right)\right)\quad\forall s<t,\label{eq: def gamma_ts}
\end{equation}
for simplicity of notation.

In the subsequent part of the section, we identify a suitable family
$\left\{ \mu_{t}\mid t\in\mathbb{R}\right\} $ of Gaussian measures
that allows to extend $P_{s,t}$ {[}$\overrightarrow{P_{s,t}}${]}
to the space $L^{p}\left(X,\mu_{t}\right)$ {[}$L^{p}\left(X,\mu_{t};X\right)${]},
with values in $L^{p}\left(X,\mu_{s}\right)$ {[}$L^{p}\left(X,\mu_{s};X\right)${]}.
Then, also the extended operators are shown to have a suitable regularization
property (Propositions $\text{\ref{prop: operatore compatto}}$ and
$\ref{prop: reg vett}$). The regularization of the extended operators
is carried on in three different subsections because the vector-valued
function space case is not a trivial extension of the scalar function
space case: even though the second case exploits the results of the
first case, it is necessary to take some measures and prove a series
of specific results in order to obtain the regularization property
also for $\overrightarrow{P_{s,t}}:L^{p}\left(X,\mu_{t};X\right)\to L^{p}\left(X,\mu_{s};X\right)$.

All this work is preparatory to the asymptotic analysis that will
be carried upon in the next section.

\vspace{2mm}

Clearly, the definitions of scalar and vector-valued Sobolev space
we deal with play a crucial role in the theory. As far as scalar functions
are concerned, we use the following notion. If $\nu$ is a non-degenerate
Gaussian measure on $X$, then the space $W^{1,p}\left(X,\nu\right)$
is defined as the domain of the closure of the gradient operator
\[
\nabla:C_{b}^{1}\left(X\right)\subseteq L^{p}\left(X,\nu\right)\to L^{p}\left(X,\nu;X\right).
\]
This means that a function $f\in L^{p}\left(X,\nu\right)$ belongs
to $W^{1,p}\left(X,\nu\right)$ if and only if there exists a sequence
$\left(f_{n}\right)_{n}\subseteq C_{b}^{1}\left(X\right)$ such that
$f_{n}\to f$ in $L^{p}\left(X,\nu\right)$ and $\left(\nabla f_{n}\right)_{n}$
converges in $L^{p}\left(X,\nu;X\right)$. In this case, denoting
again by $\nabla$ the closure of the former operator, $\nabla f$
is defined as the $L^{p}\left(X,\nu;X\right)$-limit of $\nabla f_{n}$.
Clearly, in order that this definition is well-posed, i.e. that it
does not depend on the choice of the sequence $\left(f_{n}\right)_{n}$,
it is necessary and sufficient to know that the gradient operator
is closable in $L^{p}\left(X,\nu\right)\times L^{p}\left(X,\nu;X\right)$,
which is a well-known result (see for instance \cite{DAP-ZAB}, Chapter
9.2).

The definition of $W^{1,p}\left(X,\nu;X\right)$ is more complicated,
to the point that we dedicate the entire Subsection $\text{\ref{subsez: Sob vett}}$
to it.

\vspace{2mm}

Eventually we make a some clarifications about the non degeneracy
of the measures $\gamma_{t,s}$ and $\mu_{t}$ defined in $\text{\eqref{eq: def gamma_ts}}$
and $\text{\eqref{def misura inv}}$ respectively.

We stress that both the definition of $W^{1,p}\left(X,\nu\right)$
and the definition of $W^{1,p}\left(X,\nu;X\right)$ require that
the Gaussian measure $\nu$ is non-degenerate. Thus, we will assume
that the measures we are interested in as far as Sobolev spaces are
concerned - namely those defined in $\text{\eqref{def misura inv}}$
- are actually non-degenerate. Nevertheless, the results of Subsections
$\text{\ref{subsez: reg Cb}}$ and $\text{\ref{subsez: estensione Lp}}$,
where we do not treat Sobolev spaces, hold in more general cases.

Precisely, in Subsection $\text{\ref{subsez: reg Cb}}$, the characterization
of the Cameron-Martin space (Proposition $\text{\ref{prop: caratt}}$)
is given at the most general level - actually, at a \emph{pre} measure
theoretic level since the operator $Q$ is assumed to be just a non-negative
self-adjoint operator, while the Cameron-Martin Theorem (Theorem $\text{\ref{Teo CM}}$)
is stated for a Gaussian measure $\mathcal{N}\left(a,Q\right)$ where
$Q$ is non-negative, self-adjoint and trace-class. Hence, the consequent
regularization result for continuous functions (Proposition $\text{\ref{prop: CM formula}}$)
holds for any possibly degenerate measure $\gamma_{t,s}$ defined
as in $\text{\eqref{eq: def gamma_ts}}$.

The three results of Subsection $\text{\ref{subsez: estensione Lp}}$
involve the family of measures $\left\{ \mu_{t}\mid t\in\mathbb{R}\right\} $
defined in $\text{\eqref{def misura inv}}$, but do not concern Sobolev
spaces with respect of $\mu_{t}$. Hence, all of three of these results,
namely the invariance property of the family with respect to $P_{s,t}$
(Proposition $\text{\ref{prop: invarianza}}$), its characterization
(Proposition $\text{\ref{prop: caratt mu_t}}$) and the extension
of $P_{s,t}$ to $L^{p}\left(X,\mu_{t}\right)$ (Proposition $\text{\ref{prop: estensione P_s,t}}$)
hold without assuming that the measures $\mu_{t}$ are non-degenerate.

From Subsection $\text{\ref{subsez: reg Lp}}$ on, the measures $\mu_{t}$
are assumed to be non-degenerate since we will deal with Sobolev spaces
defined with respect to $\mu_{t}$.

Note that, in virtue of Remark $\text{\ref{rem: per Q(t,s) iniettivo}}$,
the non degeneracy of $\gamma_{t,s}$ would imply that of $\mu_{t}$,
but the converse is not true in general.

\subsection{Regularization of continuous functions}
\leavevmode\label{subsez: reg Cb}

Our first regularization result relies upon the Cameron-Martin theorem.
To state this theorem in the form that is most suitable to our purposes,
we need to do some preparatory work.

Recall that if $Y$ is any other Hilbert space, then for any $T\in\mathcal{L}\left(X;Y\right)$,
$\left(KerT\right)^{\perp}=\overline{ImT^{*}}$, and consequently
\[
X=KerT\oplus\overline{ImT^{*}}.
\]
The \emph{pseudoinverse} of $T$ is the operator
\[
T^{-1}:ImT\subseteq Y\to X
\]
defined by
\[
T^{-1}y:=\underset{T^{-1}\left\{ y\right\} }{\text{argmin}}\left|\cdot\right|_{X}\quad\forall y\in ImT.
\]
The definition is well-posed since, for $y\in ImT$, the set $T^{-1}\left\{ y\right\} $
is closed, convex and non-empty. An equivalent definition is that
$T^{-1}y$ is the unique element of $T^{-1}\left\{ y\right\} \cap\left(KerT\right)^{\perp}$,
which implies $ImT^{-1}\subseteq\overline{ImT^{*}}$. In other terms,
$T^{-1}$ is characterized by the following relation:
\begin{equation}
x=T^{-1}y\iff x\in T^{-1}\left\{ y\right\} \cap\overline{ImT^{*}}=T^{-1}\left\{ y\right\} \cap\left(KerT\right)^{\perp}.\label{eq: caratt pseudoinverso}
\end{equation}
It is an easy consequence of the above relation that $T^{-1}$ is
closed even though not necessarily continuous with respect to the
topologies of $Y$ and $X$. We stress that $D\left(T^{-1}\right)=ImT$
may not be a closed subspace of $Y$.

If $Y=X$ and $T$ is self-adjoint, then $T^{-1}:ImT\to\overline{ImT}$
is symmetric, meaning that:
\[
\left\langle x_{1},T^{-1}x_{2}\right\rangle =\left\langle x_{2},T^{-1}x_{1}\right\rangle \quad\forall x_{1},x_{2}\in ImT.
\]
We stress that this is different and weaker than saying that $T^{-1}$
is self-adjoint. First, the existence of $\left(T^{-1}\right)^{*}$
requires that $ImT$ is dense in $X$, which means that $KerT=\left\{ 0\right\} $
so that $T^{-1}$ is the algebraic inverse of $T$ - which is clearly
not the most general case. Secondly, $T^{-1}=\left(T^{-1}\right)^{*}$
would imply that $ImT$ coincides with the domain of $\left(T^{-1}\right)^{*}$,
but the latter is in general bigger than the former.

Anyway, the set which serves as the domain of $\left(T^{-1}\right)^{*}$
can be defined independently of the density of $ImT$ - namely, independently
of the existence of $\left(T^{-1}\right)^{*}$ itself. Corollary $\text{\ref{cor: caratt dominio aggiunto}}$
actually provides a useful characterization of this set.

\vspace{2mm}

If $T$ is also non-negative, then the square root $T^{\frac{1}{2}}$
of $T$ can be defined as the unique non-negative operator $S\in\mathcal{L}\left(X\right)$
such that $S^{2}=T$. It is a trivial exercise to verify that $T^{\frac{1}{2}}$
is self-adjoint, that $KerT^{\frac{1}{2}}=KerT$, and thus $\overline{ImT^{\frac{1}{2}}}=\overline{ImT}$.
The pseudoinverse $T^{-\frac{1}{2}}:ImT^{\frac{1}{2}}\to\overline{ImT}$
of $T^{\frac{1}{2}}$ satisfies:
\[
T^{-1}=T^{-\frac{1}{2}}T^{-\frac{1}{2}}\quad\text{in }ImT
\]
by direct check, using $\text{\eqref{eq: caratt pseudoinverso}}$.

\vspace{4mm}

For a bounded, self-adjoint, non-negative operator $Q$ in $X$, and
an element $x\in X$, consider the following subset of $\mathbb{R}$:
\begin{equation}
\mathcal{D}\left(x;Q\right):=\left\{ \left\langle x,z\right\rangle _{X}\mid z\in X\text{ and }\left|Q^{\frac{1}{2}}z\right|_{X}\leq1\right\} .\label{eq: def norma CM}
\end{equation}
In other terms, the latter is the image - under the dual map $\left\langle \cdot,x\right\rangle $
- of the counterimage $Q^{-\frac{1}{2}}\left(\overline{B\left(0,1\right)_{X}}\right)$.

The set
\[
H_{Q}:=\left\{ x\in X\mid\sup\mathcal{D}\left(x;Q\right)<+\infty\right\} 
\]
is the \emph{Cameron-Martin space} of a Gaussian measure $\mathcal{N}\left(a,Q\right)$,
$a\in X$.

With this definitions and facts we can prove the following important
characterization, without any compactness assumption.
\begin{prop}
\label{prop: caratt}Let $Q\in\mathcal{L}\left(X\right)$ be a non-negative,
self-adjoint operator. Then:
\[
H_{Q}=ImQ^{\frac{1}{2}}.
\]

Further, if $x\in ImQ^{\frac{1}{2}}$, then $\sup\mathcal{D}\left(x;Q\right)=\left|Q^{-\frac{1}{2}}x\right|_{X}$.
\end{prop}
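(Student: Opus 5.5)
We prove the two inclusions separately, writing $S:=Q^{\frac{1}{2}}$, which is bounded, self-adjoint and non-negative with $KerS=KerQ$, and using the pseudoinverse $S^{-1}:ImS\to\overline{ImS}=(KerS)^{\perp}$ from \eqref{eq: caratt pseudoinverso}.

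\emph{First inclusion, with the upper bound.} Let $x=Sw\in ImS$ and set $w_{0}:=S^{-1}x\in(KerS)^{\perp}$, so that $x=Sw_{0}$. For every $z$ with $|Sz|_{X}\leq1$, self-adjointness and Cauchy--Schwarz give
\[
\left\langle x,z\right\rangle _{X}=\left\langle Sw_{0},z\right\rangle _{X}=\left\langle w_{0},Sz\right\rangle _{X}\leq\left|w_{0}\right|_{X}.
\]
Hence $\sup\mathcal{D}(x;Q)\leq|S^{-1}x|_{X}<+\infty$, and so $ImS\subseteq H_{Q}$.

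\emph{Matching lower bound.} Since $w_{0}\in\overline{ImS}$, there are $z_{n}$ with $Sz_{n}\to w_{0}$. If $w_{0}=0$ there is nothing to prove. Otherwise, normalize $z_{n}':=z_{n}/|Sz_{n}|_{X}$, so that $|Sz_{n}'|_{X}=1$. Then
\[
\left\langle x,z_{n}'\right\rangle _{X}=\frac{\left\langle w_{0},Sz_{n}\right\rangle _{X}}{\left|Sz_{n}\right|_{X}}\to\left|w_{0}\right|_{X}.
\]
So the supremum equals $|S^{-1}x|_{X}$. This is exactly why the pseudoinverse, rather than an arbitrary preimage $w$, appears in the statement.

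\emph{Second inclusion: $H_{Q}\subseteq ImS$.} This is the main step. Let $x\in H_{Q}$ and $c:=\sup\mathcal{D}(x;Q)$. Since $\mathcal{D}$ is symmetric under $z\mapsto-z$, we get
\[
\left|\left\langle x,z\right\rangle _{X}\right|\leq c\left|Sz\right|_{X}\quad\forall z\in X.
\]
In particular $x\perp KerS$. Define a linear functional on $ImS$ by $\ell(Sz):=\langle x,z\rangle_{X}$. It is well defined, because $Sz=Sz'$ implies $z-z'\in KerS$ and hence $\langle x,z-z'\rangle_{X}=0$. It is also bounded by $c$. Extend $\ell$ by continuity to $\overline{ImS}$ and by zero on $KerS$. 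Riesz then gives $w\in\overline{ImS}$ with $\langle w,Sz\rangle_{X}=\langle x,z\rangle_{X}$ for all $z$. Self-adjointness turns this into $\langle Sw-x,z\rangle_{X}=0$ for all $z$, so $x=Sw\in ImS$.

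The potential obstacle is only making sure that no density or injectivity of $S$ is needed. This is handled by working on $\overline{ImS}$ and by noting that $x\perp KerS$ follows from the finiteness of the supremum.
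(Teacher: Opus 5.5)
Your proof is correct. The first half is the paper's own argument: the bound $\langle x,z\rangle_X\le|S^{-1}x|_X$, and the normalized sequence $z_n/|Sz_n|_X$ giving equality.

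For the hard inclusion $H_Q\subseteq ImQ^{\frac12}$ you take a more direct route.

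\textbf{The paper's route.} It reads finiteness of $\sup\mathcal{D}(x;Q)$ as continuity of $y\mapsto\langle Q^{-\frac12}y,x\rangle_X$ on $ImQ^{\frac12}$, i.e.\ $x\in E(Q^{-\frac12})$. It then relies on Proposition \ref{prop: A_ext} and Corollary \ref{cor: caratt dominio aggiunto}. These identify $E(T^{-1})=ImT\oplus KerT$ for any bounded self-adjoint $T$, by showing that $TT^{-1}_{ext}x$ is the orthogonal projection of $x$ onto $\overline{ImT}$. This yields $x=Q^{\frac12}\xi+z$ with $z\in KerQ^{\frac12}$, and only at the end is $z$ killed by testing against $nz$.

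\textbf{Your route.} You reverse the order. You first extract $x\perp KerS$ and $|\langle x,z\rangle_X|\le c|Sz|_X$ from the finiteness. This step uses positive homogeneity as well as the symmetry $z\mapsto -z$ that you cite; in particular, when $Sz=0$ it is the paper's $nz$ trick. That inequality makes $Sz\mapsto\langle x,z\rangle_X$ a well-defined bounded functional on $ImS$, and one application of Riesz on $\overline{ImS}$ finishes.

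\textbf{Comparison.} Both proofs hinge on the same Riesz representative: for $x\perp KerS$, your functional coincides with the paper's $y\mapsto\langle S^{-1}y,x\rangle_X$. The difference is therefore organizational. Yours dispenses with the pseudoinverse in this half and with both auxiliary results. It is in effect the range-inclusion criterion of Proposition \ref{prop: immagini op Hilbert} applied to $T_1:a\mapsto ax$ and $T_2=S$, with extension by continuity replacing Hahn--Banach. The paper's detour instead produces the more general description of $E(T^{-1})$, which the paper advertises as a density-free tool for proving membership in a possibly non-closed range.
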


\begin{proof}
See Appendix \ref{appendice CM}.
\end{proof}
\begin{rem}
\label{rem: caratt app ImQ^1/2}It is usually ``difficult'' to prove
that a vector is in the image of a linear operator without knowing
that it is a closed set. Indeed, the inclusion $\left(\subseteq\right)$
in the statement of Proposition $\text{\ref{prop: caratt}}$ is the
hardest to prove. Thanks to this inclusion, we have that, if $Q$
is as in the hypotheses, then for every $x\in X$:
\[
x\in ImQ^{\frac{1}{2}}\iff\exists M_{x}>0:\forall z\in X:\left\langle x,z\right\rangle _{X}\leq M_{x}\left|Q^{\frac{1}{2}}z\right|_{X}.
\]
\end{rem}

Now we can state the Cameron-Martin theorem, applied to the case of
our interest, that of a (possibly degenerate) Gaussian measure in
a Hilbert space. We refer to \cite{DAP-ZAB} for the proof of this
classical theorem.
\begin{thm}[Cameron-Martin]
\textcolor{red}{\label{Teo CM} }Let $a\in X$ and $Q\in\mathcal{L}\left(X\right)$
be a non-negative, self-adjoint trace-class operator, and set $\gamma:=\mathcal{N}\left(a,Q\right)$.
Fix $h\in ImQ^{\frac{1}{2}}$.

Then the measure $\gamma\left(\cdot-h\right)$ is equivalent to $\gamma$,
and the density of $\gamma\left(\cdot-h\right)$ with respect to $\gamma$
has the form
\[
x\to\exp\left(\widehat{h}\left(x\right)-\frac{1}{2}\left|Q^{-\frac{1}{2}}h\right|_{X}^{2}\right)\quad\forall x\in X,
\]
where $\widehat{h}$ is a suitable function in $L^{2}\left(X,\gamma\right)$
satisfying:
\begin{align*}
i)\  & \int_{X}\hat{h}\left(x\right)\gamma\left(dx\right)=0,\\
ii)\  & \left\Vert \widehat{h}\right\Vert _{L^{2}\left(X,\gamma\right)}=\left|Q^{-\frac{1}{2}}h\right|_{X},\\
iii)\  & \gamma\circ\widehat{h}^{-1}\sim\mathcal{N}\left(0,\left|Q^{-\frac{1}{2}}h\right|_{X}^{2}\right).
\end{align*}

Further, the map $h\mapsto\widehat{h}$ is linear.
\end{thm}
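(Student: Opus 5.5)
We prove the Cameron-Martin theorem for $\gamma=\mathcal{N}\left(a,Q\right)$ by reducing to the centered case and working in the eigenbasis of $Q$, comparing Fourier transforms as in Remark $\text{\ref{rem: mis gauss e trasf F}}$. Since $Q$ is trace class, self-adjoint and non-negative, there is an orthonormal basis $\left\{ f_{k}\right\}$ of $\overline{ImQ}$ with $Qf_{k}=\lambda_{k}f_{k}$, $\lambda_{k}>0$; the measure $\gamma$ is concentrated on $a+\overline{ImQ}$. Write $h=Q^{\frac{1}{2}}g$ with $g:=Q^{-\frac{1}{2}}h\in\overline{ImQ}$, which is possible by the characterization of the pseudoinverse in $\text{\eqref{eq: caratt pseudoinverso}}$, so that $\left|g\right|_{X}=\left|Q^{-\frac{1}{2}}h\right|_{X}$.

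\textbf{Construction of $\widehat{h}$.} We first define it on a dense subspace. For $h=Qz$ with $z\in X$ (so $g=Q^{\frac{1}{2}}z$), set $\widehat{h}\left(x\right):=\left\langle z,x-a\right\rangle _{X}$. Under $\gamma$ this is a centered real Gaussian with variance $\left\langle Qz,z\right\rangle =\left|Q^{\frac{1}{2}}z\right|^{2}=\left|g\right|^{2}$, which can be read off the Fourier transform of $\gamma$. Hence the map $Q^{\frac{1}{2}}z\mapsto\left\langle z,\cdot-a\right\rangle$ is a linear isometry from $ImQ^{\frac{1}{2}}\subseteq\overline{ImQ}$ into $L^{2}\left(X,\gamma\right)$. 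Since $ImQ^{\frac{1}{2}}$ is dense in $\overline{ImQ}$, it extends uniquely to a linear isometry $W$ on $\overline{ImQ}$, and we put $\widehat{h}:=W_{Q^{-\frac{1}{2}}h}$. This map is linear in $h$, and ii) holds by isometry. For i) and iii), note that $L^{2}$-limits of centered Gaussian variables are centered Gaussian with the limiting variance (characteristic functions converge). Concretely, one may take $z_{n}=\sum_{k\leq n}\lambda_{k}^{-\frac{1}{2}}\left\langle g,f_{k}\right\rangle f_{k}$, which gives $\widehat{h}=\sum_{k}\lambda_{k}^{-\frac{1}{2}}\left\langle g,f_{k}\right\rangle \left\langle f_{k},\cdot-a\right\rangle$ as an $L^{2}$ series of independent $\mathcal{N}\left(0,\left\langle g,f_{k}\right\rangle ^{2}\right)$ variables.

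\textbf{Identification of the density.} Let $\rho:=\exp\left(\widehat{h}-\frac{1}{2}\left|g\right|^{2}\right)$ and $\nu:=\rho\,\gamma$. We show $\nu=\gamma\left(\cdot-h\right)=\mathcal{N}\left(a+h,Q\right)$ by computing $\widehat{\nu}\left(\xi\right)=\int e^{i\left\langle \xi,x\right\rangle }\rho\left(x\right)\gamma\left(dx\right)$. The pair $\left(\left\langle \xi,\cdot-a\right\rangle ,\widehat{h}\right)$ is jointly Gaussian under $\gamma$, being an $L^{2}$-limit of jointly Gaussian pairs, and its covariance is
\[
\left\langle Q\xi,z\right\rangle _{X}\to\left\langle Q^{\frac{1}{2}}\xi,g\right\rangle _{X}=\left\langle \xi,h\right\rangle _{X}.
\]
The two-dimensional Gaussian formula $E\left[e^{iU+V}\right]=\exp\left(-\frac{1}{2}\mathrm{Var}U+\frac{1}{2}\mathrm{Var}V+i\,\mathrm{Cov}\left(U,V\right)\right)$ then yields
\[
\widehat{\nu}\left(\xi\right)=\exp\left(i\left\langle a+h,\xi\right\rangle -\frac{1}{2}\left\langle Q\xi,\xi\right\rangle \right),
\]
and Remark $\text{\ref{rem: mis gauss e trasf F}}$ concludes. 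Since $\rho>0$ everywhere, $\gamma\left(\cdot-h\right)$ is absolutely continuous with respect to $\gamma$ with strictly positive density, hence the two measures are equivalent.

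\textbf{Main obstacle.} The delicate point is that $\widehat{h}$ is only an $L^{2}$ limit and not a continuous linear functional. The joint-Gaussianity and exponential-moment computation must therefore be justified by passage to the limit, either through uniform integrability of $e^{V_{n}}$, whose variances are bounded, or by verifying the formula on the approximants and using convergence in law of the pairs together with the bounded exponential moments.
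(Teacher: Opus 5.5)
Your proof is correct and is essentially the classical argument the paper defers to. The paper gives no proof of its own and only refers to Da Prato--Zabczyk, whose argument has the same structure as yours: the white noise isometry $Q^{\frac{1}{2}}z\mapsto\langle z,\cdot-a\rangle_X$ extended to $\overline{ImQ}$, followed by identification of $e^{\widehat{h}-\frac{1}{2}|Q^{-\frac{1}{2}}h|_X^2}\gamma$ with $\mathcal{N}(a+h,Q)$ through Fourier transforms. Building the isometry on $\overline{ImQ}$, as you do, also covers the possibly degenerate measures $\gamma_{t,s}$ the paper needs.

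The obstacle you flag is milder than you suggest. Once $(U,V)$ is known to be a centered Gaussian vector with the limiting covariance, $\int_X e^{iU+V}d\gamma$ is an explicit two-dimensional Gaussian integral, so no uniform integrability argument is needed.
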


\begin{rem}
Note that the classical formulation of the previous theorem identifies
$H_{Q}$ as the set where $\gamma$ is quasi-translation invariant.
Thus, the formulation in terms of the set $ImQ^{\frac{1}{2}}$ (instead
of $H_{Q}$) is possible due to Proposition $\text{\ref{prop: caratt}}$.

Note also that the property in $iii)$ means that the function $\widehat{h}$,
seen as a real random variable of the measure space $\left(X,\gamma\right)$,
has law $\mathcal{N}\left(0,\left|Q^{-\frac{1}{2}}h\right|_{X}^{2}\right)$
.
\end{rem}

\begin{rem}
\label{rem:def Lambda}For every $s<t$, the linear operator from
$X$ to $X$
\[
\Lambda\left(t,s\right):=Q\left(t,s\right)^{-\frac{1}{2}}U\left(t,s\right),
\]
which is well defined as a consequence of Assumption $\text{\ref{ass: controllabilita}}$,
is continuous. Indeed, since $Q\left(t,s\right)^{\frac{1}{2}}$ is
self-adjoint, its pseudoinverse $Q\left(t,s\right)^{-\frac{1}{2}}$
is closed in its domain $ImQ\left(t,s\right)^{\frac{1}{2}}$; thus
$\Lambda\left(t,s\right)$ is a closed operator from $X$ in $X$
(because $U\left(t,s\right)$ is bounded), so $\Lambda\left(t,s\right)$
is bounded by the Closed Graph Theorem.
\end{rem}

\vspace{2mm}

With the help of Theorem $\text{\ref{Teo CM}}$ we can prove the first
regularization result.
\begin{prop}
\textcolor{red}{\label{prop: CM formula} }Let $-\infty<s<t<+\infty$.
Then the operator $P_{s,t}$ satisfies $P_{s,t}\left(C_{b}\left(X\right)\right)\subseteq C_{b}^{1}\left(X\right)$,
and, for every $f\in C_{b}\left(X\right)$, the following formula
holds:
\begin{equation}
\left\langle \left[\nabla P_{s,t}f\right]\left(x\right),h\right\rangle _{X}=\int_{X}f\left(y+U\left(t,s\right)x\right)\cdot\widehat{U\left(t,s\right)h}\left(y\right)\gamma_{t,s}\left(dy\right)\quad\forall x,h\in X,\label{eq: grad P_s,t funz C_0}
\end{equation}
where the function $\widehat{U\left(t,s\right)h}$ is taken as in
Theorem $\text{\ref{Teo CM}}$.
\end{prop}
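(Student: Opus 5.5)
The approach is to use the Cameron--Martin theorem (Theorem \ref{Teo CM}) to move the increment $U(t,s)h$ of the mean from the argument of $f$ onto the density, and then differentiate the density instead of $f$. Fix $s<t$, $f\in C_b(X)$, $x,h\in X$, and write $\gamma=\gamma_{t,s}$, $Q=Q(t,s)$, $U=U(t,s)$. By Assumption \ref{ass: controllabilita}, $U h\in ImQ^{\frac12}$ for every $h$, so Theorem \ref{Teo CM} applies with $\varepsilon Uh$ in place of $h$. Using the linearity of $h\mapsto\widehat h$, we get
\[
P_{s,t}f(x+\varepsilon h)=\int_X f(y+Ux)\exp\Bigl(\varepsilon\widehat{Uh}(y)-\tfrac{\varepsilon^{2}}{2}\bigl|\Lambda(t,s)h\bigr|_X^{2}\Bigr)\gamma(dy),
\]
with $\Lambda(t,s)$ as in Remark \ref{rem:def Lambda}. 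This follows by writing $\int f(Ux+\varepsilon Uh+y)\gamma(dy)=\int f(Ux+z)\,\gamma(\cdot-\varepsilon Uh)(dz)$.

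Next we differentiate in $\varepsilon$ at $0$ under the integral. The difference quotient of the exponential is dominated: by the mean value theorem, for $|\varepsilon|\le1$,
\[
\Bigl|\tfrac{1}{\varepsilon}(e^{\varepsilon a-\varepsilon^2 c}-1)\Bigr|\le(|a|+2c)\,e^{|a|}.
\]
Since $\widehat{Uh}$ is Gaussian under $\gamma$ by property iii), the function $e^{|\widehat{Uh}|}(|\widehat{Uh}|+c)$ is $\gamma$-integrable, and $f$ is bounded. Dominated convergence therefore gives the Gateaux derivative as formula \eqref{eq: grad P_s,t funz C_0}. We also get the bound
\[
|\langle\cdot,h\rangle\text{-derivative}|\le\|f\|_\infty\|\widehat{Uh}\|_{L^2(\gamma)}=\|f\|_\infty|\Lambda(t,s)h|_X\le\|f\|_\infty\|\Lambda(t,s)\|\,|h|,
\]
so the map $h\mapsto$ derivative is linear and bounded, uniformly in $x$.

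The main obstacle is upgrading Gateaux to Fr\'echet differentiability with a continuous, bounded gradient. I would show that $x\mapsto D_hP_{s,t}f(x)$ is continuous uniformly for $|h|\le1$, since a Gateaux differentiable function whose Gateaux derivative is norm-continuous is Fr\'echet $C^1$. For $x_n\to x$,
\[
\sup_{|h|\le1}\Bigl|\int\bigl(f(y+Ux_n)-f(y+Ux)\bigr)\widehat{Uh}\,d\gamma\Bigr|\le\|f(\cdot+Ux_n)-f(\cdot+Ux)\|_{L^2(\gamma)}\,\|\Lambda(t,s)\|.
\]
The right-hand side tends to $0$ by dominated convergence, because $f$ is continuous and bounded. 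This yields continuity in operator norm. Boundedness of $\nabla P_{s,t}f$ follows from the bound above, and $P_{s,t}f\in C_b(X)$ by Proposition \ref{prop: op evoluzione}. Hence $P_{s,t}f\in C^1_b(X)$, and the formula holds.
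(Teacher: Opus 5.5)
Your argument is correct, but it reaches Fr\'echet differentiability by a different route from the paper's. The paper applies Theorem~\ref{Teo CM} directly to the increment $U(t,s)h$ and estimates the whole first-order remainder at once. Using property iii), it pushes everything forward to a one-dimensional Gaussian and obtains
\[
\Bigl|P_{s,t}f(x+h)-P_{s,t}f(x)-\int_X f\bigl(y+U(t,s)x\bigr)\widehat{U(t,s)h}(y)\,\gamma_{t,s}(dy)\Bigr|\le\|f\|_{\infty}\int_{\mathbb{R}}\bigl|e^{\tau-\frac{1}{2}r^{2}}-1-\tau\bigr|\,\mathcal{N}(0,r^{2})(d\tau),\qquad r=|\Lambda(t,s)h|_X .
\]
A second-order mean value estimate then shows this is $O(r^{2})=O(|h|_X^{2})$, uniformly in $x$. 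So Fr\'echet differentiability comes out in one step with a quantitative remainder, and uniformity in the direction is automatic because $h$ enters only through $|\Lambda(t,s)h|_X$. You only need first-order dominated convergence to get the Gateaux derivative along $\varepsilon h$. You then pay for the upgrade to Fr\'echet with the standard lemma on norm-continuous Gateaux derivatives, together with your Cauchy--Schwarz estimate in $x$. What your route buys is that it verifies what membership in $C^{1}_{b}(X)$ actually requires, namely boundedness and continuity of $x\mapsto\nabla P_{s,t}f(x)$. The paper's proof stops at pointwise Fr\'echet differentiability with the formula and does not address continuity of the gradient. Its uniform quadratic remainder, combined with the Lipschitz bound $|\nabla P_{s,t}f|_X\le\|f\|_\infty\|\Lambda(t,s)\|_{\mathcal{L}(X)}$, would even give H\"older-$\frac12$ continuity, but the paper does not draw this out.
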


\begin{proof}
We fix $s<t$, $x\in X$ and $f\in C_{b}\left(X\right)$, and we prove
that:
\begin{align}
 & P_{s,t}f\left(x+h\right)-P_{s,t}f\left(x\right)-\int_{X}f\left(y+U\left(t,s\right)x\right)\cdot\widehat{U\left(t,s\right)h}\left(y\right)\gamma_{t,s}\left(dy\right)=o\left(h\right)\label{eq: grad P_s,t per funz regolari}
\end{align}
for $h\to0$. Since, as a consequence of the last statement in Theorem
$\text{\ref{Teo CM}}$, the third quantity in the left-hand member
depends linearly on $h$, relation $\text{\eqref{eq: grad P_s,t per funz regolari}}$
will imply that formula $\text{\eqref{eq: grad P_s,t funz C_0}}$
holds.

For every $h\in X$, we can apply Theorem $\text{\ref{Teo CM}}$ with
$a=0$, $Q=Q\left(t,s\right)$ (and thus $\gamma=\gamma_{t,s}$) and
$h=U\left(t,s\right)h$, since $U\left(t,s\right)h\in ImQ\left(t,s\right)^{\frac{1}{2}}$
by Assumption $\text{\ref{ass: controllabilita}}$. By the density
formula in Theorem $\text{\ref{Teo CM}}$, we have:
\begin{align*}
P_{s,t}f\left(x+h\right) & =\int_{X}f\left(y+U\left(t,s\right)\left(x+h\right)\right)\gamma_{t,s}\left(dy\right)\\
 & =\int_{X}f\left(y+U\left(t,s\right)x\right)\\
 & \cdot\exp\left(\widehat{U\left(t,s\right)h}\left(y\right)-\frac{1}{2}\left|\Lambda\left(t,s\right)h\right|_{X}^{2}\right)\gamma_{t,s}\left(dy\right),
\end{align*}
where $\Lambda\left(t,s\right)$ is the operator defined in Remark
$\text{\ref{rem:def Lambda}}$.

The change of variable $\tau=\widehat{U\left(t,s\right)h}\left(y\right)$
formally implies by point iii) in Theorem $\text{\ref{Teo CM}}$:
\[
\gamma_{t,s}\left(dy\right)=\gamma_{t,s}\circ\widehat{U\left(t,s\right)h}^{-1}\left(d\tau\right)=\mathcal{N}\left(0,\left|\Lambda\left(t,s\right)h\right|_{X}^{2}\right)\left(d\tau\right);
\]
by applying such change of variable followed by $\sigma=\frac{\tau}{\left|\Lambda\left(t,s\right)h\right|_{X}}$
we obtain for a generic $h\in X$:
\begin{align*}
 & \left|P_{s,t}f\left(x+h\right)-P_{s,t}f\left(x\right)-\int_{X}f\left(y+U\left(t,s\right)x\right)\cdot\widehat{U\left(t,s\right)h}\left(y\right)\gamma_{t,s}\left(dy\right)\right|\\
\leq & \int_{X}\left|f\left(y+U\left(t,s\right)x\right)\right|\left|\exp\left(\widehat{U\left(t,s\right)h}\left(y\right)-\frac{1}{2}\left|\Lambda\left(t,s\right)h\right|_{X}^{2}\right)-1-\widehat{U\left(t,s\right)h}\left(y\right)\right|\gamma_{t,s}\left(dy\right)\\
\leq & \left\Vert f\right\Vert _{L^{\infty}\left(X\right)}\int_{\mathbb{R}}\left|\exp\left(\tau-\frac{1}{2}\left|\Lambda\left(t,s\right)h\right|_{X}^{2}\right)-1-\tau\right|\mathcal{N}\left(0,\left|\Lambda\left(t,s\right)h\right|_{X}^{2}\right)\left(d\tau\right)\\
=\, & \frac{\left\Vert f\right\Vert _{L^{\infty}\left(X\right)}}{\left|\Lambda\left(t,s\right)h\right|_{X}\sqrt{2\pi}}\int_{\mathbb{R}}\left|\exp\left(\tau-\frac{1}{2}\left|\Lambda\left(t,s\right)h\right|_{X}^{2}\right)-1-\tau\right|\exp\left(-\frac{\tau^{2}}{2\left|\Lambda\left(t,s\right)h\right|_{X}^{2}}\right)d\tau\\
=\, & \frac{\left\Vert f\right\Vert _{L^{\infty}\left(X\right)}}{\sqrt{2\pi}}\int_{\mathbb{R}}\left|\exp\left(\sigma\left|\Lambda\left(t,s\right)h\right|_{X}-\frac{1}{2}\left|\Lambda\left(t,s\right)h\right|_{X}^{2}\right)-1-\sigma\left|\Lambda\left(t,s\right)h\right|_{X}\right|e^{-\frac{\sigma^{2}}{2}}d\sigma.
\end{align*}
For every $\sigma\in\mathbb{R}$ and every small $\left|h\right|_{X}$,
the Mean value theorem leads to the inequality :
\begin{align*}
 & \left|\exp\left(\sigma\left|\Lambda\left(t,s\right)h\right|_{X}-\frac{1}{2}\left|\Lambda\left(t,s\right)h\right|_{X}^{2}\right)-1-\sigma\left|\Lambda\left(t,s\right)h\right|_{X}\right|\\
\leq & \left|\Lambda\left(t,s\right)h\right|_{X}^{2}\left\{ \frac{1}{2}+\left(\sigma+1\right)^{2}e^{\left|\sigma\right|\left\Vert \Lambda\left(t,s\right)\right\Vert _{\mathcal{L}\left(X\right)}}\right\} .
\end{align*}
Thus, $\text{\eqref{eq: grad P_s,t per funz regolari}}$ holds.
\end{proof}
\subsection{Extension to $L^p$ spaces}
\leavevmode\label{subsez: estensione Lp}

Now we define the extended Ornstein-Uhlenbeck evolution operator.
To define $P_{s,t}$ on integrable functions, the key tool tool will
be the non-autonomous analogous of the concept of invariant measure.
Denote:
\begin{equation}
\mu_{t}:=\mathcal{N}\left(0,Q\left(t,-\infty\right)\right)\quad\forall t\in\mathbb{R}.\label{def misura inv}
\end{equation}
The family of measures $\left\{ \mu_{t}\mid t\in\mathbb{R}\right\} $
has already appeared in the third part of the statement of Proposition
$\text{\ref{prop: op evoluzione}}$, but its role in the theory goes
far beyond that. Its most important property is the following.
\begin{prop}
\label{prop: invarianza}For every $s<t$, every $\varphi\in C_{b}\left(X\right)$
and every  $\Phi\in C_{b}\left(X;X\right)$:
\begin{align}
\int_{X}P_{s,t}\varphi\left(x\right)\mu_{s}\left(dx\right) & =\int_{X}\varphi\left(x\right)\mu_{t}\left(dx\right),\label{eq: muinv}\\
\int_{X}\overrightarrow{P_{s,t}}\Phi\left(x\right)\mu_{s}\left(dx\right) & =\int_{X}\Phi\left(x\right)\mu_{t}\left(dx\right),\label{eq: muinv vettoriale}
\end{align}
where $\overrightarrow{P_{s,t}}$ is the operator defined in $\text{\eqref{eq: def O-U vett formula}}$.
\end{prop}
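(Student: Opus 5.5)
The plan is to prove \eqref{eq: muinv} through Fourier transforms of measures (Remark \ref{rem: mis gauss e trasf F}), using the cocycle identity for the covariances
\begin{equation}
Q\left(t,-\infty\right)=U\left(t,s\right)Q\left(s,-\infty\right)U\left(t,s\right)^{*}+Q\left(t,s\right)\quad\forall s<t.\label{eq: cociclo Q}
\end{equation}
This identity comes straight from \eqref{eq:def Q(t,-infty)}. Split $\int_{-\infty}^{t}=\int_{-\infty}^{s}+\int_{s}^{t}$, and in the first piece use $U\left(t,r\right)=U\left(t,s\right)U\left(s,r\right)$ from \eqref{eq: evoluzione}. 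Since $U\left(t,s\right)$ is bounded, it can be pulled out of the Bochner integral, which converges in $\mathcal{L}\left(X\right)$ by Assumption \ref{assu: omega_0 neg} and Assumption \ref{assu: B(t) limitati}.

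With \eqref{eq: cociclo Q} in hand, write the left-hand side of \eqref{eq: muinv}, via Fubini (everything is bounded and measurable), as
\[
\int_{X}\int_{X}\varphi\left(U\left(t,s\right)x+y\right)\gamma_{t,s}\left(dy\right)\mu_{s}\left(dx\right)=\int_{X}\varphi\,d\left(\left(\mu_{s}\circ U\left(t,s\right)^{-1}\right)\star\gamma_{t,s}\right).
\]
It therefore suffices to show $\left(\mu_{s}\circ U\left(t,s\right)^{-1}\right)\star\gamma_{t,s}=\mu_{t}$. To do so, compute Fourier transforms. First,
\[
\widehat{\mu_{s}\circ U\left(t,s\right)^{-1}}\left(\xi\right)=\widehat{\mu_{s}}\left(U\left(t,s\right)^{*}\xi\right)=\exp\left(-\tfrac12\left\langle \xi,U\left(t,s\right)Q\left(s,-\infty\right)U\left(t,s\right)^{*}\xi\right\rangle \right).
\]
Multiplying this by $\widehat{\gamma_{t,s}}\left(\xi\right)=\exp\left(-\frac12\left\langle \xi,Q\left(t,s\right)\xi\right\rangle \right)$ and applying \eqref{eq: cociclo Q} gives exactly $\widehat{\mu_{t}}\left(\xi\right)$. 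Since Fourier transforms characterize Borel measures on the separable space $X$, the two measures coincide, and \eqref{eq: muinv} follows.

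For the vector identity \eqref{eq: muinv vettoriale}, reduce to the scalar case componentwise. For fixed $z\in X$, the function $\varphi_{z}:=\left\langle \Phi\left(\cdot\right),z\right\rangle _{X}$ belongs to $C_{b}\left(X\right)$. Since Bochner integrals commute with bounded linear functionals, we have $\left\langle \overrightarrow{P_{s,t}}\Phi\left(x\right),z\right\rangle =P_{s,t}\varphi_{z}\left(x\right)$. Hence
\[
\left\langle \int_{X}\overrightarrow{P_{s,t}}\Phi\,d\mu_{s},z\right\rangle =\int_{X}P_{s,t}\varphi_{z}\,d\mu_{s}=\int_{X}\varphi_{z}\,d\mu_{t}=\left\langle \int_{X}\Phi\,d\mu_{t},z\right\rangle ,
\]
and since $z$ is arbitrary, \eqref{eq: muinv vettoriale} holds. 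The integrand $\overrightarrow{P_{s,t}}\Phi$ is Bochner integrable because it is bounded and continuous by Proposition \ref{prop: op evoluzione}, and $X$ is separable.

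The only delicate step is the rigorous justification of \eqref{eq: cociclo Q}, specifically the interchange of $U\left(t,s\right)$ and $U\left(t,s\right)^{*}$ with the improper Bochner integral. I would handle this by proving the identity first with $-\infty$ replaced by a finite $\sigma<s$, where it is immediate, and then letting $\sigma\to-\infty$. On the left, $Q\left(t,\sigma\right)\to Q\left(t,-\infty\right)$ in $\mathcal{L}\left(X\right)$ (Remark \ref{rem: conv deb gaussiane}). On the right, $Q\left(s,\sigma\right)\to Q\left(s,-\infty\right)$ in $\mathcal{L}\left(X\right)$, and conjugation by the bounded operator $U\left(t,s\right)$ is norm-continuous, so the right-hand side converges as well.
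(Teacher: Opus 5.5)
Your proposal is correct and follows essentially the same route as the paper. The paper also shows $\widehat{P_{s,t}^{*}\mu_{s}}=\widehat{\mu_{t}}$ by computing $e^{-\frac{1}{2}\langle\xi,Q(t,s)\xi\rangle_{X}}\widehat{\mu_{s}}(U(t,s)^{*}\xi)$ and invoking $U(t,s)Q(s,-\infty)U(t,s)^{*}=Q(t,-\infty)-Q(t,s)$, then gets the vector identity componentwise (testing against $e_{N}$ rather than an arbitrary $z$); your convolution framing and the limiting argument $\sigma\to-\infty$ for the covariance identity simply make explicit steps the paper leaves implicit.
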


\begin{proof}
See Appendix \ref{appendice dim}.
\end{proof}
Following \cite{DAP-LUN}, we call any family $\left\{ \nu_{t}\mid t\in\mathbb{R}\right\} $
satisfying $\text{\eqref{eq: muinv}}$, and thus $\text{\eqref{eq: muinv vettoriale}}$,
with $\mu_{t}=\nu_{t}$, an \emph{evolution system of measures} with
respect to $\left\{ P_{s,t}\mid s<t\right\} $.

\vspace{2mm}

It has been proven in \cite{KNA} that the family defined in $\text{\eqref{def misura inv}}$
is the unique evolution system of measures with respect to $\left\{ P_{s,t}\mid s<t\right\} $
if the maps $t\to B\left(t\right)$ and $\left(s,t\right)\to U\left(t,s\right)$
are periodic (which implies that also $t\to\mu_{t}$ is periodic with
the same period).

In the general case, we have the following partial characterization.
\begin{prop}
\label{prop: caratt mu_t}Let $\left\{ \nu_{r}\mid r\in\mathbb{R}\right\} $
be a family of probability Borel measures on $X$ forming an evolution
system of measures with respect to $\left\{ P_{s,t}\mid s<t\right\} $.
Fix $t\in\mathbb{R}$ and assume that there exists $K>0$ such that
\begin{equation}
\sup_{r\leq t}\int_{X}\left|x\right|_{X}^{K}\nu_{r}\left(dx\right)<+\infty.\label{eq: momenti finiti}
\end{equation}
Then $\nu_{t}=\mu_{t}$, where $\mu_{t}$ is defined in $\text{\eqref{def misura inv}}$.
\end{prop}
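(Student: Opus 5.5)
We compare Fourier transforms, using the fact recalled in Remark \ref{rem: mis gauss e trasf F} that a Borel measure on $X$ is determined by its Fourier transform. Fix $\xi\in X$ and apply the evolution-system identity to the bounded continuous function $\varphi_{\xi}(y):=e^{i\langle\xi,y\rangle_{X}}$, taking real and imaginary parts separately. For $r<t$ it gives
\[
\widehat{\nu_{t}}(\xi)=\int_{X}P_{r,t}\varphi_{\xi}(x)\,\nu_{r}(dx).
\]
By \eqref{eq: def O-U formula} and the Gaussian Fourier formula,
\[
P_{r,t}\varphi_{\xi}(x)=e^{i\langle\xi,U(t,r)x\rangle_{X}}\,e^{-\frac{1}{2}\langle Q(t,r)\xi,\xi\rangle_{X}}.
\]
Hence
\[
\widehat{\nu_{t}}(\xi)=e^{-\frac{1}{2}\langle Q(t,r)\xi,\xi\rangle_{X}}\int_{X}e^{i\langle U(t,r)^{*}\xi,x\rangle_{X}}\nu_{r}(dx)\qquad\forall r<t .
\]

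Now let $r\to-\infty$. The first factor tends to $e^{-\frac{1}{2}\langle Q(t,-\infty)\xi,\xi\rangle_{X}}=\widehat{\mu_{t}}(\xi)$, because $Q(t,r)\to Q(t,-\infty)$ in $\mathcal{L}(X)$. It remains to show that the integral tends to $1$. We bound its distance from $1$ as follows:
\[
\Bigl|\int_{X}\bigl(e^{i\langle U(t,r)^{*}\xi,x\rangle_{X}}-1\bigr)\nu_{r}(dx)\Bigr|\le\int_{X}\min\bigl\{2,\,|U(t,r)^{*}\xi|_{X}|x|_{X}\bigr\}\,\nu_{r}(dx).
\]
Set $\varepsilon_{r}:=|U(t,r)^{*}\xi|_{X}\le Me^{\omega(t-r)}|\xi|_{X}$, which tends to $0$ as $r\to-\infty$ by Assumption \ref{assu: omega_0 neg}.

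The main step is converting the moment hypothesis \eqref{eq: momenti finiti} into uniform tightness in the radial variable, since $K$ may be smaller than $1$. Take $\alpha:=\min\{K,1\}$. The elementary inequality $\min\{2,a\}\le 2^{1-\alpha}a^{\alpha}$ for $a\ge 0$ then gives
\[
\int_{X}\min\{2,\varepsilon_{r}|x|_{X}\}\,\nu_{r}(dx)\le 2^{1-\alpha}\varepsilon_{r}^{\alpha}\int_{X}|x|_{X}^{\alpha}\,\nu_{r}(dx).
\]
Since $|x|^{\alpha}\le 1+|x|^{K}$ and each $\nu_{r}$ is a probability measure,
\[
\int_{X}|x|_{X}^{\alpha}\,\nu_{r}(dx)\le 1+\sup_{r\le t}\int_{X}|x|_{X}^{K}\,\nu_{r}(dx),
\]
which is bounded uniformly in $r\le t$ by \eqref{eq: momenti finiti}. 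The right-hand side of the previous display is therefore $O(\varepsilon_{r}^{\alpha})$ and tends to $0$. Alternatively, one can use Chebyshev's inequality $\nu_{r}(|x|>R)\le C R^{-K}$: split the integral at radius $R$, then choose $R$ large and $r$ very negative.

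Passing to the limit in the identity for $\widehat{\nu_{t}}$ gives $\widehat{\nu_{t}}(\xi)=\widehat{\mu_{t}}(\xi)$ for every $\xi\in X$, and hence $\nu_{t}=\mu_{t}$ by Remark \ref{rem: mis gauss e trasf F}.

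The only delicate points are the following. The identity must be applied to complex exponentials, which is harmless since real and imaginary parts lie in $C_{b}(X)$. The uniform moment bound must hold for all $r\le t$, and this is exactly what \eqref{eq: momenti finiti} provides.
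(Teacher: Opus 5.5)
Your proof is correct and follows essentially the same route as the paper. Both arguments apply the evolution-system identity to $e^{i\langle\xi,\cdot\rangle_{X}}$ to obtain $\widehat{\nu_t}(\xi)=e^{-\frac{1}{2}\langle\xi,Q(t,r)\xi\rangle_{X}}\,\widehat{\nu_r}(U(t,r)^{*}\xi)$, and then show $\widehat{\nu_r}(U(t,r)^{*}\xi)\to 1$ via the uniform moment bound. The only cosmetic difference is that you handle all $K$ at once with $\alpha=\min\{K,1\}$ and $|x|_{X}^{\alpha}\le 1+|x|_{X}^{K}$, whereas the paper treats $K\le 1$ and $K>1$ separately and uses Jensen's inequality in the second case.
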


\begin{proof}
Let $t\in\mathbb{R}$ and $\xi\in X$; for every $s<t$, the computation
for $P_{s,t}e^{i\left\langle \xi,\cdot\right\rangle _{X}}$ in the
proof of Proposition $\text{\ref{prop: invarianza}}$ leads to
\[
\widehat{P_{s,t}^{*}\nu_{s}}\left(\xi\right)=e^{-\frac{1}{2}\left\langle \xi,Q\left(t,s\right)\xi\right\rangle _{X}}\widehat{\nu_{s}}\left(U\left(t,s\right)^{*}\xi\right).
\]
By assumption, $P_{s,t}^{*}\nu_{s}=\nu_{t}$; thus:
\begin{align}
\exists\lim_{s\to-\infty}\widehat{\nu_{s}}\left(U\left(t,s\right)^{*}\xi\right) & =\lim_{s\to-\infty}e^{\frac{1}{2}\left\langle \xi,Q\left(t,s\right)\xi\right\rangle _{X}}\widehat{\nu_{t}}\left(\xi\right)\label{eq: limite esiste}\\
 & =e^{\frac{1}{2}\left\langle \xi,Q\left(t,-\infty\right)\xi\right\rangle _{X}}\widehat{\nu_{t}}\left(\xi\right).\nonumber 
\end{align}

For our thesis to be true, it sufficies that the limit in the left
hand side of the above identity is equal to $1$.

We have for every $s<t$ we have, if $K\in\left(0,1\right]$:
\begin{align*}
\left|\widehat{\nu}_{s}\left(U\left(t,s\right)^{*}\xi\right)-1\right| & \leq\int_{X}\left|e^{i\left\langle U\left(t,s\right)^{*}\xi,y\right\rangle _{X}}-1\right|\nu_{s}\left(dy\right)\\
 & \leq2^{1-K}\left|U\left(t,s\right)^{*}\xi\right|_{X}^{K}\int_{X}\left|y\right|_{X}^{K}\nu_{s}\left(dy\right)\\
 & \leq2^{1-K}\left|U\left(t,s\right)^{*}\xi\right|_{X}^{K}\sup_{r\leq t}\int_{X}\left|y\right|_{X}^{K}\nu_{r}\left(dy\right),
\end{align*}
and, if $K>1$:
\begin{align*}
\left|\widehat{\nu}_{s}\left(U\left(t,s\right)^{*}\xi\right)-1\right| & \leq\int_{X}\left|e^{i\left\langle U\left(t,s\right)^{*}\xi,y\right\rangle _{X}}-1\right|\nu_{s}\left(dy\right)\\
 & \leq\left|U\left(t,s\right)^{*}\xi\right|_{X}\int_{X}\left|y\right|_{X}\nu_{s}\left(dy\right)\\
 & \leq\left|U\left(t,s\right)^{*}\xi\right|_{X}\left(\sup_{r\leq t}\int_{X}\left|y\right|_{X}^{K}\nu_{r}\left(dy\right)\right)^{\frac{1}{K}}.
\end{align*}
In any case $\left|\widehat{\nu}_{s}\left(U\left(t,s\right)^{*}\xi\right)-1\right|\to0$
as $s\to-\infty$, by Assumption $\text{\ref{assu: omega_0 neg}}$
and by $\text{\eqref{eq: momenti finiti}}$. Relation $\text{\eqref{eq: limite esiste}}$
thus implies:
\[
\widehat{\nu_{t}}\left(\xi\right)=e^{-\frac{1}{2}\left\langle \xi,Q\left(t,-\infty\right)\xi\right\rangle _{X}}=\widehat{\mu_{t}}\left(\xi\right),
\]
which implies $\nu_{t}=\mu_{t}$, since $\xi$ is generic.
\end{proof}
Thanks to the ``invariance'' property in Proposition $\text{\ref{prop: invarianza}}$,
the operator $P_{s,t}$ can be extended to $L^{p}\left(X,\mu_{t}\right)$,
for any $p\geq1$. 
\begin{prop}
\label{prop: estensione P_s,t}Take $\left\{ \mu_{t}\mid t\in\mathbb{R}\right\} $
as in $\text{\eqref{def misura inv}}$. For each $s<t$, and $p\in[1,+\infty)$,
the operator $P_{s,t}:C_{b}\left(X\right)\to C_{b}\left(X\right)$
has a bounded extension:
\[
P_{s,t}^{p}:L^{p}\left(X,\mu_{t}\right)\to L^{p}\left(X,\mu_{s}\right).
\]

Analogously the operator $\overrightarrow{P_{s,t}}:C_{b}\left(X;X\right)\to C_{b}\left(X;X\right)$
has a bounded extension:
\[
\overrightarrow{P_{s,t}^{p}}:L^{p}\left(X,\mu_{t};X\right)\to L^{p}\left(X,\mu_{s};X\right).
\]

Both these extended operators are contractions. Relations $\text{\eqref{eq: muinv}}$
and $\eqref{eq: muinv vettoriale}$ also hold for $P_{s,t}=P_{s,t}^{p}$
with $\varphi\in L^{p}\left(X,\mu_{t}\right)$ and $\overrightarrow{P_{s,t}}=\overrightarrow{P_{s,t}^{p}}$
with $\Phi\in L^{p}\left(X,\mu_{t};X\right)$, respectively.
\end{prop}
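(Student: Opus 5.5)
The plan is the standard one: a pointwise Jensen (or H\"older) inequality combined with the invariance relation of Proposition \ref{prop: invarianza}, followed by a density argument. Fix $s<t$ and $p\in[1,+\infty)$, and take $\varphi\in C_{b}(X)$. Since $\mathcal{N}\left(U(t,s)x,Q(t,s)\right)$ is a probability measure and $r\mapsto|r|^{p}$ is convex, Jensen's inequality gives the pointwise bound
\[
\left|P_{s,t}\varphi(x)\right|^{p}\leq\int_{X}\left|\varphi\left(U(t,s)x+y\right)\right|^{p}\gamma_{t,s}(dy)=P_{s,t}\left(|\varphi|^{p}\right)(x)\quad\forall x\in X.
\]
Here $|\varphi|^{p}\in C_{b}(X)$. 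Integrating with respect to $\mu_{s}$ and applying \eqref{eq: muinv} to $|\varphi|^{p}$ yields
\[
\|P_{s,t}\varphi\|_{L^{p}(X,\mu_{s})}^{p}\leq\int_{X}|\varphi|^{p}d\mu_{t}=\|\varphi\|_{L^{p}(X,\mu_{t})}^{p}.
\]
In the vector-valued case the same argument works with $|\cdot|_{X}$ in place of $|\cdot|$. Convexity of $v\mapsto|v|_{X}^{p}$ together with Jensen's inequality for Bochner integrals (or, equivalently, $|\int\Phi\,d\gamma|_{X}\leq\int|\Phi|_{X}d\gamma$ followed by scalar Jensen) gives $|\overrightarrow{P_{s,t}}\Phi(x)|_{X}^{p}\leq P_{s,t}(|\Phi|_{X}^{p})(x)$. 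Since $|\Phi|_{X}^{p}\in C_{b}(X)$, the scalar identity \eqref{eq: muinv} applies again.

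The second step is to extend by density. Because $\mu_{t}$ is a finite Borel measure on a metric space, $C_{b}(X)$ is dense in $L^{p}(X,\mu_{t})$ and $C_{b}(X;X)$ is dense in $L^{p}(X,\mu_{t};X)$. For the vector case one can approximate first by simple functions $\sum\chi_{A_{i}}v_{i}$ and then each $\chi_{A_{i}}$ by bounded continuous functions, using regularity of the measure. The contraction estimate then shows that $P_{s,t}$ maps Cauchy sequences to Cauchy sequences in $L^{p}(X,\mu_{s})$, so it extends uniquely to a contraction $P_{s,t}^{p}$. The extension is well defined on equivalence classes, because $C_{b}$-functions that agree $\mu_{t}$-a.e. are mapped to functions whose difference has zero $L^{p}(\mu_{s})$-norm, again by the estimate. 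The same reasoning gives $\overrightarrow{P_{s,t}^{p}}$.

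Finally, the invariance relations pass to the limit. For $\varphi\in L^{p}(X,\mu_{t})$, take $\varphi_{n}\in C_{b}(X)$ with $\varphi_{n}\to\varphi$ in $L^{p}(\mu_{t})$. Since $\mu_{s}$ and $\mu_{t}$ are probability measures, $L^{p}$ convergence implies $L^{1}$ convergence. Hence both sides of \eqref{eq: muinv} written for $\varphi_{n}$ converge to the corresponding expressions for $\varphi$: the left-hand side by the contraction property just proved, the right-hand side directly. The vector identity \eqref{eq: muinv vettoriale} follows identically, using continuity of the Bochner integral in $L^{1}(X,\mu;X)$.

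The only point requiring some care is the density of $C_{b}(X;X)$ in the Bochner space $L^{p}(X,\mu_{t};X)$ and the vector Jensen step. Both are standard, so I expect no real obstacle; the proof is essentially bookkeeping around \eqref{eq: muinv}.
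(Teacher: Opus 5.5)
Your proposal is correct and follows essentially the same route as the paper: the pointwise Jensen bound $|P_{s,t}\varphi|^{p}\leq P_{s,t}(|\varphi|^{p})$ integrated against $\mu_{s}$ via \eqref{eq: muinv}, extension by density of $C_{b}$ functions, and passage to the limit in the invariance identities using $L^{p}\hookrightarrow L^{1}$ for probability measures. You are slightly more explicit than the paper on two points, namely well-definedness on $\mu_{t}$-equivalence classes and density of $C_{b}(X;X)$ in the Bochner space.
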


\begin{proof}
See Appendix \ref{appendice dim}.
\end{proof}
\subsection{Regularization of $L^p$ functions. Compactness}
\leavevmode\label{subsez: reg Lp}

From now on, \emph{we consider a fixed} $p>1$ and we set for simplicity
$P_{s,t}=P_{s,t}^{p}$, for every $s<t$, so that $P_{s,t}$ will
always be considered as an operator from $L^{p}\left(X,\mu_{t}\right)$
to $L^{p}\left(X,\mu_{s}\right)$. An analogous simplified notation
will be used for the vectorial operator $\overrightarrow{P_{s,t}^{p}}:L^{p}\left(X,\mu_{t};X\right)\to L^{p}\left(X,\mu_{s};X\right)$,
which we will denote by $\overrightarrow{P_{s,t}}$.

We are going to show the main result of this section: every extended
operator $P_{s,t}:L^{p}\left(X,\mu_{t}\right)\to L^{p}\left(X,\mu_{s}\right)$
has image contained in $W^{1,p}\left(X,\mu_{s}\right)$ and it is
continuous as an operator from $L^{p}\left(X,\mu_{t}\right)$ to $W^{1,p}\left(X,\mu_{s}\right)$.
Since the latter space is compactly embedded in $L^{p}\left(X,\mu_{s}\right)$,
this result will imply that $P_{s,t}$ is a compact operator.

For the reasons discussed at the beginning of the section, we assume
henceforth that the measures defined by $\text{\eqref{def misura inv}}$
are non-degenerate.
\begin{assumption}
\label{assu: mu_t non degeneri}The measures $\mu_{t}$ defined in
$\text{\eqref{def misura inv}}$ are non-degenerate.
\end{assumption}

\begin{prop}
\label{prop: operatore compatto}Let $-\infty<s<t<+\infty$. The image
of the operator
\[
P_{s,t}:L^{p}\left(X,\mu_{t}\right)\to L^{p}\left(X,\mu_{s}\right)
\]
is contained in $W^{1,p}\left(X,\mu_{s}\right)$ and $P_{s,t}$ is
continuous as an operator with values in $W^{1,p}\left(X,\mu_{s}\right)$.
Consequently, $P_{s,t}$ is compact as an operator with values in
$L^{p}\left(X,\mu_{s}\right)$.

\vspace{1mm}

In particular, the continuity estimate for the gradient has the form:
\begin{equation}
\left\Vert \nabla P_{s,t}f\right\Vert _{L^{p}\left(X,\mu_{s};X\right)}\leq C_{p}\left\Vert \Lambda\left(t,s\right)\right\Vert _{\mathcal{L}\left(X\right)}\left\Vert f\right\Vert _{L^{p}\left(X,\mu_{t}\right)}\quad\forall f\in L^{p}\left(X,\mu_{t}\right),\label{stima nabla Ps,t}
\end{equation}
where $\Lambda\left(t,s\right)$ is the operator defined in Remark
$\text{\ref{rem:def Lambda}}$.

Furthermore, relation $\text{\eqref{eq: formula grad C1}}$ also holds
for $\varphi\in W^{1,p}\left(X,\mu_{t}\right)$ and for $\mu_{s}$-almost
every $x\in X$.
\end{prop}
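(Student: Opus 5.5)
I will first prove the gradient estimate \eqref{stima nabla Ps,t} for $f\in C_{b}\left(X\right)$ and then extend it by density. Fix $s<t$, $f\in C_{b}\left(X\right)$, $x,h\in X$. By Proposition \ref{prop: CM formula}, $P_{s,t}f\in C_{b}^{1}\left(X\right)$ and $\left\langle \nabla P_{s,t}f\left(x\right),h\right\rangle _{X}$ is the integral of $f\left(y+U\left(t,s\right)x\right)\widehat{U\left(t,s\right)h}\left(y\right)$ against $\gamma_{t,s}$. Let $p'$ be the conjugate exponent. H\"older's inequality in $L^{p}\left(X,\gamma_{t,s}\right)$, together with point iii) of Theorem \ref{Teo CM}, gives
\[
\left|\left\langle \nabla P_{s,t}f\left(x\right),h\right\rangle _{X}\right|\leq\left(\int_{X}\left|f\left(y+U\left(t,s\right)x\right)\right|^{p}\gamma_{t,s}\left(dy\right)\right)^{\frac{1}{p}}c_{p'}\left|\Lambda\left(t,s\right)h\right|_{X},
\]
where $c_{p'}$ is the $L^{p'}$-norm of a standard real Gaussian. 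Taking the supremum over $\left|h\right|_{X}\leq1$ yields
\[
\left|\nabla P_{s,t}f\left(x\right)\right|_{X}\leq c_{p'}\left\Vert \Lambda\left(t,s\right)\right\Vert _{\mathcal{L}\left(X\right)}\left(P_{s,t}\left|f\right|^{p}\left(x\right)\right)^{\frac{1}{p}}.
\]
Raising this to the power $p$ and integrating with respect to $\mu_{s}$, the invariance \eqref{eq: muinv} applied to $\left|f\right|^{p}\in C_{b}\left(X\right)$ gives \eqref{stima nabla Ps,t} on $C_{b}\left(X\right)$ with $C_{p}=c_{p'}$.

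Next comes the extension. The space $C_{b}\left(X\right)$ is dense in $L^{p}\left(X,\mu_{t}\right)$. Given $f\in L^{p}\left(X,\mu_{t}\right)$, I take $f_{n}\in C_{b}\left(X\right)$ with $f_{n}\to f$. By Proposition \ref{prop: estensione P_s,t}, $P_{s,t}f_{n}\to P_{s,t}f$ in $L^{p}\left(X,\mu_{s}\right)$, and by the estimate above $\left(\nabla P_{s,t}f_{n}\right)_{n}$ is Cauchy in $L^{p}\left(X,\mu_{s};X\right)$. Since $P_{s,t}f_{n}\in C_{b}^{1}\left(X\right)$ and $\mu_{s}$ is non-degenerate (Assumption \ref{assu: mu_t non degeneri}), the definition of $W^{1,p}\left(X,\mu_{s}\right)$ as the domain of the closure of the gradient shows that $P_{s,t}f\in W^{1,p}\left(X,\mu_{s}\right)$. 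Its gradient is the limit of the $\nabla P_{s,t}f_{n}$, so \eqref{stima nabla Ps,t} passes to the limit. Combined with the contractivity in $L^{p}$, this gives the continuity $L^{p}\left(X,\mu_{t}\right)\to W^{1,p}\left(X,\mu_{s}\right)$.

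Compactness then follows from the compact embedding of $W^{1,p}\left(X,\mu_{s}\right)$ in $L^{p}\left(X,\mu_{s}\right)$ for a non-degenerate Gaussian measure. This is the classical result of Da Prato--Zabczyk (\cite{DAP-ZAB}, Chapter 9) for $p=2$ and its $L^{p}$ analogue, and I expect this to be the main point needing a citation rather than a proof. For the last claim, let $\varphi\in W^{1,p}\left(X,\mu_{t}\right)$ and take $\varphi_{n}\in C_{b}^{1}\left(X\right)$ with $\varphi_{n}\to\varphi$ and $\nabla\varphi_{n}\to\nabla\varphi$ in $L^{p}$. Formula \eqref{eq: formula grad C1} holds for each $\varphi_{n}$. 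The right-hand side converges in $L^{p}\left(X,\mu_{s};X\right)$ to $U\left(t,s\right)^{*}\overrightarrow{P_{s,t}}\nabla\varphi$ by contractivity of $\overrightarrow{P_{s,t}}$ (Proposition \ref{prop: estensione P_s,t}). The left-hand side converges to $\nabla P_{s,t}\varphi$ by \eqref{stima nabla Ps,t}. Passing to an a.e. convergent subsequence gives the identity $\mu_{s}$-almost everywhere.
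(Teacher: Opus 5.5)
Your proposal is correct and follows essentially the same route as the paper. The shared steps are:
\begin{itemize}
\item the Cameron--Martin gradient formula of Proposition \ref{prop: CM formula}, combined with H\"older's inequality and the Gaussian law of $\widehat{U(t,s)h}$, to get the pointwise bound by $\left(P_{s,t}\left|f\right|^{p}\right)^{1/p}$;
\item integration against $\mu_s$ via \eqref{eq: muinv};
\item extension to $L^{p}\left(X,\mu_{t}\right)$ by density and the closure definition of $W^{1,p}\left(X,\mu_{s}\right)$;
\item compactness from the compact embedding cited from \cite{DAP-ZAB};
\item the gradient formula for $W^{1,p}$ functions by approximation with $C_b^1$ functions.
\end{itemize}
One small simplification: in the last step you do not need an a.e.\ convergent subsequence, since two $L^{p}$-limits of the same sequence already coincide $\mu_s$-almost everywhere.
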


\begin{proof}
Let $s<t$. By the definition of $W^{1,p}\left(X,\mu_{s}\right)$
given at the beginning of the section, to prove that $P_{s,t}g\in W^{1,p}\left(X,\mu_{s}\right)$
for a $g\in L^{p}\left(X,\mu_{t}\right)$ we need to exhibit a sequence
$\left(h_{n}\right)_{n}\subseteq C_{b}^{1}\left(X\right)$ such that
$h_{n}\to P_{s,t}g$ in $L^{p}\left(X,\mu_{s}\right)$ and such that
$\left(\nabla h_{n}\right)_{n}$ is Cauchy with respect to the $L^{p}\left(X,\mu_{s};X\right)$
norm. We prove that for every sequence of the form $h_{n}=P_{s,t}f_{n}$
such that $\left(f_{n}\right)_{n}\subseteq C_{b}\left(X\right)$ and
$f_{n}\to g$ in $L^{p}\left(X,\mu_{t}\right)$, the sequence $\left(\nabla h_{n}\right)_{n}$
is as desired.

Consider a function $f\in C_{b}\left(X\right)$; by Proposition $\text{\ref{prop: CM formula}}$,
$P_{s,t}f\in C_{b}^{1}\left(X\right)$ and $\nabla P_{s,t}f$ satisfies
$\text{\eqref{eq: grad P_s,t funz C_0}}$. We estimate the $L^{p}\left(X,\mu_{s};X\right)$
norm of $\nabla P_{s,t}f$.

Applying H\"older's inequality in formula $\text{\eqref{eq: grad P_s,t funz C_0}}$,
and setting for simplicity $L^{r}=L^{r}\left(X,\gamma_{t,s}\right)$
for $r=p,p'$, we have for every $x\in X$:
\begin{align*}
\left|\nabla P_{s,t}f\left(x\right)\right|_{X} & =\sup_{\left|h\right|_{X}=1}\left|\left\langle \nabla P_{s,t}f\left(x\right),h\right\rangle _{X}\right|\\
 & \leq\sup_{\left|h\right|_{X}=1}\int_{X}\left|f\left(y+U\left(t,s\right)x\right)\cdot\widehat{U\left(t,s\right)h}\left(y\right)\right|\gamma_{t,s}\left(dy\right)\\
 & \leq\left(\int_{X}\left|f\left(y+U\left(t,s\right)x\right)\right|^{p}\gamma_{t,s}\left(dy\right)\right)^{\frac{1}{p}}\sup_{\left|h\right|_{X}=1}\left\Vert \widehat{U\left(t,s\right)h}\right\Vert _{L^{p'}}\\
 & =\left(P_{s,t}\left|f\right|^{p}\left(x\right)\right)^{\frac{1}{p}}\sup_{\left|h\right|_{X}=1}\left\Vert \widehat{U\left(t,s\right)h}\right\Vert _{L^{p'}}.
\end{align*}
Since any vector $U\left(t,s\right)h$ belongs to the Cameron-Martin
space of the measure $\gamma_{t,s}$, point $iii)$ in Theorem $\text{\ref{Teo CM}}$
implies
\[
\gamma_{t,s}\circ\widehat{U\left(t,s\right)h}^{-1}\sim\mathcal{N}\left(0,\left|\Lambda\left(t,s\right)h\right|_{X}^{2}\right).
\]
Thus, we have:
\begin{align*}
\int_{X}\left|\widehat{U\left(t,s\right)h}\left(y\right)\right|^{p'}\gamma_{t,s}\left(dy\right) & =\int_{\mathbb{R}}\left|\tau\right|^{p'}\left(\gamma_{t,s}\circ\widehat{U\left(t,s\right)h}^{-1}\right)\left(d\tau\right)\\
 & =\int_{\mathbb{R}}\left|\tau\right|^{p'}\mathcal{N}\left(0,\left|\Lambda\left(t,s\right)h\right|_{X}^{2}\right)\left(d\tau\right)\\
 & =\frac{1}{\sqrt{2\pi}\left|\Lambda\left(t,s\right)h\right|_{X}}\int_{\mathbb{R}}\left|\tau\right|^{p'}\exp\left(-\frac{\tau^{2}}{2\left|\Lambda\left(t,s\right)h\right|_{X}^{2}}\right)\left(d\tau\right)\\
 & =\frac{\left|\Lambda\left(t,s\right)h\right|_{X}^{p'}}{\sqrt{2\pi}}\int_{\mathbb{R}}\left|u\right|^{p'}\exp\left(-\frac{u^{2}}{2}\right)\left(du\right).
\end{align*}
Therefore, setting:
\[
C_{p}:=\left(\frac{1}{\sqrt{2\pi}}\int_{\mathbb{R}}\left|u\right|^{p'}\exp\left(-\frac{u^{2}}{2}\right)\left(du\right)\right)^{\frac{1}{p'}},
\]
we obtain $\left\Vert \widehat{U\left(t,s\right)h}\right\Vert _{L^{p'}}\leq C_{p}\left|\Lambda\left(t,s\right)h\right|_{X}$,
and substituting in the initial inequality:
\begin{align}
\left|\nabla P_{s,t}f\left(x\right)\right|_{X} & \leq\left(P_{s,t}\left|f\right|^{p}\left(x\right)\right)^{\frac{1}{p}}\sup_{\left|h\right|_{X}=1}\left\Vert \widehat{U\left(t,s\right)h}\right\Vert _{L^{p'}}\label{eq: stima norma X nabla Ps,t}\\
 & \leq C_{p}\left\Vert \Lambda\left(t,s\right)\right\Vert _{\mathcal{L}\left(X\right)}\left(P_{s,t}\left|f\right|^{p}\left(x\right)\right)^{\frac{1}{p}}.\nonumber 
\end{align}

Thus, integrating with respect to the measure $\mu_{s}$ defined in
$\text{\eqref{def misura inv}}$, and remembering $\text{\eqref{eq: muinv}}$:
\begin{align}
\int_{X}\left|\left[\nabla P_{s,t}f\right]\left(x\right)\right|_{X}^{p}\mu_{s}\left(dx\right) & \leq C_{p}^{p}\left\Vert \Lambda\left(t,s\right)\right\Vert _{\mathcal{L}\left(X\right)}^{p}\int_{X}P_{s,t}\left|f\right|^{p}\left(x\right)\mu_{s}\left(dx\right)\label{eq: stima grad C_b}\\
 & =C_{p}^{p}\left\Vert \Lambda\left(t,s\right)\right\Vert _{\mathcal{L}\left(X\right)}^{p}\int_{X}\left|f\left(x\right)\right|^{p}\mu_{t}\left(dx\right).\nonumber 
\end{align}
Now let $g\in L^{p}\left(X,\mu_{t}\right)$ and $\left(f_{n}\right)_{n}\subseteq C_{b}\left(X\right)$
such that $f_{n}\to g$ in $L^{p}\left(X,\mu_{t}\right)$. By $\text{\eqref{eq: stima grad C_b}}$
we have:
\[
\left\Vert \nabla P_{s,t}f_{n}-\nabla P_{s,t}f_{m}\right\Vert _{L^{p}\left(X,\mu_{s};X\right)}\leq C_{p}\left\Vert \Lambda\left(t,s\right)\right\Vert _{\mathcal{L}\left(X\right)}\left\Vert f_{n}-f_{m}\right\Vert _{L^{p}\left(X,\mu_{s}\right)}\quad\forall n,m\in\mathbb{N}.
\]
Thus the sequence $\left(\nabla P_{s,t}f_{n}\right)_{n}\subseteq L^{p}\left(X,\mu_{s};X\right)$
is Cauchy. Since $P_{s,t}f_{n}\to P_{s,t}g$ in $L^{p}\left(X,\mu_{s}\right)$
- because of the continuity of $P_{s,t}:L^{p}\left(X,\mu_{t}\right)\to L^{p}\left(X,\mu_{s}\right)$
shown in Proposition $\text{\ref{prop: estensione P_s,t}}$ - $\nabla P_{s,t}g$
exists and coincides with the $L^{p}\left(X,\mu_{s};X\right)$-limit
of the sequence $\left(\nabla P_{s,t}f_{n}\right)_{n}$. This proves
that
\[
P_{s,t}\left(L^{p}\left(X,\mu_{t}\right)\right)\subseteq W^{1,p}\left(X,\mu_{s}\right),
\]
as required. Considering relation $\text{\eqref{eq: stima grad C_b}}$
with $f_{n}$ in place of $f$ and then passing to the limit for $n\to\infty$,
we obtain, by definition of $\nabla P_{s,t}g$, that the same estimate
holds with $g$ in place of $f$, which proves formula $\text{\eqref{stima nabla Ps,t}}$
and - consequently - the continuity of $P_{s,t}$ as an operator from
$L^{p}\left(X,\mu_{t}\right)$ to $W^{1,p}\left(X,\mu_{s}\right)$.

The compactness of
\[
P_{s,t}:L^{p}\left(X,\mu_{t}\right)\to L^{p}\left(X,\mu_{s}\right)
\]
 follows, since $W^{1,p}\left(X,\mu_{s}\right)$ is compactly embedded
in $L^{p}\left(X,\mu_{s}\right)$ (see\textcolor{red}{{} }\cite{DAP-ZAB}).

As far as the gradient formula in $\text{\eqref{eq: formula grad C1}}$
for $W^{1,p}$ functions is concerned, the reasoning is similar. Consider
$g\in W^{1,p}\left(X,\mu_{t}\right)$ and take an approximation sequence
$\left(f_{n}\right)_{n}\subseteq C_{b}^{1}\left(X\right)$ for $g$
in $L^{p}\left(X,\mu_{t}\right)$; then $\nabla g$ is by definition
the $L^{p}\left(X,\mu_{t};X\right)$-limit of $\left(\nabla f_{n}\right)_{n}$,
while $\nabla P_{s,t}g$ is the $L^{p}\left(X,\mu_{s};X\right)$-limit
of $\left(\nabla P_{s,t}f_{n}\right)_{n}$ - again, because $P_{s,t}f_{n}\to P_{s,t}g$
in $L^{p}\left(X,\mu_{s}\right)$ . So we can consider $\text{\eqref{eq: formula grad C1}}$
with $\varphi=f_{n}$ and pass to the $L^{p}$-limit for $n\to\infty$,
thus obtaining the validity of the very same formula for $\varphi=g$
and for $\mu_{s}$-almost every $x\in X$, since $\overrightarrow{P_{s,t}}:L^{p}\left(X,\mu_{t};X\right)\to L^{p}\left(X,\mu_{s};X\right)$
is also continuous by Proposition $\text{\ref{prop: estensione P_s,t}}$.
\end{proof}
\begin{rem}
We stress that, as proven above, the following relation hold:
\begin{equation}
\left\Vert \widehat{U\left(t,s\right)h}\right\Vert _{L^{p'}\left(X,\gamma_{t,s}\right)}\leq C_{p}\left|\Lambda\left(t,s\right)h\right|_{X}\quad\forall t>s\ \forall h\in X.\label{eq: norma p' U(t,s) cappuccio}
\end{equation}
\end{rem}

\subsection{The vector-valued first order Sobolev space}
\leavevmode\label{subsez: Sob vett}

The Ornstein-Uhlenbeck operator $\overrightarrow{P_{s,t}}$ of a vector
valued, continuous bounded function has been introduced in $\text{\eqref{eq: def O-U vett formula}}$
and appeared in formula $\text{\eqref{eq: formula grad C1}}$; further,
in Subsection $\text{\ref{subsez: estensione Lp}}$, Proposition $\text{\ref{prop: estensione P_s,t}}$,
such operator has been proven to be extendible to a contraction operator
from $L^{p}\left(X,\mu_{t};X\right)$ to $L^{p}\left(X,\mu_{s};X\right)$.

In this and next subsection we deal with the mirror version, for the
extended $\overrightarrow{P_{s,t}}$, of the regularization properties
established in Proposition $\text{\ref{prop: operatore compatto}}$
relative to $P_{s,t}$.

The main use of these new results for spaces of vector valued functions
will be within the spectral analysis of the periodic case carried
on in the last section - in particular, as part of a second order
regularization property which is used to properly relate the eigenvalues
of the the evolution operator $U\left(t,s\right)$ (the datum) with
the eigenvalues of $P_{s,t}$, the object of our study. This will
be useful in order identify the highest possible rate of convergence
in $\text{\eqref{eq: conv}}$.

Note that while Proposition $\text{\ref{prop: operatore compatto}}$
holds for $p>1$, its vectorial counterpart, i.e. Proposition $\text{\ref{prop: reg vett}}$,
is proven in the case $p\geq2$.

\vspace{2mm}

Denote by $\nabla_{\mathcal{L}}:C_{b}^{1}\left(X;X\right)\to C_{b}\left(X;\mathcal{L}\left(X\right)\right)$
the gradient operator for functions in $C_{b}^{1}\left(X;X\right).$
Namely, $\nabla_{\mathcal{L}}$ is the Frech\'et derivative defined
by the relation:
\[
\Phi\left(x+h\right)-\Phi\left(x\right)-\nabla_{\mathcal{L}}\Phi\left(x\right)h=o_{h\to0}\left(h\right),\quad\forall x\in X,\ \Phi\in C_{b}^{1}\left(X;X\right).
\]

Define, for every $s<t$ and $\mathrm{F}\in C_{b}\left(X;\mathcal{L}\left(X\right)\right)$:
\begin{equation}
\left[\overrightarrow{\overrightarrow{P_{s,t}}}\mathrm{F}\right]\left(x\right):=\int_{X}\mathrm{F}\left(y\right)\mathcal{N}\left(U\left(t,s\right)x,Q\left(t,s\right)\right)\left(dy\right),\label{eq: def P_st operatoriale}
\end{equation}
where the integral is meant in the sense of Bochner.

\vspace{2mm}
\begin{prop}
For every $t>s$, the following inclusions hold:
\begin{align*}
 & \overrightarrow{\overrightarrow{P_{s,t}}}\left(C_{b}\left(X;\mathcal{L}\left(X\right)\right)\right)\subseteq C_{b}\left(X;\mathcal{L}\left(X\right)\right),\\
 & \overrightarrow{P_{s,t}}\left(C_{b}^{1}\left(X;X\right)\right)\subseteq C_{b}^{1}\left(X;X\right),
\end{align*}
and for every $\Phi\in C_{b}^{1}\left(X;X\right)$:
\begin{equation}
\nabla_{\mathcal{L}}\overrightarrow{P_{s,t}}\Phi\left(x\right)=\left[\overrightarrow{\overrightarrow{P_{s,t}}}\nabla_{\mathcal{L}}\Phi\right]\left(x\right)U\left(t,s\right).\label{eq: formula grad vettoriale}
\end{equation}
\end{prop}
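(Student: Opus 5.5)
The plan is to mirror the proof of point ii) of Proposition \ref{prop: op evoluzione}, replacing the scalar integrand by a Banach-space-valued one and using dominated convergence for Bochner integrals. Throughout, $s<t$ are fixed. We write $U=U\left(t,s\right)$ and $\gamma_{t,s}=\mathcal{N}\left(0,Q\left(t,s\right)\right)$, so that both operators have the translated form
\[
\overrightarrow{\overrightarrow{P_{s,t}}}\mathrm{F}\left(x\right)=\int_{X}\mathrm{F}\left(Ux+y\right)\gamma_{t,s}\left(dy\right),\qquad \overrightarrow{P_{s,t}}\Phi\left(x\right)=\int_{X}\Phi\left(Ux+y\right)\gamma_{t,s}\left(dy\right).
\]
This is exactly the change of variables already used in \eqref{eq: def O-U formula}.

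\emph{First inclusion.} Let $\mathrm{F}\in C_{b}\left(X;\mathcal{L}\left(X\right)\right)$. The integrand is norm continuous and bounded, hence strongly measurable. It is therefore Bochner integrable with $\left\Vert \overrightarrow{\overrightarrow{P_{s,t}}}\mathrm{F}\left(x\right)\right\Vert \leq\sup\left\Vert \mathrm{F}\right\Vert$. If $x_{n}\to x$, then $\mathrm{F}\left(Ux_{n}+y\right)\to\mathrm{F}\left(Ux+y\right)$ in $\mathcal{L}\left(X\right)$ for every $y$, with a uniform bound. The dominated convergence theorem for Bochner integrals then gives continuity, which proves the first inclusion.

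A technical caveat: $\mathcal{L}\left(X\right)$ is not separable. Strong measurability still holds, however, because a continuous image of the separable space $X$ is separable.

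\emph{Differentiability of $\overrightarrow{P_{s,t}}\Phi$.} Let $\Phi\in C_{b}^{1}\left(X;X\right)$ and fix $x,h$. By the mean value theorem in integral form,
\[
\Phi\left(Ux+Uh+y\right)-\Phi\left(Ux+y\right)=\int_{0}^{1}\nabla_{\mathcal{L}}\Phi\left(Ux+y+\sigma Uh\right)Uh\,d\sigma .
\]
Subtracting $\left[\overrightarrow{\overrightarrow{P_{s,t}}}\nabla_{\mathcal{L}}\Phi\right]\left(x\right)Uh$, the remainder after integration against $\gamma_{t,s}$ is bounded in norm by
\[
\left\Vert U\right\Vert \left|h\right|\int_{X}\int_{0}^{1}\left\Vert \nabla_{\mathcal{L}}\Phi\left(Ux+y+\sigma Uh\right)-\nabla_{\mathcal{L}}\Phi\left(Ux+y\right)\right\Vert d\sigma\,\gamma_{t,s}\left(dy\right).
\]
We must show this is $o\left(\left|h\right|\right)$. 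The double integral tends to $0$ along every sequence $h_{n}\to0$ by dominated convergence, since the integrand is bounded by $2\sup\left\Vert \nabla_{\mathcal{L}}\Phi\right\Vert$ and tends to $0$ pointwise by continuity of $\nabla_{\mathcal{L}}\Phi$. Sequential convergence suffices to conclude Fréchet differentiability, and the derivative is given by formula \eqref{eq: formula grad vettoriale}.

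\emph{Conclusion.} By the first inclusion applied to $\nabla_{\mathcal{L}}\Phi$, the map $x\mapsto\left[\overrightarrow{\overrightarrow{P_{s,t}}}\nabla_{\mathcal{L}}\Phi\right]\left(x\right)U$ is continuous and bounded. Boundedness of $\overrightarrow{P_{s,t}}\Phi$ itself is Proposition \ref{prop: op evoluzione}. Hence $\overrightarrow{P_{s,t}}\Phi\in C_{b}^{1}\left(X;X\right)$.

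\emph{Main obstacle.} The only delicate point is measure-theoretic. We need the Bochner integral in $\mathcal{L}\left(X\right)$ to make sense, which is the strong measurability issue addressed above. We also need to justify pulling the fixed operator $Uh$ out of the integral, that is, $\left(\int\mathrm{F}\,d\gamma\right)v=\int\mathrm{F}v\,d\gamma$. This holds because evaluation at $v$ is a bounded linear map from $\mathcal{L}\left(X\right)$ to $X$, and bounded linear maps commute with Bochner integrals. It is also what makes the mean-value step legitimate, and the one-variable integral over $\sigma$ inside it can be handled by Fubini for bounded continuous integrands.
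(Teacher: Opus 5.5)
Your proposal is correct and takes essentially the same route as the paper. The paper proves the first inclusion by dominated convergence and the rest by repeating the first-order expansion from point ii) of Proposition~\ref{prop: op evoluzione}. Your integral mean value formula combined with dominated convergence also supplies a justification the paper leaves implicit: that the $y$-dependent remainder $o(U(t,s)h)$ can be pulled out of the Gaussian integral as $o(h)$.
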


\begin{proof}
The first inclusion is a consequence of the Dominated convergence
theorem, while the proof of the other two relations is essentially
the same as in point ii), Proposition $\text{\ref{prop: op evoluzione}}$.
\end{proof}
We also denote by
\[
\mathcal{L}_{2}\left(X\right):=\left\{ T\in\mathcal{L}\left(X\right)\mid\sum_{j=1}^{+\infty}\left|Te_{j}\right|_{X}^{2}<+\infty\right\} 
\]
the set of the linear \emph{Hilbert-Schmidt }operators on $X$. It
is well known that $\mathcal{L}_{2}\left(X\right)$ is a Hilbert space
endowed with the scalar product:
\[
\left\langle T,S\right\rangle _{\mathcal{L}_{2}\left(X\right)}:=\sum_{j=1}^{+\infty}\left\langle Te_{j},Se_{j}\right\rangle _{X}.
\]

The following are well known classes of special scalar and vector-valued
regular functions. We set
\[
\mathcal{F}C_{b}^{1}\left(X\right):=\left\{ F\left(f_{1},\dots,f_{N}\right)\mid N\in\mathbb{N},\,f_{1},\dots,f_{N}\in X^{*},\,F\in C_{b}^{1}\left(\mathbb{R}^{N}\right)\right\} 
\]
for the class of the \emph{scalar cylindrical functions}, and
\[
\mathcal{F}C_{b}^{1}\left(X;X\right)=\left\{ \sum_{j=1}^{N}v_{j}y_{j}\mid N\in\mathbb{N},\,v_{j}\in\mathcal{F}C_{b}^{1}\left(X\right),\,y_{j}\in X\ \forall j=1,\dots,N\right\} 
\]
for the class of the \emph{vector-valued cylindrical functions}.

\vspace{2mm}

So far the gradient $\nabla_{\mathcal{L}}\Phi\left(x\right)$ of a
$C_{b}^{1}\left(X;X\right)$ function $\Phi$ at a point $x\in X$
is just a bounded linear operator on $X$, having been defined as
a Frech\'et derivative. For our purposes, we need to deal with gradients
that belong to the Hilbert space $\mathcal{L}_{2}\left(X\right)$.
The following basic Lemma ensures in particular that $\nabla_{\mathcal{L}}\Phi\left(x\right)\in\mathcal{L}_{2}\left(X\right)$
if $\Phi$ is cylindrical, for every $x\in X$.
\begin{lem}
\label{lem: norma H-S conv unif}For every $\Phi\in\mathcal{F}C_{b}^{1}\left(X;X\right)$,
the series:
\[
\sum_{j=1}^{+\infty}\left|\nabla_{\mathcal{L}}\Phi\left(x\right)e_{j}\right|_{X}^{2}
\]
converges uniformly with respect to $x\in X$.
\end{lem}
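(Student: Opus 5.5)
We will compute $\nabla_{\mathcal{L}}\Phi$ explicitly and reduce the uniform convergence to the convergence of finitely many scalar series that do not depend on $x$.

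Write $\Phi=\sum_{k=1}^{N}v_{k}y_{k}$ with $v_{k}=F_{k}\left(f_{1}^{k},\dots,f_{N_{k}}^{k}\right)$, $F_{k}\in C_{b}^{1}\left(\mathbb{R}^{N_{k}}\right)$ and $f_{i}^{k}\in X^{*}$. By Riesz we identify each $f_{i}^{k}$ with a vector $z_{i}^{k}\in X$, so that $f_{i}^{k}\left(x\right)=\left\langle z_{i}^{k},x\right\rangle _{X}$. The chain rule gives
\[
\nabla v_{k}\left(x\right)=\sum_{i=1}^{N_{k}}\partial_{i}F_{k}\left(f_{1}^{k}\left(x\right),\dots,f_{N_{k}}^{k}\left(x\right)\right)z_{i}^{k},
\]
and, by linearity of the Fréchet derivative, $\nabla_{\mathcal{L}}\Phi\left(x\right)h=\sum_{k=1}^{N}\left\langle \nabla v_{k}\left(x\right),h\right\rangle _{X}y_{k}$. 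Hence
\[
\nabla_{\mathcal{L}}\Phi\left(x\right)e_{j}=\sum_{k=1}^{N}\sum_{i=1}^{N_{k}}c_{i,k}\left(x\right)\left\langle z_{i}^{k},e_{j}\right\rangle _{X}y_{k},
\]
where the coefficients $c_{i,k}\left(x\right):=\partial_{i}F_{k}\left(\cdots\right)$ satisfy $\left|c_{i,k}\left(x\right)\right|\leq\left\Vert \nabla F_{k}\right\Vert _{\infty}$ uniformly in $x$.

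Let $M$ be the total number of pairs $\left(i,k\right)$, and set $C:=\max_{k}\left|y_{k}\right|_{X}\cdot\max_{k}\left\Vert \nabla F_{k}\right\Vert _{\infty}$. By Cauchy--Schwarz on this finite sum,
\[
\left|\nabla_{\mathcal{L}}\Phi\left(x\right)e_{j}\right|_{X}^{2}\leq M\,C^{2}\sum_{k,i}\left|\left\langle z_{i}^{k},e_{j}\right\rangle _{X}\right|^{2}=:a_{j}.
\]
The bound $a_{j}$ does not depend on $x$. By Parseval, $\sum_{j}a_{j}=MC^{2}\sum_{k,i}\left|z_{i}^{k}\right|_{X}^{2}<+\infty$. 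The Weierstrass M-test then gives $\sup_{x\in X}\sum_{j\geq n}\left|\nabla_{\mathcal{L}}\Phi\left(x\right)e_{j}\right|_{X}^{2}\leq\sum_{j\geq n}a_{j}\to0$, which is the required uniform convergence.

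There is no serious obstacle. The only points needing care are the chain-rule identification of $\nabla_{\mathcal{L}}\Phi$ as a finite-rank operator, and the fact that boundedness of $\nabla F_{k}$ (part of $C_{b}^{1}$) makes the coefficients $c_{i,k}$ uniformly bounded. As a consequence of the computation, $\nabla_{\mathcal{L}}\Phi\left(x\right)\in\mathcal{L}_{2}\left(X\right)$ for every $x\in X$.
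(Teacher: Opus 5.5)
Your proof is correct and follows essentially the same route as the paper. Both arguments use the chain rule to write $\nabla_{\mathcal{L}}\Phi\left(x\right)e_{j}$ as a combination of the fixed numbers $\left\langle z_{i}^{k},e_{j}\right\rangle _{X}$, with coefficients bounded uniformly in $x$ by $\left\Vert \nabla F_{k}\right\Vert _{\infty}$, and then control the tails in $j$ independently of $x$. The difference is only in bookkeeping: your Cauchy--Schwarz step yields a single $x$-independent summable majorant $a_{j}$, so Parseval and the Weierstrass M-test finish at once, whereas the paper expands the squares and bounds the diagonal and cross-term tails separately, first for scalar cylindrical functions and then for $\Phi=\sum_{i}v_{i}y_{i}$.
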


\begin{proof}
First, fix $v\in\mathcal{F}C_{b}^{1}\left(X\right)$ and assume $v=F\left(f_{1},\dots,f_{N}\right)$
for some $F\in C_{b}^{1}\left(\mathbb{R}^{N}\right)$ and $f_{j}\in X^{*}$,
$j=1,\dots,N$. Observe that $\nabla f_{j}\left(x_{0}\right)=f_{j}$
for every $j$ and $x_{0}\in X$, and also $\nabla v\left(x_{0}\right)\in X^{*}$.

Using the compact notation $\nabla\left(f_{1},\dots,f_{N}\right)\left(x_{0}\right)z$
for the $N$-dimensional vector $\left(\nabla f_{1}\left(x_{0}\right)z,\dots,\nabla f_{N}\left(x_{0}\right)z\right)=\left(f_{1}z,\dots,f_{N}z\right)$,
we have for every $z\in X$:
\begin{align*}
\left\langle \nabla v\left(x_{0}\right),z\right\rangle _{X} & =\left\langle \nabla F\left(f_{1}x_{0},\dots,f_{N}x_{0}\right),\nabla\left(f_{1},\dots,f_{N}\right)\left(x_{0}\right)z\right\rangle _{\mathbb{R}^{N}}\\
 & =\sum_{k=1}^{N}\partial_{k}F\left(f_{1}x_{0},\dots,f_{N}x_{0}\right)f_{k}z.
\end{align*}
Set $f_{j}:=\left\langle v_{j},\cdot\right\rangle _{X}$ and $\partial_{j}F=\partial_{j}F\left(f_{1}x_{0},\dots,f_{N}x_{0}\right)$
for $j=1,\dots,N$. It follows from the above identity that, for every
$R\in\mathbb{N}$:
\begin{align}
\sum_{j\geq R}\left\langle \nabla v\left(x_{0}\right),e_{j}\right\rangle _{X}^{2}\leq\, & \sum_{j\geq R}\left\{ \left\Vert \nabla F\right\Vert _{L^{\infty}\left(\mathbb{R}^{N}\right)}^{2}\sum_{k=1}^{N}\left\langle v_{k},e_{j}\right\rangle _{X}^{2}\right.\label{eq: stima norma L_2 grad cil scal}\\
 & \left.+2\sum_{h,k=1}^{N}\partial_{k}F\partial_{h}F\left\langle v_{h},e_{j}\right\rangle _{X}\left\langle v_{k},e_{j}\right\rangle _{X}\right\} \nonumber \\
\leq\, & C_{v}\left(R\right),\nonumber 
\end{align}
where
\[
C_{v}\left(R\right):=\left\Vert \nabla F\right\Vert _{L^{\infty}\left(\mathbb{R}^{N}\right)}^{2}\left\{ \sum_{k=1}^{N}\sum_{j\geq R}\left\langle v_{k},e_{j}\right\rangle _{X}^{2}+2\sum_{h,k=1}^{N}\left|\sum_{j\geq R}\left\langle v_{h},e_{j}\right\rangle _{X}\left\langle v_{k},e_{j}\right\rangle _{X}\right|\right\} .
\]
Clearly the quantity $C_{v}\left(R\right)$ is finite for every $R$,
it vanishes for $R\to\infty$ and does not depend on $x_{0}$.

In a similar fashion we see that, if $v,w\in\mathcal{F}C_{b}^{1}\left(X\right)$,
then
\begin{equation}
\left|\sum_{j\geq R}\left\langle \nabla v\left(x_{0}\right),e_{j}\right\rangle _{X}\left\langle \nabla w\left(x_{0}\right),e_{j}\right\rangle _{X}\right|\leq C_{v,w}\left(R\right)\to0\text{ as }R\to\infty,\label{eq: stima prod scal L_2 grad cil scal}
\end{equation}
for some quantity $C_{v,w}\left(R\right)$ not depending on $x_{0}$.

Now take $\Phi\in\mathcal{F}C_{b}^{1}\left(X;X\right)$ with $\Phi=\sum_{i=1}^{M}v_{i}y_{i}$
for some $y_{i}\in X$ and $v_{i}\in\mathcal{F}C_{b}^{1}\left(X\right)$.

Recalling that $\nabla_{\mathcal{L}}\left[v\left(\cdot\right)y\right]\left(x_{0}\right)e_{j}=\left\langle \nabla v\left(x_{0}\right),e_{j}\right\rangle _{X}y$
for every $v\in C_{b}^{1}\left(X\right)$ and $y\in X$, it follows
from $\text{\eqref{eq: stima norma L_2 grad cil scal}}$ and $\text{\eqref{eq: stima prod scal L_2 grad cil scal}}$
that:
\begin{align*}
\sum_{j\geq R}\left|\nabla_{\mathcal{L}}\Phi\left(x_{0}\right)e_{j}\right|_{X}^{2}=\, & \sum_{j\geq R}\left|\sum_{i=1}^{M}\nabla_{\mathcal{L}}\left[v_{i}\left(\cdot\right)y_{i}\right]\left(x_{0}\right)e_{j}\right|_{X}^{2}\\
=\, & \sum_{j\geq R}\left|\sum_{i=1}^{M}\left\langle \nabla v_{i}\left(x_{0}\right),e_{j}\right\rangle _{X}y_{i}\right|_{X}^{2}\\
=\, & \sum_{j\geq R}\left\{ \sum_{i=1}^{M}\left|y_{i}\right|_{X}^{2}\left\langle \nabla v_{i}\left(x_{0}\right),e_{j}\right\rangle _{X}^{2}\right.\\
 & \left.+2\sum_{h,k=1}^{M}\left\langle \nabla v_{h}\left(x_{0}\right),e_{j}\right\rangle _{X}\left\langle \nabla v_{k}\left(x_{0}\right),e_{j}\right\rangle _{X}\left\langle y_{h},y_{k}\right\rangle _{X}\right\} \\
\leq & \max_{i=1,\dots,M}\left|y_{i}\right|_{X}^{2}\sum_{i=1}^{M}\sum_{j\geq R}\left\langle \nabla v_{i}\left(x_{0}\right),e_{j}\right\rangle _{X}^{2}\\
 & +2\max_{h,k=1,\dots,M}\left|\left\langle y_{h},y_{k}\right\rangle _{X}\right|\sum_{h,k=1}^{M}\left|\sum_{j\geq R}\left\langle \nabla v_{h}\left(x_{0}\right),e_{j}\right\rangle _{X}\left\langle \nabla v_{k}\left(x_{0}\right),e_{j}\right\rangle _{X}\right|\\
\leq & \max_{i=1,\dots,M}\left|y_{i}\right|_{X}^{2}\sum_{i=1}^{M}C_{v_{i}}\left(R\right)+2\max_{h,k=1,\dots,M}\left|\left\langle y_{h},y_{k}\right\rangle _{X}\right|\sum_{h,k=1}^{M}C_{v_{h},v_{k}}\left(R\right).
\end{align*}
Hence
\[
\lim_{R\to\infty}\sup_{x_{0}\in X}\sum_{j\geq R}\left|\nabla_{\mathcal{L}}\Phi\left(x_{0}\right)e_{j}\right|_{X}^{2}=0,
\]
and the thesis is proved.
\end{proof}
It follows in particular from Lemma $\ref{lem: norma H-S conv unif}$
that:
\[
\nabla_{\mathcal{L}}\left(\mathcal{F}C_{b}^{1}\left(X;X\right)\right)\subseteq C_{b}\left(X;\mathcal{L}_{2}\left(X\right)\right).
\]
Further, for every $\Phi\in\mathcal{F}C_{b}^{1}\left(X;X\right)$,
there exists $C_{\Phi}>0$ such that:
\[
\left\Vert \nabla_{\mathcal{L}}\Phi\left(x\right)\right\Vert _{\mathcal{L}_{2}\left(X\right)}\leq C_{\Phi}\quad\forall x\in X.
\]

These considerations show that cylindrical functions are a good basis
to define the vector-valued Sobolev space of order one.

Observe that, for every non-degenerate Gaussian measure $\nu$ on
$X$, $\mathcal{F}C_{b}^{1}\left(X;X\right)$ is dense in $L^{p}\left(X,\nu;X\right)$.
Further, denoting $\nabla_{\mathcal{L}_{2}}:=\nabla_{\mathcal{L}\mid\mathcal{F}C_{b}^{1}\left(X;X\right)}$
we have that
\[
\nabla_{\mathcal{L}_{2}}:\mathcal{F}C_{b}^{1}\left(X;X\right)\subseteq L^{p}\left(X,\nu;X\right)\to L^{p}\left(X,\nu;\mathcal{L}_{2}\left(X\right)\right)
\]
is closable. See again \cite{DAP-ZAB} for a proof of this basic fact.

We denote by $W^{1,p}\left(X,\nu;X\right)$ the domain of the closure
of $\nabla_{\mathcal{L}_{2}}$ in $L^{p}\left(X,\nu;X\right)\times L^{p}\left(X,\nu;\mathcal{L}_{2}\left(X\right)\right)$,
and we use again the symbol $\nabla_{\mathcal{L}_{2}}$ to identify
the closure operator. Thus we may write
\[
\nabla_{\mathcal{L}_{2}}:W^{1,p}\left(X,\nu;X\right)\to L^{p}\left(X,\nu;\mathcal{L}_{2}\left(X\right)\right).
\]
Due to this definition of the first order Sobolev space of vector-valued
functions, a function $\Phi\in L^{p}\left(X,\nu;X\right)$ belongs
to $W^{1,p}\left(X,\nu;X\right)$ if and only if there exists a sequence
$\left(\Phi_{n}\right)_{n}\subseteq\mathcal{F}C_{b}^{1}\left(X;X\right)$
such that $\Phi_{n}\to\Phi$ in $L^{p}\left(X,\nu;X\right)$ and $\nabla_{\mathcal{L}_{2}}\Phi_{n}$
converges in $L^{p}\left(X,\nu;\mathcal{L}_{2}\left(X\right)\right)$.
In this case, we have for every $x\in X$:
\begin{equation}
\nabla_{\mathcal{L}_{2}}\Phi\left(x\right)=\lim_{n\to\infty}\nabla_{\mathcal{L}_{2}}\Phi_{n}\left(x\right)\quad\text{in }L^{p}\left(X,\nu;\mathcal{L}_{2}\left(X\right)\right),\label{eq: def nabla per W1p vett}
\end{equation}
regardless of the choice of $\left(\Phi_{n}\right)_{n}$.

\vspace{2mm}

From now on, the symbol $\nabla_{\mathcal{L}_{2}}$ will refer to
the Frech\'et derivative of a regular cylindrical function as well
as to the weak derivative (in the sense of $\text{\eqref{eq: def nabla per W1p vett}}$)
of a function belonging to $W^{1,p}\left(X,\nu;X\right)$ for some
non-degenerate Gaussian measure $\nu$. In particular, if $\Phi\in W^{1,p}\left(X,\nu;X\right)$
then $\nabla_{\mathcal{L}_{2}}\Phi\left(x\right)\in\mathcal{L}_{2}\left(X\right)$
for $\nu$-almost every $x\in X$.

\subsection{Regularization of $X$-valued $L^p$ functions}
\leavevmode\label{subsez: reg Lp vett}

Our present purpose is that of proving the vectorial counterpart of
Proposition $\text{\ref{prop: operatore compatto}}$.

We introduce the following space:
\[
UC_{b}\left(X;\mathcal{L}_{2}\left(X\right)\right):=\left\{ F\in C_{b}\left(X;\mathcal{L}_{2}\left(X\right)\right)\mid\sum_{j=1}^{+\infty}\left|F\left(x\right)e_{j}\right|_{X}^{2}<+\infty\,\text{uniformly in }x\in X\right\} .
\]

\begin{rem}
\label{rem: Pst operatoriale su UCb}For every $F\in UC_{b}\left(X;\mathcal{L}_{2}\left(X\right)\right)$,
and $t>s$, it is immediate to verify that $\overrightarrow{\overrightarrow{P_{s,t}}}F\in C_{b}\left(X;\mathcal{L}_{2}\left(X\right)\right)$
and that the coordinates of $\overrightarrow{\overrightarrow{P_{s,t}}}F\left(x\right)$
in $\mathcal{L}_{2}\left(X\right)$ are of the form:
\[
\left\langle \overrightarrow{\overrightarrow{P_{s,t}}}F\left(x\right)e_{i},e_{j}\right\rangle _{X}=P_{s,t}\left\langle F\left(\cdot\right)e_{i},e_{j}\right\rangle _{X}\left(x\right).
\]
\end{rem}

\begin{lem}
\label{lem: grad e Ps,t di cilindriche} Fix $t>s$. Then:

\vspace{2mm}

i)

\vspace{-11mm}
\[
\overrightarrow{P_{s,t}}\left(\mathcal{F}C_{b}^{1}\left(X;X\right)\right)\subseteq\mathcal{F}C_{b}^{1}\left(X;X\right);
\]

ii) for every $\Phi\in\mathcal{F}C_{b}^{1}\left(X;X\right)$, $x_{0}\in X$:
\[
\left\Vert \nabla_{\mathcal{L}_{2}}\overrightarrow{P_{s,t}}\Phi\left(x_{0}\right)\right\Vert _{\mathcal{L}_{2}\left(X\right)}^{2}\leq\left\Vert U\left(t,s\right)\right\Vert _{\mathcal{L}\left(X\right)}^{2}P_{s,t}\left(\left\Vert \nabla_{\mathcal{L}_{2}}\Phi\left(\cdot\right)\right\Vert _{\mathcal{L}_{2}\left(X\right)}^{2}\right)\left(x_{0}\right),
\]
and in particular
\[
\left\Vert \nabla_{\mathcal{L}_{2}}\overrightarrow{P_{s,t}}\Phi\left(x_{0}\right)\right\Vert _{\mathcal{L}_{2}\left(X\right)}\leq\left\Vert U\left(t,s\right)\right\Vert _{\mathcal{L}\left(X\right)}C_{\Phi}.
\]
\end{lem}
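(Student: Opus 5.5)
By linearity of $\overrightarrow{P_{s,t}}$ it suffices to treat $\Phi=v\,y$ with $y\in X$ and $v=F\left(\left\langle v_{1},\cdot\right\rangle _{X},\dots,\left\langle v_{N},\cdot\right\rangle _{X}\right)\in\mathcal{F}C_{b}^{1}\left(X\right)$, since a finite sum of cylindrical functions is cylindrical. For i), I first note that $\overrightarrow{P_{s,t}}\left(v\,y\right)=\left(P_{s,t}v\right)y$. It then remains to show that $P_{s,t}v$ is again a scalar cylindrical function. Writing $f_{k}=\left\langle v_{k},\cdot\right\rangle _{X}$, the definition gives
\[
P_{s,t}v\left(x\right)=\int_{X}F\left(\left\langle v_{1},U\left(t,s\right)x+y'\right\rangle _{X},\dots,\left\langle v_{N},U\left(t,s\right)x+y'\right\rangle _{X}\right)\gamma_{t,s}\left(dy'\right).
\]
The image of $\gamma_{t,s}$ under $y'\mapsto\left(\left\langle v_{k},y'\right\rangle _{X}\right)_{k}$ is a Gaussian measure $\rho$ on $\mathbb{R}^{N}$, possibly degenerate. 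Hence $P_{s,t}v\left(x\right)=G\left(\left\langle U\left(t,s\right)^{*}v_{1},x\right\rangle _{X},\dots,\left\langle U\left(t,s\right)^{*}v_{N},x\right\rangle _{X}\right)$ with $G=F\ast\check{\rho}$, namely $G\left(\xi\right)=\int_{\mathbb{R}^{N}}F\left(\xi+\eta\right)\rho\left(d\eta\right)$. By dominated convergence, $G\in C_{b}^{1}\left(\mathbb{R}^{N}\right)$ with $\nabla G=\left(\nabla F\right)\ast\check{\rho}$, and therefore $P_{s,t}v\in\mathcal{F}C_{b}^{1}\left(X\right)$. This proves i).

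For ii), I will use the gradient formula \eqref{eq: formula grad vettoriale}, which applies because cylindrical functions lie in $C_{b}^{1}\left(X;X\right)$. It gives, for every $h\in X$,
\[
\nabla_{\mathcal{L}}\overrightarrow{P_{s,t}}\Phi\left(x_{0}\right)h=\int_{X}\nabla_{\mathcal{L}}\Phi\left(U\left(t,s\right)x_{0}+y\right)U\left(t,s\right)h\,\gamma_{t,s}\left(dy\right).
\]
The main obstacle is that the bound carries the factor $\left\Vert U\left(t,s\right)\right\Vert $ on the right, whereas the Hilbert--Schmidt norm of $T\,U\left(t,s\right)$ naturally sits on the domain side. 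To handle this, I will pass to adjoints and use $\left\Vert S\right\Vert _{\mathcal{L}_{2}}=\left\Vert S^{*}\right\Vert _{\mathcal{L}_{2}}$. Set $S:=\nabla_{\mathcal{L}}\overrightarrow{P_{s,t}}\Phi\left(x_{0}\right)$ and $T\left(z\right):=\nabla_{\mathcal{L}}\Phi\left(z\right)$. Then
\[
S^{*}e_{j}=U\left(t,s\right)^{*}\int_{X}T\left(U\left(t,s\right)x_{0}+y\right)^{*}e_{j}\,\gamma_{t,s}\left(dy\right),
\]
so that
\[
\left|S^{*}e_{j}\right|_{X}^{2}\leq\left\Vert U\left(t,s\right)\right\Vert ^{2}\left(\int_{X}\left|T\left(\cdot\right)^{*}e_{j}\right|_{X}\gamma_{t,s}\right)^{2}\leq\left\Vert U\left(t,s\right)\right\Vert ^{2}\int_{X}\left|T\left(U\left(t,s\right)x_{0}+y\right)^{*}e_{j}\right|_{X}^{2}\gamma_{t,s}\left(dy\right),
\]
where the last step is Jensen's inequality.

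I then sum over $j$. Interchanging sum and integral is justified either by monotone convergence or by the uniform convergence in Lemma \ref{lem: norma H-S conv unif}, applied to $T^{*}$, whose Hilbert--Schmidt norm equals that of $T$. This gives
\[
\left\Vert S\right\Vert _{\mathcal{L}_{2}}^{2}\leq\left\Vert U\left(t,s\right)\right\Vert ^{2}\int_{X}\left\Vert T\left(U\left(t,s\right)x_{0}+y\right)\right\Vert _{\mathcal{L}_{2}}^{2}\gamma_{t,s}\left(dy\right)=\left\Vert U\left(t,s\right)\right\Vert ^{2}P_{s,t}\left(\left\Vert \nabla_{\mathcal{L}_{2}}\Phi\right\Vert _{\mathcal{L}_{2}}^{2}\right)\left(x_{0}\right),
\]
where the function $\left\Vert \nabla_{\mathcal{L}_{2}}\Phi\left(\cdot\right)\right\Vert _{\mathcal{L}_{2}}^{2}$ is bounded and continuous, by the remark following Lemma \ref{lem: norma H-S conv unif}. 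Finally, since $\left\Vert \nabla_{\mathcal{L}_{2}}\Phi\right\Vert _{\mathcal{L}_{2}}\leq C_{\Phi}$ everywhere and $P_{s,t}$ is a Markov contraction (Proposition \ref{prop: op evoluzione}), the second inequality follows by taking square roots.
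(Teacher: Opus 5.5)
Your proposal is correct and follows the paper's proof. Part i) is the same argument: the paper writes $\overrightarrow{P_{s,t}}(v\,y)=\widetilde{v}(f_1,\dots,f_N)\,y$, with $\widetilde{v}$ obtained by integrating $F$ against $\gamma_{t,s}$, which is exactly your $G$. Part ii) likewise combines the vectorial gradient formula \eqref{eq: formula grad vettoriale} with Jensen's inequality and summation over the basis. The only cosmetic difference is that the paper uses the ideal property $\|A\,U(t,s)\|_{\mathcal{L}_2(X)}\le\|A\|_{\mathcal{L}_2(X)}\|U(t,s)\|_{\mathcal{L}(X)}$ directly and applies Jensen to the vectors $Ae_j$. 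You instead prove that property by passing to adjoints and apply Jensen to $T(\cdot)^*e_j$; monotone convergence already justifies exchanging the sum and the integral.
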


\begin{proof}
Fix $t>s$, $v=F\left(\left\langle w_{1},\cdot\right\rangle _{X},\dots,\left\langle w_{N},\cdot\right\rangle _{X}\right)$
with $F\in C_{b}^{1}\left(\mathbb{R}^{N}\right)$, $w_{j}\in X$ and
let $y\in X$. For every $x\in X$:
\begin{align*}
\left(\overrightarrow{P_{s,t}}v\left(\cdot\right)y\right)\left(x\right) & =\left(P_{s,t}v\right)\left(x\right)y\\
 & =\int_{X}F\left(\left\langle w_{1},z\right\rangle _{X}+f_{1}x,\dots,\left\langle w_{N},z\right\rangle _{X}+f_{N}x\right)\gamma_{t,s}\left(dz\right)y,
\end{align*}

having set $f_{j}:=\left\langle U\left(t,s\right)^{*}w_{j},\cdot\right\rangle _{X}$,
$j=1,\dots,N$. Note that the function $\widetilde{v}:\mathbb{R}^{N}\to\mathbb{R}$
defined by
\[
\widetilde{v}\left(\xi\right):=\int_{X}F\left(\left\langle w_{1},z\right\rangle _{X}+\xi_{1},\dots,\left\langle w_{N},z\right\rangle _{X}+\xi_{N}\right)\gamma_{t,s}\left(dz\right)
\]
belongs to $C_{b}^{1}\left(\mathbb{R}^{N}\right)$, with
\[
\nabla\widetilde{v}\left(\xi\right)=\int_{X}\left(\nabla F\right)\left(\left\langle w_{1},z\right\rangle _{X}+\xi_{1},\dots,\left\langle w_{N},z\right\rangle _{X}+\xi_{N}\right)\gamma_{t,s}\left(dz\right);
\]
thus $\overrightarrow{P_{s,t}}v\left(\cdot\right)y=\widetilde{v}\left(f_{1},\dots,f_{N}\right)$
is cylindrical. Passing to linear combinations $\sum_{i=1}^{M}v_{i}\left(\cdot\right)y_{i}$,
the inclusion in i) follows.

Now take $\Phi\in\mathcal{F}C_{b}^{1}\left(X;X\right)$ and $x_{0}\in X$.
Note that $\nabla_{\mathcal{L}_{2}}\Phi\in UC_{b}\left(X;\mathcal{L}_{2}\left(X\right)\right)$
by Lemma $\text{\ref{lem: norma H-S conv unif}}$; thus, as observed
in Remark $\text{\ref{rem: Pst operatoriale su UCb}}$, $\overrightarrow{\overrightarrow{P_{s,t}}}\nabla_{\mathcal{L}_{2}}\Phi\in C_{b}\left(X;\mathcal{L}_{2}\left(X\right)\right)$.
Hence, by point i), we can pass to the Hilbert-Schmidt norms in $\text{\eqref{eq: formula grad vettoriale}}$
with our $\Phi$; taking into account the definition in $\text{\eqref{eq: def P_st operatoriale}}$
as well we obtain:
\begin{align*}
\left\Vert \nabla_{\mathcal{L}_{2}}\overrightarrow{P_{s,t}}\Phi\left(x_{0}\right)\right\Vert _{\mathcal{L}_{2}\left(X\right)}^{2} & \leq\left\Vert U\left(t,s\right)\right\Vert _{\mathcal{L}\left(X\right)}^{2}\left\Vert \left[\overrightarrow{\overrightarrow{P_{s,t}}}\nabla_{\mathcal{L}_{2}}\Phi\right]\left(x_{0}\right)\right\Vert _{\mathcal{L}_{2}\left(X\right)}^{2}\\
 & \leq\left\Vert U\left(t,s\right)\right\Vert _{\mathcal{L}\left(X\right)}^{2}\sum_{j=1}^{\infty}\int_{X}\left|\nabla_{\mathcal{L}_{2}}\Phi\left(y\right)e_{j}\right|_{X}^{2}\mathcal{N}\left(U\left(t,s\right)x_{0},Q\left(t,s\right)\right)\left(dy\right)\\
 & =\left\Vert U\left(t,s\right)\right\Vert _{\mathcal{L}\left(X\right)}^{2}\int_{X}\left\Vert \nabla_{\mathcal{L}_{2}}\Phi\left(y\right)\right\Vert _{\mathcal{L}_{2}\left(X\right)}^{2}\mathcal{N}\left(U\left(t,s\right)x_{0},Q\left(t,s\right)\right)\left(dy\right)\\
 & =\left\Vert U\left(t,s\right)\right\Vert _{\mathcal{L}\left(X\right)}^{2}P_{s,t}\left(\left\Vert \nabla_{\mathcal{L}_{2}}\Phi\left(\cdot\right)\right\Vert _{\mathcal{L}_{2}\left(X\right)}^{2}\right)\left(x_{0}\right).
\end{align*}
This concludes the proof.
\end{proof}
We note that Remark $\text{\ref{rem: Pst operatoriale su UCb}}$ implies,
using $\text{\eqref{eq: muinv}}$:
\begin{align*}
\int_{X}\overrightarrow{\overrightarrow{P_{s,t}}}F\left(x\right)\mu_{s}\left(dx\right) & =\int_{X}F\left(x\right)\mu_{t}\left(dx\right),
\end{align*}
even though the knowledge of such identity is not necessary to our
purposes.

The very relation $\text{\eqref{eq: muinv}}$ also implies:

\begin{align}
\left\Vert \overrightarrow{\overrightarrow{P_{s,t}}}F\right\Vert _{L^{p}\left(X,\mu_{s};\mathcal{L}_{2}\left(X\right)\right)} & \leq\left\Vert F\right\Vert _{L^{p}\left(X,\mu_{t};\mathcal{L}_{2}\left(X\right)\right)}\quad\forall F\in UC_{b}\left(X;\mathcal{L}_{2}\left(X\right)\right).\label{eq: Ps,t operatoriale contraz}
\end{align}
Even though by the definition in $\text{\eqref{eq: def P_st operatoriale}}$
the operator $\overrightarrow{\overrightarrow{P_{s,t}}}$ acts on
(operator-valued) bounded continuous functions, thanks to relation
$\text{\eqref{eq: Ps,t operatoriale contraz}}$ we can define $\overrightarrow{\overrightarrow{P_{s,t}}}\nabla_{\mathcal{L}_{2}}\Phi$
for a function $\Phi\in W^{1,p}\left(X,\mu_{t};X\right)$ in a standard
way.

Indeed, assume that $\left(\Phi_{n}\right)_{n}\subseteq\mathcal{F}C_{b}^{1}\left(X;X\right)$
is such that $\Phi_{n}\to\Phi$ in $L^{p}\left(X,\mu_{t};X\right)$
and $\nabla_{\mathcal{L}_{2}}\Phi_{n}\to\nabla_{\mathcal{L}_{2}}\Phi$
in $L^{p}\left(X,\mu_{t};\mathcal{L}_{2}\left(X\right)\right)$; by
Lemma $\text{\ref{lem: norma H-S conv unif}}$, for every $n\in\mathbb{N}$,
$\nabla_{\mathcal{L}_{2}}\Phi_{n}\in UC_{b}\left(X;\mathcal{L}_{2}\left(X\right)\right)$,
so that, as observed, $\overrightarrow{\overrightarrow{P_{s,t}}}\nabla_{\mathcal{L}_{2}}\Phi_{n}\in C_{b}\left(X;\mathcal{L}_{2}\left(X\right)\right)$.
By $\text{\eqref{eq: Ps,t operatoriale contraz}}$, the latter sequence
is Cauchy in $L^{p}\left(X,\mu_{s};\mathcal{L}_{2}\left(X\right)\right)$,
thus we may set:
\begin{equation}
\overrightarrow{\overrightarrow{P_{s,t}}}\nabla_{\mathcal{L}_{2}}\Phi:=\lim_{n\to\infty}\overrightarrow{\overrightarrow{P_{s,t}}}\nabla_{\mathcal{L}_{2}}\Phi_{n}\quad\text{in }L^{p}\left(X,\mu_{s};\mathcal{L}_{2}\left(X\right)\right),\label{eq: def Ps,t op nabla Phi Lp}
\end{equation}
regardless of the choice of $\left(\Phi_{n}\right)_{n}$.

\vspace{1mm}
\begin{rem}
\label{rem: su Pst operatoriale}i) The above construction allows
to extend relation $\text{\eqref{eq: formula grad vettoriale}}$ to
any $\Phi\in W^{1,p}\left(X,\mu_{t};X\right)$, for $\mu_{s}$-almost
every $x\in X$ and with $\nabla_{\mathcal{L}}=\nabla_{\mathcal{L}_{2}}$.
Indeed, taking $\left(\Phi_{n}\right)_{n}$ as above, we have:
\[
\left[\nabla_{\mathcal{L}_{2}}\left(\overrightarrow{P_{s,t}}\Phi_{n}\right)\right]\left(\cdot\right)=\left[\overrightarrow{\overrightarrow{P_{s,t}}}\left(\nabla_{\mathcal{L}_{2}}\Phi_{n}\right)\right]\left(\cdot\right)U\left(t,s\right)
\]
pointwise in $X$, and both quantities converge in $L^{p}\left(X,\mu_{s};\mathcal{L}_{2}\left(X\right)\right)$:
the left hand member converges to $\nabla_{\mathcal{L}_{2}}\left(\overrightarrow{P_{s,t}}\Phi\right)$
by definition of $\nabla_{\mathcal{L}_{2}}$ since $\left(\overrightarrow{P_{s,t}}\Phi_{n}\right)_{n}$
is a sequence of \emph{cylindrical} functions approximating $\overrightarrow{P_{s,t}}\Phi$
in $L^{p}\left(X,\mu_{s};X\right)$, while the right hand member converges
to $\overrightarrow{\overrightarrow{P_{s,t}}}\left(\nabla_{\mathcal{L}_{2}}\Phi\right)$
by the definition in $\text{\eqref{eq: def Ps,t op nabla Phi Lp}}$.

\vspace{1mm}

ii) By a similar argument, one sees that also relation $\text{\eqref{eq: Ps,t operatoriale contraz}}$
can be extended to $F=\nabla_{\mathcal{L}_{2}}\Phi$ for some $\Phi\in W^{1,p}\left(X,\mu_{t};X\right)$.

iii) Relation $\eqref{eq: formula grad vettoriale}$ applied to a
$\Phi\in\mathcal{F}C_{b}^{1}\left(X;X\right)$ implies that the function
$\overrightarrow{\overrightarrow{P_{s,t}}}\nabla_{\mathcal{L}_{2}}\Phi$
actually belongs to $UC_{b}\left(X;\mathcal{L}_{2}\left(X\right)\right)$
and not merely to $C_{b}\left(X;\mathcal{L}_{2}\left(X\right)\right)$,
thanks to Lemmas $\text{\ref{lem: norma H-S conv unif}}$ and $\text{\ref{lem: grad e Ps,t di cilindriche}}$.
This property could be also proved directly based on the particular
form of $\nabla_{\mathcal{L}_{2}}\Phi\in UC_{b}\left(X;\mathcal{L}_{2}\left(X\right)\right)$.
\end{rem}

\begin{prop}
\label{prop: reg vett} Let $t>s$ and $\overrightarrow{P_{s,t}}:L^{p}\left(X,\mu_{t};X\right)\to L^{p}\left(X,\mu_{s};X\right)$
the extended operator obtained in Proposition $\text{\ref{prop: estensione P_s,t}}$.
We have:
\[
\overrightarrow{P_{s,t}}\left(L^{p}\left(X,\mu_{t};X\right)\right)\subseteq W^{1,p}\left(X,\mu_{s};X\right),
\]
and there exists $\bar{C}_{p}>0$ such that, for every $\Phi\in L^{p}\left(X,\mu_{t};X\right)$:
\begin{equation}
\left\Vert \nabla_{\mathcal{L}_{2}}\overrightarrow{P_{s,t}}\Phi\right\Vert _{L^{p}\left(X,\mu_{s};\mathcal{L}_{2}\left(X\right)\right)}\leq\bar{C}_{p}\left\Vert \Lambda\left(t,s\right)\right\Vert _{\mathcal{L}\left(X\right)}\left\Vert \Phi\right\Vert _{L^{p}\left(X,\mu_{t};X\right)}.\label{stima nabla Ps,t vett}
\end{equation}
\end{prop}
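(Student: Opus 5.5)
The proof will mirror that of Proposition \ref{prop: operatore compatto}. I first establish \eqref{stima nabla Ps,t vett} for cylindrical $\Phi\in\mathcal{F}C_{b}^{1}\left(X;X\right)$, which is dense in $L^{p}\left(X,\mu_{t};X\right)$. Then I pass to the limit using the closedness of $\nabla_{\mathcal{L}_{2}}$ and the continuity of $\overrightarrow{P_{s,t}}$ from Proposition \ref{prop: estensione P_s,t}.

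For $\Phi\in\mathcal{F}C_{b}^{1}\left(X;X\right)$, Lemma \ref{lem: grad e Ps,t di cilindriche} gives $\overrightarrow{P_{s,t}}\Phi\in\mathcal{F}C_{b}^{1}\left(X;X\right)$. Hence $\nabla_{\mathcal{L}_{2}}\overrightarrow{P_{s,t}}\Phi(x)$ is a genuine Fr\'echet derivative, and its $\mathcal{L}_{2}$ norm can be computed through the columns $\nabla_{\mathcal{L}_2}\overrightarrow{P_{s,t}}\Phi(x)e_{j}$. To control these columns I use the Cameron--Martin formula of Proposition \ref{prop: CM formula}, not the formula \eqref{eq: formula grad vettoriale}, since the latter only involves $\nabla\Phi$. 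Write $\Phi=\sum_{i}\varphi_{i}e_{i}$ with $\varphi_{i}=\langle\Phi,e_{i}\rangle$. The coordinate $\langle\overrightarrow{P_{s,t}}\Phi,e_{i}\rangle=P_{s,t}\varphi_{i}$ then has directional derivatives
\[
\left\langle \nabla_{\mathcal{L}_{2}}\overrightarrow{P_{s,t}}\Phi(x)e_{j},e_{i}\right\rangle _{X}=\int_{X}\varphi_{i}\left(y+U(t,s)x\right)\widehat{U(t,s)e_{j}}(y)\,\gamma_{t,s}(dy).
\]
So $\|\nabla_{\mathcal{L}_{2}}\overrightarrow{P_{s,t}}\Phi(x)\|_{\mathcal{L}_{2}}^{2}$ is a double sum over $i$ and $j$ of squared pairings. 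Equivalently, it is the squared Hilbert--Schmidt norm of the operator $h\mapsto\int_{X}\Phi(y+U(t,s)x)\,\widehat{U(t,s)h}(y)\,\gamma_{t,s}(dy)$.

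The key point is to bound this Hilbert--Schmidt norm, and this is the main obstacle. A naive application of H\"older column by column would give $\sum_{j}|\Lambda(t,s)e_{j}|^{2}$, which need not be finite since $\Lambda(t,s)$ is only bounded. Instead I will use the Gaussian structure. The map $h\mapsto\widehat{h}$ is an isometry from $ImQ(t,s)^{\frac12}$, with the norm $|Q(t,s)^{-\frac12}\cdot|$, into $L^{2}\left(X,\gamma_{t,s}\right)$. The functions $\widehat{Q(t,s)^{\frac12}g_{k}}$, for $(g_{k})$ an orthonormal basis of $\overline{ImQ(t,s)}$, form an orthonormal system in $L^{2}(\gamma_{t,s})$, and in fact are i.i.d.\ standard Gaussians. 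Write $U(t,s)e_{j}=Q(t,s)^{\frac12}\Lambda(t,s)e_{j}$. For fixed $x$, set $G(y):=\Phi(y+U(t,s)x)$. Then
\[
\sum_{j}\Big|\int G\,\widehat{U(t,s)e_{j}}\,d\gamma_{t,s}\Big|^{2}=\sum_{j}\Big|\sum_{k}\langle\Lambda(t,s)e_{j},g_{k}\rangle c_{k}\Big|^{2}=\big\|\Lambda(t,s)^{*}c\big\|^{2}\leq\|\Lambda(t,s)\|^{2}\sum_{k}|c_{k}|_{X}^{2},
\]
where $c_{k}:=\int G\,\widehat{Q^{\frac12}g_{k}}\,d\gamma_{t,s}\in X$. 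By Bessel's inequality in $L^{2}(\gamma_{t,s};X)$, $\sum_{k}|c_{k}|^{2}\leq\int|G|^{2}d\gamma_{t,s}=P_{s,t}|\Phi|^{2}(x)$. This yields the pointwise bound
\[
\|\nabla_{\mathcal{L}_{2}}\overrightarrow{P_{s,t}}\Phi(x)\|_{\mathcal{L}_{2}}\leq\|\Lambda(t,s)\|\,\big(P_{s,t}|\Phi|^{2}(x)\big)^{\frac12}.
\]

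For $p\geq2$, Jensen's inequality for the probability measure in $P_{s,t}$ gives $(P_{s,t}|\Phi|^{2})^{p/2}\leq P_{s,t}|\Phi|^{p}$. Integrating against $\mu_{s}$ and using \eqref{eq: muinv} then gives \eqref{stima nabla Ps,t vett} with $\bar C_{p}=1$. This explains the restriction $p\geq2$ mentioned earlier in the paper. If the Bessel step fails in the vector-valued setting, I would first check it coordinatewise in $i$, which reduces to the scalar Bessel inequality and sums correctly.

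Finally, for general $\Phi\in L^{p}(X,\mu_{t};X)$, take cylindrical $\Phi_{n}\to\Phi$. By the estimate, $\nabla_{\mathcal{L}_{2}}\overrightarrow{P_{s,t}}\Phi_{n}$ is Cauchy in $L^{p}(X,\mu_{s};\mathcal{L}_{2}(X))$, and $\overrightarrow{P_{s,t}}\Phi_{n}\to\overrightarrow{P_{s,t}}\Phi$. The functions $\overrightarrow{P_{s,t}}\Phi_{n}$ are cylindrical, so the definition of $W^{1,p}(X,\mu_{s};X)$ gives membership, and the estimate passes to the limit.
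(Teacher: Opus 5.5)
Your Cameron--Martin/Bessel computation is correct. For $p\geq2$ it gives exactly the pointwise bound $\|\nabla_{\mathcal{L}_{2}}\overrightarrow{P_{s,t}}\Phi(x)\|_{\mathcal{L}_{2}(X)}\leq\|\Lambda(t,s)\|_{\mathcal{L}(X)}\bigl(P_{s,t}|\Phi|_{X}^{2}(x)\bigr)^{1/2}$, which is what the paper obtains with $q_{0}=2$ (note $C_{2}=1$). Your Jensen step and the final approximation argument are also the paper's.

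The gap is the range of $p$. The proposition is stated for the exponent $p>1$ fixed at the start of Subsection \ref{subsez: reg Lp}. Despite the earlier sentence announcing $p\geq2$, the paper's proof explicitly treats $1<p<2$. Section \ref{sez: tasso conv} then uses the proposition, in Lemmas \ref{lem: autospazio di P0} and \ref{lem: autospazio generalizzato P0}, for that same $p$. Your proof leaves $1<p<2$ open, and it does not extend there as it stands. It rests on the orthonormal system $\widehat{Q(t,s)^{1/2}g_{k}}$ in $L^{2}(X,\gamma_{t,s})$ and Bessel's inequality, so it can only produce $(P_{s,t}|\Phi|_{X}^{2})^{1/2}$. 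For $p<2$, $(P_{s,t}|\Phi|_{X}^{2})^{p/2}$ is not dominated by $P_{s,t}|\Phi|_{X}^{p}$. After integrating against $\mu_{s}$ you would in effect need $\Phi\in L^{2}(X,\mu_{t};X)$.

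The missing idea, which also removes the obstacle you identified, is to sum the Hilbert--Schmidt norm over rows instead of columns. With $\varphi_{i}=\langle\Phi,e_{i}\rangle_{X}$ you have $\|\nabla_{\mathcal{L}_{2}}\overrightarrow{P_{s,t}}\Phi(x)\|_{\mathcal{L}_{2}(X)}^{2}=\sum_{i}|\nabla P_{s,t}\varphi_{i}(x)|_{X}^{2}$. Each row is controlled by the scalar bound \eqref{eq: stima norma X nabla Ps,t}, which estimates the whole $X$-norm of a gradient through a supremum over $h$, so $\sum_{j}|\Lambda(t,s)e_{j}|_{X}^{2}$ never appears.

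Taking exponent $q=2$ there gives your pointwise bound at once, with no Bessel argument. For $1<p<2$, take $q=p$, i.e.\ H\"older against $\widehat{U(t,s)h}\in L^{p'}(X,\gamma_{t,s})$, and combine the rows by Minkowski's inequality in $\ell^{2/p}$:
\[
\Bigl(\sum_{i}\bigl(P_{s,t}|\varphi_{i}|^{p}(x)\bigr)^{2/p}\Bigr)^{p/2}\leq P_{s,t}\Bigl(\bigl(\sum_{i}|\varphi_{i}|^{2}\bigr)^{p/2}\Bigr)(x)=P_{s,t}|\Phi|_{X}^{p}(x).
\]
Integrating against $\mu_{s}$ and using \eqref{eq: muinv} then gives \eqref{stima nabla Ps,t vett} with $\bar{C}_{p}=C_{p}$. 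This is the paper's argument. It is carried out after Gram--Schmidt on the finitely many vectors $y_{i}$ of a cylindrical $\Phi$, so only finitely many rows occur.
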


\begin{proof}
First we prove $\text{\eqref{stima nabla Ps,t vett}}$ for a function
$\Phi\in\mathcal{F}C_{b}^{1}\left(X;X\right)$, say $\Phi=\sum_{i=1}^{M}v_{i}\left(\cdot\right)y_{i}$
for some $v_{i}\in\mathcal{F}C_{b}^{1}\left(X\right)$ and $y_{i}\in X$.
We may assume that $y_{1},\dots,y_{M}$ are unit pairwise orthogonal
vectors; indeed if they are not, denoting by $\left\{ \widetilde{y_{1}},\dots,\widetilde{y_{M}}\right\} $
the system obtained by applying the Gram-Schmidt orthonormalization
procedure to the $y_{i}$'s, we may write $\Phi=\sum_{j=1}^{M}\sum_{i=1}^{M}\alpha_{ij}v_{i}\left(\cdot\right)\widetilde{y_{j}}$
and $\sum_{i=1}^{M}\alpha_{ij}v_{i}\in\mathcal{F}C_{b}^{1}\left(X\right)$
for $j=1,\dots,M$. Indeed, for our purposes it is actually enough
that the $y_{1,}\dots,y_{M}$ are pairwise orthogonal, which is what
we assume.

We show that there exists a positive constant $\bar{C}_{p}^{p}$ such
that for every $x_{0}\in X$:
\begin{equation}
\left\Vert \nabla_{\mathcal{L}_{2}}\overrightarrow{P_{s,t}}\Phi\left(x_{0}\right)\right\Vert _{\mathcal{L}_{2}\left(X\right)}^{p}\leq\bar{C}_{p}^{p}\left\Vert \Lambda\left(t,s\right)\right\Vert _{\mathcal{L}\left(X\right)}^{p}P_{s,t}\left(\left|\Phi\left(\cdot\right)\right|_{X}^{p}\right)\left(x_{0}\right).\label{eq: norma H-s Nabla Ps,t alla p}
\end{equation}
Note that the left hand member of the above inequality makes sense
because of point i) in Lemma $\text{\ref{lem: grad e Ps,t di cilindriche}}$,
which ensures that $\overrightarrow{P_{s,t}}\Phi$ belongs to the
domain of the operator $\nabla_{\mathcal{L}_{2}}$. Fix $x_{0}\in X$;
using to the orthogonality assumption we obtain:
\begin{align}
\left\Vert \nabla_{\mathcal{L}_{2}}\overrightarrow{P_{s,t}}\Phi\left(x_{0}\right)\right\Vert _{\mathcal{L}_{2}\left(X\right)}^{2} & =\sum_{j=1}^{+\infty}\left|\nabla_{\mathcal{L}_{2}}\overrightarrow{P_{s,t}}\sum_{i=1}^{M}v_{i}\left(\cdot\right)y_{i}\left(x_{0}\right)e_{j}\right|_{X}^{2}\nonumber \\
 & =\sum_{j=1}^{+\infty}\left|\sum_{i=1}^{M}\nabla_{\mathcal{L}}\left[P_{s,t}v_{i}\left(\cdot\right)y_{i}\right]\left(x_{0}\right)e_{j}\right|_{X}^{2}\nonumber \\
 & =\sum_{j=1}^{+\infty}\left|\sum_{i=1}^{M}\left\langle \nabla\left(P_{s,t}v_{i}\right)\left(x_{0}\right),e_{j}\right\rangle _{X}y_{i}\right|_{X}^{2}\nonumber \\
 & =\sum_{j=1}^{+\infty}\sum_{i=1}^{M}\left\langle \nabla\left(P_{s,t}v_{i}\right)\left(x_{0}\right),e_{j}\right\rangle _{X}^{2}\left|y_{i}\right|_{X}^{2}\nonumber \\
 & =\sum_{i=1}^{M}\left|\nabla\left(P_{s,t}v_{i}\right)\left(x_{0}\right)\right|_{X}^{2}\left|y_{i}\right|_{X}^{2}.\label{eq: norma H-S NablaPs,t}
\end{align}
Further, we know from the proof of Proposition $\text{\ref{prop: operatore compatto}}$,
and in particular from relation $\text{\eqref{eq: stima norma X nabla Ps,t}}$
that, for every $q>1$ there exists $C_{q}>0$ such that:
\begin{equation}
\left|\nabla\left(P_{s,t}v_{i}\right)\left(x_{0}\right)\right|_{X}^{2}\leq C_{q}^{2}\left\Vert \Lambda\left(t,s\right)\right\Vert _{\mathcal{L}\left(X\right)}^{2}\left(P_{s,t}\left|v_{i}\right|^{q}\left(x_{0}\right)\right)^{\frac{2}{q}}.\label{eq: stima basica deriv direz Ps,t}
\end{equation}

We consider the above estimate with different values of $q$ depending
on whether $p\in\left(1,2\right)$ or $p\geq2$.

If $p\in\left(1,2\right)$, we take $q=p$ in $\text{\eqref{eq: stima basica deriv direz Ps,t}}$.
Setting $\alpha:=\frac{2}{p}>1$, $\alpha'=\alpha/\left(\alpha-1\right)$
and applying the Riesz representation theorem to the $\alpha$ norm
$\left|\cdot\right|_{M,\alpha}$ in $\mathbb{R}^{M}$, we obtain from
$\text{\eqref{eq: norma H-S NablaPs,t}}$ and $\text{\eqref{eq: stima basica deriv direz Ps,t}}$
:
\begin{align*}
\left\Vert \nabla_{\mathcal{L}_{2}}\overrightarrow{P_{s,t}}\Phi\left(x_{0}\right)\right\Vert _{\mathcal{L}_{2}\left(X\right)}^{p} & \leq C_{p}^{p}\left\Vert \Lambda\left(t,s\right)\right\Vert _{\mathcal{L}\left(X\right)}^{p}\left[\sum_{i=1}^{M}\left(P_{s,t}\left|v_{i}\left(\cdot\right)\right|^{p}\left(x_{0}\right)\left|y_{i}\right|_{X}^{p}\right)^{\alpha}\right]^{\frac{1}{\alpha}}\\
 & =C_{p}^{p}\left\Vert \Lambda\left(t,s\right)\right\Vert _{\mathcal{L}\left(X\right)}^{p}\sup_{\left|z\right|_{M,\alpha'}=1}P_{s,t}\left(\sum_{i=1}^{M}z_{i}\left|y_{i}\right|_{X}^{p}\left|v_{i}\left(\cdot\right)\right|^{p}\right)\left(x_{0}\right)\\
 & \leq C_{p}^{p}\left\Vert \Lambda\left(t,s\right)\right\Vert _{\mathcal{L}\left(X\right)}^{p}P_{s,t}\left(\left[\sum_{i=1}^{M}\left|y_{i}\right|_{X}^{p\alpha}\left|v_{i}\left(\cdot\right)\right|^{p\alpha}\right]^{\frac{1}{\alpha}}\right)\left(x_{0}\right)\\
 & =C_{p}^{p}\left\Vert \Lambda\left(t,s\right)\right\Vert _{\mathcal{L}\left(X\right)}^{p}P_{s,t}\left(\left|\Phi\left(\cdot\right)\right|_{X}^{p}\right)\left(x_{0}\right),
\end{align*}
where we used again the orthogonality of the $y_{i}$'s.

In the case $p\geq2$, it is sufficient to consider $\text{\eqref{eq: stima basica deriv direz Ps,t}}$
with any $q_{0}\in(1,2]$ - so that the function $x\mapsto x^{\frac{2}{q_{0}}}$
is convex:
\begin{align*}
\left\Vert \nabla_{\mathcal{L}_{2}}\overrightarrow{P_{s,t}}\Phi\left(x_{0}\right)\right\Vert _{\mathcal{L}_{2}\left(X\right)}^{p} & \leq C_{q_{0}}^{p}\left\Vert \Lambda\left(t,s\right)\right\Vert _{\mathcal{L}\left(X\right)}^{p}\left[\sum_{i=1}^{M}\left(P_{s,t}\left|v_{i}\right|^{q_{0}}\left(x_{0}\right)\right)^{\frac{2}{q_{0}}}\left|y_{i}\right|_{X}^{2}\right]^{\frac{p}{2}}\\
 & \leq C_{q_{0}}^{p}\left\Vert \Lambda\left(t,s\right)\right\Vert _{\mathcal{L}\left(X\right)}^{p}\left[\sum_{i=1}^{M}P_{s,t}v_{i}^{2}\left(x_{0}\right)\left|y_{i}\right|_{X}^{2}\right]^{\frac{p}{2}}\\
 & =C_{q_{0}}^{p}\left\Vert \Lambda\left(t,s\right)\right\Vert _{\mathcal{L}\left(X\right)}^{p}\left[P_{s,t}\left|\Phi\right|_{X}^{2}\left(x_{0}\right)\right]^{\frac{p}{2}},
\end{align*}
which leads to $\text{\eqref{eq: norma H-s Nabla Ps,t alla p}}$ since
$x\mapsto x^{\frac{p}{2}}$ is also convex.

Integrating $\text{\eqref{eq: norma H-s Nabla Ps,t alla p}}$ over
$x_{0}$ with respect to the measure $\mu_{s}$ and remembering $\text{\eqref{eq: muinv}}$
leads to the fact that $\text{\eqref{stima nabla Ps,t vett}}$ holds
for any $\Phi\in\mathcal{F}C_{b}^{1}\left(X;X\right)$.

The extension of the validity of $\text{\eqref{stima nabla Ps,t vett}}$
to $L^{p}\left(X,\mu_{t};X\right)$ functions is obtained \emph{mutatis
mutandis} exactly as in the proof of Proposition $\text{\ref{prop: operatore compatto}}$,
with the precaution of observing that, if $\left(\Phi_{n}\right)_{n}\subseteq\mathcal{F}C_{b}^{1}\left(X;X\right)$
approximates some $\Phi\in L^{p}\left(X,\mu_{t};X\right)$, then the
functions $P_{s,t}\Phi_{n}$ are also cylindrical, by Lemma $\text{\ref{lem: grad e Ps,t di cilindriche}}$.
\end{proof}
\section{Asymptotic decay in case of time-periodic coefficients}\label{sez: tasso conv}

In this second part of the paper, our aim is to focus on the case
where the data, namely the families of operators $\left\{ A\left(t\right)\right\} _{t\in\mathbb{R}}$
and $\left\{ B\left(t\right)\right\} _{t\in\mathbb{R}}$, are periodic,
and to prove the convergence
\begin{equation}
\left\Vert P_{s,t}f-\int_{X}f\left(y\right)\mu_{t}\left(dy\right)\right\Vert _{L^{p}\left(X,\mu_{s}\right)}\to0\quad\text{for }t-s\to+\infty.\label{eq: conv}
\end{equation}
Most important, we want to estimate the rate of this convergence.

To this purpose, it is useful to begin with an ad-hoc formalization.
Let $\Pi_{t}:L^{p}\left(X,\mu_{t}\right)\to L^{p}\left(X,\mu_{t}\right)$
the projection onto the space of zero-mean functions, namely 
\begin{equation}
\Pi_{t}f:=f-\int_{X}f\left(y\right)\mu_{t}\left(dy\right).\label{eq: proiezione media 0}
\end{equation}

\begin{rem}
\label{rem: proiezione}Clearly, $\Pi_{t}=\Pi_{t}^{2}$ is linear
and, by H\"older's inequality, satisfies
\[
\left\Vert \Pi_{t}\right\Vert _{\mathcal{L}\left(L^{p}\left(X,\mu_{t}\right)\right)}\leq2.
\]

Further, for every $s<t$ the following relations hold: 
\begin{align}
 & L^{p}\left(X,\mu_{t}\right)=Im\Pi_{t}\oplus Ker\Pi_{t}\label{eq: scomposizion Lp}\\
 & P_{s,t}\left(Im\Pi_{t}\right)\subseteq Im\Pi_{s}.\label{eq: Ps,t ristretto}
\end{align}
The first relation is a consequence of $\Pi_{t}$ being a projection.
Note that $Ker\Pi_{t}$ actually does not depend on $t$ since it
coincides with the set of the constant functions from $X$ to $\mathbb{R}$;
further, thanks to $\text{\eqref{eq: scomposizion Lp}}$, it is easy
to check that, for every $r>0$, $Im\Pi_{r}$ is the set of the functions
whose $\mu_{r}$-mean is zero. Thus, since by $\text{\eqref{eq: muinv}}$:
\[
\int_{X}P_{s,t}\Pi_{t}\varphi\left(x\right)\mu_{s}\left(dx\right)=\int_{X}\Pi_{t}\varphi\left(x\right)\mu_{t}\left(dx\right)=0\quad\forall\varphi\in L^{p}\left(X,\mu_{t}\right),
\]
thus $P_{s,t}\Pi_{t}\varphi\in Im\Pi_{s}$, and actually $\Pi_{s}P_{s,t}\Pi_{t}=P_{s,t}\Pi_{t}$.

Further, since $P_{s,t}$ is constant on constant functions, $P_{s,t}f-\int_{X}f\left(y\right)\mu_{t}\left(dy\right)=P_{s,t}\Pi_{t}f$
and we have for every $s<t$:

\begin{align*}
\left\Vert P_{s,t}f-\int_{X}f\left(y\right)\mu_{t}\left(dy\right)\right\Vert _{L^{p}\left(X,\mu_{s}\right)} & =\left\Vert P_{s,t}\Pi_{t}f\right\Vert _{L^{p}\left(X,\mu_{s}\right)}\\
 & =\left\Vert P_{s,t\mid Im\Pi_{t}}\Pi_{t}f\right\Vert _{L^{p}\left(X,\mu_{s}\right)}\\
 & \leq\left\Vert P_{s,t\mid Im\Pi_{t}}\right\Vert _{\mathcal{L}\left(Im\Pi_{t};Im\Pi_{s}\right)}\left\Vert \Pi_{t}f\right\Vert _{L^{p}\left(X,\mu_{t}\right)}\\
 & \leq2\left\Vert P_{s,t\mid Im\Pi_{t}}\right\Vert _{\mathcal{L}\left(Im\Pi_{t};Im\Pi_{s}\right)}\left\Vert f\right\Vert _{L^{p}\left(X,\mu_{t}\right)}.
\end{align*}

So, in order to estimate the behaviour of $P_{s,t}f$, we can focus
on the restricted operator
\begin{equation}
P_{s,t\mid Im\Pi_{t}}:Im\Pi_{t}\to Im\Pi_{s}.\label{eq: def P0}
\end{equation}
Note that both the spaces are Banach with the respective $L^{p}$
norm.
\end{rem}

It is clear that, if the system is, say, $T$-periodic, then the above
operator maps $Im\Pi_{t}$ in itself, for suitable values of $s$.
This opens to the possibility of using spectral theory to understand
the behaviour of the operator $P_{0,T}$ - which becomes, roughly
speaking, a ``root'' of the generic operator $P_{\sigma,\tau}$.

\begin{assumption}
\textcolor{red}{\label{assu: periodicita}} There exists $T>0$ such
that the system is $T$-periodic. Namely:
\[
A\left(t\right)=A\left(t+T\right),\quad B\left(t\right)=B\left(t+T\right),\quad\forall t\in\mathbb{R}.
\]
\end{assumption}

From now on, the period $T$ is fixed. Note that as a consequence
of the latter assumption we have:
\[
U\left(t,s\right)=U\left(t+T,s+T\right),\quad Q\left(t,s\right)=Q\left(t+T,s+T\right)\quad\forall s\leq t.
\]

The rate of convergence in $\text{\eqref{eq: conv}}$ will be estimated
in terms of the decay rate of the evolution operator $U\left(\cdot,\cdot\right)$,
i.e. the negative quantity $\omega_{0}$ appearing in Assumption $\text{\ref{assu: omega_0 neg}}$,
which is a datum of the problem.

Roughly speaking, our approach is the following: thanks to the periodicity
Assumption $\text{\ref{assu: periodicita}}$, the asymptotic behaviour
of $P_{s,t\mid Im\Pi_{t}}$ for $t-s\to+\infty$ is the same as that
of the powers $P_{0,T\mid Im\Pi_{0}}^{N}$ for $N\to\infty$, whose
norms are related to the spectral radius through a well known formula.
The compactness result proved in Proposition $\text{\ref{prop: operatore compatto}}$
will be a fundamental tool to estimate the spectral radius of $P_{0,T\mid Im\Pi_{0}}$;
in addition to it, a certain work will be needed in order to establish
a link between the spectral radius of $P_{0,T\mid Im\Pi_{0}}$ and
$\omega_{0}$. Precisely, the radius is proved to be less than or
equal to the quantity $e^{\omega_{0}T}$, which implies that the norm
$\left\Vert P_{0,T\mid Im\Pi_{0}}^{N}\right\Vert _{\mathcal{L}\left(L^{p}\left(X,\mu_{0}\right)\right)}$
tends to $0$ with a speed that is lower but arbitrarily near to $e^{\omega_{0}NT}$.
This result is Theorem $\ref{teo omega>omega0}$.

But our goal goes beyond this first estimate of the rate of convergence:
we want to give reasonable sufficient conditions (let us say, ``almost''
necessary conditions) under which $e^{\omega_{0}NT}$ is certainly
the highest possible speed of convergence, meaning that $\left\Vert P_{0,T\mid Im\Pi_{0}}^{N}\right\Vert _{\mathcal{L}\left(L^{p}\left(X,\mu_{0}\right)\right)}\leq Ce^{\omega_{0}NT}$
definitely in $N$, and the estimate cannot hold for any $\nu<\omega_{0}$
in place of $\omega_{0}$. These conditions will be identified in
the eigenvalues of maximum modulus of $U\left(T,0\right)^{*}$ being
semisimple, something that holds both in the finite dimensional case
and, most important, in the example given in Section $\text{\ref{sez: esempio}}$.
The proof of the sufficiency of this condition is quite delicate,
particularily in the part regarding the relation between the spectrum
of $U\left(T,0\right)^{*}$ and the spectrum of $P_{0,T\mid Im\Pi_{0}}$.

We split up all these results into two subsections, the first one
regarding the basic estimate, and the second one regarding the optimality
of such estimate.

\vspace{2mm}

Before going into the details we put forward a Lemma which is a basic
consequence of Assumptions $\text{\ref{assu: omega_0 neg}}$ and $\text{\ref{assu: periodicita}}$.
\begin{lem}
\label{lem: raggio spettr U}Let the system satisfy Assumption $\text{\ref{assu: periodicita}}.$
Then the spectral radius of $U\left(T,0\right)$ and $U\left(T,0\right)^{*}$
is $e^{\omega_{0}T}$.
\end{lem}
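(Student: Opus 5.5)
The plan is to use Gelfand's spectral radius formula, $r\left(U\left(T,0\right)\right)=\lim_{N\to\infty}\left\Vert U\left(T,0\right)^{N}\right\Vert _{\mathcal{L}\left(X\right)}^{1/N}$, together with periodicity. By Assumption \ref{assu: periodicita} and the uniqueness of the evolution operator, $U\left(t+T,s+T\right)=U\left(t,s\right)$. Combining this with the law $U\left(t,s\right)=U\left(t,r\right)U\left(r,s\right)$ in \eqref{eq: evoluzione} gives $U\left(NT,0\right)=U\left(T,0\right)^{N}$ for every $N\in\mathbb{N}$. The case of the adjoint is immediate: $\sigma\left(U\left(T,0\right)^{*}\right)=\overline{\sigma\left(U\left(T,0\right)\right)}$ and $\left\Vert \left(U^{*}\right)^{N}\right\Vert =\left\Vert U^{N}\right\Vert $, so the two spectral radii coincide. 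It therefore suffices to show $r:=r\left(U\left(T,0\right)\right)=e^{\omega_{0}T}$.

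For the upper bound, fix any $\omega\in\Omega_{U}$, with constant $M$. Then $\left\Vert U\left(T,0\right)^{N}\right\Vert =\left\Vert U\left(NT,0\right)\right\Vert \leq Me^{\omega NT}$. Taking $N$-th roots and letting $N\to\infty$ gives $r\leq e^{\omega T}$. Since $\omega>\omega_{0}$ is arbitrary in $\Omega_{U}$ (and $\Omega_{U}$ is upward closed, since $t-s\geq0$), we get $r\leq e^{\omega_{0}T}$.

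For the lower bound, the aim is to show that every $\omega$ with $e^{\omega T}>r$ lies in $\Omega_{U}$; this gives $\omega_{0}\leq\frac{1}{T}\log r$, i.e. $e^{\omega_{0}T}\leq r$. This is also the step that handles $r=0$: every real $\omega$ then lies in $\Omega_{U}$, so $\omega_{0}=-\infty$ and the statement holds with $e^{-\infty}=0$. Fix such $\omega$. By Gelfand's formula there is $C>0$ with $\left\Vert U\left(T,0\right)^{N}\right\Vert \leq Ce^{\omega NT}$ for all $N\geq0$.

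Now take arbitrary $s\leq t$. Let $m=\lceil s/T\rceil$ and $n=\lfloor t/T\rfloor$.
\begin{itemize}
\item If $n\geq m$, write $U\left(t,s\right)=U\left(t,nT\right)U\left(T,0\right)^{n-m}U\left(mT,s\right)$, using periodicity to identify $U\left(nT,mT\right)$ with $U\left(T,0\right)^{n-m}$.
\item Otherwise $t-s<T$ and there is nothing to factor.
\end{itemize}
The boundary pieces have length less than $T$. The main point is a uniform bound $K:=\sup_{0\leq t-s\leq T}\left\Vert U\left(t,s\right)\right\Vert <\infty$. By periodicity it suffices to take $s\in\left[0,T\right]$, $t\in\left[s,s+T\right]$, a compact set. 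Strong continuity gives pointwise boundedness of $U\left(t,s\right)x$ there, and the uniform boundedness principle gives $K<\infty$. (Alternatively, $K\leq M$ for any $\omega\in\Omega_{U}$ with $\omega\leq0$, which exists since $\omega_{0}<0$.) Hence $\left\Vert U\left(t,s\right)\right\Vert \leq K^{2}Ce^{\omega\left(n-m\right)T}$. Since $\left(n-m\right)T\geq t-s-2T$, this is at most $K^{2}Ce^{2\left|\omega\right|T}e^{\omega\left(t-s\right)}$. The short case $t-s<T$ is covered the same way by adjusting the constant. Hence $\omega\in\Omega_{U}$.

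The only delicate point is the interval bookkeeping and the uniform constant $K$; the rest is Gelfand's formula.
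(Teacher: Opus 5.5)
Your proof is correct and follows essentially the paper's argument. The upper bound comes from Gelfand's formula together with $U(NT,0)=U(T,0)^{N}$. The lower bound comes from splitting $U(t,s)$ into two pieces of length less than $T$ around a power of $U(T,0)$, with the short pieces bounded uniformly (the paper uses the constant $M$ from some $\omega\in\Omega_U$ with $\omega\le 0$, which is your alternative for $K$).

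The differences are cosmetic: you argue directly instead of by contradiction, and you treat $r=0$ explicitly. Your constant $C$, valid for all $N\geq 0$, also spares you the paper's need to have $N\geq N_{0}$ in the long-interval case, which its choice of $K,N$ does not quite guarantee when $t-s$ is only slightly above $N_{0}T$.
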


\begin{proof}
Denote the spectral radius of $U\left(T,0\right)$ by $\rho_{T}$.
By the Gelfand formula we have:
\begin{equation}
\rho_{T}=\lim_{N\to\infty}\left\Vert U\left(T,0\right)^{N}\right\Vert _{\mathcal{L}\left(X\right)}^{\frac{1}{N}}=\lim_{N\to\infty}\left\Vert U\left(NT,0\right)\right\Vert _{\mathcal{L}\left(X\right)}^{\frac{1}{N}},\label{eq: formula raggio spettrale}
\end{equation}
where the latter equality follows from Assumption $\text{\ref{assu: periodicita}}$.

First we show that $\rho_{T}\leq e^{\omega_{0}T}$. Let $\left(\omega_{k}\right)_{k}\subseteq\Omega_{U}$
be such that $\omega_{k}\to\omega_{0}$ for $k\to\infty$, where $\Omega_{U}$
is the set appearing in Assumption $\text{\ref{assu: omega_0 neg}}$.
Then there exists a sequence $\left(M_{k}\right)_{k}\subseteq\left(0,+\infty\right)$
such that
\[
\left\Vert U\left(NT,0\right)\right\Vert _{\mathcal{L}\left(X\right)}\leq M_{k}e^{N\omega_{k}T}\quad\forall k,N\in\mathbb{N}.
\]
Thus, elevating both members of the previous inequality to the power
$1/N$ and then passing to the limit for $N\to\infty$ we obtain $\rho_{T}\leq e^{\omega_{k}T}$
for every $k\in\mathbb{N}$, which implies the desired inequality.

Now assume by contradiction that $\rho_{T}<e^{\omega_{0}T}$; we show
that in this case there exists $\nu_{0}\in\left(-\infty,\omega_{0}\right)\cap\Omega_{U}$.
We deduce from formula $\text{\eqref{eq: formula raggio spettrale}}$
that there exist $\nu_{0}<\omega_{0}$ and $N_{0}\in\mathbb{N}$ such
that:
\begin{equation}
\left\Vert U\left(NT,0\right)\right\Vert _{\mathcal{L}\left(X\right)}\leq e^{\nu_{0}NT}\quad\forall N\geq N_{0}.\label{eq: ip ass rho U}
\end{equation}
We show that $\nu_{0}\in\Omega_{U}$. First note that by Assumption
$\text{\ref{assu: omega_0 neg}}$ there exists $C_{0}>0$ such that:
\begin{equation}
\left\Vert U\left(t,s\right)\right\Vert _{\mathcal{L}\left(X\right)}\leq C_{0}\quad\forall s\leq t.\label{eq: U limitato}
\end{equation}
Now define:
\begin{align*}
 & C_{1}:=\max\left\{ C_{0},e^{\nu_{0}\left(2-N_{0}\right)T}\right\} ,\\
 & C_{2}:=C_{1}^{2}e^{-2\nu_{0}T}.
\end{align*}
Fix $s\leq t$. If $t-s<N_{0}T$, then we note that, by our choice
of the constants:
\begin{align*}
\left\Vert U\left(t,s\right)\right\Vert _{\mathcal{L}\left(X\right)} & \leq C_{1}\\
 & \leq C_{1}^{2}e^{\nu_{0}\left(N_{0}-2\right)T}\\
 & =C_{2}e^{\nu_{0}N_{0}T}\\
 & \leq C_{2}e^{\nu_{0}\left(t-s\right)}.
\end{align*}
If $t-s\geq N_{0}T$ then there exist $K\in\mathbb{Z}$ and $N\in\mathbb{N}$
depending on $t$ and $s$ such that $N\geq N_{0}$ and:
\begin{align*}
 & \left(K-1\right)T\leq s\leq KT,\\
 & \left(K+N\right)T\leq t\leq\left(K+N+1\right)T.
\end{align*}
Thus:
\begin{align*}
\left\Vert U\left(t,s\right)\right\Vert _{\mathcal{L}\left(X\right)} & =\left\Vert U\left(t,\left(K+N\right)T\right)U\left(NT,0\right)U\left(KT,s\right)\right\Vert _{\mathcal{L}\left(X\right)}\\
 & \leq C_{0}^{2}\left\Vert U\left(NT,0\right)\right\Vert _{\mathcal{L}\left(X\right)}\\
 & \leq C_{0}^{2}e^{\nu_{0}NT}\\
 & \leq C_{1}^{2}e^{-2\nu_{0}T}e^{\nu_{0}\left(t-s\right)}\\
 & =C_{2}e^{\nu_{0}\left(t-s\right)},
\end{align*}
by $\text{\eqref{eq: ip ass rho U}}$ and since $t-s\leq\left(N+2\right)T$
by the choice of $N$. Hence we have proved that
\[
\left\Vert U\left(t,s\right)\right\Vert _{\mathcal{L}\left(X\right)}\leq C_{2}e^{\nu_{0}\left(t-s\right)}\quad\forall s\leq t,
\]
where the quantity $C_{2}>0$ depends only on $\nu_{0}$, $T$ and
the problem's data. This implies that $\nu_{0}\in\Omega_{U}$, which
contradicts the definition of $\omega_{0}$. Thus $\rho_{T}=e^{\omega_{0}T}$,
and the same holds for the spectral radius of $U\left(T,0\right)^{*}$.
\end{proof}
\subsection{Exponential convergence of the Ornstein-Uhlenbeck operators}
\leavevmode\label{subsez: caso omega>omega0}

\begin{thm}
\label{teo omega>omega0}Let the system satisfy Assumption $\text{\ref{assu: periodicita}}.$
Then $P_{0,T}\left(Im\Pi_{0}\right)\subseteq Im\Pi_{0}$, and the
spectral radius of $P_{0,T\mid Im\Pi_{0}}$ is less than or equal
to $e^{\omega_{0}T}$.

Consequently, for every $\omega\in\left(\omega_{0},0\right)$, there
exists $N_{\omega}\in\mathbb{N}$ such that:
\begin{equation}
\left\Vert P_{0,T\mid Im\Pi_{0}}^{N}\right\Vert _{\mathcal{L}\left(L^{p}\left(X,\mu_{0}\right)\right)}\leq e^{\omega NT}\quad\forall N\geq N_{\omega}.\label{eq: stima potenza P_0,T}
\end{equation}
Further, there exists also $C_{T,\omega}>0$ such that:
\begin{align}
\left\Vert P_{s,t}f-\int_{X}f\left(y\right)\mu_{t}\left(dy\right)\right\Vert _{L^{p}\left(X,\mu_{s}\right)}\leq C_{T,\omega}e^{\omega\left(t-s\right)}\left\Vert f\right\Vert _{L^{p}\left(X,\mu_{t}\right)}\label{eq: stima Ps,tf - mtf}\\
\forall s<t,\ f\in L^{p}\left(X,\mu_{t}\right),\nonumber 
\end{align}
where $\left\{ \mu_{t}\mid t\in\mathbb{R}\right\} $ is the family
of Gaussian measures defined in $\text{\eqref{def misura inv}}$.
\end{thm}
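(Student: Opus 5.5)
The first claim follows from periodicity. By Assumption \ref{assu: periodicita}, $Q(T,-\infty)=Q(0,-\infty)$, so $\mu_{T}=\mu_{0}$, and by \eqref{eq: Ps,t ristretto} we get $P_{0,T}(Im\Pi_{0})=P_{0,T}(Im\Pi_{T})\subseteq Im\Pi_{0}$. In the same way, $U(t+kT,s+kT)=U(t,s)$ and $Q(t+kT,s+kT)=Q(t,s)$ give $\mu_{kT}=\mu_{0}$ and $P_{kT,(k+N)T}=P_{0,NT}=P_{0,T}^{N}$ as operators on $L^{p}(X,\mu_{0})$, using \eqref{eq: op evoluzione} extended by density.

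For the spectral radius I will use compactness. By Proposition \ref{prop: operatore compatto}, $P_{0,T}$ is compact on $L^{p}(X,\mu_{0})$, and so is its restriction to the closed invariant subspace $Im\Pi_{0}$. Hence every nonzero point of its spectrum is an eigenvalue. Suppose, by contradiction, that $\lambda$ is an eigenvalue with $|\lambda|>e^{\omega_{0}T}$ and eigenfunction $f\in Im\Pi_{0}\setminus\{0\}$.
\begin{itemize}
\item[$\triangleright$] Since $f=\lambda^{-1}P_{0,T}f$, Proposition \ref{prop: operatore compatto} gives $f\in W^{1,p}(X,\mu_{0})$.
\item[$\triangleright$] Formula \eqref{eq: formula grad C1}, which holds for $W^{1,p}$ functions $\mu_{0}$-a.e., applied to $P_{0,NT}=P_{0,T}^{N}$ gives
\[
\lambda^{N}\nabla f=\nabla P_{0,NT}f=U(NT,0)^{*}\,\overrightarrow{P_{0,NT}}\nabla f .
\]
\item[$\triangleright$] Since $\overrightarrow{P_{0,NT}}$ is a contraction from $L^{p}(X,\mu_{0};X)$ to itself (Proposition \ref{prop: estensione P_s,t} with $\mu_{NT}=\mu_{0}$), we obtain
\[
|\lambda|^{N}\Vert\nabla f\Vert_{L^{p}(X,\mu_{0};X)}\leq\Vert U(NT,0)\Vert_{\mathcal{L}(X)}\Vert\nabla f\Vert_{L^{p}(X,\mu_{0};X)} .
\]
\item[$\triangleright$] By Lemma \ref{lem: raggio spettr U} and \eqref{eq: formula raggio spettrale}, $\Vert U(NT,0)\Vert^{1/N}\to e^{\omega_{0}T}<|\lambda|$, which forces $\nabla f=0$.
\end{itemize}

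The remaining step, which I expect to be the main obstacle, is deducing from $\nabla f=0$ that $f$ is constant. I would invoke the Gaussian Poincar\'e inequality in $L^{p}(X,\mu_{0})$ for the non-degenerate measure $\mu_{0}$ (Assumption \ref{assu: mu_t non degeneri}), namely $\Vert\Pi_{0}g\Vert_{L^{p}}\leq C\Vert\nabla g\Vert_{L^{p}}$, which is standard for $W^{1,p}(X,\mu_{0})$ as defined via closure (cf. \cite{DAP-ZAB}). Then $f=\Pi_{0}f=0$, contradicting $f\neq0$. This shows that the spectral radius is at most $e^{\omega_{0}T}$. Estimate \eqref{eq: stima potenza P_0,T} then follows from the Gelfand formula: $\lim_{N}\Vert P_{0,T\mid Im\Pi_{0}}^{N}\Vert^{1/N}\leq e^{\omega_{0}T}<e^{\omega T}$.

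For \eqref{eq: stima Ps,tf - mtf}, let $s<t$. If $t-s\geq (N_{\omega}+2)T$, choose $k\in\mathbb{Z}$ and $N\geq N_{\omega}$ with $(k-1)T\leq s\leq kT$ and $(k+N)T\leq t\leq(k+N+1)T$. Then
\[
P_{s,t}\Pi_{t}f=P_{s,kT}\,P_{kT,(k+N)T}\,P_{(k+N)T,t}\Pi_{t}f .
\]
Here $P_{(k+N)T,t}\Pi_{t}f\in Im\Pi_{0}$ with norm at most $2\Vert f\Vert$ by Remark \ref{rem: proiezione}. The middle factor is $P_{0,T}^{N}$ restricted to $Im\Pi_{0}$, and the outer factor is a contraction. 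Hence the norm is at most $2e^{\omega NT}\Vert f\Vert\leq 2e^{-2\omega T}e^{\omega(t-s)}\Vert f\Vert$, since $t-s\leq(N+2)T$ and $\omega<0$. If instead $t-s<(N_{\omega}+2)T$, the trivial bound $2\Vert f\Vert\leq 2e^{-\omega(N_{\omega}+2)T}e^{\omega(t-s)}\Vert f\Vert$ suffices. Taking $C_{T,\omega}$ to be the maximum of the two constants concludes the proof.
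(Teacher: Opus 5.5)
Your proof is correct. Its architecture matches the paper's: invariance from $\mu_T=\mu_0$, compactness so that only nonzero eigenvalues matter, passing to gradients in $P_{0,NT}f=\lambda^N f$, and the periodic splitting of $P_{s,t}$. The key eigenvalue estimate, however, is obtained differently.

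The paper writes $\nabla P_{0,NT}f=U((N-1)T,0)^{*}\overrightarrow{P_{0,(N-1)T}}(\nabla P_{0,T}f)$ and then applies the smoothing estimate \eqref{stima nabla Ps,t} over one period. This yields $|\lambda|^N\Vert\nabla f\Vert\leq Ce^{\omega NT}\Vert f\Vert_{L^p(X,\mu_0)}$, with a constant involving $\Vert\Lambda(T,0)\Vert$ and the decay bound $M_\omega e^{\omega(N-1)T}$. You instead apply the $W^{1,p}$ version of \eqref{eq: formula grad C1} to $f$ itself. Then $\nabla f$ satisfies the same eigen-relation for $U(NT,0)^{*}\overrightarrow{P_{0,NT}}$, and the self-referential bound $|\lambda|^N\Vert\nabla f\Vert\leq\Vert U(NT,0)\Vert\,\Vert\nabla f\Vert$ closes the argument through the Gelfand formula of Lemma \ref{lem: raggio spettr U}.

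Your route is shorter and uses the regularization only qualitatively, to know that $f\in W^{1,p}(X,\mu_0)$. The paper's route buys a genuine decay estimate, $\Vert\nabla P_{0,NT}f\Vert\leq Ce^{\omega NT}\Vert f\Vert$, valid for every $f\in L^p(X,\mu_0)$ and not only for eigenfunctions.

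You also handle two points more carefully than the paper.
\begin{itemize}
\item The paper simply asserts that $\nabla f=0$ forces $f$ to be constant. Your appeal to the Gaussian Poincar\'e inequality justifies this, and it is the same inequality the paper itself invokes later in Lemma \ref{cor: nabla2 uguale 0}.
\item Your threshold $t-s\geq(N_\omega+2)T$ actually guarantees an integer $N\geq N_\omega$ in the splitting. The paper's threshold $t-s\geq N_\omega T$ only gives $N\geq N_\omega-2$, so \eqref{eq: stima potenza P_0,T} is not directly available in the gap $N_\omega T\leq t-s<(N_\omega+2)T$.
\end{itemize}
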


\begin{proof}
The periodicity of the system implies $\mu_{T}=\mu_{0}$, thus $P_{0,T}\left(Im\Pi_{0}\right)\subseteq Im\Pi_{0}$
by $\text{\eqref{eq: Ps,t ristretto}}$; thus, for every $N\in\mathbb{N}$,
also $Im\Pi_{0}$ is invariant for $P_{0,T}^{N}$, which coincides
with $P_{0,NT}$ by relation $\text{\eqref{eq: op evoluzione}}$ extended
to $L^{p}\left(X,\mu_{0}\right)$ functions.

By Proposition $\text{\ref{prop: operatore compatto}}$, $P_{0,T\mid Im\Pi_{0}}$
is a compact operator, since $Im\Pi_{0}$ is a closed and $P_{0,T}$-invariant
subspace of $L^{p}\left(X,\mu_{0}\right)$. This allows us to relate
the behaviour of the powers of such operator to its eigenvalues.

Denote by $\Sigma$ the spectrum of $P_{0,T\mid Im\Pi_{0}}$ and by
$R_{T}$ its spectral radius, namely $R_{T}:=\max_{\lambda\in\Sigma}\left|\lambda\right|$.
We are going to prove that:
\begin{equation}
R_{T}\leq e^{\omega_{0}T}.\label{eq: R_t leq exp(omega0T)}
\end{equation}
We have:
\begin{align*}
R_{T}=\lim_{N\to\infty}\left\Vert P_{0,T\mid Im\Pi_{0}}^{N}\right\Vert _{\mathcal{L}\left(Im\Pi_{0}\right)}^{\frac{1}{N}} & ,
\end{align*}
where the notation is lawful since $Im\Pi_{0}$ is $P_{0,T}$-invariant.
Thus relation $\text{\eqref{eq: R_t leq exp(omega0T)}}$ will imply
immediately that, for every $\omega\in\left(\omega_{0},0\right)$,
there exists $N_{0}\in\mathbb{N}$ depending on $\omega$ such that
relation $\text{\eqref{eq: stima potenza P_0,T}}$ holds.

The compactness of $P_{0,T\mid Im\Pi_{0}}$ implies that $0\in\Sigma$
because $Im\Pi_{0}$ has infinite dimension; we assume without loss
of generality that $R_{T}>0$ so that $\Sigma$ contains some non-zero
elements, which are eigenvalues of $P_{0,T\mid Im\Pi_{0}}$. Thus:
\[
R_{T}=\sup\left\{ \left|\lambda\right|\mid\lambda\text{ is an eigenvalue of }P_{0,T\mid Im\Pi_{0}},\ \lambda\neq0\right\} .
\]

Let $\lambda$ be an eigenvalue of $P_{0,T\mid Im\Pi_{0}}$ with $\lambda\neq0$,
and let $f\in Im\Pi_{0}$ the associated eigenfunction. Thus:
\begin{equation}
P_{0,NT}f=P_{0,T}^{N}f=\lambda^{N}f\quad\forall N\in\mathbb{N}.\label{eq: autofunzione}
\end{equation}
The idea to estimate the magnitude of $\left|\lambda\right|$ is to
pass to the gradients in the above relation. This will allow us to
use our knowledge about the operators $P_{0,NT}$ and $P_{0,T}$ which
derives from applying the Propositions $\text{\ref{prop: op evoluzione}}$,
$\text{\ref{prop: invarianza}}$ and $\text{\ref{prop: operatore compatto}}$
to them.

Fix $N\in\mathbb{N}$. First observe that relation $\text{\eqref{eq: autofunzione}}$
obviously implies that $f\in W^{1,p}\left(X,\mu_{0}\right)$, by Proposition
$\text{\ref{prop: operatore compatto}}$.

The gradients of the two sides of $\text{\eqref{eq: autofunzione}}$
do coincide, but applying the estimate in $\text{\eqref{stima nabla Ps,t}}$
directly to $\nabla P_{0,NT}$ would lead to the need for estimating
the norm of the operator $\Lambda\left(NT,0\right)=Q^{-\frac{1}{2}}\left(NT,0\right)U\left(NT,0\right)$.
To exploit Assumption $\text{\ref{assu: omega_0 neg}}$ more easily,
we have to use the gradient formula $\text{\eqref{eq: formula grad C1}}$
for $W^{1,p}$ functions, which is legitimate by Proposition $\text{\ref{prop: operatore compatto}}$.

Observe that the function $P_{\left(N-1\right)T,NT}f$ also belongs
to $W^{1,p}\left(X,\mu_{0}\right)$. Consider the operator $\overrightarrow{P_{0,\left(N-1\right)T}}$
as an operator from $L^{p}\left(X,\mu_{0};X\right)$ to itself, as
allowed by Proposition $\text{\ref{prop: estensione P_s,t}}$. Using
the evolution operator property in $\text{\eqref{eq: op evoluzione}}$
- which obviously holds for $\varphi\in W^{1,p}\left(X,\mu_{0}\right)$
- and the gradient formula $\text{\eqref{eq: formula grad C1}}$ for
$W^{1,p}$ functions, we obtain:
\begin{align*}
\nabla P_{0,NT}f & =\nabla\left[P_{0,\left(N-1\right)T}\left(P_{\left(N-1\right)T,NT}f\right)\right]\\
 & =U\left(\left(N-1\right)T,0\right)^{*}\overrightarrow{P_{0,\left(N-1\right)T}}\left(\nabla P_{\left(N-1\right)T,NT}f\right)\\
 & =U\left(\left(N-1\right)T,0\right)^{*}\overrightarrow{P_{0,\left(N-1\right)T}}\left(\nabla P_{0,T}f\right),
\end{align*}
Now let $\omega\in\left(\omega_{0},0\right)$; by Assumption $\text{\ref{assu: omega_0 neg}}$,
there exists $M_{\omega}>0$ such that
\[
\left\Vert U\left(\tau,\sigma\right)^{*}\right\Vert _{\mathcal{L}\left(X\right)}\leq M_{\omega}e^{\omega\left(\tau-\sigma\right)}\quad\forall\sigma<\tau.
\]

Thus, applying $\text{\eqref{stima nabla Ps,t}}$ and remembering
that $\overrightarrow{P_{0,\left(N-1\right)T}}$ is a contraction:
\begin{align}
\left\Vert \nabla P_{0,NT}f\right\Vert _{L^{p}\left(X,\mu_{0};X\right)} & \leq\left\Vert U\left(\left(N-1\right)T,0\right)^{*}\right\Vert _{\mathcal{L}\left(X\right)}\left\Vert \nabla P_{0,T}f\right\Vert _{L^{p}\left(X,\mu_{0};X\right)}\nonumber \\
 & \leq M_{\omega}e^{\omega\left(N-1\right)T}C_{p}\left\Vert \Lambda\left(T,0\right)\right\Vert _{\mathcal{L}\left(X\right)}\left\Vert f\right\Vert _{L^{p}\left(X,\mu_{0}\right)}\nonumber \\
 & =:C\left(p,T,\omega\right)e^{\omega NT}\left\Vert f\right\Vert _{L^{p}\left(X,\mu_{0}\right)}.\label{eq: stima nabla esponenziale}
\end{align}

Hence, passing to the gradients in $\text{\eqref{eq: autofunzione}}$
and using $\text{\eqref{eq: stima nabla esponenziale}}$ we obtain:
\begin{align*}
\left|\lambda\right|^{N}\left\Vert \nabla f\right\Vert _{L^{p}\left(X,\mu_{0};X\right)} & =\left\Vert \nabla P_{0,NT}f\right\Vert _{L^{p}\left(X,\mu_{0};X\right)}\\
 & \leq C\left(p,T,\omega\right)e^{\omega NT}\left\Vert f\right\Vert _{L^{p}\left(X,\mu_{0}\right)},\quad\forall N\in\mathbb{N}.
\end{align*}
This implies $\left|\lambda\right|\leq e^{\omega T}$. Indeed, in
the opposite case, we would infer from the above inequality that $\nabla f=0$,
by dividing both members by $\left|\lambda\right|^{N}$(which is not
null) and then letting $N\to\infty$. Thus, $f$ would be constant,
namely $f\in Ker\Pi_{0}$. Since by assumption $f\in Im\Pi_{0}$,
the decomposition in $\text{\eqref{eq: scomposizion Lp}}$ would imply
$f=0$, a contradiction.

The reason why we have focused our attention to the restricted operator
$P_{0,T\mid Im\Pi_{0}}$ is now apparent: such operator does not admit
any constant eigenfunction.

This argument shows that $R_{T}\leq e^{\omega T}$, and, therefore,
that $\text{\eqref{eq: R_t leq exp(omega0T)}}$ holds, since $\omega>\omega_{0}$
is generic.

Now we prove the final part of the statement. Assume without loss
of generality that $N_{\omega}\geq2$ and fix $s<t$.

If $t-s<N_{\omega}T$ then
\begin{equation}
\left\Vert P_{s,t\mid Im\Pi_{t}}\right\Vert _{\mathcal{L}\left(Im\Pi_{t};Im\Pi_{s}\right)}\leq1\leq e^{-\omega N_{\omega}T}e^{\omega\left(t-s\right)}.\label{eq: stima P_s,t t-s piccolo}
\end{equation}
In case that $t-s\geq N_{\omega}T$, let $\left(K-1\right)T\leq s\leq KT$
and $\left(K+N\right)T\leq t\leq\left(K+N+1\right)T$ for suitable
integers $K,N$ with $N\geq N_{\omega}$. We have:

\begin{align}
P_{s,t} & =P_{s,KT}P_{KT,\left(K+N\right)T}P_{\left(K+N\right)T,t}\nonumber \\
 & =P_{s,KT}P_{0,T}^{N}P_{\left(K+N\right)T,t}.\label{scomposizione}
\end{align}
Observe that, in general, $P_{q,r}P_{r,t\mid Im\Pi_{t}}=P_{q,r\mid Im\Pi_{r}}P_{r,t\mid Im\Pi_{t}}$,
by $\text{\eqref{eq: Ps,t ristretto}}$. Hence it follows from $\text{\eqref{scomposizione}}$
that:
\[
P_{s,t\mid Im\Pi_{t}}=P_{s,KT\mid Im\Pi_{0}}P_{0,T\mid Im\Pi_{0}}^{N}P_{\left(K+N\right)T,t\mid Im\Pi_{t}}.
\]
In particular, since every Ornstein-Uhlenbeck operator is a contraction,
we obtain from this and $\text{\eqref{eq: stima potenza P_0,T}}$:
\begin{align*}
\left\Vert P_{s,t\mid Im\Pi_{t}}\right\Vert _{\mathcal{L}\left(Im\Pi_{t};Im\Pi_{s}\right)} & \leq\left\Vert P_{0,T\mid Im\Pi_{0}}^{N}\right\Vert _{\mathcal{L}\left(Im\Pi_{0}\right)}\\
 & \leq e^{\omega NT}\\
 & \leq e^{-\omega N_{\omega}T}e^{\omega\left(t-s\right)},
\end{align*}
where the last inequality holds since $\left(N+2\right)T\geq t-s$,
by the choice of $N$ and $K$.

This estimate together with $\text{\eqref{eq: stima P_s,t t-s piccolo}}$
means:
\begin{equation}
\left\Vert P_{s,t\mid Im\Pi_{t}}\right\Vert _{\mathcal{L}\left(Im\Pi_{t};Im\Pi_{s}\right)}\leq e^{-\omega N_{\omega}T}e^{\omega\left(t-s\right)}\quad\forall s<t.\label{eq: stima P_s,t s e t qualunque}
\end{equation}

Combining $\text{\eqref{eq: stima P_s,t s e t qualunque}}$ with the
final estimate in Remark $\text{\ref{rem: proiezione}}$ we obtain,
for every $f\in L^{P}\left(X,\mu_{t}\right)$: 
\begin{align*}
\left\Vert P_{s,t}f-\int_{X}f\left(y\right)\mu_{t}\left(dy\right)\right\Vert _{L^{p}\left(X,\mu_{s}\right)} & \leq2\left\Vert P_{s,t\mid Im\Pi_{t}}\right\Vert _{\mathcal{L}\left(Im\Pi_{t};Im\Pi_{s}\right)}\left\Vert f\right\Vert _{L^{p}\left(X,\mu_{t}\right)}\\
 & \leq2e^{-\omega N_{\omega}T}e^{\omega\left(t-s\right)}\left\Vert f\right\Vert _{L^{p}\left(X,\mu_{t}\right)}.
\end{align*}
Relation $\text{\eqref{eq: stima Ps,tf - mtf}}$ is thus proved with
$C_{T,\omega}=2e^{-\omega N_{\omega}T}$, which makes sense since
$N_{\omega}$ depends on $T$, $\omega$, $\bar{\omega}$ and thus,
ultimately, on $T$, $\omega_{0}$ and $\omega$.

This concludes the proof.
\end{proof}
\subsection{Bounds on the convergence rates}
\leavevmode\label{subsez: caso omega leq omega0}

Here we want to explore the question of the optimality of the convergence
estimates in Theorem $\text{\ref{teo omega>omega0}}$. The new assumption,
satisfied by the example in Section $\text{\ref{sez: esempio}}$,
is the following.
\begin{assumption}
\label{ass: U* ha autov di mod max}The operator $U\left(T,0\right)^{*}$
has an eigenvalue whose modulus is equal to its spectral radius, namely
- as established by Lemma $\text{\ref{lem: raggio spettr U}}$ - to
$e^{\omega_{0}T}$.
\end{assumption}

The main tool in this subsection will be spectral analysis, and in
particular semisimple eigenvalues and spectral projections. We briefly
illustrate the some basic properties of these notions.

If $E$ be a Banach space, $T\in\mathcal{L}\left(E\right)$, we denote
by $\sigma\left(T\right)$ and $\rho\left(T\right):=\mathbb{C}\setminus\sigma\left(T\right)$
the spectrum and the resolvent set of $T$, respectively. For every
$z\in\rho\left(T\right)$, the operator $R\left(z,T\right):=\left(zI-T\right)^{-1}$
is the \emph{resolvent operator} of $T$ at the point $z$, and belongs
to $\mathcal{L}\left(E\right)$ by definition. The domain $\rho\left(T\right)$
of the operator valued function $R\left(\cdot,T\right)$ is open and
$R\left(\cdot,T\right)$ is holomorphic in $\rho\left(T\right)$,
and has singularities at the points of $\sigma\left(T\right)$, which
is compact.

An isolated element of $\sigma\left(T\right)$\footnote{Namely, a complex number $\lambda\in\sigma\left(T\right)$ that admits
a punctured neighbourhood not intersecting $\sigma\left(T\right)$.} that is a first order pole of $R\left(\cdot,T\right)$ is called
\emph{semisimpl}e \emph{for} $T$. This notion has many characterizations,
among which are the following.

\vspace{2mm}

\emph{If $\lambda$ is an isolated element of $\sigma\left(T\right)$,
then $\lambda$ is semisimple for $T$ if and only $Im\left(\lambda I-T\right)$
is closed and 
\begin{equation}
E=Ker\left(\lambda I-T\right)\oplus Im\left(\lambda I-T\right).\label{eq: scomposizione spazio semisempl}
\end{equation}
}

We recall the notion of spectral projection. Let $\Omega\subseteq\mathbb{C}$
be a bounded region with rectifiable boundary $\partial\Omega\subseteq\rho\left(T\right)$,
and let $\sigma_{0}\left(T\right)\subseteq\sigma\left(T\right)$ such
that $\sigma_{0}\left(T\right)$ is made of isolated elements of $\sigma\left(T\right)$.
Assume that $\sigma_{0}\left(T\right)=\sigma\left(T\right)\cap\Omega$. 

The \emph{spectral projection} of $T$ relative to the region $\Omega$
is the operator defined by:
\[
\mathcal{P}\left(\Omega,T\right)=\frac{1}{2\pi i}\int_{\partial\Omega}R\left(\xi,T\right)d\xi.
\]
Clearly $\mathcal{P}\left(\Omega,T\right)\in\mathcal{L}\left(E\right)$,
and it is easily proven that $\mathcal{P}\left(\Omega,T\right)=\mathcal{P}\left(\Omega,T\right)^{2}$
(the reason why it is called a projection). This implies that both
$Ker\mathcal{P}\left(\Omega,T\right)$ and $Im\mathcal{P}\left(\Omega,T\right)$
are closed and $T$-invariant subspaces of $E$, and that
\[
E=Im\mathcal{P}\left(\Omega,T\right)\oplus Ker\mathcal{P}\left(\Omega,T\right).
\]
The invariance of $Im\mathcal{P}\left(\Omega,T\right)$ and $Ker\mathcal{P}\left(\Omega,T\right)$
is a consequence of the fact that $T$ and $\mathcal{P}\left(\Omega,T\right)$
commute, because any bounded operator can enter and exit the Bochner
integral through which $\mathcal{P}$ is defined.

Further, it can be proved that, for every $\lambda\in\sigma_{0}\left(T\right)$:
\begin{align}
 & Ker\mathcal{P}\left(\Omega,T\right)\subseteq Im\left(\lambda I-T\right),\label{eq: Ker Proiez Spettr}\\
 & Ker\left(\lambda I-T\right)\subseteq Im\mathcal{P}\left(\Omega,T\right).\label{eq: Im  Proiez Spettr}
\end{align}

The main utility of this projection operator is in the following spectrum
separation property:
\begin{align*}
\sigma\left(T_{\mid Im\mathcal{P}\left(\Omega,T\right)}\right) & =\sigma_{0}\left(T\right),\\
\sigma\left(T_{\mid Ker\mathcal{P}\left(\Omega,T\right)}\right) & =\sigma\left(T\right)\setminus\sigma_{0}\left(T\right).
\end{align*}

Spectral projections can be used to further characterize semisimplicity.

If $\lambda$ is an isolated element of $\sigma\left(T\right)$, we
denote by $\mathcal{P}\left(\lambda,T\right)$ the spectral projection
of $T$ relative to any small region containing $\lambda$ and no
other element of $\sigma\left(T\right)$ (the integral in the definition
does not depend on the path, by Cauchy's integral theorem).

Then, an analysis of the $\mathcal{L}\left(X\right)$-valued coefficients
in the Laurent development of $R\left(\cdot,T\right)$ leads to the
following equivalence:

\vspace{2mm}

\emph{$\lambda$ is semisimple for $T$ is and only if the operator
$\left(\lambda I-T\right)\mathcal{P}\left(\lambda,T\right)$ is identically
zero}\footnote{For $n\in\mathbb{Z}$, the $n$-th coefficient in the Laurent development
of $R\left(\cdot,T\right)$ in $\lambda$ is $a_{n}=\frac{1}{2\pi i}\int_{\gamma\left(\lambda\right)}\left(z-\lambda\right)^{-\left(n+1\right)}R\left(z,T\right)dz$.
It is easily proven that, for every $n\leq-2$:
\[
a_{n}=\left(\lambda I-T\right)^{-\left(n+1\right)}\mathcal{P}\left(\lambda,T\right).
\]
Hence if $\left(\lambda I-T\right)\mathcal{P}\left(\lambda,T\right)=0$
then $a_{n}=0$ for every $n\leq-2$, which means exactly that $\lambda$
is a first order pole of $R\left(\cdot,T\right)$ - namely that $\lambda$
is semisimple for $T$. The converse is obvious. }\emph{.}

\vspace{2mm}

Note that, in virtue of the above assertion, an isolated element $\lambda$
of $\sigma\left(T\right)$ is semisimple for $T$ if and only if the
inclusions in $\eqref{eq: Ker Proiez Spettr}$ and $\text{\eqref{eq: Im  Proiez Spettr}}$
are actually identities.
\begin{rem}
\label{rem:  semisemplici compatto}If $T$ is compact and $\lambda\neq0$
is an element of $\sigma\left(T\right)$, then the condition 
\begin{equation}
Ker\left(\lambda I-T\right)^{2}\subseteq Ker\left(\lambda I-T\right)\label{eq: semisempl compatto}
\end{equation}
is equivalent to the semisimplicity of $\lambda$ with respect to
$T$.

Indeed, if $\lambda$ is semisimple for $T$ then relation $\text{\eqref{eq: scomposizione spazio semisempl}}$
implies $\eqref{eq: semisempl compatto}$.

Conversely, assume that $\eqref{eq: semisempl compatto}$ holds and
recall that by a well known property of compact operators, $\lambda$
must be an isolated eigenvalue of $T$ and a \emph{pole} of $R\left(\cdot,T\right)$
- meaning that the minimum $n_{0}$ of the natural numbers $n$ such
that $\left(\lambda I-T\right)^{n}\mathcal{P}\left(\lambda,T\right)^{n}=0$
must be finite. We show that $\eqref{eq: semisempl compatto}$ implies
that $n_{0}=1$.

Indeed, assume by contradiction that $n_{0}\geq2$. There exists $x_{0}\neq0$
such that
\[
y_{0}:=\left(\lambda I-T\right)^{n_{0}-1}\mathcal{P}\left(\lambda,T\right)^{n_{0}-1}x_{0}\neq0.
\]
Since $\left(\lambda I-T\right)^{n_{0}-2}\mathcal{P}\left(\lambda,T\right)^{n_{0}}x_{0}\in Ker\left(\lambda I-T\right)^{2}$,
we have $y_{0}\in Ker\mathcal{P}\left(\lambda,T\right)$ by $\eqref{eq: semisempl compatto}$.
Since by definition $y_{0}\in Im\mathcal{P}\left(\lambda,T\right)$
(because $n_{0}>1$), we have $y_{0}=0$, which is absurd.
\end{rem}

\begin{rem}
\label{rem: polinomiali}By substituting, in the definitional formula
$\text{\eqref{eq: def O-U formula}}$, the domain $C_{b}\left(X\right)$
with the larger set $C_{p}\left(X\right)$ of continuous functions
with polynomial growth of a certain order\footnote{Namely, the functions $\varphi\in C\left(X\right)$ such that there
exists $k\in\mathbb{N}$ and $C>0$ such that $\left|\varphi\left(x\right)\right|\leq C\left(1+\left|x\right|_{X}^{k}\right)$
for every $x\in X$.} one obtains a substantially equivalent theory. Precisely, all the
results in Section $\text{\ref{sez: O-U Cb}}$ and in Subsection $\text{\ref{subsez: reg Cb}}$
hold true with the same formulation, as well as Proposition $\text{\ref{prop: CM formula}}$,
while the ``invariance'' property in Proposition $\text{\ref{prop: invarianza}}$
holds with $\varphi\in C_{p}\left(X\right)$ (with the same proof).
Consequently, the extension in Proposition $\text{\ref{prop: estensione P_s,t}}$
can be performed by taking $C_{p}\left(X\right)$ as the initial domain,
which implies that the extended operator $P_{s,t}^{p}$, when applied
to a function in $C_{p}\left(X\right)$, can be considered to be represented
by formula $\text{\eqref{eq: def O-U formula}}$ .
\end{rem}

We establish a first relationship between the spectrum of $P_{0,T\mid Im\Pi_{0}}$
and the spectrum of the evolution operator $U\left(T,0\right)^{*}$.
\begin{lem}
\label{lem: autov. 1}i) Let the system satisfy Assumption $\text{\ref{assu: periodicita}}$.
Then every eigenvalue $\lambda$ of $U\left(T,0\right)^{*}$ is an
eigenvalue of $P_{0,T\mid Im\Pi_{0}}$, and, for every $w\in Ker\left(\lambda I-U\left(T,0\right)^{*}\right)\setminus\left\{ 0\right\} $,
the linear function $\left\langle w,\cdot\right\rangle _{X}$ is an
eigenfunction of $P_{0,T\mid Im\Pi_{0}}$ relative to $\lambda$.

ii) If also Assumption $\text{\ref{ass: U* ha autov di mod max}}$
is satisfied, the spectral radius of $P_{0,T\mid Im\Pi_{0}}$ is $e^{\omega_{0}T}$.
\end{lem}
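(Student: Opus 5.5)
Both parts follow from computing $P_{0,T}$ on linear functionals directly from the explicit formula \eqref{eq: def O-U formula}. Remark \ref{rem: polinomiali} makes this legitimate, since it allows $P_{0,T}$ to act on functions of polynomial growth through that formula.

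\emph{Part i).} Fix $w\in Ker\left(\lambda I-U\left(T,0\right)^{*}\right)\setminus\left\{ 0\right\}$ and set $f\left(x\right):=\left\langle w,x\right\rangle _{X}$. Then $f\in C_{p}\left(X\right)\subseteq L^{p}\left(X,\mu_{0}\right)$, because Gaussian measures have finite moments of every order. By \eqref{eq: def O-U formula},
\[
P_{0,T}f\left(x\right)=\int_{X}\left\langle w,U\left(T,0\right)x+y\right\rangle _{X}\gamma_{T,0}\left(dy\right)=\left\langle U\left(T,0\right)^{*}w,x\right\rangle _{X}=\lambda\left\langle w,x\right\rangle _{X},
\]
where the term $\int_{X}\left\langle w,y\right\rangle _{X}\gamma_{T,0}\left(dy\right)$ vanishes because $\gamma_{T,0}$ is centered. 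We also have $f\in Im\Pi_{0}$, since $\mu_{0}$ is centered and so $\int_{X}f\,d\mu_{0}=0$. Finally $f\neq0$ in $L^{p}\left(X,\mu_{0}\right)$. Indeed, under $\mu_{0}$ the function $f$ has law $\mathcal{N}\left(0,\left\langle Q\left(0,-\infty\right)w,w\right\rangle _{X}\right)$, and this variance is strictly positive by Assumption \ref{assu: mu_t non degeneri}, i.e. $Q\left(0,-\infty\right)$ is injective. Hence $f$ is an eigenfunction of $P_{0,T\mid Im\Pi_{0}}$ with eigenvalue $\lambda$.

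If $\lambda$ is complex, take $w$ in the complexification. Then $f=\left\langle w,\cdot\right\rangle$ with the sesquilinear pairing, and the same computation applies, with $L^{p}$ understood as complex-valued. This is the standard convention for spectral theory and is implicit in the paper. The point I expect to need the most care is identifying $P_{0,T}f$, where $P_{0,T}$ is the abstract $L^{p}$ extension, with the integral formula. Remark \ref{rem: polinomiali} is there exactly to justify this step. Alternatively, approximate $f$ by the truncations $\max\left(-n,\min\left(n,f\right)\right)\in C_{b}\left(X\right)$ and use dominated convergence in the formula, together with the continuity of $P_{0,T}$ from Proposition \ref{prop: estensione P_s,t}.

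\emph{Part ii).} By Assumption \ref{ass: U* ha autov di mod max}, $U\left(T,0\right)^{*}$ has an eigenvalue $\lambda$ with $\left|\lambda\right|=e^{\omega_{0}T}$, using Lemma \ref{lem: raggio spettr U}. By part i), $\lambda$ is an eigenvalue of $P_{0,T\mid Im\Pi_{0}}$, so the spectral radius $R_{T}$ satisfies $R_{T}\geq e^{\omega_{0}T}$. The reverse inequality $R_{T}\leq e^{\omega_{0}T}$ is Theorem \ref{teo omega>omega0}, so $R_{T}=e^{\omega_{0}T}$.
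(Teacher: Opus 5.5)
Your proposal is correct and follows the paper's proof essentially line by line. The paper computes $P_{0,T}\left\langle w,\cdot\right\rangle _{X}$ through formula \eqref{eq: def O-U formula}, justified by Remark \ref{rem: polinomiali}, uses the zero $\mu_{0}$-mean to get membership in $Im\Pi_{0}$, and derives part ii) from Lemma \ref{lem: raggio spettr U} together with Theorem \ref{teo omega>omega0}. Your extra check that $\left\langle w,\cdot\right\rangle _{X}\neq0$ in $L^{p}\left(X,\mu_{0}\right)$, using Assumption \ref{assu: mu_t non degeneri}, is correct and fills a detail the paper leaves implicit.
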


\begin{proof}
i) Let $\lambda\in\mathbb{C}$ be an eigenvalue of $U^{*}\left(T,0\right)$
and $w\in Ker\left(\lambda I-U\left(T,0\right)^{*}\right)\setminus\left\{ 0\right\} $.
Define $\psi:=\left\langle w,\cdot\right\rangle _{X}$. Then
\[
\int_{x}\psi\left(y\right)\mu_{0}\left(dy\right)=\int_{x}\left\langle w,y\right\rangle _{X}\mu_{0}\left(dy\right)=0,
\]
since the mean of $\mu_{0}$ is zero, by the definition in $\text{\eqref{def misura inv}}$.
Thus $\psi\in Im\Pi_{0}$. Further, by Remark $\text{\ref{rem: polinomiali}}$,
we have, for every $x\in X$:
\begin{align*}
P_{0,T}\psi\left(x\right) & =\int_{X}\left\langle w,y+U\left(T,0\right)x\right\rangle _{X}\gamma_{0,T}\left(dy\right)\\
 & =\int_{X}\left\langle w,y\right\rangle _{X}\gamma_{0,T}\left(dy\right)+\left\langle U\left(T,0\right)^{*}w,x\right\rangle _{X}\\
 & =\lambda\left\langle w,x\right\rangle _{X}\\
 & =\lambda\psi\left(x\right).
\end{align*}

So $\lambda$ is also an eigenvalue of $P_{0,T\mid Im\Pi_{0}}$, relative
to the linear eigenfunction $\psi$.

ii) The latter argument implies that the spectral radius $R_{0}$
of $P_{0,T\mid Im\Pi_{0}}$ is greater than or equal to the maximum
modulus of an eigenvalue of $U^{*}\left(T,0\right)$ which is - by
Assumption $\text{\ref{ass: U* ha autov di mod max}}$, the spectral
radius of $U^{*}\left(T,0\right)$. The latter is $e^{\omega_{0}T}$,
by Lemma $\text{\ref{lem: raggio spettr U}}$; thus, $R_{0}\geq e^{\omega_{0}T}$.
The equality follows from Theorem $\text{\ref{teo omega>omega0}}$.
\end{proof}
Thanks to the previous lemma, we can be sure that $e^{\omega_{0}\left(t-s\right)}$
is the highest possible speed of the convergence in $\text{\eqref{eq: conv}}$.

\begin{thm}
Assume that $\omega<\omega_{0}$. Then there is no $N_{\omega}\in\mathbb{N}$
such that $\text{\eqref{eq: stima potenza P_0,T}}$ holds, and there
is no $C_{T,\omega}>0$ such that $\text{\eqref{eq: stima Ps,tf - mtf}}$
holds.
\end{thm}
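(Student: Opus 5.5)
This follows from Lemma \ref{lem: autov. 1}, in the setting where Assumptions \ref{assu: periodicita} and \ref{ass: U* ha autov di mod max} hold. I will argue by contradiction separately for the two estimates. The second will be reduced to the first, or else contradicted directly with the linear eigenfunction.

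For \eqref{eq: stima potenza P_0,T}, part ii) of Lemma \ref{lem: autov. 1} gives that the spectral radius of $P_{0,T\mid Im\Pi_{0}}$ equals $e^{\omega_{0}T}$. Suppose some $N_{\omega}$ existed with $\Vert P_{0,T\mid Im\Pi_{0}}^{N}\Vert\leq e^{\omega NT}$ for all $N\geq N_{\omega}$. Gelfand's formula $R_{T}=\lim_{N}\Vert P_{0,T\mid Im\Pi_{0}}^{N}\Vert^{1/N}$ would then give $R_{T}\leq e^{\omega T}<e^{\omega_{0}T}$, a contradiction.

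More concretely, I will work with the eigenfunction itself. By Assumption \ref{ass: U* ha autov di mod max}, take $\lambda$ with $|\lambda|=e^{\omega_{0}T}$ and $w\neq0$ such that $U(T,0)^{*}w=\lambda w$. Set $\psi=\langle w,\cdot\rangle_{X}$. Then $\psi\in Im\Pi_{0}$, $\psi\in L^{p}(X,\mu_{0})$ because Gaussian measures have all moments, and $\psi\neq0$ in $L^{p}$ because $\mu_{0}$ is non-degenerate (Assumption \ref{assu: mu_t non degeneri}). By Lemma \ref{lem: autov. 1} i), $P_{0,T}^{N}\psi=\lambda^{N}\psi$, so $\Vert P_{0,T\mid Im\Pi_{0}}^{N}\Vert\geq e^{\omega_{0}NT}>e^{\omega NT}$ for every $N$. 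Since $\lambda$ may be complex, I replace $\psi$ by its real or imaginary part, or work in complexified $L^{p}$; the norm bound is unaffected.

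For \eqref{eq: stima Ps,tf - mtf}, I take $s=0$, $t=NT$, $f=\psi$, so $\mu_{t}=\mu_{NT}=\mu_{0}$ by periodicity. Since $\int\psi\,d\mu_{0}=0$ and $P_{0,NT}=P_{0,T}^{N}$, the estimate would read
\[
e^{\omega_{0}NT}\Vert\psi\Vert_{L^{p}(X,\mu_{0})}\leq C_{T,\omega}e^{\omega NT}\Vert\psi\Vert_{L^{p}(X,\mu_{0})}.
\]
Dividing gives $e^{(\omega_{0}-\omega)NT}\leq C_{T,\omega}$ for all $N$, which fails as $N\to\infty$ because $\omega_{0}-\omega>0$.

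The only delicate point is the complex eigenvalue case. If $\lambda=e^{\omega_{0}T}e^{i\theta}$ and $w=w_{1}+iw_{2}$, then at least one of the real functions $\mathrm{Re}(\lambda^{N}\psi)$ or $\mathrm{Im}(\lambda^{N}\psi)$ has norm comparable to $e^{\omega_{0}NT}$ along a subsequence of $N$. A subsequence suffices for both contradictions, so I expect this step to be routine.
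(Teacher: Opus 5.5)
Your proof is correct and follows the paper's argument. You use the linear eigenfunction $\psi_{0}=\langle w_{0},\cdot\rangle_{X}$ from Lemma \ref{lem: autov. 1} to obtain $\Vert P_{0,T\mid Im\Pi_{0}}^{N}\Vert\geq e^{\omega_{0}NT}$, and then take $f=\psi_{0}$, $s=0$, $t=NT$ in \eqref{eq: stima Ps,tf - mtf} to force $e^{(\omega_{0}-\omega)NT}\leq C_{T,\omega}$ for all $N$. Your extra remarks fill in points the paper leaves implicit and are handled correctly: that $\psi_{0}\neq0$ in $L^{p}(X,\mu_{0})$ by non-degeneracy of $\mu_{0}$, and the pigeonhole passage to a fixed real or imaginary part along a subsequence when the eigenvalue is non-real.
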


\begin{proof}
Let $\lambda_{0}$ be an eigenvalue of $U^{*}\left(T,0\right)$ such
that $\left|\lambda_{0}\right|=e^{\omega_{0}T}$; such $\lambda_{0}$
exists by Assumption $\text{\ref{ass: U* ha autov di mod max}}$.
Take $\psi_{0}=\left\langle w_{0},\cdot\right\rangle _{X}$, where
$w_{0}\in Ker\left[\lambda_{0}I-U\left(T,0\right)^{*}\right]$ . Thus,
by Lemma $\text{\ref{lem: autov. 1}}$, we have for every $N\in\mathbb{N}$:
\begin{align*}
 & \left\Vert P_{0,T\mid Im\Pi_{0}}^{N}\psi_{0}\right\Vert _{L^{p}\left(X,\mu_{0}\right)}=e^{\omega_{0}NT}\left\Vert \psi_{0}\right\Vert _{L^{p}\left(X,\mu_{0}\right)}\\
\implies & \left\Vert P_{0,T\mid Im\Pi_{0}}^{N}\right\Vert _{\mathcal{L}\left(Im\Pi_{0}\right)}\geq e^{\omega_{0}NT}.
\end{align*}
This implies that, if relation $\text{\eqref{eq: stima potenza P_0,T}}$
holds for some $\omega\in\mathbb{R}$, then $\omega_{0}\leq\omega$.
Thus, there are no $\omega<\omega_{0}$ and $N_{0}\in\mathbb{N}$
such that relation $\text{\eqref{eq: stima potenza P_0,T}}$ holds.

Further, taking $f=\psi_{0}$, $s=0$ and $t=NT$ in relation $\text{\eqref{eq: stima Ps,tf - mtf}}$
leads to
\[
e^{\left(\omega_{0}-\omega\right)NT}\leq C_{T,\omega}\quad\forall N\in\mathbb{N};
\]
therefore, there is no $\omega<\omega_{0}$ such that $\text{\eqref{eq: stima Ps,tf - mtf}}$
holds.
\end{proof}
Now we pass to the last and hardest part of the section, which is
the proof of the following result stating a sufficient condition on
the evolution operator $\left\{ U\left(t,s\right)\right\} _{t<s}$
in order that the speed of convergence $e^{\omega_{0}\left(t-s\right)}$
can actually be reached in $\text{\eqref{eq: conv}}$.
\begin{thm}
\label{teo: omega uguale omega0}Let the system satisfy Assumptions
$\text{\ref{assu: periodicita}}$ and $\text{\ref{ass: U* ha autov di mod max}}$,
and assume that all the eigenvalues of $U\left(T,0\right)^{*}$ of
maximum modulus $e^{\omega_{0}T}$ are semisimple for $U\left(T,0\right)^{*}$.
Then, there exists a constant $C>0$, depending only on the problem's
data, and a positive integer $N_{0}$, such that:
\[
\left\Vert P_{0,T\mid Im\Pi_{0}}^{N}\right\Vert _{\mathcal{L}\left(L^{p}\left(X,\mu_{0}\right)\right)}\leq Ce^{\omega_{0}NT}\quad\forall N\geq N_{0}.
\]

Consequently, there exists $C_{T}>0$ such that:
\begin{align*}
\left\Vert P_{s,t}f-\int_{X}f\left(y\right)\mu_{t}\left(dy\right)\right\Vert _{L^{p}\left(X,\mu_{s}\right)}\leq C_{T}e^{\omega_{0}\left(t-s\right)}\left\Vert f\right\Vert _{L^{p}\left(X,\mu_{t}\right)}\\
\forall s<t,\ f\in L^{p}\left(X,\mu_{t}\right).
\end{align*}
\end{thm}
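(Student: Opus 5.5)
The plan is to reduce the statement to the semisimplicity of the eigenvalues of maximum modulus of the compact operator $S:=P_{0,T\mid Im\Pi_{0}}$. By Lemma \ref{lem: autov. 1} ii) and Theorem \ref{teo omega>omega0}, the spectral radius of $S$ is $e^{\omega_{0}T}$. Since $S$ is compact, its spectrum on the circle $\{|z|=e^{\omega_{0}T}\}$ consists of finitely many eigenvalues $\lambda_{1},\dots,\lambda_{k}$, all isolated, and the rest of $\sigma(S)$ lies in a disc of radius $r<e^{\omega_{0}T}$.

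Let $\mathcal{P}$ be the spectral projection relative to $\{\lambda_{1},\dots,\lambda_{k}\}$. Then $S^{N}=S^{N}\mathcal{P}+S^{N}(I-\mathcal{P})$. The second term has norm at most $C_{\epsilon}(r+\epsilon)^{N}$ by the Gelfand formula on $Ker\mathcal{P}$, so it is $o(e^{\omega_{0}NT})$. If every $\lambda_{j}$ is semisimple, then $S\mathcal{P}=\sum_{j}\lambda_{j}\mathcal{P}(\lambda_{j},S)$, hence $\|S^{N}\mathcal{P}\|\leq e^{\omega_{0}NT}\sum_{j}\|\mathcal{P}(\lambda_{j},S)\|$. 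This gives the first estimate. The second estimate then follows verbatim from the decomposition argument at the end of the proof of Theorem \ref{teo omega>omega0}, using contractivity and Remark \ref{rem: proiezione}.

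So the core is to show that every eigenvalue $\lambda$ of $S$ with $|\lambda|=e^{\omega_{0}T}$ is semisimple. By Remark \ref{rem:  semisemplici compatto}, it suffices to prove $Ker(\lambda-S)^{2}\subseteq Ker(\lambda-S)$. The idea is to pass to gradients, as in Theorem \ref{teo omega>omega0}. Take $f\in Ker(\lambda-S)^{2}$ and set $g:=(S-\lambda)f\in Ker(\lambda-S)$. Then $S^{N}f=\lambda^{N}f+N\lambda^{N-1}g$.

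I would first identify the eigenfunctions of maximum modulus. If $Sg=\lambda g$, then by formula \eqref{eq: formula grad C1} extended to $W^{1,p}$ (Proposition \ref{prop: operatore compatto}) we get
\[
\lambda\nabla g=U(T,0)^{*}\overrightarrow{P_{0,T}}\nabla g .
\]
This says that $\nabla g$ is an eigenvector of the operator $G:=U(T,0)^{*}\overrightarrow{P_{0,T}}$ on $L^{p}(X,\mu_{0};X)$. The aim is then to show that $g$ is linear, i.e. $g=\langle w,\cdot\rangle$ with $w\in Ker(\lambda-U(T,0)^{*})$. To do so, differentiate once more using Proposition \ref{prop: reg vett} and \eqref{eq: formula grad vettoriale}:
\[
\lambda\nabla_{\mathcal{L}_{2}}\nabla g=U(T,0)^{*}\,\overrightarrow{\overrightarrow{P_{0,T}}}(\nabla_{\mathcal{L}_{2}}\nabla g)\,U(T,0).
\]
Iterating $N$ times and using \eqref{eq: Ps,t operatoriale contraz} gives $|\lambda|^{N}\|\nabla^{2}g\|\leq M_{\omega}^{2}e^{2\omega NT}\|\nabla^{2}g\|$. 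Choosing $\omega\in(\omega_{0},\omega_{0}/2)$, this forces $\nabla^{2}g=0$. Hence $\nabla g$ is a constant $w$, $g$ is linear by zero mean, and $U(T,0)^{*}w=\lambda w$.

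Next, pass to gradients in the relation for $f$:
\[
\nabla S^{N}f=\lambda^{N}\nabla f+N\lambda^{N-1}w .
\]
The left side equals $U(NT,0)^{*}\overrightarrow{P_{0,NT}}\nabla f$. Applying the same second-derivative argument to $f$ (the $N\lambda^{N-1}g$ term has zero second derivative) shows $\nabla^{2}f=0$, so $f$ is linear too, $f=\langle v,\cdot\rangle$. Then $S$ acts on $f$ exactly as $U(T,0)^{*}$ acts on $v$, so $(\lambda-U(T,0)^{*})^{2}v=0$. Semisimplicity of $\lambda$ for $U(T,0)^{*}$ gives $(\lambda-U(T,0)^{*})v=0$, hence $g=0$, which is the required inclusion.

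The main obstacle is the second-order regularity step. Proposition \ref{prop: reg vett} is only available for $p\geq2$, and the justification of $\nabla^{2}f\in L^{p}$ together with the composed operator formula needs care. For $p<2$ I would first use compactness to bootstrap into a higher $L^{q}$, or argue that the generalized eigenspaces of $S$ are independent of $p$, since they consist of finitely many functions, and are therefore already regular.
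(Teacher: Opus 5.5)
Your proposal is correct and follows essentially the paper's route. The steps match as follows: linearity of the eigenfunctions and generalized eigenfunctions of modulus $e^{\omega_{0}T}$ via the decay of $\nabla^{2}$ (Lemmas \ref{lem: autospazio di P0} and \ref{lem: autospazio generalizzato P0}, with the Poincar\'e-based Lemma \ref{cor: nabla2 uguale 0} supplying the step ``$\nabla^{2}g=0\Rightarrow g$ linear'' that you assert without proof); transfer of semisimplicity from $U\left(T,0\right)^{*}$ to $P_{0}$ (Proposition \ref{prop: autovalori di max mod}); and a spectral-projection bound, where your identity $S^{N}\mathcal{P}=\sum_{j}\lambda_{j}^{N}\mathcal{P}\left(\lambda_{j},S\right)$ is a cleaner substitute for the recursive construction in Proposition \ref{prop: omega uguale omega0 p 1}. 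The obstacle you anticipate does not arise: the proof of Proposition \ref{prop: reg vett} explicitly treats $p\in\left(1,2\right)$ as well, and $f\in W^{2,p}\left(X,\mu_{0}\right)$ follows from $\lambda\nabla f=U\left(T,0\right)^{*}\overrightarrow{P_{0,T}}\nabla f-\nabla g$ with $\nabla g$ constant, exactly as in Lemma \ref{lem: autospazio generalizzato P0}.
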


To face the proof of this theorem, we gather some useful notations,
definitions, theoretical results and lemmas.

First, we define the second order Sobolev space with respect to $\mu_{0}$
as:
\begin{equation}
W^{2,p}\left(X,\mu_{0}\right):=\left\{ f\in W^{1,p}\left(X,\mu_{0}\right)\mid\nabla f\in W^{1,p}\left(X,\mu_{0};X\right)\right\} ,\label{def W2p}
\end{equation}
where $W^{1,p}\left(X,\mu_{0};X\right)$ is the space defined in Subsection
$\text{\ref{subsez: Sob vett}}$.

For every $f\in W^{2,p}\left(X,\mu_{0}\right)$, we set
\[
\nabla^{2}f:=\nabla_{\mathcal{L}_{2}}\nabla f.
\]

\begin{lem}
\label{cor: nabla2 uguale 0}Assume that $f\in W^{2,p}\left(X,\mu_{0}\right)$
and $\nabla^{2}f=0$ $\mu_{0}$-almost everywhere in $X$. Thus $f$
coincides with an affine functional, $\mu_{0}$-almost everywhere
in $X$.

If, further, $f\in Im\Pi_{0}$, then $f$ is linear, $\mu_{0}$-almost
everywhere in $X$.
\end{lem}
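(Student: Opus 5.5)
We will proceed in two stages: first show that $\nabla f$ is $\mu_{0}$-a.e. equal to a constant vector $a\in X$, then show that $f-\left\langle a,\cdot\right\rangle _{X}$ is $\mu_{0}$-a.e. constant.

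\emph{Step 1: $\nabla f$ is constant.} Set $\Phi:=\nabla f\in W^{1,p}\left(X,\mu_{0};X\right)$, so that $\nabla_{\mathcal{L}_{2}}\Phi=0$. We work coordinate by coordinate. For each $j$ put $\phi_{j}:=\left\langle \Phi,e_{j}\right\rangle _{X}$. Take cylindrical approximants $\Phi_{n}\in\mathcal{F}C_{b}^{1}\left(X;X\right)$ of $\Phi$. Then $\left\langle \Phi_{n},e_{j}\right\rangle _{X}\in\mathcal{F}C_{b}^{1}\left(X\right)\subseteq C_{b}^{1}\left(X\right)$, and
\[
\nabla\left\langle \Phi_{n},e_{j}\right\rangle _{X}=\nabla_{\mathcal{L}_{2}}\Phi_{n}\left(\cdot\right)^{*}e_{j}.
\]
This converges in $L^{p}\left(X,\mu_{0};X\right)$ to $\left(\nabla_{\mathcal{L}_{2}}\Phi\right)^{*}e_{j}=0$, because $\left\Vert T^{*}e_{j}\right\vert _{X}\leq\left\Vert T\right\Vert _{\mathcal{L}_{2}}$. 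Hence $\phi_{j}\in W^{1,p}\left(X,\mu_{0}\right)$ with $\nabla\phi_{j}=0$.

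We then need the scalar fact that a $W^{1,p}\left(X,\mu_{0}\right)$ function with zero gradient is a.e. constant. I would take this from the Poincar\'e inequality for the non-degenerate Gaussian measure $\mu_{0}$ (Assumption \ref{assu: mu_t non degeneri}), which holds in $L^{p}$ (see \cite{DAP-ZAB}). Alternatively, it follows from the compact embedding $W^{1,p}\subseteq L^{p}$ combined with ergodicity of the autonomous Ornstein-Uhlenbeck semigroup associated with $\mu_{0}$. This step is the main point where a genuinely external result is invoked, and I expect it to be the principal obstacle in making the argument self-contained.

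Given this fact, $\phi_{j}=a_{j}$ a.e. for every $j$. Since $\Phi\in L^{p}\left(X,\mu_{0};X\right)$, we get $\sum_{j}a_{j}^{2}=\left|\Phi\right|_{X}^{2}<\infty$ a.e., so $a:=\sum_{j}a_{j}e_{j}\in X$ and $\nabla f=a$ a.e.

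\emph{Step 2: $f$ is affine.} Let $g:=f-\left\langle a,\cdot\right\rangle _{X}$. The linear functional $\left\langle a,\cdot\right\rangle _{X}$ lies in $W^{1,p}\left(X,\mu_{0}\right)$ with gradient $a$. To see this, approximate it by the $C_{b}^{1}$ functions $\theta_{n}\left(\left\langle a,\cdot\right\rangle _{X}\right)$, where $\theta_{n}$ is a smooth bounded truncation of the identity; Gaussian moments of all orders and dominated convergence give the convergence. Hence $g\in W^{1,p}\left(X,\mu_{0}\right)$ with $\nabla g=0$, and the same scalar fact yields $g=c$ $\mu_{0}$-a.e. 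Therefore $f=\left\langle a,\cdot\right\rangle _{X}+c$ a.e.

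\emph{Step 3: the case $f\in Im\Pi_{0}$.} The linear part has zero $\mu_{0}$-mean because $\mu_{0}$ is centered. So if $\int_{X}f\,d\mu_{0}=0$, then $c=0$ and $f$ is linear a.e.
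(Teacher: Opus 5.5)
Your proof is correct, but Step 1 takes a different route from the paper's.

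\emph{The paper's route.} The paper obtains the constancy of $\nabla f$ directly from a second-order, gradient-valued Poincar\'e-type inequality taken from \cite[Theorem 2.7]{ADDONA-Poinc}. That inequality bounds the $L^{p}(X,\mu_{0};X)$-oscillation of $Q_{0}^{\frac{1}{2}}\nabla\varphi$, with $Q_{0}=Q(0,-\infty)$, by $\Vert Q_{0}^{\frac{1}{2}}\nabla^{2}\varphi\,Q_{0}^{\frac{1}{2}}\Vert_{L^{p}(X,\mu_{0};\mathcal{L}_{2}(X))}$. This gives that $Q_{0}^{\frac{1}{2}}\nabla f$ is a.e.\ constant. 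The injectivity of $Q_{0}^{\frac{1}{2}}$ (non-degeneracy of $\mu_{0}$) then yields $\nabla f=v$ a.e. Its remaining steps coincide with your Steps 2 and 3.

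\emph{Your route.} You reduce Step 1 to the scalar fact ``zero gradient implies constant''. The identity $\nabla\langle\Phi_{n},e_{j}\rangle_{X}=\nabla_{\mathcal{L}}\Phi_{n}(\cdot)^{*}e_{j}$ for cylindrical approximants, together with $|T^{*}e_{j}|_{X}\leq\Vert T\Vert_{\mathcal{L}_{2}(X)}$, puts every coordinate $\langle\nabla f,e_{j}\rangle_{X}$ in $W^{1,p}(X,\mu_{0})$ with zero gradient. You also correctly reassemble $a=\sum_{j}a_{j}e_{j}\in X$ on a common full-measure set.

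\emph{What each buys.} Your route needs only the scalar Gaussian Poincar\'e inequality. The paper invokes that same inequality in its own second step, so the ingredient you flag as the main obstacle is no more external than what the paper already uses. You also avoid inverting $Q_{0}^{\frac{1}{2}}$. In addition, your Step 2 justifies that $\langle a,\cdot\rangle_{X}\in W^{1,p}(X,\mu_{0})$ with gradient $a$, which the paper takes for granted. The paper's route is shorter once the cited theorem is available. It also gives a quantitative bound on the oscillation of $Q_{0}^{\frac{1}{2}}\nabla f$ in terms of second derivatives, but that plays no role in this lemma.
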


\begin{proof}
Set for simplicity of notation $Q_{0}=Q\left(0,-\infty\right)$. It
is a direct consequence of \cite[Theorem 2.7]{ADDONA-Poinc} applied
to our problem setting that there exists $K_{p}>0$ such that the
following infinite dimensional, Gaussian Poincar\'e-type inequality
for gradients holds:
\[
\left\Vert Q_{0}^{\frac{1}{2}}\nabla\varphi-\int_{X}Q_{0}^{\frac{1}{2}}\nabla\varphi\left(x\right)\mu_{0}\left(dx\right)\right\Vert _{L^{p}\left(X,\mu_{0};X\right)}\leq K_{p}\left\Vert Q_{0}^{\frac{1}{2}}\nabla^{2}\varphi Q_{0}^{\frac{1}{2}}\right\Vert _{L^{p}\left(X,\mu_{0};\mathcal{L}_{2}\left(X\right)\right)},
\]
for every $\varphi\in W^{2,p}\left(X,\mu_{0}\right)$. Applying this
inequality to a function $f\in W^{2,p}\left(X,\mu_{0}\right)$ such
that $\nabla^{2}f=0$, we obtain that there exists $v\in X$ such
that $\nabla f=v$, $\mu_{0}$- almost everywhere in $X$, remembering
that $Q_{0}^{\frac{1}{2}}$ is injective because of Assumption $\text{\ref{assu: mu_t non degeneri}}$.

Set $c:=\int_{X}\left\{ f\left(x\right)-\left\langle v,x\right\rangle \right\} \mu_{0}\left(dx\right)$.
Thus, for some $k_{p}>0$:
\begin{align*}
\left\Vert f-\left\langle v,\cdot\right\rangle _{X}-c\right\Vert _{L^{p}\left(X,\mu_{0}\right)} & \leq k_{p}\left\Vert \nabla\left(f-\left\langle v,\cdot\right\rangle _{X}\right)\right\Vert _{L^{p}\left(X,\mu_{0};X\right)}\\
 & =k_{p}\left\Vert \nabla f-v\right\Vert _{L^{p}\left(X,\mu_{0};X\right)}\\
 & =0,
\end{align*}
by the scalar, Gaussian, infinite dimensional Poincar\'e inequality,
whose proof can also be derived from \cite[Theorem 2.7]{ADDONA-Poinc}
(see also \cite[Theorem 5.5.11]{BOG;GM}). Thus
\[
f\left(x\right)=\left\langle v,x\right\rangle _{X}+c\quad\text{for }\mu_{0}\text{-almost every }x\in X.
\]
Note that $\int_{X}fd\mu_{0}=c$ since $\int_{X}\left\langle v,\cdot\right\rangle _{X}d\mu_{0}=0$,
so also the last statement is proved because $f\in Im\Pi_{0}$ means
$\int_{X}fd\mu_{0}=0$.
\end{proof}
To simplify the notation, we denote:
\begin{itemize}
\item $P_{0}:=P_{0,T\mid Im\Pi_{0}}$;
\item by $R_{0}$ the spectral radius of $P_{0}$;
\item by $\sigma_{p}\left(\cdot\right)$ the point spectrum of an operator;
\item by $\sigma_{s}\left(\cdot\right)$ the set of semisimple eigenvalues
of an operator;
\item by $S_{\mathbb{C}}\left(0,r\right)$ the complex circumference of
radius $r>0$, centred at $0$.
\end{itemize}
The following results about $P_{0}$ obviously require Assumption
$\text{\ref{assu: periodicita}}$, but, formally, they do not require
Assumption $\text{\ref{ass: U* ha autov di mod max}}$.
\begin{lem}
\label{lem: autospazio di P0}Every eigenfunction of $P_{0}$ relative
to a non-zero eigenvalue is twice differentiable. Further, for every
$\omega\in\left(\omega_{0},0\right)$, there exists $C_{\omega}>0$
such that, for every $N\geq2$, $\lambda\in\sigma\left(P_{0}\right)\setminus\left\{ 0\right\} $
and $g\in Ker\left(\lambda I-P_{0}\right)\setminus\left\{ 0\right\} $:
\begin{equation}
\left|\lambda\right|^{N}\left\Vert \nabla^{2}g\right\Vert _{L^{p}\left(X;\mu_{0};\mathcal{L}\left(X\right)\right)}\leq C_{\omega}e^{2\omega NT}\left\Vert g\right\Vert _{L^{p}\left(X,\mu_{0}\right)}.\label{eq: rel nabla quadro}
\end{equation}

Consequently, if $\lambda_{0}\in\sigma\left(P_{0}\right)$ with $\left|\lambda_{0}\right|=e^{\omega_{0}T}$
and $g$ is any eigenfunction of $P_{0}$ relative to $\lambda_{0}$,
then
\[
\left\Vert \nabla^{2}g\right\Vert _{L^{p}\left(X;\mu_{0};\mathcal{L}\left(X\right)\right)}=0.
\]
Thus, by Lemma $\text{\ref{cor: nabla2 uguale 0}}$, $g$ can be considered
a linear functional, up to a $\mu_{0}$-negligible subset of $X$.
\end{lem}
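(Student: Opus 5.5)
The plan is to mimic the first-order argument in the proof of Theorem \ref{teo omega>omega0}, now at second order. Fix $\lambda\in\sigma(P_{0})\setminus\{0\}$ and an eigenfunction $g$, so that $g=\lambda^{-N}P_{0,NT}g$ for every $N$.

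\emph{Step 1: $g\in W^{2,p}(X,\mu_{0})$, with a crude bound.} Since $P_{0,NT}=P_{0,(N-1)T}P_{(N-1)T,NT}$, the gradient formula (\ref{eq: formula grad C1}) for $W^{1,p}$ functions (Proposition \ref{prop: operatore compatto}) gives
\[
\nabla g=\lambda^{-N}\,U\left((N-1)T,0\right)^{*}\,\overrightarrow{P_{0,(N-1)T}}\left(\nabla P_{0,T}g\right).
\]
By Proposition \ref{prop: operatore compatto}, $\nabla P_{0,T}g\in L^{p}(X,\mu_{0};X)$. Applying Proposition \ref{prop: reg vett} to $\overrightarrow{P_{0,(N-1)T}}$ (here I use $p\geq 2$ if necessary, as that proposition requires) shows that $\overrightarrow{P_{0,(N-1)T}}(\nabla P_{0,T}g)$ lies in $W^{1,p}(X,\mu_{0};X)$. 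Composing with the fixed bounded operator $U((N-1)T,0)^{*}$ preserves $W^{1,p}$: check it on cylindrical functions and close, using $\|AF\|_{\mathcal{L}_{2}}\leq\|A\|\|F\|_{\mathcal{L}_{2}}$. Hence $\nabla g\in W^{1,p}(X,\mu_{0};X)$, so $g\in W^{2,p}(X,\mu_{0})$. This settles twice differentiability.

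\emph{Step 2: the exponential estimate.} To gain two factors $e^{\omega NT}$, split $P_{0,NT}=P_{0,T}\,P_{T,(N-1)T}\,P_{(N-1)T,NT}$, assuming $N\geq 3$ and treating $N=2$ by enlarging $C_{\omega}$. Write $h:=P_{(N-1)T,NT}g=P_{0,T}g$, which lies in $W^{1,p}$. Applying (\ref{eq: formula grad C1}) twice yields
\[
\nabla P_{0,NT}g=U(T,0)^{*}\,\overrightarrow{P_{0,T}}\Big(U\left((N-1)T,T\right)^{*}\,\overrightarrow{P_{T,(N-1)T}}\nabla h\Big).
\]
Then differentiate with (\ref{eq: formula grad vettoriale}), extended to $W^{1,p}$ via Remark \ref{rem: su Pst operatoriale}, with $\nabla_{\mathcal{L}}\Phi$ for $\Phi=\overrightarrow{P_{0,T}}\Psi$. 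This gives
\[
\nabla^{2}P_{0,NT}g=U(T,0)^{*}\Big[\overrightarrow{\overrightarrow{P_{0,T}}}\nabla_{\mathcal{L}_{2}}\Psi\Big]U(T,0),\qquad \Psi=U\left((N-1)T,T\right)^{*}\,\overrightarrow{P_{T,(N-1)T}}\nabla h .
\]
The right factor $U(T,0)$ costs only a constant. A second factor of decay comes from $\nabla_{\mathcal{L}_{2}}\Psi=U((N-1)T,T)^{*}\,\nabla_{\mathcal{L}_{2}}\overrightarrow{P_{T,(N-1)T}}\nabla h$, where one applies (\ref{eq: formula grad vettoriale}) again to $\overrightarrow{P_{T,(N-1)T}}\nabla h$. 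I expect this to produce $\overrightarrow{\overrightarrow{P}}(\nabla^{2}h)\,U((N-1)T,T)$, provided $h\in W^{2,p}$. Step 1 applied to $h$ gives that, together with $\|\nabla^{2}h\|_{L^{p}}\leq C\|g\|_{L^{p}}$, via Proposition \ref{prop: reg vett} and (\ref{stima nabla Ps,t}) on a short interval. Then the contraction property (\ref{eq: Ps,t operatoriale contraz}) and $\|U(\tau,\sigma)\|\leq M_{\omega}e^{\omega(\tau-\sigma)}$ give
\[
|\lambda|^{N}\|\nabla^{2}g\|\leq C\,M_{\omega}^{2}\,e^{2\omega(N-2)T}\|g\|,
\]
which is (\ref{eq: rel nabla quadro}) once $e^{-4\omega T}$ is absorbed into $C_{\omega}$.

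\emph{Step 3: eigenvalues of maximal modulus.} If $|\lambda_{0}|=e^{\omega_{0}T}$, choose $\omega\in(\omega_{0},\omega_{0}/2)$. Then $2\omega<\omega_{0}$, so $e^{(2\omega-\omega_{0})NT}\to0$, which forces $\nabla^{2}g=0$. Lemma \ref{cor: nabla2 uguale 0}, together with $g\in Im\Pi_{0}$, then gives linearity.

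\emph{Main obstacle.} I expect the hardest part to be the rigorous justification of the iterated gradient formulas in Step 2 for merely Sobolev, noncylindrical functions, and the claim that $\overrightarrow{P}$ maps $W^{1,p}$ into functions whose operator-valued gradient obeys (\ref{eq: formula grad vettoriale}) in $L^{p}(\mathcal{L}_{2})$. I would handle this by approximating $h$ with cylindrical functions, for which Lemma \ref{lem: grad e Ps,t di cilindriche} keeps everything cylindrical and the formulas hold pointwise, and then passing to limits using the contraction estimates.
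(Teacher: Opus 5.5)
Your proof is correct, but it arranges the second-order computation differently from the paper.

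\textbf{The paper's arrangement.} The paper never needs an a priori bound on a second derivative. It writes $\lambda^{N}\nabla^{2}g=\nabla^{2}P_{0,(N-1)T}(P_{0}g)$ and proceeds in three moves:
\begin{itemize}
\item it takes the left factor $U((N-1)T,0)^{*}$ from \eqref{eq: formula grad C1} on $[0,(N-1)T]$;
\item it peels one period off $\overrightarrow{P_{0,(N-1)T}}$ and takes the right factor $U((N-2)T,0)$ from \eqref{eq: formula grad vettoriale} on $[0,(N-2)T]$;
\item it controls the middle term $\nabla_{\mathcal{L}_{2}}\overrightarrow{P_{0,T}}\nabla P_{0}g$ directly by \eqref{stima nabla Ps,t vett} and \eqref{stima nabla Ps,t}.
\end{itemize}
So the two decay factors come from two nested long intervals, and the smoothing is done by the last two periods.

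\textbf{Your arrangement.} You first build a seed $h$ with $\|\nabla^{2}h\|_{L^{p}}\leq C\|g\|_{L^{p}}$. You then apply the symmetric identity $\nabla^{2}P_{s,t}h=U(t,s)^{*}\,\overrightarrow{\overrightarrow{P_{s,t}}}(\nabla^{2}h)\,U(t,s)$ on a single long interval, which yields both decay factors at once. This is exactly the mechanism the paper uses later, in \eqref{eq: rel decisiva} within Lemma \ref{lem: autospazio generalizzato P0}. Your route therefore treats eigenfunctions and generalized eigenfunctions the same way. The price is a preliminary quantitative $W^{2,p}$ estimate, which the paper's arrangement avoids.

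\textbf{Points to tighten.}
\begin{itemize}
\item The outer factor $P_{0,T}$ in your splitting is unnecessary. Applying the identity directly to $P_{0,(N-1)T}h$ already gives $e^{2\omega(N-1)T}$.
\item The seed bound must come from a factorization acting on $g$ itself, e.g.\ $h=P_{0,T/2}P_{T/2,T}g$, or a two-period seed as in the paper. It must not come from the eigenrelation $h=\lambda g$ combined with Step 1. That route introduces factors $|\lambda|^{-1}$ and destroys the uniformity in $\lambda$ required by \eqref{eq: rel nabla quadro}. Your phrase ``on a short interval'' points to the correct choice, but you should state it explicitly.
\item No restriction $p\geq2$ is needed. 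The proof of Proposition \ref{prop: reg vett} also covers $p\in(1,2)$.
\end{itemize}
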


\begin{proof}
Fix $\omega$, $N$, $\lambda$ and $g$ like in the hypothesis. We
first observe that $\lambda g=P_{0}g$ implies $g\in W^{1,p}\left(X,\mu_{0}\right)$,
so $\lambda\nabla g=\nabla P_{0}g=U\left(T,0\right)^{*}\overrightarrow{P_{0,T}}\nabla g$,
by Proposition $\text{\ref{prop: operatore compatto}}$ and by formula
$\text{\eqref{eq: formula grad C1}}$ applied to functions in $W^{1,p}\left(X,\mu_{0}\right)$.
By Proposition $\text{\ref{prop: reg vett}}$, $\overrightarrow{P_{0,T}}\nabla g\in W^{1,p}\left(X,\mu_{0};X\right)$,
hence also $\nabla g\in W^{1,p}\left(X,\mu_{0};X\right)$.

From $\lambda^{N}g=P_{0}^{N}g$ and formula $\text{\eqref{eq: formula grad vettoriale}}$
applied to $W^{1,p}\left(X,\mu_{0};X\right)$-functions (which is
a lawful tool as explained at point i) in Remark $\text{\ref{rem: su Pst operatoriale}}$),
we infer that:
\begin{align*}
\lambda^{N}\nabla^{2}g & =\nabla^{2}P_{0}^{N}g\\
 & =\nabla^{2}P_{0,\left(N-1\right)T}P_{0}g\\
 & =\nabla_{\mathcal{L}_{2}}U\left(\left(N-1\right)T,0\right)^{*}\overrightarrow{P_{0,\left(N-1\right)T}}\nabla P_{0}g\\
 & =U\left(\left(N-1\right)T,0\right)^{*}\nabla_{\mathcal{L}_{2}}\overrightarrow{P_{0,\left(N-2\right)T}}\overrightarrow{P_{0,T}}\nabla P_{0}g\\
 & =U\left(\left(N-1\right)T,0\right)^{*}\overrightarrow{\overrightarrow{P_{0,\left(N-2\right)T}}}\left(\nabla_{\mathcal{L}_{2}}\overrightarrow{P_{0,T}}\nabla P_{0}g\right)U\left(\left(N-2\right)T,0\right).
\end{align*}
The central term needs some clarification. Remember that $P_{0}:Im\Pi_{0}\subseteq L^{p}\left(X,\mu_{0}\right)\to W^{1,p}\left(X,\mu_{0}\right)$
and $\overrightarrow{P_{0,T}}:L^{p}\left(X,\mu_{0};X\right)\to W^{,1p}\left(X,\mu_{0};X\right)$;
thus by the definition in $\text{\eqref{eq: def Ps,t op nabla Phi Lp}}$,
$\overrightarrow{\overrightarrow{P_{0,\left(N-2\right)T}}}\nabla_{\mathcal{L}_{2}}\overrightarrow{P_{0,T}}\nabla P_{0}g\in L^{p}\left(X,\mu_{0};\mathcal{L}_{2}\left(X\right)\right)$.
By point ii) in Remark $\text{\ref{rem: su Pst operatoriale}}$, remembering
relations $\text{\eqref{stima nabla Ps,t vett}}$ and $\text{\eqref{stima nabla Ps,t}}$,
we obtain:
\begin{align*}
\left\Vert \overrightarrow{\overrightarrow{P_{0,\left(N-2\right)T}}}\nabla_{\mathcal{L}_{2}}\overrightarrow{P_{0,T}}\nabla P_{0}g\right\Vert _{L^{p}\left(X,\mu_{0};\mathcal{L}_{2}\left(X\right)\right)} & \leq\left\Vert \nabla_{\mathcal{L}_{2}}\overrightarrow{P_{0,T}}\nabla P_{0}g\right\Vert _{L^{p}\left(X,\mu_{0};\mathcal{L}_{2}\left(X\right)\right)}\\
 & \leq\widetilde{C}_{p}\left\Vert g\right\Vert _{L^{p}\left(X,\mu_{0}\right)}.
\end{align*}

It follows from Assumption $\text{\ref{assu: omega_0 neg}}$ that
there exists a constant $C_{\omega}>0$ such that $\text{\eqref{eq: rel nabla quadro}}$
holds.

Assume now that $\lambda_{0}\in\sigma\left(P_{0}\right)$ with $\left|\lambda_{0}\right|=e^{\omega_{0}T}$
and $g\in Ker$$\left(\lambda_{0}I-P_{0}\right)\setminus\left\{ 0\right\} $;
then by $\text{\eqref{eq: rel nabla quadro}}$:
\begin{align*}
\left\Vert \nabla^{2}g\right\Vert _{L^{p}\left(X,\mu_{0};\mathcal{L}\left(X\right)\right)} & \leq C_{\omega}e^{\left(2\omega-\omega_{0}\right)NT}\left\Vert g\right\Vert _{L^{p}\left(X,\mu_{0}\right)}.
\end{align*}
Fixing $\omega\in\left(\omega_{0},\omega_{0}/2\right)$ and letting
$N\to\infty$ ($\omega$ does not depend on $N$), we obtain $\nabla^{2}g=0$.
\end{proof}
\begin{lem}
\label{lem: autospazio generalizzato P0} If $\left|\lambda_{0}\right|=e^{\omega_{0}T}$
and $f\in Ker\left(\lambda_{0}I-P_{0}\right)^{2}\setminus Ker\left(\lambda_{0}I-P_{0}\right)$,
then $f\in W^{2,p}\left(X,\mu_{0}\right)$ and $\text{\eqref{eq: rel nabla quadro}}$
holds with $\lambda=\lambda_{0}$ and $g=f$, for every $\omega>\omega_{0}$.

Therefore $\nabla^{2}f=0$, $\mu_{0}$-almost everywhere in $X$ and
$f$ can be considered a linear functional.
\end{lem}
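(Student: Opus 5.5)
The plan is to reduce the generalized eigenvector $f$ to the eigenvector case already handled in Lemma \ref{lem: autospazio di P0}. Set $g:=\left(\lambda_{0}I-P_{0}\right)f$. By assumption $g\neq0$ and $\left(\lambda_{0}I-P_{0}\right)g=0$, so $g\in Ker\left(\lambda_{0}I-P_{0}\right)\setminus\left\{ 0\right\}$. By Lemma \ref{lem: autospazio di P0}, $g\in W^{2,p}\left(X,\mu_{0}\right)$, $\nabla^{2}g=0$, and hence $g=\left\langle w,\cdot\right\rangle _{X}$ $\mu_{0}$-a.e. for some $w\in X$ (Lemma \ref{cor: nabla2 uguale 0}). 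Since $\lambda_{0}\neq0$, we can write
\[
f=\lambda_{0}^{-1}\left(P_{0}f+g\right).
\]
Proposition \ref{prop: operatore compatto} gives $P_{0}f\in W^{1,p}\left(X,\mu_{0}\right)$. Applying the gradient formula \eqref{eq: formula grad C1} together with Proposition \ref{prop: reg vett}, exactly as at the start of the proof of Lemma \ref{lem: autospazio di P0}, gives $\nabla P_{0}f=U\left(T,0\right)^{*}\overrightarrow{P_{0,T}}\nabla f\in W^{1,p}\left(X,\mu_{0};X\right)$. The function $g$ is linear, so it is already in $W^{2,p}$. Therefore $f\in W^{2,p}\left(X,\mu_{0}\right)$.

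For the estimate I would iterate the relation. By induction, $P_{0}^{N}f=\lambda_{0}^{N}f-N\lambda_{0}^{N-1}g$, which gives
\[
\lambda_{0}^{N}f=P_{0}^{N}f+N\lambda_{0}^{N-1}g.
\]
Take $\nabla^{2}$ of both sides and use $\nabla^{2}g=0$. This leaves $\lambda_{0}^{N}\nabla^{2}f=\nabla^{2}P_{0}^{N}f$. From here the chain of identities in the proof of Lemma \ref{lem: autospazio di P0} applies verbatim. The reason is that it used only $P_{0}^{N}g=P_{0,\left(N-1\right)T}P_{0}g$, formulas \eqref{eq: formula grad C1} and \eqref{eq: formula grad vettoriale}, and the bounds \eqref{stima nabla Ps,t}, \eqref{stima nabla Ps,t vett} and \eqref{eq: Ps,t operatoriale contraz}; it never used the eigenvalue equation. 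It yields
\[
\left|\lambda_{0}\right|^{N}\left\Vert \nabla^{2}f\right\Vert \leq\left\Vert U\left(\left(N-1\right)T,0\right)\right\Vert \left\Vert U\left(\left(N-2\right)T,0\right)\right\Vert \widetilde{C}_{p}\left\Vert f\right\Vert \leq C_{\omega}e^{2\omega NT}\left\Vert f\right\Vert ,
\]
which is \eqref{eq: rel nabla quadro} with $\lambda=\lambda_{0}$ and $g=f$. Now choose $\omega\in\left(\omega_{0},\omega_{0}/2\right)$. Dividing by $e^{\omega_{0}NT}$ leaves the factor $e^{\left(2\omega-\omega_{0}\right)NT}\to0$, so $\nabla^{2}f=0$. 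Since $f\in Im\Pi_{0}$, Lemma \ref{cor: nabla2 uguale 0} then shows that $f$ is linear $\mu_{0}$-a.e.

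The main obstacle is justifying that the chain of gradient identities is legitimate for $f$, which is not an eigenfunction. The delicate point is the step $\nabla_{\mathcal{L}_{2}}\overrightarrow{P_{0,\left(N-1\right)T}}=\overrightarrow{\overrightarrow{P_{0,\left(N-2\right)T}}}\nabla_{\mathcal{L}_{2}}\overrightarrow{P_{0,T}}\left(\cdot\right)U\left(\left(N-2\right)T,0\right)$, which requires the argument to lie in $L^{p}\left(X,\mu_{0};X\right)$. Here the argument is $\nabla P_{0}f$, which is in $L^{p}$ by Proposition \ref{prop: operatore compatto}, so Remark \ref{rem: su Pst operatoriale} i)--ii) applies. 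I would check carefully that the splitting $P_{0,\left(N-1\right)T}=P_{0,\left(N-2\right)T}P_{0,T}$ on the vector-valued side holds on $L^{p}$ by density from $C_{b}$; this step needs $N\geq2$.
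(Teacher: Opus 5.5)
Your proposal is correct and follows the paper's proof. You reduce to the eigenfunction $g=(\lambda_{0}I-P_{0})f$, obtain $f\in W^{2,p}(X,\mu_{0})$ from $\lambda_{0}\nabla f=\nabla g+U(T,0)^{*}\overrightarrow{P_{0,T}}\nabla f$ together with Proposition \ref{prop: reg vett}, reduce to $\lambda_{0}^{N}\nabla^{2}f=\nabla^{2}P_{0}^{N}f$, and rerun the estimate chain of Lemma \ref{lem: autospazio di P0}. The only minor difference is that you get this last identity directly from the closed form $P_{0}^{N}f=\lambda_{0}^{N}f-N\lambda_{0}^{N-1}g$ and $\nabla^{2}g=0$, which is a cleaner route than the paper's iteration of $\lambda_{0}\nabla^{2}f=U(T,0)^{*}\overrightarrow{\overrightarrow{P_{0,T}}}(\nabla^{2}f)U(T,0)$.
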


\begin{proof}
Fix $\lambda_{0}\in\mathbb{C}$ such that $\left|\lambda_{0}\right|=e^{\omega_{0}T}$
and $f$ as in the hypothesis. First note that this implies $f\in W^{2,p}\left(X,\mu_{0}\right)$;
indeed $\left(\lambda_{0}I-P_{0}\right)^{2}f=0$ means
\[
\lambda_{0}^{2}f=2\lambda_{0}P_{0}f-P_{0}^{2}f,
\]
hence $f\in W^{1,p}\left(X,\mu_{0}\right)$ by Proposition $\text{\ref{prop: operatore compatto}}$.
Further, setting for simplicity $g:=\left(\lambda_{0}I-P_{0}\right)f$,
we have by formula $\text{\eqref{eq: formula grad C1}}$ applied to
$W^{1,p}\left(X,\mu_{0}\right)$:
\begin{equation}
\lambda_{0}\nabla f=\nabla g+\nabla P_{0}f=\nabla g+U\left(T,0\right)^{*}\overrightarrow{P_{0,T}}\left(\nabla f\right);\label{eq: rel previncente}
\end{equation}
since $g\in Ker\left(\lambda_{0}I-P_{0}\right)\setminus\left\{ 0\right\} $,
it follows from Lemma $\text{\ref{lem: autospazio di P0}}$ that $g\in W^{2,p}\left(X,\mu_{0}\right)$,
so $\nabla f\in W^{1,p}\left(X,\mu_{0};X\right)$, taking into account
by Proposition $\text{\ref{prop: reg vett}}$.

We now show that:
\begin{equation}
\lambda_{0}^{N}\nabla^{2}f=\nabla^{2}P_{0}^{N}f.\label{eq: rel vincente}
\end{equation}

Since $\nabla^{2}g=0$ $\mu_{0}$-almost everywhere in $X$ (again
by Lemma $\text{\ref{lem: autospazio di P0}}$), relation $\text{\eqref{eq: rel previncente}}$
implies: 
\begin{align}
\lambda_{0}\nabla^{2}f & =\nabla^{2}P_{0}f\nonumber \\
 & =U\left(T,0\right)^{*}\nabla_{\mathcal{L}}\overrightarrow{P_{0,T}}\nabla f\nonumber \\
 & =U\left(T,0\right)^{*}\overrightarrow{\overrightarrow{P_{0,T}}}\left(\nabla^{2}f\right)U\left(T,0\right).\label{eq: rel decisiva}
\end{align}
The expression $U\left(T,0\right)^{*}\overrightarrow{\overrightarrow{P_{0,T}}}\left(\cdot\right)U\left(T,0\right)$
can be evaluated also at $\nabla^{2}P_{0}f$ thanks to the definition
in $\text{\eqref{eq: def Ps,t op nabla Phi Lp}}$, then $\text{\eqref{eq: formula grad vettoriale}}$
can be used thanks to Remark $\text{\ref{rem: su Pst operatoriale}}$,
together with formula $\text{\eqref{eq: formula grad C1}}$. Doing
this we obtain:
\begin{align*}
U\left(T,0\right)^{*}\overrightarrow{\overrightarrow{P_{0,T}}}\left(\nabla^{2}P_{0}f\right)U\left(T,0\right) & =U\left(T,0\right)^{*}\nabla_{\mathcal{L}_{2}}\overrightarrow{P_{0,T}}\left(\nabla P_{0}f\right)\\
 & =\nabla_{\mathcal{L}_{2}}U\left(T,0\right)^{*}\overrightarrow{P_{0,T}}\left(\nabla P_{0}f\right)\\
 & =\nabla^{2}P_{0}^{2}f
\end{align*}
Thus, by the identities in $\text{\eqref{eq: rel decisiva}}$:
\begin{align*}
\nabla^{2}P_{0}^{2}f & =\lambda_{0}U\left(T,0\right)^{*}\overrightarrow{\overrightarrow{P_{0,T}}}\left(\nabla^{2}f\right)U\left(T,0\right)\\
 & =\lambda_{0}^{2}\nabla^{2}f.
\end{align*}
Iterating the procedure we get $\text{\eqref{eq: rel vincente}}$.
Arguing like in the proof of Lemma $\text{\ref{lem: autospazio di P0}}$
we obtain that $\text{\eqref{eq: rel nabla quadro}}$ holds with $\lambda=\lambda_{0}$
and $g=f$, for every $\omega>\omega_{0}$.
\end{proof}
The latter two lemmas imply this
\begin{prop}
\label{prop: autovalori di max mod}The following two facts hold true.

i) $\sigma_{p}\left(U\left(T,0\right)^{*}\right)\cap S_{\mathbb{C}}\left(0,e^{\omega_{0}T}\right)=\sigma_{p}\left(P_{0}\right)\cap S_{\mathbb{C}}\left(0,e^{\omega_{0}T}\right)$;

ii) $\sigma_{s}\left(U\left(T,0\right)^{*}\right)\cap S_{\mathbb{C}}\left(0,e^{\omega_{0}T}\right)\subseteq\sigma_{s}\left(P_{0}\right)$.
\end{prop}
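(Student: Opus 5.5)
For i), the inclusion $\subseteq$ is already contained in Lemma \ref{lem: autov. 1} i): every eigenvalue of $U(T,0)^{*}$ is an eigenvalue of $P_{0}$, with linear eigenfunction $\langle w,\cdot\rangle_{X}$. For the reverse inclusion, take $\lambda_{0}\in\sigma_{p}(P_{0})$ with $|\lambda_{0}|=e^{\omega_{0}T}$ and an eigenfunction $g\neq0$. By Lemma \ref{lem: autospazio di P0} and Lemma \ref{cor: nabla2 uguale 0}, $g=\langle w,\cdot\rangle_{X}$ $\mu_{0}$-a.e. for some $w\in X$, and $w\neq0$ since $g\neq0$. Here $w$ should be allowed complex, working in the complexification; the preceding lemmas apply componentwise. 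By Remark \ref{rem: polinomiali}, $P_{0}g(x)=\langle U(T,0)^{*}w,x\rangle_{X}$, as in the computation of Lemma \ref{lem: autov. 1}, using that $\gamma_{T,0}$ is centred. Hence $\langle \lambda_{0}w-U(T,0)^{*}w,x\rangle_{X}=0$ for $\mu_{0}$-a.e. $x$. A continuous linear functional vanishing $\mu_{0}$-a.e. vanishes on the support of $\mu_{0}$, which is all of $X$ by Assumption \ref{assu: mu_t non degeneri}. Alternatively, its $L^{2}(\mu_{0})$ norm is $|Q(0,-\infty)^{1/2}v|_{X}$, and $Q(0,-\infty)^{1/2}$ is injective. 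Either way $U(T,0)^{*}w=\lambda_{0}w$, which proves i).

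For ii), let $\lambda_{0}$ be semisimple for $U(T,0)^{*}$ with $|\lambda_{0}|=e^{\omega_{0}T}$. By i), $\lambda_{0}\in\sigma_{p}(P_{0})$, and $\lambda_{0}\neq0$. Since $P_{0}$ is compact (Proposition \ref{prop: operatore compatto}), Remark \ref{rem:  semisemplici compatto} reduces semisimplicity to showing $Ker(\lambda_{0}I-P_{0})^{2}\subseteq Ker(\lambda_{0}I-P_{0})$. Suppose by contradiction that $f\in Ker(\lambda_{0}I-P_{0})^{2}\setminus Ker(\lambda_{0}I-P_{0})$. By Lemma \ref{lem: autospazio generalizzato P0}, $f=\langle v,\cdot\rangle_{X}$ a.e. Then $g:=(\lambda_{0}I-P_{0})f=\langle(\lambda_{0}I-U(T,0)^{*})v,\cdot\rangle_{X}$ is a nonzero eigenfunction. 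Applying $(\lambda_{0}I-P_{0})$ once more gives $\langle(\lambda_{0}I-U(T,0)^{*})^{2}v,\cdot\rangle_{X}=0$ a.e., hence, by the non-degeneracy argument above, $(\lambda_{0}I-U(T,0)^{*})^{2}v=0$ while $(\lambda_{0}I-U(T,0)^{*})v\neq0$. This contradicts the semisimplicity of $\lambda_{0}$ for $U(T,0)^{*}$, since by \eqref{eq: scomposizione spazio semisempl} we have $Ker(\lambda_{0}I-U(T,0)^{*})^{2}=Ker(\lambda_{0}I-U(T,0)^{*})$. Therefore $\lambda_{0}\in\sigma_{s}(P_{0})$.

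The main point requiring care is the identification of an a.e.-linear function with a genuine vector $w$ and the passage from a.e. identities to identities in $X$. This is exactly where non-degeneracy of $\mu_{0}$ enters. A secondary point is the complex setting: eigenvalues may be non-real, so one works with complexified $L^{p}$ and $X$, applying Lemma \ref{cor: nabla2 uguale 0} to real and imaginary parts separately.
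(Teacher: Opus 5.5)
Your proposal is correct and follows essentially the same route as the paper. Part i) combines Lemma~\ref{lem: autov. 1} with the linearity of eigenfunctions of maximal modulus. Part ii) rests on three steps: the compact-operator criterion $Ker(\lambda_0 I-P_0)^2\subseteq Ker(\lambda_0 I-P_0)$, the linearity of generalized eigenfunctions from Lemma~\ref{lem: autospazio generalizzato P0}, and the translation of $(\lambda_0 I-P_0)^2 f=0$ into $(\lambda_0 I-U(T,0)^{*})^2 w=0$ with $(\lambda_0 I-U(T,0)^{*})w\neq 0$, which contradicts \eqref{eq: scomposizione spazio semisempl}; the paper phrases this as a contrapositive. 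Your explicit use of the non-degeneracy of $\mu_0$, to pass from $\mu_0$-a.e. identities between linear functionals to identities between vectors, and your remark on complexification fill in steps the paper leaves implicit.
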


\begin{proof}
i) We already know from point i) in Lemma $\text{\ref{lem: autov. 1}}$
that the inclusion $\sigma_{p}\left(U\left(T,0\right)^{*}\right)\subseteq\sigma_{p}\left(P_{0}\right)$
holds; thus we need to prove that $\sigma_{p}\left(P_{0}\right)\cap S_{\mathbb{C}}\left(0,e^{\omega_{0}T}\right)\subseteq\sigma_{p}\left(U\left(T,0\right)^{*}\right)$.

Let $\lambda_{0}\in\sigma_{p}\left(P_{0}\right)$ with $\left|\lambda_{0}\right|=e^{\omega_{0}T}$.
Then every function $g\in Ker\left(\lambda_{0}I-P_{0}\right)\setminus\left\{ 0\right\} $
has the form $g=\left\langle v,\cdot\right\rangle $, $\mu_{0}$-almost
everywhere in $X$, by Lemma $\text{\ref{lem: autospazio di P0}}$,
and it is easy to verify that $v\in Ker\left(\lambda_{0}I-U\left(T,0\right)^{*}\right)\setminus\left\{ 0\right\} $.
Indeed:
\[
\left\langle \lambda_{0}v,\cdot\right\rangle _{X}=P_{0}\left\langle v,\cdot\right\rangle _{X}=\left\langle v,U\left(T,0\right)\left(\cdot\right)\right\rangle _{X}=\left\langle U\left(T,0\right)^{*}v,\cdot\right\rangle _{X}
\]
(note that we have used again Remark $\text{\ref{rem: polinomiali}}$).

ii) Assume that $\lambda_{0}\notin\sigma_{s}\left(P_{0}\right)$ while
$\lambda_{0}\in\sigma_{p}\left(U\left(T,0\right)^{*}\right)$ and
$\left|\lambda_{0}\right|=e^{\omega_{0}T}$. We show that in this
case $\lambda_{0}$ cannot be semisimple for $U\left(T,0\right)^{*}$.

We have $\lambda_{0}\in\sigma_{p}\left(P_{0}\right)$ by Lemma $\text{\ref{lem: autov. 1}}$,
i); since $P_{0}$ is compact, Remark $\text{\ref{rem:  semisemplici compatto}}$
implies that:
\[
Ker\left(\lambda_{0}I-P_{0}\right)^{2}\subsetneq Ker\left(\lambda_{0}I-P_{0}\right).
\]
Choose $f\in Ker\left(\lambda_{0}I-P_{0}\right)^{2}\setminus Ker\left(\lambda_{0}I-P_{0}\right)$.
Then by Lemmas $\text{\ref{lem: autospazio di P0}}$ and $\text{\ref{lem: autospazio generalizzato P0}}$,
there exist $v,w\in X\setminus\left\{ 0\right\} $ such that $f=\left\langle w,\cdot\right\rangle _{X}$
and $\left(\lambda_{0}I-P_{0}\right)f=\left\langle v,\cdot\right\rangle _{X}$;
further $\left(\lambda_{0}I-P_{0}\right)^{2}f=0$. These relations
between functions transform into the following relations between vectors,
remembering that $P_{0}f=\left\langle U\left(T,0\right)^{*}w,\cdot\right\rangle _{X}$:
\begin{align*}
\left[\lambda_{0}I-U\left(T,0\right)^{*}\right]w & =v,\\
\left[\lambda_{0}I-U\left(T,0\right)^{*}\right]^{2}w & =0.
\end{align*}

This means that $w\in Ker\left(\lambda_{0}I-U\left(T,0\right)^{*}\right)^{2}\setminus Ker\left(\lambda_{0}I-U\left(T,0\right)^{*}\right)$,
so
\[
Im\left(\lambda_{0}I-U\left(T,0\right)^{*}\right)\cap Ker\left(\lambda_{0}I-U\left(T,0\right)^{*}\right)\neq\left\{ 0\right\} .
\]
Therefore, condition $\text{\eqref{eq: scomposizione spazio semisempl}}$
is not met, and $\lambda_{0}$ is an eigenvalue of $U\left(T,0\right)^{*}$
that is not semisimple.
\end{proof}
\begin{prop}
\label{prop: omega uguale omega0 p 1}Assume that $R_{0}>0$ and that
all the elements of $\sigma\left(P_{0}\right)$ of modulus $R_{0}$,
if any, are semisimple for $P_{0}$. Then, there exists a constant
$C>0$, depending only on the problem's data, and a positive integer
$N_{0}$, such that
\[
\left\Vert P_{0,T\mid Im\Pi_{0}}^{N}\right\Vert _{\mathcal{L}\left(L^{p}\left(X,\mu_{0}\right)\right)}\leq CR_{0}^{N}\quad\forall N\geq N_{0}.
\]
\end{prop}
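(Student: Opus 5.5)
We use the spectral decomposition of the compact operator $P_{0}$ on the Banach space $E:=Im\Pi_{0}$. Since $P_{0}$ is compact (Proposition \ref{prop: operatore compatto}) and $R_{0}>0$, the set $\sigma_{0}:=\sigma\left(P_{0}\right)\cap S_{\mathbb{C}}\left(0,R_{0}\right)$ is non-empty and finite. Indeed, non-zero spectral points of a compact operator are isolated eigenvalues, and they can accumulate only at $0$. Let $\lambda_{1},\dots,\lambda_{m}$ be the elements of $\sigma_{0}$. Choose an open region $\Omega$ made of small disjoint discs around the $\lambda_{k}$ that contain no other spectral points, and set $\mathcal{P}:=\mathcal{P}\left(\Omega,P_{0}\right)$. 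Then $E=Im\mathcal{P}\oplus Ker\mathcal{P}$ with both summands closed and $P_{0}$-invariant. We have $\mathcal{P}=\sum_{k}\mathcal{P}\left(\lambda_{k},P_{0}\right)$, and these projections satisfy $\mathcal{P}\left(\lambda_{k},P_{0}\right)\mathcal{P}\left(\lambda_{h},P_{0}\right)=0$ for $k\neq h$. Each $Im\mathcal{P}\left(\lambda_{k},P_{0}\right)$ is finite dimensional, since it is the generalized eigenspace of a non-zero eigenvalue of a compact operator.

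The first step treats the part on $Im\mathcal{P}$. By hypothesis each $\lambda_{k}$ is semisimple, so by the characterization recalled above $\left(\lambda_{k}I-P_{0}\right)\mathcal{P}\left(\lambda_{k},P_{0}\right)=0$, that is, $P_{0}\mathcal{P}\left(\lambda_{k},P_{0}\right)=\lambda_{k}\mathcal{P}\left(\lambda_{k},P_{0}\right)$. Using the commutation of $P_{0}$ with the spectral projections we get
\[
P_{0}^{N}\mathcal{P}=\sum_{k=1}^{m}\lambda_{k}^{N}\mathcal{P}\left(\lambda_{k},P_{0}\right),
\]
and hence $\left\Vert P_{0}^{N}\mathcal{P}\right\Vert \leq R_{0}^{N}\sum_{k}\left\Vert \mathcal{P}\left(\lambda_{k},P_{0}\right)\right\Vert $ for every $N$.

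The second step treats the part on $Ker\mathcal{P}$. By spectrum separation, $\sigma\left(P_{0\mid Ker\mathcal{P}}\right)=\sigma\left(P_{0}\right)\setminus\sigma_{0}$. This set is compact and contains no point of modulus $R_{0}$, and its non-zero points are isolated eigenvalues accumulating only at $0$. Hence only finitely many of them have modulus in $\left[R_{0}/2,R_{0}\right]$, and therefore the spectral radius $r_{1}$ of $P_{0\mid Ker\mathcal{P}}$ is strictly less than $R_{0}$. This strict inequality is the key point: it is the only place where compactness enters beyond finite dimensionality of the eigenspaces, and it must be checked carefully. Fix $r\in\left(r_{1},R_{0}\right)$. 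Gelfand's formula gives $N_{0}$ such that $\left\Vert P_{0\mid Ker\mathcal{P}}^{N}\right\Vert \leq r^{N}\leq R_{0}^{N}$ for $N\geq N_{0}$.

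Finally, for $f\in E$ we write $f=\mathcal{P}f+\left(I-\mathcal{P}\right)f$, so that
\[
\left\Vert P_{0}^{N}f\right\Vert \leq\left(\sum_{k}\left\Vert \mathcal{P}\left(\lambda_{k},P_{0}\right)\right\Vert +\left\Vert I-\mathcal{P}\right\Vert \right)R_{0}^{N}\left\Vert f\right\Vert \quad\forall N\geq N_{0}.
\]
This yields the claim with $C:=\sum_{k}\left\Vert \mathcal{P}\left(\lambda_{k},P_{0}\right)\right\Vert +1+\left\Vert \mathcal{P}\right\Vert $. This constant depends only on $P_{0}$, hence on the data. The main obstacle is the spectral-gap claim $r_{1}<R_{0}$ together with the finiteness of $\sigma_{0}$; both follow from the Riesz–Schauder structure of $\sigma\left(P_{0}\right)$.
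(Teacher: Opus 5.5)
Your proof is correct and has the same architecture as the paper's proof. Both split $Im\Pi_{0}$ with the Riesz projection $\mathcal{P}$ onto the peripheral spectrum, use semisimplicity to get pure growth $R_{0}^{N}$ on $Im\mathcal{P}$, and apply Gelfand's formula on $Ker\mathcal{P}$, where the spectral radius is strictly below $R_{0}$. The one difference is that you obtain $P_{0}^{N}\mathcal{P}=\sum_{k}\lambda_{k}^{N}\mathcal{P}\left(\lambda_{k},P_{0}\right)$ directly from the additivity and mutual annihilation of Riesz projections for disjoint spectral sets. The paper reaches the same eigenspace decomposition of $Im\mathcal{P}_{0}$ through the iterated restrictions $E_{j+1}=Ker\mathcal{P}_{j}$, so your version is a shorter route to the same estimate.
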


\begin{proof}
For simplicity of notation, denote $P_{0}=P_{0,T\mid Im\Pi_{0}}$.
Since $P_{0}$ is compact, only a finite number of elements of its
spectrum are contained in, say, $\mathbb{C}\setminus B\left(0,\frac{1}{2}R_{0}\right)$,
and all of them are eigenvalues of $P_{0}$.

Denote by $\lambda_{1},\dots,\lambda_{K}$ the eigenvalues of maximum
modulus $R_{0}$; thus we can identify a compact region $\Omega$
of the complex plane such that the only elements of the spectrum of
$P_{0}$ contained in $\Omega$ are $\lambda_{1},\dots,\lambda_{K}$.
We may assume that $\Omega$ is a disjoint union of the form $\cup_{j=1}^{K}B\left(\lambda_{j},\epsilon\right)$,
and denote by $\gamma_{j}$ a simple, counterclockwise parametrization
of $\partial B\left(\lambda_{j},\epsilon\right)$.

We prove the thesis by a recursive construction. Assume that $\mu_{1},\dots,\mu_{K}$
are semisimple eigenvalues for $P_{0,T\mid Im\Pi_{0}}$. 

\emph{Step 0}. Define:
\begin{align*}
 & E_{0}:=Im\Pi_{0},\\
 & P_{0}:=P_{0,T\mid E_{0}},\\
 & \mathcal{P}_{0}:=\mathcal{P}\left(\Omega,P_{0}\right).
\end{align*}
We have $E_{0}=Im\mathcal{P}_{0}\oplus Ker\mathcal{P}_{0}$. We have,
by the previous considerations:

\begin{equation}
\sigma\left(P_{0\mid Ker\mathcal{P}_{0}}\right)=\sigma\left(P_{0}\right)\setminus\left\{ \mu_{1},\dots,\mu_{K}\right\} ,\label{eq: spettro modulo basso}
\end{equation}
remembering that $Ker\mathcal{P}_{0}$ is a closed and $P_{0}$- invariant
subspace of $E_{0}$. Thus $Ker\mathcal{P}_{0}$ is a Banach spaces
and $P_{0\mid Ker\mathcal{P}_{0}}$ is a compact operator acting on
it.

\emph{Step 1}: Define:
\begin{align*}
 & E_{1}:=Im\mathcal{P}_{0},\\
 & P_{1}:=P_{0\mid E_{1}},\\
 & \mathcal{P}_{1}:=\mathcal{P}\left(\mu_{1},P_{1}\right).
\end{align*}
Due to the previous considerations, $E_{1}$ is a closed, $P_{0}$-invariant
subspace of $E_{0}$. Thus $E_{1}$ is a Banach space and $P_{1}\in\mathcal{L}\left(E_{1}\right)$
is a compact operator, and $\mathcal{P}_{1}\in\mathcal{L}\left(E_{1}\right)$.
Further, 
\[
\sigma\left(P_{1}\right)=\left\{ \mu_{1},\dots,\mu_{K}\right\} ,
\]
and all these eigenvalues can be proved to be semisimple for $P_{1}$.
Indeed, for every fixed $i=1,\dots,K$:
\begin{align*}
\left(\mu_{i}I-P_{1}\right)\mathcal{P}\left(\mu_{i},P_{1}\right) & =\left(\mu_{i}I-P_{1}\right)\int_{\gamma\left(\mu_{i}\right)}R\left(\xi,P_{1}\right)d\xi\\
 & =\left(\mu_{i}I-P_{1}\right)\int_{\gamma\left(\mu_{i}\right)}R\left(\xi,P_{0}\right)_{\mid E_{1}}d\xi\\
 & =\left(\mu_{i}I-P_{0}\right)_{\mid E_{1}}\mathcal{P}\left(\mu_{i},P_{0}\right)_{\mid E_{1}}\\
 & =\left[\left(\mu_{i}I-P_{0}\right)\mathcal{P}\left(\mu_{i},P_{0}\right)\right]_{\mid E_{1}}.
\end{align*}
The second key equality holds because $E_{1}$ is a closed and $P_{0}$-invariant
subspace of $E_{0}$ which is complemented in $E_{0}$ by an analogous
subspace. Using the characterization of semisimplicity by the spectral
projection, we have that $\left(\mu_{i}I-P_{0}\right)\mathcal{P}\left(\mu_{i},P_{0}\right)=0$
because $\mu_{i}$ is semisimple for $P_{0}$, and thus the same holds
for $P_{1}$ in place of $P_{0}$, which means that $\mu_{i}$ is
semisimple for $P_{1}$.

In particular, using the second characterization of semisimplicity,
applied to $\mu_{1}$ and $P_{1}$, we have:
\[
E_{1}=Ker\left(\mu_{1}I-P_{1}\right)\oplus Ker\mathcal{P}_{1}.
\]

\emph{Recursive-inductive step}: For $1\leq j\leq k-1$, assume that
$E_{j}$ is a Banach space, that $P_{j}\in\mathcal{L}\left(E_{j}\right)$
is compact with $\sigma\left(P_{j}\right)=\left\{ \mu_{j},\dots,\mu_{K}\right\} $
, and that $\mu_{j},\dots,\mu_{K}$ are semisimple eigenvalues of
$P_{j}$. Denoting $\mathcal{P}_{j}:=\mathcal{P}\left(\mu_{j},P_{j}\right)$,
we have $\mathcal{P}_{j}\in\mathcal{L}\left(E_{j}\right)$ and 
\begin{equation}
E_{j}=Ker\left(\mu_{j}I-P_{j}\right)\oplus Ker\mathcal{P}_{j}.\label{eq: ip ind E_j}
\end{equation}

Define:
\begin{align*}
 & E_{j+1}:=Ker\mathcal{P}_{j},\\
 & P_{j+1}:=P_{j\mid E_{j+1}},\\
 & \mathcal{P}_{j+1}:=\mathcal{P}\left(\mu_{j+1},P_{j+1}\right).
\end{align*}

The space $E_{j+1}$ is a closed and $P_{j}$-invariant subspace of
$E_{j}$, so $P_{j+1}\in\mathcal{L}\left(E_{j+1}\right)$ is compact
and
\begin{align*}
\sigma\left(P_{j+1}\right) & =\sigma\left(P_{j\mid Ker\mathcal{P}\left(\mu_{j},P_{j}\right)}\right)\\
 & =\sigma\left(P_{j}\right)\setminus\left\{ \mu_{j}\right\} \\
 & =\left\{ \mu_{j+1},\dots,\mu_{K}\right\} .
\end{align*}

With the same argument as in Step 1, we see that $\mu_{j+1},\dots,\mu_{K}$
are semisimple eigenvalues of $P_{j+1}$; thus, in particular:
\[
E_{j+1}=Ker\left(\mu_{j+1}I-P_{j+1}\right)\oplus Ker\mathcal{P}_{j+1}.
\]

With this procedure we have proved by induction that, for every integer
$j\in\left[1,\dots,K\right]$, there exist a closed subspace $E_{j}$
of $E_{0}$, a compact operator $P_{j}$ on $E_{j}$ whose spectrum
$\sigma\left(P_{j}\right)=\left\{ \mu_{j},\dots,\mu_{K}\right\} $
is made of semisimple eigenvalues, such that $\text{\eqref{eq: ip ind E_j}}$
holds with $\mathcal{P}_{j}=\mathcal{P}\left(\mu_{j},P_{j}\right)$.
Further, $E_{j+1}=Ker\mathcal{P}_{j}\subseteq E_{j}$ and $P_{j+1}=P_{j\mid E_{j+1}}$,
for every $j\in\left[1,\dots,K-1\right]$.

\emph{Final step}: Applying the latter statement to $j=K$, we have
the following situation: $P_{K}\in\mathcal{L}\left(E_{K}\right)$
is a compact operator of the Banach space $E_{K}$ with $\sigma\left(P_{K}\right)=\left\{ \mu_{K}\right\} $
, $\mu_{K}$ is semisimple for $P_{K}$ and, denoting $\mathcal{P}_{K}:=\mathcal{P}\left(\mu_{K},P_{K}\right)$:
\[
E_{K}=Ker\left(\mu_{K}I-P_{K}\right)\oplus Ker\mathcal{P}_{K}.
\]
By the spectral separation property of the spectral projection, we
have that $\sigma\left(P_{K\mid Ker\mathcal{P}_{K}}\right)=\emptyset$;
since $P_{K}$ is bounded this implies that $E_{K+1}:=Ker\mathcal{P}_{K}=\left\{ 0\right\} $.

Applying relation $\text{\eqref{eq: ip ind E_j}}$ forward from $j=1$
to $j=k$ we obtain:
\begin{align}
E_{1} & =Ker\left(\mu_{1}I-P_{1}\right)\oplus E_{2}\nonumber \\
 & =Ker\left(\mu_{1}I-P_{1}\right)\oplus Ker\left(\mu_{2}I-P_{2}\right)\oplus E_{3}\nonumber \\
 & \vdots\nonumber \\
 & =\bigoplus_{j=1}^{K}Ker\left(\mu_{j}I-P_{j}\right)\bigoplus E_{K+1}\nonumber \\
 & =\bigoplus_{j=1}^{K}Ker\left(\mu_{j}I-P_{j}\right).\label{eq: E_1 =00003D somma autospazi}
\end{align}
Now fix $g\in E_{1}$. Then there exist unique $g_{1},\dots,g_{K}$,
with $g_{j}\in Ker\left(\mu_{j}I-P_{j}\right)$ for every $j=1\dots,K$,
such that:
\[
g=g_{1}+\dots+g_{K}.
\]

Fix a $m\in\left\{ 1,\dots,K-1\right\} $ . Then
\[
Ker\left(\mu_{m}I-P_{m}\right)=Im\mathcal{P}_{m}
\]

because $\mu_{m}$ is semisimple for $P_{m}$. Thus, since $g_{m}+\dots+g_{K}\in E_{m}$,
$g_{m}\in Im\mathcal{P}_{m}$ and $g_{m+1}+\dots+g_{K}\in Ker\mathcal{P}_{m}=Im\left(I-\mathcal{P}_{m}\right)$
, we have:
\begin{align*}
g_{m} & =\mathcal{P}_{m}\left(g_{m}+\dots+g_{K}\right),\\
g_{m+1}+\dots+g_{K} & =\left(I-\mathcal{P}_{m}\right)\left(g_{m}+\dots+g_{K}\right).
\end{align*}
Thus, inductively, for every $j=1,\dots,.K$:
\begin{align*}
g_{j} & =\mathcal{P}_{j}\left(g_{j}+\dots+g_{K}\right)\\
 & =\mathcal{P}_{j}\left(I-\mathcal{P}_{j-1}\right)\left(g_{j-1}+\dots+g_{K}\right)\\
 & =\mathcal{P}_{j}\left(I-\mathcal{P}_{j-1}\right)\left(I-\mathcal{P}_{j-2}\right)\left(g_{j-2}+\dots+g_{K}\right)\\
 & \vdots\\
 & =\mathcal{P}_{j}\bigcirc_{m=1}^{j-1}\left(I-\mathcal{P}_{j-m}\right)g.
\end{align*}

Further, since $P_{j}$ is a restriction of $P_{1}$, $P_{1}g_{j}=P_{j}g_{j}=\mu_{j}g_{j}$.
Now fix $N\in\mathbb{N}$; thanks to the latter decomposition, we
have
\begin{align}
P_{1}^{N}g & =P_{1}^{N}\sum_{j=1}^{K}g_{j}=\sum_{j=1}^{K}\mu_{j}^{N}g_{j}\nonumber \\
 & =\sum_{j=1}^{K}\mu_{j}^{N}\mathcal{P}_{j}\bigcirc_{m=1}^{j-1}\left(I-\mathcal{P}_{j-m}\right)g.\label{eq: pot P_1 su Im pro}
\end{align}
Before concluding, observe that, by $\text{\eqref{eq: spettro modulo basso}}$,
the spectral radius of $P_{0\mid Ker\mathcal{P}_{0}}$ is strictly
less than $R_{0}$; thus, the formula for the spectral radius implies
that there exists $N_{0}\in\mathbb{N}$ such that
\begin{equation}
\left\Vert P_{0\mid Ker\mathcal{P}_{0}}^{N}\right\Vert \leq R_{0}^{N}\quad\forall N\geq N_{0}.\label{eq: tasso conv P_0 su Ker pro}
\end{equation}

Finally, let $f\in E_{0}$ and $N\geq N_{0}$. Since $\mathcal{P}_{0}f\in E_{1}$,
we have by $\text{\eqref{eq: pot P_1 su Im pro}}$:
\begin{align*}
P_{0}^{N}f= & P_{0}^{N}\left\{ \mathcal{P}_{0}f+\left(I-\mathcal{P}_{0}\right)f\right\} \\
= & P_{1}^{N}\mathcal{P}_{0}f+P_{0}^{N}\left(I-\mathcal{P}_{0}\right)f\\
= & \sum_{j=1}^{K}\mu_{j}^{N}\mathcal{P}_{j}\bigcirc_{m=1}^{j-1}\left(I-\mathcal{P}_{j-m}\right)\mathcal{P}_{0}f+P_{0}^{N}\left(I-\mathcal{P}_{0}\right)f.
\end{align*}
Observe that every $\mathcal{P}_{j}\bigcirc_{m=1}^{j-1}\left(I-\mathcal{P}_{j-m}\right)\mathcal{P}_{0}$
belongs to $\mathcal{L}\left(E_{0}\right)$. Take $C>0$ bigger than
the quantities $\sum_{j=1}^{K}\left\Vert \mathcal{P}_{j}\bigcirc_{m=1}^{j-1}\left(I-\mathcal{P}_{j-m}\right)\mathcal{P}_{0}\right\Vert _{\mathcal{L}\left(E_{0}\right)}$
and $\left\Vert \left(I-\mathcal{P}_{0}\right)\right\Vert _{\mathcal{L}\left(E_{0}\right)}$,
where the norm in $E_{0}$ is the one inherited by $L^{p}\left(X,\mu_{0}\right)$.
So $C$ depends only on the $\mathcal{P}_{j}$'s and, therefore, on
the problem's data. In particular, $C$ does not depend neither on
$N_{0}$ or on $N$.

Passing to the norms in the previous identity and remembering that
$\left(I-\mathcal{P}_{0}\right)f\in Ker\mathcal{P}_{0}$ we obtain
from $\text{\eqref{eq: tasso conv P_0 su Ker pro}}$:
\begin{align*}
\left\Vert P_{0}^{N}f\right\Vert _{L^{p}\left(X,\mu_{0}\right)}= & \left\Vert P_{0}^{N}f\right\Vert _{E_{0}}\\
= & \left\Vert \sum_{j=1}^{K}\mu_{j}^{N}\mathcal{P}_{j}\bigcirc_{m=1}^{j-1}\left(I-\mathcal{P}_{j-m}\right)\mathcal{P}_{0}f+P_{0}^{N}\left(I-\mathcal{P}_{0}\right)f\right\Vert _{E_{0}}\\
\leq & R_{0}^{N}\Biggl\{\sum_{j=1}^{K}\left\Vert \mathcal{P}_{j}\bigcirc_{m=1}^{j-1}\left(I-\mathcal{P}_{j-m}\right)\mathcal{P}_{0}\right\Vert _{\mathcal{L}\left(E_{0}\right)}\left\Vert f\right\Vert _{E_{0}}\\
 & +\left\Vert \left(I-\mathcal{P}_{0}\right)\right\Vert _{\mathcal{L}\left(E_{0}\right)}\left\Vert f\right\Vert _{E_{0}}\Biggr\}\\
 & \leq CR_{0}^{N}\left\Vert f\right\Vert _{L^{p}\left(X,\mu_{0}\right)}.
\end{align*}
This concludes the proof.
\end{proof}
Now we can formalize the proof of the main result of the subsection.
\begin{proof}[Proof of Theorem $\text{\ref{teo: omega uguale omega0}}$]
Let the system satisfy Assumptions $\text{\ref{assu: periodicita}}$
and $\text{\ref{ass: U* ha autov di mod max}}$, and suppose that
all the eigenvalues of $U\left(T,0\right)^{*}$ of modulus $e^{\omega_{0}T}$
are semisimple.

We show that the hypotheses of Proposition $\text{\ref{prop: omega uguale omega0 p 1}}$
are satisfied. First, since Assumption $\text{\ref{ass: U* ha autov di mod max}}$
holds, it follows from Lemma $\text{\ref{lem: autov. 1}}$, point
ii) that $R_{0}=e^{\omega_{0}T}>0$. Further, let $\lambda$ be an
eigenvalue of $P_{0}$ such that $\left|\lambda\right|=R_{0}$; by
point i) in Proposition $\text{\ref{prop: autovalori di max mod}}$,
$\lambda\in\sigma_{p}\left(U\left(T,0\right)^{*}\right)$. Thus, since
$\lambda$ is semisimple for $U\left(T,0\right)^{*}$ by assumption,
$\lambda$ is semisimple for $P_{0}$ due to point ii) in the same
Proposition $\text{\ref{prop: autovalori di max mod}}$. So the hypotheses
of Proposition $\text{\ref{prop: omega uguale omega0 p 1}}$ are satisfied.
Hence the desired estimates hold, the second one being obtained from
the first one exactly the same way $\text{\eqref{eq: stima Ps,tf - mtf}}$
was obtained by $\text{\eqref{eq: stima potenza P_0,T}}$ in the proof
of Theorem $\text{\ref{teo omega>omega0}}$.
\end{proof}
\section{Example}\label{sez: esempio}

%`

The primary data of this paper are a Hilbert space $X$ and two families
$\left\{ A\left(t\right)\mid t\in\mathbb{R}\right\} $, $\left\{ B\left(t\right)\mid t\in\mathbb{R}\right\} $
of linear operators on $X$, which must satisfy the assumptions listed
at the end of Subsection $\text{\ref{subsec: setting}}$.

In this section we provide an example where $X=L^{2}\left(\Omega\right)$
(being $\Omega$ a bounded domain of $\mathbb{R}^{N}$), $A\left(t\right)$
is a differential operator on $H^{1}\left(\Omega\right)$ and $B\left(t\right)$
is a proper fractional power of $A\left(t\right)^{-1}$, and we verify
that these objects satisfy the assumptions of the paper, except for
the periodicity of the system which is obtained just by imposing that
the coefficients in the operator $A\left(t\right)$ are periodic.
Moreover, the example meets the hypotheses of Theorem $\text{\ref{teo: omega uguale omega0}}$.
The latter feature is important because it ensures that the example
is optimal with regard to the rate of convergence of the Ornstein-Uhlenbeck
operator $P_{s,t}$. In other terms, in our example the speed of converge
of $P_{s,t}$ to $\int_{X}fd\mu_{t}$ for $t-s\to\infty$ is the highest
possible.

The operators $A\left(t\right):D\left(A\left(t\right)\right)\subseteq X\to X$
will be defined as elliptic differential operators agreeing with the
hypotheses of Acquistapace-Terreni; this guarantees the existence
of an evolution operator with the expected properties. As far as the
operators $B\left(t\right)$ are concerned, the choice of a fractional
power of $A\left(t\right)^{-1}$ is similar to that made in \cite{BigDef23},
and exploits a technique intrduced in \cite{Cer21}. This approach
also makes use of some basic properties of $\left\{ U\left(t,s\right)\mid s<t\right\} $
guaranteed in the Acquistapace-Terreni framework.

In the first part of the section, we will prove that the assumptions
regarding the sole $U\left(t,s\right)$ - thus, the sole $A\left(t\right)$
- are satisfied. We first check the hypotheses of Acquistapace-Terreni,
which is not trivial and seems to be new for an operator defined in
a subspace of $H^{1}\left(\Omega\right)$. This technique does not
exploit any maximum regularity result and may be used for more general
purposes. Then we complete the check of Assumption $\text{\ref{assu: omega_0 neg}}$
by proving the exponential decay of the evolution operator. Finally
we show that also Assumption $\text{\ref{ass: U* ha autov di mod max}}$,
as well as the other crucial hypothesis of Theorem $\text{\ref{teo: omega uguale omega0}}$,
hold true for $A\left(t\right)$ as we defined it.

In the second part of the section, we prove that our choice for $B\left(t\right)$
satisfies the basic Assumptions $\text{\ref{assu: B(t) limitati}}$
and $\text{\ref{assu: mu_t non degeneri}}$, and that it is coherent
with the ``controllability condition'' prescribed by Assumption $\text{\ref{ass: controllabilita}}$.
We prove the validity of Assumption $\text{\ref{assu: tracce finite}}$
in a similar way as in \cite{Cer21}.

Summing up, the scheme of this section is the following.

\begin{eqnarray*}
\text{Subsection }\text{\ref{subsec: A(t)}}: & \quad & \begin{aligned}\triangleright\  & \text{Assumption }\text{\ref{assu: omega_0 neg}}\\
\triangleright\  & \text{Assumption }\text{\ref{ass: U* ha autov di mod max}}\\
\triangleright\  & \text{Hypotheses of Theorem }\text{\ref{teo: omega uguale omega0}}
\end{aligned}
\\
\\
\\
\text{Subsection }\text{\ref{subsec: B(t)}} & \quad & \begin{aligned}\triangleright\  & \text{Assumption }\text{\ref{assu: B(t) limitati}}\\
\triangleright\  & \text{Assumption }\text{\ref{assu: tracce finite}}\\
\triangleright\  & \text{Assumption }\text{\ref{ass: controllabilita}}\\
\triangleright\  & \text{Assumption }\text{\ref{assu: mu_t non degeneri}}
\end{aligned}
\end{eqnarray*}

\subsection{The operators \textit{A}($\cdot$)}\label{subsec: A(t)} 

Let $N\in\mathbb{N}$ and $\Omega\subseteq\mathbb{R}^{N}$ be a bounded
domain with $\mathcal{C}^{2}$ boundary and let $\nu$ be the outward
unit vector normal to $\partial\Omega$. We set $X:=L^{2}\left(\Omega\right)$.
For $i,j=1,\dots,N$ we choose $a_{ij}:\mathbb{R}\times\overline{\Omega}\to\mathbb{R}$
to be real functions that are uniformly bounded in $i,j$ and satisfy
the uniform H\"older condition:
\begin{align}
 & \left|a_{ij}\left(t,x\right)-a_{ij}\left(s,x\right)\right|\leq C\left|t-s\right|^{\alpha}\label{eq: aij holder}\\
 & \forall t,s\in\mathbb{R},\ x\in\Omega,\ i,j=1,\dots,N\nonumber 
\end{align}
for some fixed $\alpha\in\left(1/2,1\right)$. We assume that, for
every $t\in\mathbb{R}$ and $x\in\overline{\Omega}$, the matrix $\left[a_{ij}\left(t,x\right)\right]_{i,j=1}^{N}$
is symmetric and satisfies a common strong ellipticity assumption;
namely, for some $\eta>0$:
\begin{equation}
\Re\sum_{i,j=1}^{N}y_{i}a_{ij}\left(t,x\right)y_{j}\geq\eta\left|y\right|_{\mathbb{C}^{N}}^{2}\quad\forall y\in\mathbb{C}^{N}.\label{eq: matrice coerciva}
\end{equation}

For every fixed $t\in\mathbb{R}$ and $g\in X$, we consider the following
second order elliptic equation in divergence form:
\begin{equation}
\begin{cases}
{\displaystyle \sum_{i,j=1}^{N}}D_{j}\left(a_{ij}\left(t,\cdot\right)D_{i}\mathrm{u}\right)\left(x\right)-\mathrm{u}\left(x\right)=g\left(x\right) & \quad x\in\Omega\\
{\displaystyle \sum_{i,j=1}^{N}}a_{ij}\left(t,x\right)D_{i}\mathrm{u}\left(x\right)\nu_{j}\left(x\right)=0 & \quad x\in\partial\Omega
\end{cases}\label{eq: problema ellittico}
\end{equation}
in the unknown $\mathrm{u}$. Every such problem has a unique weak
solution in $H^{1}\left(\Omega\right)$; for an extensive treatment
of this kind of problems see for instance \cite{GIL-TRUD}.

Define, for every $t\in\mathbb{R}$:
\[
a\left(t\right)\left(u,v\right):=-\int_{\Omega}\left\{ \sum_{i,j=1}^{N}D_{i}u\left(x\right)a_{ij}\left(t,x\right)D_{j}v\left(x\right)+u\left(x\right)v\left(x\right)\right\} dx\quad\forall u,v\in H^{1}\left(\Omega\right).
\]
Clearly $a\left(t\right)\left(\cdot,\cdot\right)$ is a symmetric,
positive-definite bilinear form in $H^{1}\left(\Omega\right)$. Set
\[
D\left(A\left(t\right)\right):=\left\{ u\in H^{1}\left(\Omega\right)\mid\exists f\in X\text{ such that }a\left(t\right)\left(u,v\right)=\left\langle f,v\right\rangle _{X}\quad\forall v\in H^{1}\left(\Omega\right)\right\} ,
\]
and, for every $u\in D\left(A\left(t\right)\right)$, $v\in H^{1}\left(\Omega\right)$:
\[
\left\langle A\left(t\right)u,v\right\rangle _{X}:=a\left(t\right)\left(u,v\right).
\]
In other terms, $A\left(t\right)$ is the weak realization in $X$
of the differential operator:
\begin{equation}
\mathcal{A}\left(t\right)u:=\sum_{i,j=1}^{N}D_{j}\left(a_{ij}\left(t,\cdot\right)D_{i}u\right)-u\label{eq: op differenziale}
\end{equation}
with conormal derivative boundary conditions as in $\text{\eqref{eq: problema ellittico}}$.
\begin{rem}
The abstract, weak formulation of $\text{\eqref{eq: problema ellittico}}$
is given by:
\[
\begin{cases}
\left\langle A\left(t\right)\mathrm{u},v\right\rangle _{X}=\left\langle g,v\right\rangle _{X} & \quad\forall v\in X\\
\mathrm{u}\in D\left(A\left(t\right)\right).
\end{cases}
\]
This abstract problem has memory of the conormal derivative boundary
condition because in the definition of $D\left(A\left(t\right)\right)$
the test function $v$ varies in $H^{1}\left(\Omega\right)$. In order
to represent the weak problem with Dirichlet boundary conditions,
one just needs to replace the expression ``$\forall v\in H^{1}\left(\Omega\right)$''
with the expression ``$\forall v\in H_{0}^{1}\left(\Omega\right)$''
in the definition of $D\left(A\left(t\right)\right)$. For this reason
all the following results also hold with exactly the same arguments
if $D\left(A\left(t\right)\right)$ is such as to represent the Dirichlet
problem.
\end{rem}

We prove three basic estimates relative to the elements of $D\left(A\left(t\right)\right)$.
First, it is an immediate consequence of the uniform ellipticity property
in $\text{\eqref{eq: matrice coerciva}}$ that, for every $t\in\mathbb{R}$,
the operator $A\left(t\right)$ is dissipative:
\begin{equation}
\left\langle A\left(t\right)u,u\right\rangle _{X}\leq-\left|u\right|_{X}^{2}\quad\forall u\in D\left(A\left(t\right)\right).\label{eq: forma dissipativa}
\end{equation}

Note that the above relation implies $\left|u\right|_{X}\leq\left|A\left(t\right)u\right|_{X}$
for every $u\in D\left(A\left(t\right)\right)\setminus\left\{ 0\right\} $
so the operator $A\left(t\right):D\left(A\left(t\right)\right)\subseteq X\to X$
is injective; further, it is surjective because of the existence of
weak solutions of $\text{\eqref{eq: problema ellittico}}$ for every
$g\in X$. We conclude that $0$ belongs to the resolvent set of $A\left(t\right)$
and
\begin{equation}
\left\Vert A\left(t\right)^{-1}\right\Vert _{\mathcal{L}\left(X\right)}\leq1.\label{eq: stima A(t)^-1}
\end{equation}

Relation $\text{\eqref{eq: forma dissipativa}}$ can be improved for
two special subclasses of $X$, namely that of the constants and that
of the functions whose integral mean in $\Omega$ - with respect to
the Lebesgue measure $\lambda$ - is zero. Indeed, for every $t\in\mathbb{R}$,
a function that is almost everywhere constant in $\Omega$ belongs
to $D\left(A\left(t\right)\right)$, and:
\begin{equation}
\left\langle A\left(t\right)u,v\right\rangle _{X}=-\left\langle u,v\right\rangle _{X}\quad\forall u\text{ a.e. constant in }\Omega,\ v\in H^{1}\left(\Omega\right).\label{eq: forma su costanti}
\end{equation}

For every $u\in L^{1}\left(\Omega\right)$, denote
\[
u_{\Omega}:=\frac{1}{\lambda\left(\Omega\right)}\int_{\Omega}u\left(x\right)dx.
\]
By Poincar\'e's inequality, there exists a positive constant $C_{\Omega}$
such that, for every $t\in\mathbb{R}$:
\begin{align}
\left\langle A\left(t\right)u,u\right\rangle _{X} & \leq-\int_{\Omega}\left\{ \eta\left|\nabla u\left(x\right)\right|_{\mathbb{C}^{N}}^{2}+\left|u\left(x\right)\right|^{2}\right\} dx\nonumber \\
 & \leq-\left(1+\eta C_{\Omega}\right)\left|u\right|_{X}^{2}\quad\forall u\in D\left(A\left(t\right)\right)\text{ with }u_{\Omega}=0.\label{eq: forma su media nulla}
\end{align}

Finally, we observe that it follows from $\text{\eqref{eq: stima A(t)^-1}}$
and the first inequality in $\text{\eqref{eq: forma su media nulla}}$
that every $f\in X$:
\begin{align}
\eta\int_{\Omega}\left|\nabla A\left(t\right)^{-1}f\left(x\right)\right|_{\mathbb{C}^{N}}^{2}dx & \leq\left|f\right|_{X}^{2}.\label{eq: norma X grad inversa}
\end{align}

\vspace{2mm}
\begin{fact}
The family $\left\{ A\left(t\right)\mid t\in\mathbb{R}\right\} $
satisfies Assumption $\text{\ref{assu: omega_0 neg}}$, i). Precisely,
it generates an evolution operator $\left\{ U\left(t,s\right)\mid s\leq t\right\} $
such that, for every $t>s$, $ImU\left(t,s\right)\subseteq D\left(A\left(t\right)\right)$,
$A\left(t\right)U\left(t,s\right)\in\mathcal{L}\left(X\right)$ and
\begin{equation}
\left\Vert A\left(t\right)U\left(t,s\right)\right\Vert _{\mathcal{L}\left(X\right)}\leq\frac{c}{t-s}.\label{eq: A(t)U(t,s)}
\end{equation}
\end{fact}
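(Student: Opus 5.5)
The plan is to verify the Acquistapace--Terreni hypotheses for the family $\{A(t)\}$ in $X=L^{2}(\Omega)$. The existence of $U(t,s)$, the inclusion $ImU(t,s)\subseteq D(A(t))$ and the estimate $\text{\eqref{eq: A(t)U(t,s)}}$ then follow from the abstract theory. The hypotheses to check are:

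\begin{itemize}
\item[(AT1)] \emph{uniform sectoriality}: there exist $\theta\in(\pi/2,\pi)$ and $K>0$ such that $\rho(A(t))\supseteq S_{\theta}:=\{\lambda\neq0:|\arg\lambda|<\theta\}\cup\{0\}$ and $\|R(\lambda,A(t))\|\leq K/(1+|\lambda|)$ for all $\lambda\in S_{\theta}$ and $t\in\mathbb{R}$;
\item[(AT2)] a \emph{H\"older estimate}: $\|A(t)R(\lambda,A(t))\left(A(t)^{-1}-A(s)^{-1}\right)\|\leq L|t-s|^{\alpha}|\lambda|^{-\nu}$ for some $\nu,\alpha\in(0,1]$ with $\alpha+\nu>1$.
\end{itemize}

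For (AT1) I would use the form approach. The form $-a(t)$ is symmetric and, by $\text{\eqref{eq: matrice coerciva}}$ and the uniform bound on the $a_{ij}$, satisfies
\[
\eta\|\nabla u\|^{2}+\|u\|^{2}\leq-a(t)(u,u)\leq C\|u\|_{H^{1}}^{2},
\]
uniformly in $t$. Hence $A(t)$ is self-adjoint with spectrum contained in $(-\infty,-1]$, as already noted in $\text{\eqref{eq: forma dissipativa}}$. For self-adjoint operators one has $\|R(\lambda,A(t))\|=1/\mathrm{dist}(\lambda,\sigma(A(t)))$. On any sector $|\arg\lambda|<\theta<\pi$ this gives $\|R(\lambda,A(t))\|\leq K_{\theta}/(1+|\lambda|)$ with $K_{\theta}$ independent of $t$, because the spectra all lie in the same half-line $(-\infty,-1]$.

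For (AT2) I would write $w:=A(t)^{-1}f-A(s)^{-1}f$ and estimate it in the weak sense, avoiding any $H^{2}$ regularity. Set $u_{t}=A(t)^{-1}f$ and $u_{s}=A(s)^{-1}f$, and take $v\in H^{1}(\Omega)$. Since $a(t)(u_{t},v)=\langle f,v\rangle=a(s)(u_{s},v)$, subtracting $a(t)(u_{s},v)$ from both sides gives
\[
a(t)(w,v)=\int_{\Omega}\sum_{i,j}D_{i}u_{s}\left(a_{ij}(t,\cdot)-a_{ij}(s,\cdot)\right)D_{j}v\,dx .
\]
By $\text{\eqref{eq: aij holder}}$ and $\text{\eqref{eq: norma X grad inversa}}$ the right-hand side is bounded by $C|t-s|^{\alpha}|f|_{X}\|\nabla v\|$. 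This shows that $A(t)w$ is defined as an element of the dual space $H^{1}(\Omega)^{*}$, and more precisely that $\left(-A(t)\right)^{1/2}w\in X$ with
\[
\left|\left(-A(t)\right)^{1/2}w\right|\leq C|t-s|^{\alpha}|f|_{X},
\]
because the form domain of the self-adjoint operator $-A(t)$ is $H^{1}(\Omega)$, with form norm equivalent to the $H^{1}$ norm. Then
\[
A(t)R(\lambda,A(t))\,w=-\left(-A(t)\right)^{1/2}R(\lambda,A(t))\left(-A(t)\right)^{1/2}w,
\]
and by spectral calculus $\|\left(-A(t)\right)^{1/2}R(\lambda,A(t))\|\leq C|\lambda|^{-1/2}$ on the sector. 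This yields (AT2) with $\nu=1/2$, and the requirement $\alpha+\nu>1$ is exactly why the paper assumes $\alpha\in(1/2,1)$.

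The main obstacle is this last step. One has to justify the identification of $(-A(t))^{1/2}$ through the form domain, via Kato's square root property, which is immediate for symmetric forms, together with the uniformity in $t$ of the equivalence constants between the form norm and the $H^{1}$ norm. Uniformity holds because $\eta$ and the bounds on the $a_{ij}$ do not depend on $t$. Once (AT1) and (AT2) are established, the Acquistapace--Terreni theorem gives a strongly continuous evolution operator with $ImU(t,s)\subseteq D(A(t))$ for $t>s$ and the bound $\|A(t)U(t,s)\|\leq c/(t-s)$, initially for $0<t-s\leq1$. For $t-s>1$ I would write $A(t)U(t,s)=A(t)U(t,t-1)U(t-1,s)$ and use the uniform bound on $U$ coming from dissipativity to obtain $\|A(t)U(t,s)\|\leq c$, which is compatible with $\text{\eqref{eq: A(t)U(t,s)}}$ after enlarging $c$ if necessary. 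A sharper decay estimate is not needed here, since the statement only requires $c/(t-s)$.
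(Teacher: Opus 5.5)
Your verification of the two Acquistapace--Terreni hypotheses is correct, but it follows a different route from the paper. The paper uses only energy estimates on the form. For Hypothesis I it takes real and imaginary parts of $\langle(\lambda I-A(t))u,u\rangle_X$ to get $(|\Im\lambda|+\Re\lambda+1)|u|_X\le 2|(\lambda I-A(t))u|_X$. It then turns this into $c/(1+|\lambda|)$ on a sector through the elementary geometric Lemma~\ref{rem: lemmino settore}. For Hypothesis II it writes $T=R(\lambda,A(t))[I-\lambda A(s)^{-1}]+A(s)^{-1}$ so that $Tf\in H^1(\Omega)$, and tests the resulting weak identity against $Tf$ itself. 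It absorbs the gradient part of $I_2$ into $\Re I_1$ by Young's inequality, and applies the sector lemma again to get the factor $(1+|\lambda|)^{-1/2}$.

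You instead use self-adjointness and functional calculus. The inclusion $\sigma(A(t))\subseteq(-\infty,-1]$ gives Hypothesis I on every sector of angle less than $\pi$. For Hypothesis II you combine three things: the factorization $A(t)R(\lambda,A(t))w=-(-A(t))^{1/2}R(\lambda,A(t))(-A(t))^{1/2}w$ for $w\in H^1(\Omega)=D((-A(t))^{1/2})$, the identity $|(-A(t))^{1/2}v|_X^2=-a(t)(v,v)\ge\eta\|\nabla v\|^2$, and duality. This yields $\nu=1/2$. Your version is shorter and shows clearly where $\nu=1/2$ comes from: half a power of $A(t)$ is used up by the $H^1$-regularity of $A(t)^{-1}-A(s)^{-1}$. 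The paper's version is more elementary and adapts directly to non-symmetric, merely sectorial forms. There, identifying $D((-A(t))^{1/2})$ with $H^1(\Omega)$ would need the Kato square root property, which is then no longer trivial.

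Your last step, however, contains an error. For $t-s>1$ you only obtain $\|A(t)U(t,s)\|_{\mathcal{L}(X)}\le c$. A constant bound is not compatible with $c'/(t-s)$ for any $c'$, since $c'/(t-s)\to0$ as $t-s\to+\infty$. So, contrary to your remark, you do need a decay estimate for $U$. It follows from the strict dissipativity \eqref{eq: forma dissipativa}: $\frac{\partial}{\partial t}|U(t,s)f|_X^2\le-2|U(t,s)f|_X^2$ gives $\|U(t,s)\|_{\mathcal{L}(X)}\le e^{-(t-s)}$, which is the paper's next Fact. Then
\[
\|A(t)U(t,s)\|_{\mathcal{L}(X)}\le\|A(t)U(t,t-1)\|_{\mathcal{L}(X)}\|U(t-1,s)\|_{\mathcal{L}(X)}\le c\,e^{1-(t-s)}\le\frac{c}{t-s},
\]
because $\sup_{x>0}xe^{-x}=e^{-1}$. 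The paper itself does not discuss the global-in-time constant. It extends $U$ from bounded intervals by uniqueness, and later uses \eqref{eq: A(t)U(t,s)} only for bounded $t-s$, in the proof of Fact~\ref{fact: per controllabilita}.
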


We prove that $\left\{ A\left(t\right)\right\} _{t\in\mathbb{R}}$
satisfies, in the whole real line, the two hypotheses of Acquistapace-Terreni
stated in \cite{Acq - Ev op}, \S 1 for the case $t\in\left[0,T\right]$.
By Theorem 2.3 ($v)$ in the same paper, $\left\{ A\left(t\right)\right\} _{t\in\mathbb{R}}$
generates a unique evolution operator $\left\{ U\left(t,s\right)\right\} _{0\leq s\leq t\leq T}$
satisfying the above requirements. The extension of $U\left(\cdot,\cdot\right)$
to the half plane $\left\{ \left(s,t\right)\mid s\leq t\right\} $
is straightforward due to the uniqueness of the solution of $\text{\eqref{eq: evoluzione}}$.

\vspace{2mm}

\textsc{Hypothesis I}. We have to show that there exists $c>0$ and
a sector of the complex plane of the form $S_{\vartheta_{0}}=\left\{ z\in\mathbb{C}\mid\left|\arg z\right|\leq\vartheta_{0}\right\} $,
with $\vartheta_{0}\in\left(\frac{\pi}{2},\pi\right]$, contained
in the resolvent set of every $A\left(t\right)$, such that
\begin{equation}
\left\Vert R\left(\lambda,A\left(t\right)\right)\right\Vert _{\mathcal{L}\left(X\right)}\leq\frac{c}{1+\left|\lambda\right|}\quad\forall\lambda\in\overline{S_{\vartheta_{0}}},\ t\in\mathbb{R}.\label{eq: AT1}
\end{equation}

For every $\lambda\in\mathbb{C}$, $t\in\mathbb{R}$ and $u\in D\left(A\left(t\right)\right)\setminus\left\{ 0\right\} $
we have
\begin{align*}
\left\langle \left[\lambda I-A\left(t\right)\right]u,u\right\rangle _{X} & =\lambda\left|u\right|_{X}^{2}-a\left(t\right)\left(u,u\right)\\
 & =\left(\lambda+1\right)\left|u\right|_{X}^{2}+\int_{\Omega}\left\{ \sum_{i,j=1}^{N}D_{i}u\left(x\right)a_{ij}\left(t,x\right)D_{j}u\left(x\right)\right\} dx.
\end{align*}
Passing to the imaginary part $\Im$ and to the real part $\Re$ separately,
and considering that the last addend is a non-negative real number
by $\text{\eqref{eq: matrice coerciva}}$, we see that the quantities
$\left|\Im\lambda\right|\left|u\right|_{X}^{2}$ and $\left(\Re\lambda+1\right)\left|u\right|_{X}^{2}$
are bounded above by $\left|\left[\lambda I-A\left(t\right)\right]u\right|_{X}\left|u\right|_{X}$.
Summing up we obtain:
\[
\left(\left|\Im\lambda\right|+\Re\lambda+1\right)\left|u\right|_{X}\leq2\left|\left[\lambda I-A\left(t\right)\right]u\right|_{X}.
\]
Hence, provided that $\left|\Im\lambda\right|+\Re\lambda+1>0$, $\lambda$
belongs to the resolvent set of $A\left(t\right)$ and:
\begin{equation}
\left\Vert R\left(\lambda,A\left(t\right)\right)\right\Vert _{\mathcal{L}\left(X\right)}\leq\frac{2}{\left|\Im\lambda\right|+\Re\lambda+1}.\label{eq: stima ris settore grande}
\end{equation}

Note that the inequality $\left|\Im\lambda\right|+\Re\lambda+1>0$
configures a complex sector, precisely the sector $S_{\frac{3}{4}\pi}-1$,
which contains $\overline{S_{\frac{3}{4}\pi}}$; but we need the estimate
in $\eqref{eq: AT1}$ - which is sharper than $\text{\eqref{eq: stima ris settore grande}}$
if $\Re\lambda<0$ - at the cost of taking a $\vartheta_{0}<\frac{3}{4}\pi$
(provided that $\vartheta_{0}$ stays grater than $\frac{\pi}{2}$).

To this end, start by observing that for every $\vartheta_{0}\in\left(\frac{\pi}{2},\pi\right]$
the sector $\overline{S_{\vartheta_{0}}}$ can be written as
\begin{align}
\overline{S_{\vartheta_{0}}} & =\left\{ z\in\mathbb{C}\mid\Re z-\frac{1}{\tan\vartheta_{0}}\left|\Im z\right|\geq0\right\} ,\label{eq: caratt sett}
\end{align}
since $\tan\vartheta_{0}<0$. 
\begin{lem}
\label{rem: lemmino settore}For every $c_{1}>0$ and $c_{2}>1$ such
that $c_{1}c_{2}>1$, there exists $\vartheta_{0}\in\left(\frac{\pi}{2},\pi\right]$
such that
\begin{equation}
\overline{S_{\vartheta_{0}}}\subseteq\left\{ z\in\mathbb{C}\mid\frac{1+\left|z\right|}{c_{2}}\leq1+\Re z+c_{1}\left|\Im z\right|\right\} .\label{eq: settore incluso in stima buona}
\end{equation}
\end{lem}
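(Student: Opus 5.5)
The plan is to reduce the inclusion $\text{\eqref{eq: settore incluso in stima buona}}$ to a one-variable trigonometric inequality in the argument of $z$, and then choose $\vartheta_{0}$ explicitly. The hypothesis $c_{2}>1$ disposes of the constant term, and $c_{1}c_{2}>1$ leaves room to open the sector slightly beyond the imaginary axis.

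\emph{Reduction.} Write $z=re^{i\varphi}$ with $r\geq0$ and $\left|\varphi\right|\leq\vartheta_{0}$. For $z=0$ the inclusion reads $1/c_{2}\leq1$, which holds since $c_{2}>1$. For $r>0$, the same fact $1/c_{2}<1$ shows that it suffices to prove
\[
\frac{r}{c_{2}}\leq r\cos\varphi+c_{1}r\left|\sin\varphi\right|.
\]
Both sides are even in $\varphi$, so after dividing by $r$ the task becomes: find $\vartheta_{0}\in\left(\frac{\pi}{2},\pi\right]$ such that
\[
f\left(\varphi\right):=\cos\varphi+c_{1}\sin\varphi\geq\frac{1}{c_{2}}\quad\forall\varphi\in\left[0,\vartheta_{0}\right].
\]

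\emph{Study of $f$.} Set $\alpha:=\arctan c_{1}\in\left(0,\frac{\pi}{2}\right)$, so that
\[
f\left(\varphi\right)=\sqrt{1+c_{1}^{2}}\,\cos\left(\varphi-\alpha\right).
\]
Then $f$ is increasing on $\left[0,\alpha\right]$ and decreasing on $\left[\alpha,\pi\right]$. Hence $\min_{\left[0,\pi/2\right]}f=\min\left\{ f\left(0\right),f\left(\frac{\pi}{2}\right)\right\} =\min\left\{ 1,c_{1}\right\}$. This minimum is strictly greater than $1/c_{2}$, because $c_{2}>1$ and $c_{1}c_{2}>1$.

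\emph{Choice of $\vartheta_{0}$.} On $\left[\alpha,\pi\right]$ the function $f$ decreases from $\sqrt{1+c_{1}^{2}}$ to $-1$, so there is a unique $\vartheta^{*}\in\left(\alpha,\pi\right)$ with $f\left(\vartheta^{*}\right)=1/c_{2}$, namely
\[
\vartheta^{*}=\alpha+\arccos\left(\frac{1}{c_{2}\sqrt{1+c_{1}^{2}}}\right).
\]
Since $f\left(\frac{\pi}{2}\right)=c_{1}>1/c_{2}$ and $f$ is decreasing past $\alpha$, we get $\vartheta^{*}>\frac{\pi}{2}$. Take $\vartheta_{0}:=\vartheta^{*}$; this lies in $\left(\frac{\pi}{2},\pi\right)\subseteq\left(\frac{\pi}{2},\pi\right]$. (Any $\vartheta_{0}\in\left(\frac{\pi}{2},\vartheta^{*}\right]$ would also work.) Then $f\geq1/c_{2}$ on $\left[0,\frac{\pi}{2}\right]$ by the previous paragraph, and on $\left[\frac{\pi}{2},\vartheta_{0}\right]$ by monotonicity of $f$ there.

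The only point that needs care is showing that the threshold angle $\vartheta^{*}$ lies strictly beyond $\frac{\pi}{2}$. This is exactly where $c_{1}c_{2}>1$ is used, through $f\left(\frac{\pi}{2}\right)=c_{1}>1/c_{2}$. The rest is elementary trigonometry, and one can equivalently argue through the characterization $\text{\eqref{eq: caratt sett}}$ of $\overline{S_{\vartheta_{0}}}$ instead of polar coordinates.
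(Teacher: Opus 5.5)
Your proof is correct, and it follows a somewhat different route from the paper's.

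The paper works in Cartesian coordinates. It describes the sector as $\{x+\epsilon|y|\geq0\}$ with $\epsilon=-1/\tan\vartheta_{0}$, and treats two regions separately.
\begin{itemize}
\item For $x\geq0$, the inequality $1+\sqrt{x^{2}+y^{2}}\leq c_{2}(1+x+c_{1}|y|)$ is immediate.
\item In the thin wedge $-\epsilon|y|\leq x<0$, it uses the bounds $\sqrt{x^{2}+y^{2}}\leq\sqrt{1+\epsilon^{2}}\,|y|$ and $x\geq-\epsilon|y|$. Everything then reduces to $(c_{1}-\epsilon)c_{2}-\sqrt{1+\epsilon^{2}}>0$. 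At $\epsilon=0$ this is exactly $c_{1}c_{2}>1$, so it holds for small $\epsilon$ by continuity.
\end{itemize}
The hypotheses enter exactly where they do in your argument: $c_{2}>1$ absorbs the constant term, and $c_{1}c_{2}>1$ is the condition on the imaginary axis. The paper's split into $x\geq0$ and $x<0$ mirrors your split into $[0,\pi/2]$ and $[\pi/2,\vartheta_{0}]$.

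The difference is in what each argument delivers. The paper's perturbative argument gives only the existence of some $\vartheta_{0}$ slightly larger than $\pi/2$. That is all it needs for the Acquistapace--Terreni hypotheses.

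Your exact analysis of $\cos\varphi+c_{1}\sin\varphi$ produces the explicit angle
\[
\vartheta^{*}=\arctan c_{1}+\arccos\bigl(1/(c_{2}\sqrt{1+c_{1}^{2}})\bigr).
\]
This angle is in fact optimal. For $\varphi$ slightly beyond $\vartheta^{*}$ one has $\cos\varphi+c_{1}\sin\varphi<1/c_{2}$. Then $z=re^{i\varphi}$ violates the inclusion for $r$ large, since the constant terms become negligible.
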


\begin{proof}
The sketched proof of the statement is as follows. Clearly there exists
$\epsilon>0$ such that $\left(c_{1}-\epsilon\right)c_{2}-\sqrt{1+\epsilon^{2}}>0$,
and for this choice of $\epsilon$ we have
\[
\left[\left(c_{1}-\epsilon\right)c_{2}-\sqrt{1+\epsilon^{2}}\right]\left|y\right|>1-c_{2}\quad\forall y\in\mathbb{R}.
\]
A simple computation shows that in this case
\[
-\epsilon\left|y\right|\leq x<0\implies1+\sqrt{x^{2}+y^{2}}\leq c_{2}\left(1+x+c_{1}\left|y\right|\right),
\]
while obviously the latter inequality holds for $x\geq0$ as well.
Thus, for every $x$, $y$ $\in\mathbb{R}$:
\[
x+\epsilon\left|y\right|\geq0\implies1+\sqrt{x^{2}+y^{2}}\leq c_{2}\left(1+x+c_{1}\left|y\right|\right).
\]
In other words this means that $\text{\eqref{eq: settore incluso in stima buona}}$
holds, having set $\vartheta_{0}:=\arctan\left(-\frac{1}{\epsilon}\right)+\pi$
and remembering $\text{\eqref{eq: caratt sett}}$. Note that, as $\epsilon\to0^{+}$,
$\arctan\left(-\frac{1}{\epsilon}\right)\to-\frac{\pi}{2}^{+}$ so
we can assume that $\vartheta_{0}\in\left(\frac{\pi}{2},\pi\right]$.
\end{proof}
To fix the ideas, we could choose for instance $c_{1}=1$, $c_{2}=4$
and $\epsilon=-\frac{1}{\tan\left(\frac{2}{3}\pi\right)}\thickapprox0,58$,
thus $\vartheta_{0}=\frac{2}{3}\pi$ and we would obtain:
\[
\overline{S_{\frac{2}{3}\pi}}\subseteq\left\{ z\in\mathbb{C}\mid1+\left|z\right|\leq4\left(1+\Re z+\left|\Im z\right|\right)\right\} .
\]

Taking $c_{1}=1$ and $c_{2}$, $\vartheta_{0}$ as in Lemma $\text{\ref{rem: lemmino settore}}$,
we obtain from $\text{\eqref{eq: stima ris settore grande}}$ that
$\text{\eqref{eq: AT1}}$ holds with $c=2c_{2}$.

\vspace{2mm}

\textsc{Hypothesis II}. As far as the second hypothesis of Acquistapace-Terreni
is concerned, we prove that there exists a different angle $\vartheta_{0}\in\left(\frac{\pi}{2},\pi\right]$
such that:
\begin{equation}
\left\Vert A\left(t\right)R\left(\lambda,A\left(t\right)\right)\left[A\left(t\right)^{-1}-A\left(s\right)^{-1}\right]\right\Vert _{\mathcal{L}\left(X\right)}\leq C\frac{\left|t-s\right|^{\alpha}}{\sqrt{1+\left|\lambda\right|}}\quad\forall\lambda\in\overline{S_{\vartheta_{0}}}\setminus\left\{ 0\right\} ,\ s<t,\label{eq: AT2}
\end{equation}
which is slightly stronger than Hypothesis II stated in \cite{Acq - Ev op}.

The proof is more complicated than the previous one. Fix $s<t$ and
set for simplicity 
\[
T:=A\left(t\right)R\left(\lambda,A\left(t\right)\right)\left[A\left(t\right)^{-1}-A\left(s\right)^{-1}\right].
\]
The heuristic idea is to write down $\left[\lambda I-A\left(t\right)\right]T$,
which is formally not admissible because the image of $T$ is not
necessarily contained in $D\left(A\left(t\right)\right)$.

We begin by decomposing $T$ into the sum of two operators which have
range contained in $D\left(A\left(t\right)\right)$ and $D\left(A\left(s\right)\right)$
respectively. Indeed, noting that $A\left(t\right)R\left(\lambda,A\left(t\right)\right)=\lambda R\left(\lambda,A\left(t\right)\right)-I$
and that, by the resolvent identity, $\lambda R\left(\lambda,A\left(t\right)\right)A\left(t\right)^{-1}=R\left(\lambda,A\left(t\right)\right)+A\left(t\right)^{-1}$,
we have:
\begin{align}
T & =\left[\lambda R\left(\lambda,A\left(t\right)\right)-I\right]\left[A\left(t\right)^{-1}-A\left(s\right)^{-1}\right]\nonumber \\
 & =R\left(\lambda,A\left(t\right)\right)\left[I-\lambda A\left(s\right)^{-1}\right]+A\left(s\right)^{-1},\label{eq: scomposizione T}
\end{align}
which implies $ImT\subseteq D\left(A\left(t\right)\right)+D\left(A\left(s\right)\right)\subseteq H^{1}\left(\Omega\right)$.
Thus, if $\widetilde{A}\left(t\right):H^{1}\left(\Omega\right)\to X$
is any operator extending $A\left(t\right)$:
\begin{align*}
\left[\lambda I-\widetilde{A}\left(t\right)\right]T & =I-\widetilde{A}\left(t\right)A\left(s\right)^{-1}.
\end{align*}
Define $\left\langle \widetilde{A}\left(t\right)u,v\right\rangle _{X}:=a\left(t\right)\left(u,v\right)$
for every $u,v\in H^{1}\left(\Omega\right)$; applying the above relation
to some $f\in X$ and multiplying by $v\in H^{1}\left(\Omega\right)$
in $X$ leads to:
\[
\lambda\left\langle Tf,v\right\rangle _{X}-a\left(t\right)\left(Tf,v\right)=\left[a\left(s\right)-a\left(t\right)\right]\left(A\left(s\right)^{-1}f,v\right),
\]
since $\left\langle f,v\right\rangle _{X}=\left\langle A\left(s\right)A\left(s\right)^{-1}f,v\right\rangle _{X}=a\left(s\right)\left(A\left(s\right)^{-1}f,v\right)$
by definition of $A\left(s\right)$.

By definition of $a\left(t\right)$ and taking $v=Tf$ we have:
\begin{align}
\left(\lambda+1\right)\left|Tf\right|_{X}^{2}= & -\int_{\Omega}\sum_{i,j=1}^{N}D_{i}Tf\left(x\right)a_{ij}\left(t,x\right)D_{j}Tf\left(x\right)dx\nonumber \\
 & +\int_{\Omega}\sum_{i,j=1}^{N}D_{i}A\left(s\right)^{-1}f\left(x\right)\left[a_{ij}\left(t,x\right)-a_{ij}\left(s,x\right)\right]D_{j}Tf\left(x\right)dx\label{eq: 1+lambda fond}\\
=: & -I_{1}+I_{2}.\nonumber 
\end{align}
Now we only have to carefully estimate $I_{1}$ and $I_{2}$. First,
setting
\[
M:=\max_{\substack{i,j=1,\dots,N\\
\left(r,x\right)\in\mathbb{R}\times\overline{\Omega}
}
}\left|a_{ij}\left(r,x\right)\right|,
\]
we have by $\text{\eqref{eq: matrice coerciva}}$:
\begin{equation}
\eta\int_{\Omega}\left|\nabla Tf\left(x\right)\right|_{\mathbb{C}^{N}}^{2}dx\leq\Re I_{1}\leq\left|I_{1}\right|\leq MN^{2}\int_{\Omega}\left|\nabla Tf\left(x\right)\right|_{\mathbb{C}^{N}}^{2}dx\label{eq: I1 primo}
\end{equation}
which implies
\begin{equation}
\frac{\eta}{2MN^{2}}\left|I_{1}\right|-\frac{1}{2}\Re I_{1}\leq0.\label{eq: I1 secondo}
\end{equation}
As far as $I_{2}$ is concerned, we have:
\begin{align}
\left|I_{2}\right|\leq & \frac{N}{2K}\int_{\Omega}\left|\nabla Tf\left(x\right)\right|_{\mathbb{C}^{N}}^{2}dx+\frac{NK}{2}\left|t-s\right|^{2\alpha}\int_{\Omega}\left|\nabla A\left(s\right)^{-1}f\left(x\right)\right|_{\mathbb{C}^{N}}^{2},\label{eq: I2}
\end{align}
for every $K>0$, since the euclidean product in $\mathbb{C}^{N}$
obviously satisfies $\left\langle w_{1},w_{2}\right\rangle _{\mathbb{C}^{N}}\leq\frac{1}{2K}\left|w_{1}\right|_{\mathbb{C}^{N}}^{2}+\frac{K}{2}\left|w_{2}\right|_{\mathbb{C}^{N}}^{2}$,
and remembering the assumption in $\text{\eqref{eq: aij holder}}$.
Now we infer from $\text{\eqref{eq: 1+lambda fond}}$:
\begin{align*}
\left(1+\Re\lambda\right)\left|Tf\right|_{X}^{2} & \leq-\Re I_{1}+\left|I_{2}\right|\\
\frac{\eta}{2MN^{2}}\left|\Im\lambda\right|\left|Tf\right|_{X}^{2} & \leq\frac{\eta}{2MN^{2}}\left(\left|I_{1}\right|+\left|I_{2}\right|\right).
\end{align*}

Summing up, we obtain from $\eqref{eq: I1 secondo}$ and $\text{\eqref{eq: I2}}$:
\begin{align*}
\left(1+\Re\lambda+\frac{\eta}{2MN^{2}}\left|\Im\lambda\right|\right)\left|Tf\right|_{X}^{2}\leq & -\Re I_{1}+\frac{\eta}{2MN^{2}}\left|I_{1}\right|+\left(\frac{\eta}{2MN^{2}}+1\right)\left|I_{2}\right|\\
\leq & -\frac{1}{2}\Re I_{1}+\frac{1}{K}\frac{\eta+2MN^{2}}{4MN}\int_{\Omega}\left|\nabla Tf\left(x\right)\right|_{\mathbb{C}^{N}}^{2}dx\\
 & +K\frac{\eta+2MN^{2}}{4MN}\left|t-s\right|^{2\alpha}\int_{\Omega}\left|\nabla A\left(s\right)^{-1}f\left(x\right)\right|_{\mathbb{C}^{N}}^{2}.
\end{align*}
By $\text{\eqref{eq: I1 primo}}$, the sum of the first two terms
in the right-hand member is non-positive if $K$ is such that $\frac{1}{K}\frac{\eta+2MN^{2}}{4MN}\leq\frac{\eta}{2}$.
We deduce that there exists a constant $C>0$ depending only on the
problem's data $M$, $N$, $\eta$, such that:
\[
\left(1+\Re\lambda+\frac{\eta}{2MN^{2}}\left|\Im\lambda\right|\right)\left|Tf\right|_{X}^{2}\leq C\left|t-s\right|^{2\alpha}\left|f\right|_{X}^{2},
\]
having used $\text{\eqref{eq: norma X grad inversa}}$. By invoking
Lemma $\text{\ref{rem: lemmino settore}}$ with $c_{1}=\frac{\eta}{2MN^{2}}$
and any $c_{2}>1$ we obtain the desired estimate $\text{\eqref{eq: AT2}}$
with the related $\theta_{0}$.

\vspace{2mm}

Thus, Assumption $\text{\ref{assu: omega_0 neg}}$, i) is proven.
We now show, in sequence, that: a) $U\left(t,s\right)$ satisfies
the second part of Assumption $\text{\ref{assu: omega_0 neg}}$; b)
$U\left(t,s\right)$ satisfies Assumption $\text{\ref{ass: U* ha autov di mod max}}$;
c) $U\left(t,s\right)^{*}$ admits a unique eigenvalue of maximum
modulus, and that this eigenvalue is semisimple.
\begin{fact}
$\left\{ U\left(t,s\right)\mid s\leq t\right\} $ satisfies Assumption
$\text{\ref{assu: omega_0 neg}}$, ii).
\end{fact}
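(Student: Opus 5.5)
We must show $\left\Vert U\left(t,s\right)\right\Vert _{\mathcal{L}\left(X\right)}\leq Me^{\omega\left(t-s\right)}$ with some $\omega<0$. The abstract states the expected bound $\left\Vert U\left(t,s\right)\right\Vert \leq e^{-\left(t-s\right)}$, i.e.\ $M=1$, $\omega=-1$. The plan is an energy estimate driven by the dissipativity \eqref{eq: forma dissipativa}, so that $-1\in\Omega_{U}$ and hence $\omega_{0}\leq-1<0$.

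\textbf{Step 1 (energy identity on a dense set).} Fix $s$ and $x\in X$, and set $u\left(t\right):=U\left(t,s\right)x$ for $t>s$. By the Fact just proved, $u\left(t\right)\in D\left(A\left(t\right)\right)$ for $t>s$, and in the Acquistapace--Terreni framework $u\in C^{1}\left(\left(s,+\infty\right);X\right)$ with $u'\left(t\right)=A\left(t\right)u\left(t\right)$ (Theorem 2.3 of \cite{Acq - Ev op}). Since $A\left(t\right)$ is real and symmetric, we can work with real $x$ and then complexify. For $t>s$ this gives
\[
\frac{d}{dt}\left|u\left(t\right)\right|_{X}^{2}=2\left\langle A\left(t\right)u\left(t\right),u\left(t\right)\right\rangle _{X}\leq-2\left|u\left(t\right)\right|_{X}^{2},
\]
where the inequality is \eqref{eq: forma dissipativa}.

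\textbf{Step 2 (Gronwall).} Take $s<\tau<t$. Gronwall on $\left[\tau,t\right]$ gives $\left|u\left(t\right)\right|_{X}\leq e^{-\left(t-\tau\right)}\left|u\left(\tau\right)\right|_{X}$. Now let $\tau\to s^{+}$. Strong continuity of $U$ gives $u\left(\tau\right)\to x$, so
\[
\left|U\left(t,s\right)x\right|_{X}\leq e^{-\left(t-s\right)}\left|x\right|_{X}\quad\text{for all }x .
\]
Hence $\left\Vert U\left(t,s\right)\right\Vert _{\mathcal{L}\left(X\right)}\leq e^{-\left(t-s\right)}$, so $-1\in\Omega_{U}$ and $\omega_{0}\leq-1<0$.

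\textbf{Main obstacle.} The delicate point is justifying the differentiability of $t\mapsto\left|u\left(t\right)\right|^{2}$ and the identity $u'=A\left(t\right)u$ all the way to the endpoint $s$. The Acquistapace--Terreni theory gives this only for $t>s$: the bound \eqref{eq: A(t)U(t,s)} blows up like $\left(t-s\right)^{-1}$. That is why I would work on $\left[\tau,t\right]$ with $\tau>s$ and pass to the limit using strong continuity, rather than differentiating at $s$.

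\textbf{Fallback.} If only the weaker regularity is available, I would instead use the Yosida-type approximation used in the construction in \cite{Acq - Ev op}, with approximants $A_{n}\left(t\right)$ that remain dissipative with the same constant. I would then derive the estimate for the approximating evolution operators and pass to the strong limit.
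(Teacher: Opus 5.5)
Your proposal is correct and is essentially the paper's proof: differentiate $\left|U\left(t,s\right)f\right|_{X}^{2}$ for $t>s$, bound $2\left\langle A\left(t\right)U\left(t,s\right)f,U\left(t,s\right)f\right\rangle _{X}$ by $-2\left|U\left(t,s\right)f\right|_{X}^{2}$ using \eqref{eq: forma dissipativa}, and integrate to get $\left\Vert U\left(t,s\right)\right\Vert _{\mathcal{L}\left(X\right)}\leq e^{-\left(t-s\right)}$. Your Gronwall argument on $\left[\tau,t\right]$ with $\tau\to s^{+}$ via strong continuity only makes explicit an endpoint justification the paper leaves implicit when it combines the inequality for $t>s$ with $U\left(s,s\right)=I$, so the Yosida-approximation fallback is not needed.
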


\begin{proof}
Fix $s\in\mathbb{R}$ and $f\in X$. Multiplying in $X$ both sides
of the evolution equation in $\text{\eqref{eq: evoluzione}}$ by $U\left(t,s\right)f$
we obtain by $\text{\eqref{eq: forma dissipativa}}$:
\begin{align*}
\frac{\partial}{\partial t}\left|U\left(t,s\right)f\right|_{X}^{2} & =2\left\langle \frac{\partial U\left(t,s\right)}{\partial t}f,U\left(t,s\right)f\right\rangle _{X}\\
 & =2\left\langle A\left(t\right)U\left(t,s\right)f,U\left(t,s\right)f\right\rangle _{X}\\
 & \leq-2\left|U\left(t,s\right)f\right|_{X}^{2}\quad\forall t>s.
\end{align*}
Joint with the condition $\left|U\left(s,s\right)f\right|_{X}^{2}=\left|f\right|_{X}^{2}$,
this implies
\begin{equation}
\left|U\left(t,s\right)f\right|_{X}\leq e^{-\left(t-s\right)}\left|f\right|_{X}\quad\forall t\geq s.\label{eq: stima norma U(t,s) esempio}
\end{equation}
Thus $\left\Vert U\left(t,s\right)^{*}\right\Vert _{\mathcal{L}\left(X\right)}=\left\Vert U\left(t,s\right)\right\Vert _{\mathcal{L}\left(X\right)}\leq e^{-\left(t-s\right)}$,
and Assumption $\text{\ref{assu: omega_0 neg}}$ is satisfied by $\left\{ U\left(t,s\right)\mid s\leq t\right\} $.
\end{proof}
Observe that, if $g\in X$ is such that $\left[U\left(t,s\right)g\right]_{\Omega}=0$,
then, arguing in the same way, we obtain from $\text{\eqref{eq: forma su media nulla}}$
that $\frac{\partial}{\partial t}\left|U\left(t,s\right)g\right|_{X}^{2}\leq-2\left(1+\eta C_{\Omega}\right)\left|U\left(t,s\right)g\right|_{X}^{2}$.

Therefore:
\begin{equation}
\left|U\left(t,s\right)g\right|_{X}\leq e^{-\left(1+\eta C_{\Omega}\right)\left(t-s\right)}\left|g\right|_{X}\quad\forall t\geq s.\label{eq: decad U(t,s) media nulla}
\end{equation}

\vspace{2mm}

We denote by $\mathbb{R}_{\Omega}$ the set of the real functions
that equal a constant almost everywhere in $\Omega$.
\begin{fact}
\label{fact: autospazio U(t,s)}For every $s<t$, $Ker\left(e^{-\left(t-s\right)}I-U\left(t,s\right)\right)=\mathbb{R}_{\Omega}$.
\end{fact}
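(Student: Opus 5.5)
We will prove the two inclusions separately, using the orthogonal decomposition $X=\mathbb{R}_{\Omega}\oplus\mathbb{R}_{\Omega}^{\perp}$. Here $\mathbb{R}_{\Omega}^{\perp}=\{g\in X\mid g_{\Omega}=0\}$, since $\langle g,1\rangle_X=\lambda(\Omega)g_{\Omega}$.

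\emph{Inclusion $\mathbb{R}_{\Omega}\subseteq Ker(e^{-(t-s)}I-U(t,s))$.} Fix a constant $c\in\mathbb{R}_{\Omega}$ and set $u(r):=e^{-(r-s)}c$ for $r\geq s$. By \eqref{eq: forma su costanti}, every constant belongs to $D(A(r))$ with $A(r)c=-c$ for every $r$, since the gradient term of $a(r)$ vanishes on constants. Thus $u'(r)=-u(r)=A(r)u(r)$ and $u(s)=c$. Hence $u$ is a (classical) solution of the Cauchy problem associated with \eqref{eq: evoluzione}. By uniqueness of solutions in the Acquistapace--Terreni framework, already used to build $U$, we get $U(r,s)c=u(r)$, in particular $U(t,s)c=e^{-(t-s)}c$.

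\emph{Inclusion $Ker(e^{-(t-s)}I-U(t,s))\subseteq\mathbb{R}_{\Omega}$.} The key preliminary step is to show that $\mathbb{R}_{\Omega}^{\perp}$ is invariant under $U(r,s)$. The quickest route is duality. Let $g$ have zero mean. Since $U(r,s)1=e^{-(r-s)}1$ by the first part,
\[
\langle U(r,s)g,1\rangle_X=\langle g,U(r,s)^{*}1\rangle_X .
\]
We want this to vanish, so we need $U(r,s)^*1$ to be a multiple of $1$. Let me argue directly instead. Set $m(r):=\langle U(r,s)g,1\rangle_X$. Then
\[
m'(r)=\langle A(r)U(r,s)g,1\rangle_X=a(r)(U(r,s)g,1)=-\langle U(r,s)g,1\rangle_X=-m(r),
\]
because $1\in H^1(\Omega)$ is an admissible test function and $\nabla 1=0$. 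With $m(s)=0$, this gives $m\equiv0$. The differentiation is justified for $r>s$ because $Im\,U(r,s)\subseteq D(A(r))$ and $A(r)U(r,s)$ is bounded; continuity at $r=s$ comes from strong continuity of $U$.

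Consequently \eqref{eq: decad U(t,s) media nulla} applies to every zero-mean $g$, giving
\[
|U(t,s)g|_X\leq e^{-(1+\eta C_\Omega)(t-s)}|g|_X .
\]
Now take $f\in Ker(e^{-(t-s)}I-U(t,s))$ and decompose $f=c+g$ with $c$ constant and $g_\Omega=0$. By the first part, $U(t,s)g=e^{-(t-s)}g$. Then
\[
e^{-(t-s)}|g|_X\leq e^{-(1+\eta C_\Omega)(t-s)}|g|_X .
\]
Since $\eta C_\Omega>0$ and $t>s$, this forces $g=0$, so $f\in\mathbb{R}_\Omega$.

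The main obstacle is the rigorous justification of the ODE computations near $r=s$ and the use of uniqueness. For the first part I would rely on the uniqueness statement of Acquistapace--Terreni for classical solutions, or alternatively on the same energy argument applied to the difference $U(r,s)c-e^{-(r-s)}c$. For the invariance, the argument above only needs $m$ to be continuous on $[s,t]$ and differentiable on $(s,t]$, which follows from \eqref{eq: A(t)U(t,s)} and strong continuity.
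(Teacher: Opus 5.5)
Your proof is correct, and its core is the same as the paper's. Both inclusions rest on $A(r)c=-c$ for constants and on the strict decay \eqref{eq: decad U(t,s) media nulla} on zero-mean functions, which beats $e^{-(t-s)}$ because $\eta C_\Omega>0$. There are three differences.

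\emph{The inclusion $\supseteq$.} The paper differentiates in the initial time, using $\partial_s U(t,s)f=-U(t,s)A(s)f=U(t,s)f$ together with $U(t,t)=I$. You instead solve forward and appeal to uniqueness, or to the energy estimate on $U(r,s)c-e^{-(r-s)}c$. Either works.

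\emph{The inclusion $\subseteq$, final comparison.} The paper writes $|e^{-(t-s)}f|_X^2$ as the Pythagorean sum $|U(t,s)(f-f_\Omega)|_X^2+|U(t,s)f_\Omega|_X^2$. You observe directly that $g=f-f_\Omega$ is itself an eigenvector for $e^{-(t-s)}$. This is the same comparison.

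\emph{Invariance of the zero-mean subspace.} This is the genuinely new ingredient in your argument. The computation $m'(r)=a(r)(U(r,s)g,1)=-m(r)$ shows that a zero-mean function stays zero-mean at every intermediate time $r\in[s,t]$. That is exactly what the Gronwall derivation of \eqref{eq: decad U(t,s) media nulla} needs, since \eqref{eq: forma su media nulla} must be applied to $U(r,s)g$ for all $r$. In the paper's proof of this Fact, only the final-time condition $[U(t,s)(f-f_\Omega)]_\Omega=0$ is checked. The extension of \eqref{eq: decad U(t,s) media nulla} to every $g\in M_\Omega^0$ is obtained only later, from Fact \ref{fact: autospazio U(t,s)*} and the identity $\overline{ImT}=(KerT^{*})^{\perp}$.

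So your version is self-contained and makes that step rigorous where it is used. The paper's version is shorter on the page but implicitly relies on the later bootstrap.
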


\begin{proof}
$\left(\supseteq\right)$ Let $f\in\mathbb{R}_{\Omega}$, $v\in H^{1}\left(\Omega\right)$
and $t\in\mathbb{R}$. By $\text{\eqref{eq: forma su costanti}}$
we have for every $s<t$:
\begin{align*}
\left\langle \frac{\partial U\left(t,s\right)}{\partial s}f,v\right\rangle _{X} & =-\left\langle U\left(t,s\right)A\left(s\right)f,v\right\rangle _{X}\\
 & =-\left\langle A\left(s\right)f,U\left(t,s\right)^{*}v\right\rangle _{X}\\
 & =\left\langle U\left(t,s\right)f,v\right\rangle _{X}.
\end{align*}
Since $U\left(t,t\right)=I$, the above equality implies $U\left(t,s\right)f=e^{-\left(t-s\right)}f$
a.e. in $X$, hence $f\in Ker\left(e^{-\left(t-s\right)}I-U\left(t,s\right)\right)$.

$\left(\subseteq\right)$ Now assume that $f$ is an eigenvector of
$U\left(t,s\right)$ relative to the eigenvalue $e^{-\left(t-s\right)}$.
First observe that the functions $U\left(t,s\right)\left(f-f_{\Omega}\right)$
and $U\left(t,s\right)f_{\Omega}$ are orthogonal in $X$, since
\[
\left\langle U\left(t,s\right)f,U\left(t,s\right)f_{\Omega}\right\rangle _{X}=e^{-2\left(t-s\right)}f_{\Omega}\int_{\Omega}f\left(x\right)dx=\lambda\left(\Omega\right)e^{-2\left(t-s\right)}f_{\Omega}^{2}=\left|U\left(t,s\right)f_{\Omega}\right|_{X}^{2}.
\]
In the same way we see that $\left[U\left(t,s\right)\left(f-f_{\Omega}\right)\right]_{\Omega}=0$.
Therefore, using $\text{\eqref{eq: decad U(t,s) media nulla}}$ with
$g=f-f_{\Omega}$ we obtain:
\begin{align*}
e^{-2\left(t-s\right)}\left|f-f_{\Omega}\right|_{X}^{2}+e^{-2\left(t-s\right)}\lambda\left(\Omega\right)f_{\Omega}^{2} & =\left|e^{-\left(t-s\right)}f\right|_{X}^{2}\\
 & =\left|U\left(t,s\right)\left(f-f_{\Omega}\right)+U\left(t,s\right)f_{\Omega}\right|_{X}^{2}\\
 & =\left|U\left(t,s\right)\left(f-f_{\Omega}\right)\right|_{X}^{2}+\left|U\left(t,s\right)f_{\Omega}\right|_{X}^{2}\\
 & \leq e^{-2\left(1+\eta C_{\Omega}\right)\left(t-s\right)}\left|f-f_{\Omega}\right|_{X}^{2}+e^{-2\left(t-s\right)}\lambda\left(\Omega\right)f_{\Omega}^{2},
\end{align*}
which is a contradiction unless $f=f_{\Omega}$ almost everywhere
in $\Omega$.
\end{proof}
Taking into account the estimate in $\eqref{eq: stima norma U(t,s) esempio}$
we have as consequence:
\[
\left\Vert U\left(t,s\right)^{*}\right\Vert _{\mathcal{L}\left(X\right)}=\left\Vert U\left(t,s\right)\right\Vert _{\mathcal{L}\left(X\right)}=e^{-\left(t-s\right)}.
\]

\begin{fact}
\label{fact: autospazio U(t,s)*}For every $s<t$, $Ker\left(e^{-\left(t-s\right)}I-U\left(t,s\right)^{*}\right)\supseteq\mathbb{R}_{\Omega}$.
\end{fact}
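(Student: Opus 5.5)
We have to show that every a.e.\ constant function $f\in\mathbb{R}_{\Omega}$ satisfies $U\left(t,s\right)^{*}f=e^{-\left(t-s\right)}f$. The plan is to test against an arbitrary $g\in X$ and move the adjoint across: $\left\langle U\left(t,s\right)^{*}f,g\right\rangle _{X}=\left\langle f,U\left(t,s\right)g\right\rangle _{X}=f\int_{\Omega}U\left(t,s\right)g\,dx$. So the statement reduces to the scalar identity
\[
\int_{\Omega}U\left(t,s\right)g\left(x\right)dx=e^{-\left(t-s\right)}\int_{\Omega}g\left(x\right)dx\quad\forall g\in X,
\]
that is, the mean of $U\left(t,s\right)g$ decays exactly like $e^{-\left(t-s\right)}$.

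To prove this, fix $s$ and $g$, and set $m\left(r\right):=\left\langle U\left(r,s\right)g,\mathbf{1}\right\rangle _{X}$ for $r\geq s$, where $\mathbf{1}$ is the constant function $1$. For $r>s$ we have $U\left(r,s\right)g\in D\left(A\left(r\right)\right)$ by the first Fact, and $r\mapsto U\left(r,s\right)g$ is differentiable with derivative $A\left(r\right)U\left(r,s\right)g$. Hence
\[
m'\left(r\right)=\left\langle A\left(r\right)U\left(r,s\right)g,\mathbf{1}\right\rangle _{X}=a\left(r\right)\left(U\left(r,s\right)g,\mathbf{1}\right).
\]
This is the key step. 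The test function $v=\mathbf{1}$ belongs to $H^{1}\left(\Omega\right)$ and $\nabla\mathbf{1}=0$, so the definition of $a\left(r\right)$ gives $a\left(r\right)\left(u,\mathbf{1}\right)=-\int_{\Omega}u\,dx$. Equivalently, by the symmetry of $a\left(r\right)$ and \eqref{eq: forma su costanti}, $a\left(r\right)\left(u,\mathbf{1}\right)=\left\langle u,A\left(r\right)\mathbf{1}\right\rangle _{X}=-\left\langle u,\mathbf{1}\right\rangle _{X}$. Here the conormal boundary condition is essential: because the test functions in the definition of $D\left(A\left(t\right)\right)$ range over all of $H^{1}\left(\Omega\right)$, the constant $\mathbf{1}$ is an admissible test function. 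This gives $m'\left(r\right)=-m\left(r\right)$ for $r>s$.

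Strong continuity of $U$ gives $m\left(s\right)=\int_{\Omega}g\,dx$, so $m\left(t\right)=e^{-\left(t-s\right)}m\left(s\right)$, which is the required identity.

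The main obstacle is justifying the differentiation at the endpoint $r=s$. The derivative $\partial_{r}U\left(r,s\right)$ is only guaranteed for $r>s$, and $\left\Vert A\left(r\right)U\left(r,s\right)\right\Vert $ may blow up like $c/\left(r-s\right)$ by \eqref{eq: A(t)U(t,s)}. I would handle this by integrating the ODE on $\left[s+\epsilon,t\right]$ and letting $\epsilon\to0$, using the continuity of $m$ at $s$. Alternatively, the argument already used in Fact \ref{fact: autospazio U(t,s)} with $\partial_{s}U\left(t,s\right)=-U\left(t,s\right)A\left(s\right)$ can be dualized: for $f$ constant it gives $\partial_{s}\left\langle f,U\left(t,s\right)g\right\rangle $ directly. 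The first route seems cleaner.
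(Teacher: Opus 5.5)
Your proof is correct and is essentially the paper's argument: the paper also differentiates $\left\langle U\left(t,s\right)^{*}f,v\right\rangle _{X}=\left\langle f,U\left(t,s\right)v\right\rangle _{X}$ in the forward variable using $\partial_{t}U\left(t,s\right)=A\left(t\right)U\left(t,s\right)$ together with \eqref{eq: forma su costanti} (i.e.\ $A\left(t\right)$ acts as $-I$ on constants when tested against $H^{1}\left(\Omega\right)$), which yields the linear ODE with solution $e^{-\left(t-s\right)}$. Your handling of the endpoint $r=s$, by integrating on $\left[s+\epsilon,t\right]$ and using strong continuity, fills in a detail the paper leaves implicit.
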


\begin{proof}
The proof is the same as before, considering the evolution $\frac{\partial}{\partial t}U\left(t,s\right)^{*}=U\left(t,s\right)^{*}A\left(t\right)$
for $t>s$. Take $f$ almost everywhere constant. We obtain from $\text{\eqref{eq: forma su costanti}}$
that for every $v\in X$:
\[
\left\langle \frac{\partial U\left(t,s\right)}{\partial t}^{*}f,v\right\rangle _{X}=\left\langle A\left(t\right)f,U\left(t,s\right)v\right\rangle _{X}=-\left\langle U\left(t,s\right)^{*}f,v\right\rangle _{X}\quad\forall t>s.
\]
This implies $U\left(t,s\right)^{*}f=e^{-\left(t-s\right)}f$, almost
everywhere in $X$, for every $t\geq s$.
\end{proof}
\begin{fact}
$\left\{ U\left(t,s\right)\mid s\leq t\right\} $ satisfies Assumption
$\text{\ref{ass: U* ha autov di mod max}}$.
\end{fact}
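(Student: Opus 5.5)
To verify that $\left\{ U\left(t,s\right)\mid s\leq t\right\}$ satisfies Assumption $\text{\ref{ass: U* ha autov di mod max}}$, I will show that $e^{-T}$ is an eigenvalue of $U\left(T,0\right)^{*}$ and that its modulus equals the spectral radius of $U\left(T,0\right)^{*}$. Both ingredients are already available.

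First, the eigenvalue. By Fact $\text{\ref{fact: autospazio U(t,s)*}}$ with $s=0$ and $t=T$, every almost everywhere constant function $f\in\mathbb{R}_{\Omega}\setminus\left\{ 0\right\}$ satisfies $U\left(T,0\right)^{*}f=e^{-T}f$. Since $\Omega$ is bounded, the constant function $1$ is a nonzero element of $X=L^{2}\left(\Omega\right)$. Hence $e^{-T}$ is an eigenvalue of $U\left(T,0\right)^{*}$.

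Second, the spectral radius, which I will identify with $e^{-T}$. From $\text{\eqref{eq: stima norma U(t,s) esempio}}$ we have $\left\Vert U\left(t,s\right)\right\Vert _{\mathcal{L}\left(X\right)}\leq e^{-\left(t-s\right)}$, so $-1\in\Omega_{U}$ and $\omega_{0}\leq-1$. For the reverse inequality, Fact $\text{\ref{fact: autospazio U(t,s)}}$ gives $U\left(t,s\right)1=e^{-\left(t-s\right)}1$. Therefore any $\omega\in\Omega_{U}$ with constant $M$ satisfies $e^{-\left(t-s\right)}\leq Me^{\omega\left(t-s\right)}$ for every $t-s\geq0$. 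Letting $t-s\to+\infty$ forces $\omega\geq-1$, so $\omega_{0}=-1$. Lemma $\text{\ref{lem: raggio spettr U}}$ then yields that the spectral radius of $U\left(T,0\right)^{*}$ is $e^{\omega_{0}T}=e^{-T}$. Alternatively, one can argue directly: the spectral radius is at most $\left\Vert U\left(T,0\right)^{*}\right\Vert =e^{-T}$, and it is at least $\left|e^{-T}\right|$ because $e^{-T}$ is an eigenvalue.

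The only point that needs care is the lower bound $\omega_{0}\geq-1$, and it follows immediately from the constant eigenfunction. Note that Lemma $\text{\ref{lem: raggio spettr U}}$ uses the periodicity Assumption $\text{\ref{assu: periodicita}}$. The direct norm-and-eigenvalue argument does not need periodicity, so I would present that one to avoid the dependence. Combining the two steps, $e^{-T}=e^{\omega_{0}T}$ is an eigenvalue of $U\left(T,0\right)^{*}$ whose modulus equals the spectral radius, which is exactly Assumption $\text{\ref{ass: U* ha autov di mod max}}$.
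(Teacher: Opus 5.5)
Your proposal is correct and follows essentially the paper's own argument: the constant functions give the eigenvalue $e^{-(t-s)}$ of $U(t,s)^{*}$, and since $\|U(t,s)^{*}\|_{\mathcal{L}(X)}=e^{-(t-s)}$, the eigenvalue's modulus is squeezed between the spectral radius and the norm, so it equals the spectral radius. Your explicit check that $\omega_{0}=-1$ (via the constant eigenfunction of $U(t,s)$) simply spells out what the paper leaves implicit in the norm identity, namely that this eigenvalue has modulus $e^{\omega_{0}T}$.
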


\begin{proof}
It follows in particular from Fact $\text{\ref{fact: autospazio U(t,s)*}}$
that, for every $s<t$, $e^{-\left(t-s\right)}$ is an eigenvalue
of $U\left(t,s\right)^{*}$ that coincides with $\left\Vert U\left(t,s\right)^{*}\right\Vert _{\mathcal{L}\left(X\right)}$
and thus with the spectral radius of $U\left(t,s\right)^{*}$.
\end{proof}
Now, with a sort of bootstrap argument, we can improve our initial
conclusions. Denote
\[
M_{\Omega}^{0}:=\left\{ f\in X\mid f_{\Omega}=0\right\} .
\]
Fix $s<t$ ; from the general identity $\overline{ImT}=\left(KerT^{*}\right)^{\perp}$
- which holds for every densely defined linear operator $T$ in an
Hilbert space - combined with Facts $\text{\ref{fact: autospazio U(t,s)}}$
and $\text{\ref{fact: autospazio U(t,s)*}}$ we deduce:
\begin{align}
 & \overline{Im\left(e^{-\left(t-s\right)}I-U\left(t,s\right)\right)}=Ker\left(e^{-\left(t-s\right)}I-U\left(t,s\right)^{*}\right)^{\perp}\subseteq M_{\Omega}^{0},\label{eq: Im autospazio U(t,s)}\\
 & \overline{Im\left(e^{-\left(t-s\right)}I-U\left(t,s\right)^{*}\right)}=Ker\left(e^{-\left(t-s\right)}I-U\left(t,s\right)\right)^{\perp}=M_{\Omega}^{0}.\label{eq: Im autospazio U(t,s)*}
\end{align}

In particular, $Im\left(e^{-\left(t-s\right)}I-U\left(t,s\right)\right)$
and $Im\left(e^{-\left(t-s\right)}I-U\left(t,s\right)^{*}\right)$
are contained in $M_{\Omega}^{0}$, so:
\[
\left[U\left(t,s\right)f\right]_{\Omega}=\left[U\left(t,s\right)^{*}f\right]_{\Omega}=e^{-\left(t-s\right)}f_{\Omega}\quad\forall f\in X.
\]
Hence $U\left(t,s\right)M_{\Omega}^{0}\subseteq M_{\Omega}^{0}$ and
$U\left(t,s\right)^{*}M_{\Omega}^{0}\subseteq M_{\Omega}^{0}$. Since
$M_{\Omega}^{0}$ is closed, both the restrictions $U\left(t,s\right)_{\mid M_{\Omega}^{0}}$
and $U\left(t,s\right)_{\mid M_{\Omega}^{0}}^{*}$ are bounded linear
operators of the Hilbert space $M_{\Omega}^{0}$.

Having established relation $\text{\eqref{eq: decad U(t,s) media nulla}}$
for $g\in\left(U\left(t,s\right)\right)^{-1}\left(M_{\Omega}^{0}\right)$,
we now see that such relation actually holds for $g\in M_{\Omega}^{0}$.
Namely:
\begin{equation}
\left|U\left(t,s\right)g\right|_{X}\leq e^{-\left(1+\eta C_{\Omega}\right)\left(t-s\right)}\left|g\right|_{X}\quad\forall g\in M_{\Omega}^{0},\label{eq: decad media nulla strong}
\end{equation}
which means 
\[
\left\Vert U\left(t,s\right)_{\mid M_{\Omega}^{0}}^{*}\right\Vert _{\mathcal{L}\left(M_{\Omega}^{0}\right)}=\left\Vert U\left(t,s\right)_{\mid M_{\Omega}^{0}}\right\Vert _{\mathcal{L}\left(M_{\Omega}^{0}\right)}\leq e^{-\left(1+\eta C_{\Omega}\right)\left(t-s\right)}.
\]
Thus $\text{\eqref{eq: decad media nulla strong}}$ holds also with
$U\left(t,s\right)^{*}$ in place of $U\left(t,s\right)$. Besides,
in particular, the spectral radia of $U\left(t,s\right)_{\mid M_{\Omega}^{0}}$
and $U\left(t,s\right)_{\mid M_{\Omega}^{0}}^{*}$ are less than or
equal to $e^{-\left(1+\eta C_{\Omega}\right)\left(t-s\right)}$, namely
the region $\mathbb{C}\setminus\overline{B\left(0,e^{-\left(1+\eta C_{\Omega}\right)\left(t-s\right)}\right)}$
is contained in the resolvent set of both operators.
\begin{fact}
For every $s<t$, the number $e^{-\left(t-s\right)}$ is the unique
eigenvalue of $U\left(t,s\right)^{*}$ with maximum modulus, it is
an isolated element of the spectrum of $U\left(t,s\right)^{*}$, and
\begin{align*}
Im\left(e^{-\left(t-s\right)}I-U\left(t,s\right)^{*}\right) & =M_{\Omega}^{0},\\
Ker\left(e^{-\left(t-s\right)}I-U\left(t,s\right)^{*}\right) & =\mathbb{R}_{\Omega}^{0}.
\end{align*}
Consequently:
\[
X=Ker\left(e^{-\left(t-s\right)}I-U\left(t,s\right)^{*}\right)\oplus Im\left(e^{-\left(t-s\right)}I-U\left(t,s\right)^{*}\right).
\]

In particular, $e^{-\left(t-s\right)}$ is semisimple as an eigenvalue
of $U\left(t,s\right)^{*}$.

These assertions also hold with $U\left(t,s\right)$ in place of $U\left(t,s\right)^{*}$.
\end{fact}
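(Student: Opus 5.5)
The plan is to use the orthogonal decomposition $X=\mathbb{R}_{\Omega}\oplus M_{\Omega}^{0}$ and to show that $U\left(t,s\right)^{*}$ acts diagonally with respect to it. On $\mathbb{R}_{\Omega}$ it is multiplication by $e^{-\left(t-s\right)}$, and on $M_{\Omega}^{0}$ it has spectral radius at most $e^{-\left(1+\eta C_{\Omega}\right)\left(t-s\right)}<e^{-\left(t-s\right)}$.

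\emph{Invariance of the two pieces.} By Fact \ref{fact: autospazio U(t,s)*}, $U\left(t,s\right)^{*}\mathbb{R}_{\Omega}\subseteq\mathbb{R}_{\Omega}$. By the mean identity $\left[U\left(t,s\right)^{*}f\right]_{\Omega}=e^{-\left(t-s\right)}f_{\Omega}$ proved just before the statement, also $U\left(t,s\right)^{*}M_{\Omega}^{0}\subseteq M_{\Omega}^{0}$. Hence
\[
\lambda I-U\left(t,s\right)^{*}=\left(\lambda-e^{-\left(t-s\right)}\right)I_{\mid\mathbb{R}_{\Omega}}\oplus\left(\lambda I-U\left(t,s\right)_{\mid M_{\Omega}^{0}}^{*}\right).
\]

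\emph{Spectrum.} From the previous displays, the norm of $U\left(t,s\right)_{\mid M_{\Omega}^{0}}^{*}$ is at most $e^{-\left(1+\eta C_{\Omega}\right)\left(t-s\right)}$. Therefore every $\lambda$ with $\left|\lambda\right|>e^{-\left(1+\eta C_{\Omega}\right)\left(t-s\right)}$ and $\lambda\neq e^{-\left(t-s\right)}$ lies in the resolvent set of $U\left(t,s\right)^{*}$, since both diagonal blocks are invertible. The spectral radius equals $e^{-\left(t-s\right)}$, because it is bounded by the norm and $e^{-\left(t-s\right)}$ is an eigenvalue. So $e^{-\left(t-s\right)}$ is the only spectral point, a fortiori the only eigenvalue, of modulus $e^{-\left(t-s\right)}$. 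It is also isolated, since the annulus $e^{-\left(1+\eta C_{\Omega}\right)\left(t-s\right)}<\left|\lambda\right|$ minus that point is resolvent.

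\emph{Kernel and image.} Take $\lambda_{0}=e^{-\left(t-s\right)}$. On $M_{\Omega}^{0}$ the operator $\lambda_{0}I-U\left(t,s\right)^{*}_{\mid M_{\Omega}^{0}}$ is invertible by the norm bound (Neumann series). Hence the kernel of $\lambda_{0}I-U\left(t,s\right)^{*}$ is exactly $\mathbb{R}_{\Omega}$ (the set written $\mathbb{R}_{\Omega}^{0}$ in the statement), which sharpens Fact \ref{fact: autospazio U(t,s)*}. The image is exactly $M_{\Omega}^{0}$: it is contained in $M_{\Omega}^{0}$ by \eqref{eq: Im autospazio U(t,s)*}, and it contains $M_{\Omega}^{0}$ by the invertibility on $M_{\Omega}^{0}$. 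In particular the image is closed. Since $X=\mathbb{R}_{\Omega}\oplus M_{\Omega}^{0}$ orthogonally, the decomposition \eqref{eq: scomposizione spazio semisempl} holds, and with isolatedness this gives semisimplicity by the characterization recalled before Remark \ref{rem:  semisemplici compatto}.

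\emph{The case of $U\left(t,s\right)$.} The argument is identical, using Fact \ref{fact: autospazio U(t,s)}, the invariance $U\left(t,s\right)M_{\Omega}^{0}\subseteq M_{\Omega}^{0}$, and \eqref{eq: decad media nulla strong}. Alternatively, it follows by duality, because $\sigma\left(U\left(t,s\right)\right)=\overline{\sigma\left(U\left(t,s\right)^{*}\right)}$ and the eigenvalue is real.

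\emph{Main obstacle.} The delicate step is the invariance $U\left(t,s\right)^{*}M_{\Omega}^{0}\subseteq M_{\Omega}^{0}$ together with the bound \eqref{eq: decad media nulla strong}. These rely on the bootstrap identities just preceding the statement. Once they are granted, everything above is a block-diagonal spectral computation.
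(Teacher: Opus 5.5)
Your proposal is correct and follows essentially the paper's argument: the invariant splitting $X=\mathbb{R}_{\Omega}\oplus M_{\Omega}^{0}$, the bound \eqref{eq: decad media nulla strong} on $M_{\Omega}^{0}$, and invertibility of $\zeta I-U(t,s)^{*}$ block by block for $|\zeta|>e^{-(1+\eta C_{\Omega})(t-s)}$, $\zeta\neq e^{-(t-s)}$; the paper writes this last step out as explicit injectivity and surjectivity. The only difference is organizational: you read off uniqueness of the maximum-modulus eigenvalue directly from this resolvent computation, whereas the paper proves it separately with the orthogonality argument of Fact \ref{fact: autospazio U(t,s)}.
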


\begin{proof}
Fix $t>s$ and denote by $V\left(t,s\right)$ either $U\left(t,s\right)$
or $U\left(t,s\right)^{*}$. It follows from the last observation
that $e^{-\left(t-s\right)}$ belongs to the resolvent set of $V\left(t,s\right)_{\mid M_{\Omega}^{0}}$,
which implies that $\left[e^{-\left(t-s\right)}I-V\left(t,s\right)\right]_{\mid M_{\Omega}^{0}}$
is a bijection from $M_{\Omega}^{0}$ to itself. Therefore $M_{\Omega}^{0}\subseteq Im\left(e^{-\left(t-s\right)}I-V\left(t,s\right)\right)$,
and, as a product, relation $\text{\eqref{eq: Im autospazio U(t,s)}}$
transforms into a chain of identities, which implies further that
$Ker\left(e^{-\left(t-s\right)}I-U\left(t,s\right)^{*}\right)=\mathbb{R}_{\Omega}^{0}$.

Considering also relation $\text{\eqref{eq: Im autospazio U(t,s)*}}$
and remembering Fact $\text{\ref{fact: autospazio U(t,s)}}$ we can
write:
\[
X=Ker\left(e^{-\left(t-s\right)}I-V\left(t,s\right)\right)\oplus Im\left(e^{-\left(t-s\right)}I-V\left(t,s\right)\right).
\]
It remains to be proven that $e^{-\left(t-s\right)}$ is the unique
eigenvalue of $V\left(t,s\right)$ with modulus $e^{-\left(t-s\right)}$,
and that it is an isolated element of the spectrum of $V\left(t,s\right)$.

Assume $\left|\zeta\right|=e^{-\left(t-s\right)}$ and $f\in Ker\left(\zeta I-V\left(t,s\right)\right)\setminus\left\{ 0\right\} $.
Since both $\mathbb{R}_{\Omega}$ and $M_{\Omega}$ are $V\left(t,s\right)$-invariant,
$V\left(t,s\right)f_{\Omega}$ and $V\left(t,s\right)\left(f-f_{\Omega}\right)$
are orthogonal. Thus we can argue as in the proof of the second inclusion
in Fact $\text{\ref{fact: autospazio U(t,s)}}$ - where orthogonality
followed from the fact that $f$ was an eigenfunction of $U\left(t,s\right)$
relative to $e^{-\left(t-s\right)}$ - using $\text{\eqref{eq: decad media nulla strong}}$
with $V\left(t,s\right)$ in place of $U\left(t,s\right)$ and $g=f-f_{\Omega}$.
This way we obtain that $f=f_{\Omega}$ a.e. in $X$, and this implies
$\zeta=e^{-\left(t-s\right)}$.

\vspace{2mm}

Eventually, we show that the complex region $\mathbb{C}\setminus\overline{B\left(0,e^{-\left(1+\eta C_{\Omega}\right)\left(t-s\right)}\right)}\setminus\left\{ e^{-\left(t-s\right)}\right\} $
is contained in the resolvent set of $V\left(t,s\right)$.

Assume that $\left|\zeta\right|>e^{-\left(1+\eta C_{\Omega}\right)\left(t-s\right)}$
and $\zeta\neq e^{-\left(t-s\right)}$; we prove that the operator
$\zeta I-V\left(t,s\right):X\to X$ is bijective.

If $f\in Ker\left(\zeta I-V\left(t,s\right)\right)$, then:
\begin{align*}
 & \zeta\left(f-f_{\Omega}\right)+\zeta f_{\Omega}=V\left(t,s\right)\left(f-f_{\Omega}\right)+e^{-\left(t-s\right)}f_{\Omega}\\
\implies & \left[\zeta I-V\left(t,s\right)\right]\left(f-f_{\Omega}\right)=\left(e^{-\left(t-s\right)}-\zeta\right)f_{\Omega}.
\end{align*}
The left hand member of the above identity belongs to $M_{\Omega}^{0}$,
and the right hand member belongs to $\mathbb{R}_{\Omega}$; we deduce
that both quantities are zero. We have already observed that all the
complex numbers whose modulus exceeds $e^{-\left(1+\eta C_{\Omega}\right)\left(t-s\right)}$
are in the resolvent set of $V\left(t,s\right)_{\mid M_{\Omega}^{0}}$;
it follows that $\left[\zeta I-V\left(t,s\right)\right]_{\mid M_{\Omega}^{0}}$
is a bijection from $M_{\Omega}^{0}$ to itself - in particular, it
is injective. This implies that $f=f_{\Omega}$ almost everywhere
in $X$. If it was $f\neq0$ then $f$ would be an almost everywhere
constant eigenfunction of $V\left(t,s\right)$ relative to the eigenvalue
$\zeta$, thus we would have $\zeta=e^{-\left(t-s\right)}$, a contradiction.
This argument shows that $\zeta I-V\left(t,s\right)$ is injective.

The operator $\zeta I-V\left(t,s\right)$ is also surjective, because
for every $g\in X$ we have:
\[
g=\left[\zeta I-V\left(t,s\right)\right]\left(g_{0}+\frac{g_{\Omega}}{\zeta-e^{-\left(t-s\right)}}\right),
\]
where $g_{0}=\left[\zeta I-V\left(t,s\right)\right]^{-1}\left(g-g_{\Omega}\right)$.

Hence we have proved that $\zeta I-V\left(t,s\right)$ is bijective;
since it is bounded, $\zeta$ is in the resolvent set of $V\left(t,s\right)$.
\end{proof}
\subsection{The operators \textit{B}($\cdot$)}\label{subsec: B(t)} 

We begin by implementing a technique introduced by Cerrai and Lunardi
in \cite{Cer21} which gives sufficient conditions on a family $\left\{ B\left(t\right)\mid t\in\mathbb{R}\right\} $
in order that an operator $Q\left(t,s\right)$ defined as in $\text{\eqref{eq: def cov Q}}$
has finite trace. This result is substantially independent of $A\left(t\right)$,
provided that the solutions of the corresponding homogeneous parabolic
equation can be reasonably represented.

The method excludes that $B\left(t\right)$ can be chosen as the identity
operator, but opens to the possibility of defining it as a fractional
power of $A\left(t\right)^{-1}$, with strictly positive exponent.

We begin by a result characterizing the trace of $Q\left(t,s\right)$
at the most general level, given our choice for $A\left(t\right)$.
\begin{lem}
\label{lem: uguaglianza trQ(t,s)}Let $\left\{ B\left(t\right)\mid t\in\mathbb{R}\right\} \subseteq\mathcal{L}\left(X\right)$
be a family of bounded operators such that $B\left(\cdot\right)f$
is measurable for every $f\in X$. Then
\begin{equation}
trQ\left(t,s\right)=\int_{s}^{t}e^{-2\left(t-r\right)}\int_{\Omega}\left|B\left(r\right)^{*}k_{A}\left(x,\cdot,t,r\right)\right|_{X}^{2}dxdr,\label{eq: formula traccia}
\end{equation}
where $k_{A}:\Omega^{2}\times\left\{ \left(t,s\right)\mid s\leq t\right\} \to\mathbb{R}$
is a function depending only on $\left\{ U\left(t,s\right)\right\} _{s\leq t}$
such that $k_{A}\left(\cdot,\cdot,t,s\right)\in L^{\infty}\left(\Omega^{2}\right)$
for every $s\leq t$ and
\begin{align}
\left|k_{A}\left(x,y,t,s\right)\right| & \leq C\left(t-s\right)^{-\frac{N}{2}}e^{-\frac{\left|x-y\right|^{2}}{t-s}}\quad\forall t>s,\ x,y\in\Omega.\label{eq: nucleo}
\end{align}
\end{lem}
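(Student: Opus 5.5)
The plan is to factor out the zero-order term of $A(t)$ and to represent the remaining evolution operator by an integral kernel with Gaussian bounds. Then I would compute the trace of $Q(t,s)$ as an integral of Hilbert--Schmidt norms.

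\emph{Step 1 (rescaling).} Set $A_{0}(t):=A(t)+I$, the weak realization of the pure divergence operator $\sum_{i,j}D_{j}(a_{ij}(t,\cdot)D_{i}\,\cdot)$ with conormal boundary conditions. The family $\{A_{0}(t)\}$ satisfies the Acquistapace--Terreni hypotheses. The verification of Hypothesis II does not change, since $A_{0}(t)^{-1}-A_{0}(s)^{-1}$ is treated like $A(t)^{-1}-A(s)^{-1}$, up to a spectral shift that is harmless on the relevant sector. Hence $\{A_{0}(t)\}$ generates an evolution operator $V(t,s)$. By uniqueness for \eqref{eq: evoluzione}, $U(t,s)=e^{-(t-s)}V(t,s)$; this produces the factor $e^{-2(t-r)}$ in \eqref{eq: formula traccia}.

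\emph{Step 2 (kernel).} I would invoke the classical theory of nonautonomous divergence-form parabolic equations with bounded measurable, uniformly elliptic coefficients and Neumann/conormal boundary conditions on a $\mathcal{C}^{2}$ (hence extension) domain. These are Aronson-type estimates, as in Daners' work on heat kernels for the Neumann problem. They give a kernel $k_{A}(x,y,t,s)$ with
\[
\left[V(t,s)f\right](x)=\int_{\Omega}k_{A}(x,y,t,s)f(y)\,dy ,
\]
satisfying the Gaussian bound \eqref{eq: nucleo}, possibly after adjusting the constant in the exponent. In particular $k_{A}(\cdot,\cdot,t,s)\in L^{\infty}(\Omega^{2})$ for $t>s$. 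I expect this to be the main obstacle: the bound must hold globally in time with a constant independent of $t-s$. The restriction to bounded time intervals is harmless only because $\Omega$ is bounded and the zero-order term has already been removed. I would try first to cite the Daners estimate directly; failing that, I would run Moser iteration on $[s,t]$, whose constants depend only on $\eta$, $M$, $N$ and $\Omega$.

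\emph{Step 3 (trace computation).} Using \eqref{eq: def cov Q}, Tonelli's theorem for the nonnegative integrand, and the identity $\langle e_{j},U B B^{*}U^{*}e_{j}\rangle=|B^{*}U^{*}e_{j}|^{2}$, I would write
\[
trQ(t,s)=\int_{s}^{t}\left\Vert B(r)^{*}U(t,r)^{*}\right\Vert_{\mathcal{L}_{2}(X)}^{2}dr=\int_{s}^{t}e^{-2(t-r)}\left\Vert V(t,r)B(r)\right\Vert_{\mathcal{L}_{2}(X)}^{2}dr .
\]
Here I use the invariance of the Hilbert--Schmidt norm under adjoints; the measurability of $r\mapsto\|B(r)^{*}U(t,r)^{*}e_{j}\|$ follows from the measurability of $B(\cdot)$ and the strong continuity of $U$. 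Next, for each $j$ and a.e. $x$,
\[
\left[V(t,r)B(r)e_{j}\right](x)=\left\langle k_{A}(x,\cdot,t,r),B(r)e_{j}\right\rangle_{X}=\left\langle B(r)^{*}k_{A}(x,\cdot,t,r),e_{j}\right\rangle_{X}.
\]
This uses $k_{A}(x,\cdot,t,r)\in L^{\infty}\subseteq L^{2}(\Omega)$ and that all functions involved are real. Summing over $j$, exchanging sum and $\int_{\Omega}dx$ by Tonelli, and applying Parseval gives
\[
\left\Vert V(t,r)B(r)\right\Vert_{\mathcal{L}_{2}}^{2}=\int_{\Omega}\left|B(r)^{*}k_{A}(x,\cdot,t,r)\right|_{X}^{2}dx .
\]
Substituting this into the previous display yields \eqref{eq: formula traccia}, with both sides possibly $+\infty$ at this level of generality.
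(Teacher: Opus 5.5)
Your Step 3 is correct. It is essentially the paper's computation, written more economically: you use $\|B(r)^{*}U(t,r)^{*}\|_{\mathcal{L}_{2}(X)}=\|U(t,r)B(r)\|_{\mathcal{L}_{2}(X)}$ and Parseval instead of expanding $\langle Q(t,s)h,h\rangle_{X}$. The overall route is also the paper's: strip off $e^{-(t-s)}$, represent the remaining evolution by a Daners kernel, apply Tonelli.

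\textbf{The gap.} The real problem sits exactly where you placed the main obstacle, and your heuristic there is backwards. The bound \eqref{eq: nucleo} for \emph{all} $t>s$ is false for this kernel. So neither a direct citation nor a Moser iteration with constants independent of $t-s$ can produce it. Removing the zero-order term is precisely what kills large-time decay under conormal boundary conditions. By Fact~\ref{fact: autospazio U(t,s)}, $U(t,s)\mathbf{1}=e^{-(t-s)}\mathbf{1}$, i.e.\ $V(t,s)\mathbf{1}=\mathbf{1}$, so $\int_{\Omega}k_{A}(x,y,t,s)\,dy=1$ for a.e.\ $x$. Combined with \eqref{eq: nucleo}, this gives $1\leq C|\Omega|(t-s)^{-N/2}$, which fails as soon as $t-s>(C|\Omega|)^{2/N}$.

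What Daners-type estimates actually yield is the following:
\begin{itemize}
\item the Gaussian bound on bounded time ranges, say $0<t-s\leq1$, with exponent $-|x-y|^{2}/(c(t-s))$ and $c$ depending on the ellipticity data, as you suspected;
\item the bound $|k_{A}|\leq C$ for $t-s\geq1$, via the short-time bound and mass conservation. This is sharp by the identity above.
\end{itemize}
The paper's proof has the same defect, since it simply cites Daners for \eqref{eq: nucleo}. So the right move is to correct the statement rather than try to prove it.

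The correction is harmless. The identity \eqref{eq: formula traccia} itself uses only $k_{A}(x,\cdot,t,r)\in L^{2}(\Omega)$. In Fact~\ref{fact: per trQ(t,s) finita}, the factor $e^{-2(t-r)}$ absorbs the range $t-r\geq1$. Alternatively, keep $e^{-(t-s)}$ inside the kernel: because $\Omega$ is bounded, the kernel of $U(t,s)$ itself does satisfy a global Gaussian bound.

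\textbf{A smaller point on Step 1.} The family $\{A_{0}(t)\}$ does not satisfy the Acquistapace--Terreni hypotheses. Since $A_{0}(t)\mathbf{1}=0$, the point $0$ lies in the spectrum, so Hypothesis I fails at $\lambda=0$ and $A_{0}(t)^{-1}$ in Hypothesis II does not exist. The ``spectral shift'' that repairs this just returns $A(t)$.

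You do not need it: simply define $V(t,s):=e^{t-s}U(t,s)$. What does need an argument, and what you only assert in Step 2, is that this $V(t,s)f$ coincides with the weak solution of the conormal parabolic problem represented by the Daners kernel. The paper obtains this from uniqueness for the abstract Cauchy problem $\mathrm{w}'=A(\cdot)\mathrm{w}$, $\mathrm{w}(s)=f$, applied to $e^{-(t-s)}y_{s,f}(t,\cdot)$.
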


\begin{proof}
Fix $s\in\mathbb{R}$ and let $\mathcal{A}\left(\cdot\right)$ the
differential operator in $\text{\eqref{eq: op differenziale}}$. Consider
the following parabolic equation:
\begin{equation}
\begin{cases}
\frac{\partial}{\partial t}\mathrm{y}\left(t,x\right)=\left[\mathcal{A}\left(t\right)\mathrm{y}\left(t,\cdot\right)\right]\left(x\right)+\mathrm{y}\left(t,x\right) & \left(t,x\right)\in\left(s,+\infty\right)\times\Omega\\
\sum_{i,j=1}^{N}a_{i,j}\left(t,x\right)\frac{\partial}{\partial x_{i}}\mathrm{y}\left(t,x\right)\nu_{j}\left(x\right)=0 & \left(t,x\right)\in\left(s,+\infty\right)\times\partial\Omega\\
\mathrm{y}\left(s,x\right)=f\left(x\right) & x\in\Omega.
\end{cases}\label{eq: parabolica}
\end{equation}
For every $f\in L^{2}\left(\Omega\right)$, $\text{\eqref{eq: parabolica}}$
admits a unique weak solution $y_{s,f}\in L^{2}\left(s,T;H^{1}\left(\Omega\right)\right)\cap H^{1}\left(s,T;H^{-1}\left(\Omega\right)\right)$
which can be expressed as
\begin{equation}
y_{s,f}\left(t,x\right)=\int_{\Omega}k_{A}\left(x,y,t,s\right)f\left(y\right)dy\quad\forall\left(t,x\right)\in\left(s,+\infty\right)\times\Omega,\label{eq: nucleo sol A(t)+I}
\end{equation}
where $k_{A}$ is a kernel satisfying $\text{\eqref{eq: nucleo}}$.
See for instance \cite{DAN} for this representation formula. Obviously
$y_{s,f}\left(t,\cdot\right)\in D\left(A\left(t\right)\right)$ for
every $t>s$, since the equation and the boundary condition in $\text{\eqref{eq: parabolica}}$
applied to $y_{s,f}$ imply $a\left(t\right)\left(y_{s,f}\left(t,\cdot\right),v\right)=\left\langle \frac{\partial}{\partial t}y_{s,f}\left(t,\cdot\right)-y_{s,f}\left(t,\cdot\right),v\right\rangle _{X}$
for every $v\in H^{1}\left(\Omega\right)$. Note further that
\[
\frac{\partial}{\partial t}e^{-\left(t-s\right)}y_{s,f}\left(t,\cdot\right)=A\left(t\right)e^{-\left(t-s\right)}y_{s,f}\left(t,\cdot\right)\ \text{in }\Omega,\quad\forall t>s
\]
which implies $e^{-\left(t-s\right)}y_{s,f}\left(t,\cdot\right)=U\left(t,s\right)f$
for every $t>s$, by the uniqueness of the solution of the abstract
Cauchy problem $\mathrm{w}'=A\left(\cdot\right)\mathrm{w}$ in $\left]s,+\infty\right)$,
$\mathrm{w}\left(s\right)=f$.

By $\text{\eqref{eq: nucleo sol A(t)+I}}$ we have, for every $f,h\in X$:
\begin{align*}
\left\langle U\left(t,r\right)^{*}h,f\right\rangle _{X} & =\left\langle h,U\left(t,r\right)f\right\rangle _{X}\\
 & =\left\langle h,e^{-\left(t-r\right)}\int_{\Omega}k_{A}\left(\cdot,y,t,r\right)f\left(y\right)dy\right\rangle _{X}\\
 & =e^{-\left(t-r\right)}\int_{\Omega}\int_{\Omega}k_{A}\left(x,y,t,r\right)h\left(x\right)f\left(y\right)dxdy\\
 & =\left\langle e^{-\left(t-r\right)}\int_{\Omega}h\left(x\right)k_{A}\left(x,\cdot,t,r\right)dx,f\right\rangle _{X}.
\end{align*}

Fix $t>s$ and $h\in X$. Inserting the equality $U\left(t,r\right)^{*}h=e^{-\left(t-r\right)}\int_{\Omega}h\left(x\right)k_{A}\left(x,\cdot,t,r\right)dx$
in the definition $\eqref{eq: def cov Q}$ leads to:
\begin{align*}
\left\langle Q\left(t,s\right)h,h\right\rangle _{X} & =\int_{s}^{t}\left|B\left(r\right)^{*}U\left(t,r\right)^{*}h\right|_{X}^{2}dr\\
 & =\int_{s}^{t}e^{-2\left(t-r\right)}\left|B\left(r\right)^{*}\int_{\Omega}h\left(x\right)k_{A}\left(x,\cdot,t,r\right)dx\right|_{X}^{2}dr\\
 & =\int_{s}^{t}e^{-2\left(t-r\right)}\int_{\Omega}\left|\left[B\left(r\right)^{*}\int_{\Omega}h\left(x\right)k_{A}\left(x,\cdot,t,r\right)dx\right]\left(y\right)\right|^{2}dydr\\
 & =\int_{s}^{t}e^{-2\left(t-r\right)}\int_{\Omega}\left|\int_{\Omega}h\left(x\right)\left[B\left(r\right)^{*}k_{A}\left(x,\cdot,t,r\right)\right]\left(y\right)dx\right|^{2}dydr\\
 & =\int_{s}^{t}e^{-2\left(t-r\right)}\int_{\Omega}\left\langle h,x\mapsto\left[B\left(r\right)^{*}k_{A}\left(x,\cdot,t,r\right)\right]\left(y\right)\right\rangle _{X}^{2}dydr.
\end{align*}
Therefore:
\begin{align*}
trQ\left(t,s\right) & =\sum_{j=1}^{+\infty}\left\langle Q\left(t,s\right)e_{j},e_{j}\right\rangle _{X}\\
 & =\int_{s}^{t}e^{-2\left(t-r\right)}\int_{\Omega}\sum_{j=1}^{+\infty}\left\langle e_{j},x\mapsto\left[B\left(r\right)^{*}k_{A}\left(x,\cdot,t,r\right)\right]\left(y\right)\right\rangle _{X}^{2}dydr\\
 & =\int_{s}^{t}e^{-2\left(t-r\right)}\int_{\Omega}\left|x\mapsto\left[B\left(r\right)^{*}k_{A}\left(x,\cdot,t,r\right)\right]\left(y\right)\right|_{X}^{2}dydr\\
 & =\int_{s}^{t}e^{-2\left(t-r\right)}\int_{\Omega}\int_{\Omega}\left|\left[B\left(r\right)^{*}k_{A}\left(x,\cdot,t,r\right)\right]\left(y\right)\right|^{2}dydxdr\\
 & =\int_{s}^{t}e^{-2\left(t-r\right)}\int_{\Omega}\left|B\left(r\right)^{*}k_{A}\left(x,\cdot,t,r\right)\right|_{X}^{2}dxdr,
\end{align*}
and $\text{\eqref{eq: formula traccia}}$ is proven.
\end{proof}
\begin{fact}
\label{fact: per trQ(t,s) finita}If $\left\{ B\left(t\right)^{*}\mid t\in\mathbb{R}\right\} \subseteq\mathcal{L}\left(L^{q}\left(\Omega\right);X\right)$
for some $q\in\left(1,\frac{N}{N-1}\right)$\footnote{Note that this choice of $q$ implies $q<2$ and therefore $L^{q}\left(\Omega\right)\supseteq X$
whenever $N\geq2$.}, $B\left(\cdot\right)f$ is measurable for every $f\in X$ and the
above family of operators is bounded with respect to the $\mathcal{L}\left(L^{q}\left(\Omega\right);X\right)$
norm, then Assumption $\text{\ref{assu: tracce finite}}$ is satisfied.
\end{fact}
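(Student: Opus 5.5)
Starting from the trace formula \eqref{eq: formula traccia} of Lemma \ref{lem: uguaglianza trQ(t,s)}, I will bound the integrand uniformly. Set $K:=\sup_{r\in\mathbb{R}}\left\Vert B\left(r\right)^{*}\right\Vert _{\mathcal{L}\left(L^{q}\left(\Omega\right);X\right)}<+\infty$. For fixed $x\in\Omega$ and $r<t$, the function $k_{A}\left(x,\cdot,t,r\right)$ is in $L^{\infty}\left(\Omega\right)\subseteq L^{q}\left(\Omega\right)$ ($\Omega$ is bounded), so
\[
\left|B\left(r\right)^{*}k_{A}\left(x,\cdot,t,r\right)\right|_{X}\leq K\left\Vert k_{A}\left(x,\cdot,t,r\right)\right\Vert _{L^{q}\left(\Omega\right)}.
\]
Here one must be slightly careful that $B\left(r\right)^{*}$ on $L^{q}$ and the Hilbert adjoint agree on $X\cap L^{q}$. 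I take this to be the meaning of the hypothesis, namely that the Hilbert adjoint extends boundedly to $L^{q}$.

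Next I estimate the $L^{q}$ norm of the kernel with the Gaussian bound \eqref{eq: nucleo}. Extend the integral to $\mathbb{R}^{N}$ and change variables $z=(x-y)/\sqrt{t-r}$:
\[
\left\Vert k_{A}\left(x,\cdot,t,r\right)\right\Vert _{L^{q}\left(\Omega\right)}^{q}\leq C^{q}\left(t-r\right)^{-\frac{Nq}{2}}\int_{\mathbb{R}^{N}}e^{-\frac{q\left|x-y\right|^{2}}{t-r}}dy=C'\left(t-r\right)^{-\frac{N}{2}\left(q-1\right)}.
\]
Hence $\left\Vert k_{A}\left(x,\cdot,t,r\right)\right\Vert _{L^{q}}^{2}\leq C''\left(t-r\right)^{-N\left(q-1\right)/q}$, uniformly in $x$. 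For large $t-r$ this bound decays, and in any case the factor $e^{-2(t-r)}$ takes care of that regime.

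I then plug this into \eqref{eq: formula traccia}. Integrating in $x$ contributes the factor $\lambda\left(\Omega\right)$, so
\[
trQ\left(t,s\right)\leq K^{2}C''\lambda\left(\Omega\right)\int_{s}^{t}e^{-2\left(t-r\right)}\left(t-r\right)^{-\frac{N\left(q-1\right)}{q}}dr\leq K^{2}C''\lambda\left(\Omega\right)\int_{0}^{\infty}e^{-2\sigma}\sigma^{-\frac{N\left(q-1\right)}{q}}d\sigma.
\]
The last integral is finite exactly when $N\left(q-1\right)/q<1$, i.e. $q<\frac{N}{N-1}$, which is the assumption. So $trQ\left(t,s\right)$ is finite, every $Q\left(t,s\right)$ (non-negative and self-adjoint) is trace class, and the bound is independent of $s\leq t$ and even of $t$. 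This gives Assumption \ref{assu: tracce finite}. Measurability of $B\left(\cdot\right)f$ is only used to make Lemma \ref{lem: uguaglianza trQ(t,s)} applicable.

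The exponent computation is routine. The main obstacle is justifying the first step: applying $B\left(r\right)^{*}$ to $k_{A}\left(x,\cdot,t,r\right)$ viewed in $L^{q}$, and checking that the formula of Lemma \ref{lem: uguaglianza trQ(t,s)} holds with this extended operator. Since $k_{A}\left(x,\cdot,t,r\right)\in L^{\infty}\subseteq X$, the two readings coincide, provided the $L^{q}$-bounded operator is the continuous extension of the $X$-adjoint. I would state that identification explicitly and not prove it further.
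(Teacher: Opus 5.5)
Your proposal is correct and follows essentially the same route as the paper. Both arguments bound $|B(r)^{*}k_{A}(x,\cdot,t,r)|_{X}$ by the uniform $\mathcal{L}(L^{q}(\Omega);X)$ norm times $\|k_{A}(x,\cdot,t,r)\|_{L^{q}(\Omega)}$, use the Gaussian kernel estimate to get the factor $(t-r)^{-N+N/q}$ uniformly in $x$, and integrate against $e^{-2(t-r)}$ to obtain a Gamma-function bound, independent of $s$ and $t$, that is finite exactly when $q<N/(N-1)$. Your explicit remark identifying the $L^{q}$-bounded operator with the Hilbert adjoint on $L^{\infty}(\Omega)\subseteq X\cap L^{q}(\Omega)$ is a point the paper passes over silently, and you handle it correctly.
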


\begin{proof}
Fix $s<r<t$ and $k_{A}$ as in Lemma $\text{\ref{lem: uguaglianza trQ(t,s)}}$.
Since $k_{A}\left(\cdot,\cdot;t,r\right)$ belongs to $L^{p}\left(\Omega^{2}\right)$
for every $p>1$, using $\text{\eqref{eq: nucleo}}$ we can estimate:

\begin{align*}
\int_{\Omega}\left|B\left(r\right)^{*}k_{A}\left(x,\cdot,t,r\right)\right|_{X}^{2}dx & \leq\int_{\Omega}\left\Vert B\left(r\right)^{*}\right\Vert _{\mathcal{L}\left(L^{q}\left(\Omega\right);X\right)}^{2}\left|k_{A}\left(x,\cdot,t,r\right)\right|_{L^{q}\left(\Omega\right)}^{2}dx\\
 & \leq C\left(t-r\right)^{-N}\int_{\Omega}\left(\int_{\Omega}e^{-\frac{\left|x-y\right|^{2}}{t-r}q}dy\right)^{\frac{2}{q}}dx\\
 & \leq C_{q,N}\left(t-r\right)^{-N+\frac{N}{q}}\int_{\Omega}\left(\int_{\mathbb{R}^{N}}e^{-\left|z\right|^{2}}dz\right)^{\frac{2}{q}}dx\\
 & =C_{q,N,\Omega}\left(t-r\right)^{-N+\frac{N}{q}}.
\end{align*}
By $\text{\eqref{eq: formula traccia}}$ we have:
\begin{align*}
trQ\left(t,s\right) & \leq C_{q,N,\Omega}\int_{s}^{t}e^{-2\left(t-r\right)}\left(t-r\right)^{-N+\frac{N}{q}}dr\\
 & \leq\widetilde{C}_{q,N,\Omega}\Gamma\left(-N+\frac{N}{q}+1\right),
\end{align*}
where $\Gamma$ denotes the gamma function; observe that the assumed
condition $q<\frac{N}{N-1}$ is equivalent to $-N+\frac{N}{q}+1>0$,
so that the last quantity is well defined.
\end{proof}
Observe that any operator $A\left(t\right)$ is closed and, obviously,
densely defined since $D\left(A\left(t\right)\right)\supseteq C_{0}^{1}\left(\Omega\right)$;
thus, by $\text{\eqref{eq: AT1}}$, $A\left(t\right)$ is the generator
of an analytic semigroup. This allows to define the fractional power
operator $\left[-A\left(t\right)\right]^{-\theta}$, in particular,
for every fixed $\theta\geq0$. Following the approach in \cite{Pazy}, \S2.6,
we remark that the following representation formula holds for every
$\theta\in\left(0,1\right)$:
\begin{align*}
\left[-A\left(t\right)\right]^{-\theta} & =\frac{\sin\pi\theta}{\pi}\int_{0}^{+\infty}\sigma^{-\theta}R\left(\sigma,A\left(t\right)\right)d\sigma.
\end{align*}

The formula is meaningful and defines a bounded operator; indeed,
by $\text{\eqref{eq: AT1}}$:
\begin{align}
\left\Vert \left[-A\left(t\right)\right]^{-\theta}\right\Vert _{\mathcal{L}\left(X\right)} & \leq\frac{\sin\pi\theta}{\pi}\int_{0}^{+\infty}\sigma^{-\theta}\left\Vert R\left(\sigma,A\left(t\right)\right)\right\Vert _{\mathcal{L}\left(X\right)}d\sigma\nonumber \\
 & \leq c\frac{\sin\pi\theta}{\pi}\left\{ \int_{0}^{1}\sigma^{-\theta}d\sigma+\int_{1}^{+\infty}\sigma^{-\theta-1}d\sigma\right\} .\nonumber \\
 & =c\left\{ \frac{\sin\left[\pi\left(1-\theta\right)\right]}{\pi\left(1-\theta\right)}+\frac{\sin\pi\theta}{\pi\theta}\right\} \nonumber \\
 & \leq2c.\label{eq: stima norma B(t)}
\end{align}

We now show that there are fractional powers of $A\left(t\right)^{-1}$
(the identity operator being indeed among them) which, taken as $B\left(t\right)$,
guarantee that the assumptions different from Assumption $\text{\ref{assu: tracce finite}}$
are satisfied.

\begin{fact}
\label{fact: per controllabilita}If $B\left(t\right)=\left[-A\left(t\right)\right]^{-\theta}$
for some $\theta\in\left[0,1/2\right)$ then Assumptions $\text{\ref{assu: B(t) limitati}}$,
$\text{\ref{ass: controllabilita}}$ and $\text{\ref{assu: mu_t non degeneri}}$
are satisfied.
\end{fact}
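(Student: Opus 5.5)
We verify the three assumptions in turn, with Assumption \ref{ass: controllabilita} as the substantial one.

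\emph{Assumption \ref{assu: B(t) limitati}.} Boundedness of $\{B(t)\}$ follows directly from \eqref{eq: stima norma B(t)}, since $\|[-A(t)]^{-\theta}\|_{\mathcal{L}(X)}\leq 2c$ uniformly in $t$. For measurability of $t\mapsto B(t)f$ I would use the integral representation of $[-A(t)]^{-\theta}$ through $R(\sigma,A(t))$. By the resolvent identity and Hypothesis II (estimate \eqref{eq: AT2}), the map $t\mapsto R(\sigma,A(t))$ is continuous in $\mathcal{L}(X)$. Dominated convergence, with the integrable majorant $c\sigma^{-\theta}(1+\sigma)^{-1}$, then gives continuity, hence measurability, of $t\mapsto B(t)f$.

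\emph{Assumption \ref{assu: mu_t non degeneri}.} Each $A(t)$ is self-adjoint and negative (the form $a(t)$ is symmetric), so $B(t)$ is self-adjoint and injective: it is the inverse of $[-A(t)]^{\theta}$. Hence $B(r)^{*}$ is injective for every $r$. By the second observation in Remark \ref{rem: per Q(t,s) iniettivo}, $Q(t,s)$ and $Q(t,-\infty)$ are then injective, so $\mu_t$ is non-degenerate.

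\emph{Assumption \ref{ass: controllabilita}.} By \eqref{eq: immagini uguali Lst} it suffices, for each $x\in X$, to find $\varphi\in L^{2}(s,t;X)$ with $L_{s,t}\varphi=U(t,s)x$. I would try
\[
\varphi(r):=\frac{1}{t-s}\,[-A(r)]^{\theta}\,U(r,s)x,\qquad r\in(s,t).
\]
Formally $B(r)\varphi(r)=\frac{1}{t-s}U(r,s)x$, so
\[
L_{s,t}\varphi=\frac{1}{t-s}\int_{s}^{t}U(t,r)U(r,s)x\,dr=U(t,s)x.
\]
Two things then need checking. First, $U(r,s)x\in D([-A(r)]^{\theta})$; this holds because $\mathrm{Im}\,U(r,s)\subseteq D(A(r))$ by the Fact on the Acquistapace--Terreni evolution operator. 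Second, $\varphi\in L^{2}(s,t;X)$. For this I would combine the moment inequality $\|[-A(r)]^{\theta}y\|\leq C\|A(r)y\|^{\theta}\|y\|^{1-\theta}$ with \eqref{eq: A(t)U(t,s)} and \eqref{eq: stima norma U(t,s) esempio}, which give
\[
\|\varphi(r)\|\leq \frac{C}{t-s}\,(r-s)^{-\theta}\,\|x\|.
\]
This is square integrable near $r=s$ exactly because $\theta<1/2$; that is where the restriction on $\theta$ enters. The moment inequality constant must be uniform in $r$; it depends only on the sectoriality constants of Hypothesis I, which are $t$-independent.

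The main obstacle is rigor in this construction. One must confirm that $\varphi$ is strongly measurable, which again follows from continuity of $r\mapsto[-A(r)]^{\theta}U(r,s)x$ on $(s,t]$ via the resolvent representation and the Acquistapace--Terreni regularity. One must also justify the identity $U(t,r)B(r)\varphi(r)=\frac{1}{t-s}U(t,s)x$ pointwise, using $B(r)[-A(r)]^{\theta}y=y$ for $y\in D([-A(r)]^{\theta})$. If the moment inequality route runs into trouble, the fallback is to estimate $[-A(r)]^{\theta}U(r,s)$ directly from the resolvent integral, splitting at $\sigma=(r-s)^{-1}$.
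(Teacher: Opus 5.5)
Your proposal is correct and follows essentially the same route as the paper. Boundedness comes from \eqref{eq: stima norma B(t)}, and non-degeneracy from injectivity of $B(r)^{*}$ via Remark~\ref{rem: per Q(t,s) iniettivo}. Controllability uses the same control $\varphi(r)=\frac{1}{t-s}[-A(r)]^{\theta}U(r,s)x$, bounded through the moment inequality and \eqref{eq: A(t)U(t,s)}, and square-integrable exactly because $\theta<1/2$. The extra checks you include are details the paper treats as evident, so they only add rigor: measurability of $t\mapsto B(t)f$ via operator-norm continuity of the resolvents, uniformity in $r$ of the moment-inequality constant, and strong measurability of $\varphi$.
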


\begin{proof}
Clearly, every function $B\left(\cdot\right)x$ is measurable, and
the validity of Assumption $\text{\ref{assu: B(t) limitati}}$ comes
directly from $\text{\eqref{eq: stima norma B(t)}}$. Also, the proof
that the above choice makes true Assumption $\text{\ref{assu: mu_t non degeneri}}$
is an immediate consequence of Remark $\text{\ref{rem: per Q(t,s) iniettivo}}$.

Now fix $s<t$ and $\theta\in\left[0,1/2\right)$. By Theorem 6.8,
Chapter 2 in \cite{Pazy}, the algebraic inverse $\left[-A\left(t\right)\right]^{\theta}$
of $B\left(t\right)$ is a closed linear operator densely defined
in $D\left(\left[-A\left(t\right)\right]^{\theta}\right):=Im\left(B\left(t\right)\right)$,
and $D\left(A\left(t\right)\right)\subseteq D\left(\left[-A\left(t\right)\right]^{\theta}\right).$
Further, by Theorem 6.10 in the same chapter, there exist $C,C_{\theta}>0$
such that, for every $f\in X$ and $r>s$:
\begin{align*}
\left|\left[-A\left(r\right)\right]^{\theta}U\left(r,s\right)f\right|_{X} & \leq C\left|U\left(r,s\right)f\right|_{X}^{1-\theta}\left|A\left(r\right)U\left(r,s\right)f\right|_{X}^{\theta}\\
 & \leq C_{\theta}\left|f\right|_{X}^{1-\theta}\left|f\right|_{X}^{\theta}\left\Vert A\left(r\right)U\left(r,s\right)\right\Vert _{\mathcal{L}\left(X\right)}^{\theta}\\
 & \leq C_{\theta}\left(r-s\right)^{-\theta}\left|f\right|_{X},
\end{align*}
the last relation following from $\text{\eqref{eq: A(t)U(t,s)}}$.
Namely, $\left[-A\left(r\right)\right]^{\theta}U\left(r,s\right)\in\mathcal{L}\left(X\right)$
with:
\begin{equation}
\left\Vert \left[-A\left(r\right)\right]^{\theta}U\left(r,s\right)\right\Vert _{\mathcal{L}\left(X\right)}\leq C_{\theta}\left(r-s\right)^{-\theta}.\label{eq: stima L2}
\end{equation}
Since we have chosen $\theta<1/2$, relation $\text{\eqref{eq: stima L2}}$
implies that, for every $f\in X$, the function $r\to\left[-A\left(r\right)\right]^{\theta}U\left(r,s\right)f$
belongs to $L^{2}\left(s,t;X\right)$; thus, the vector
\begin{align*}
U\left(t,s\right)f & =\fint_{s}^{t}U\left(t,r\right)U\left(r,s\right)fdr\\
 & =\int_{s}^{t}U\left(t,r\right)B\left(r\right)\left[-A\left(r\right)\right]^{\theta}U\left(r,s\right)\frac{f}{t-s}dr
\end{align*}
belongs to the image of the operator
\begin{eqnarray*}
L_{s,t}:L^{2}\left(s,t;X\right) & \to & X\\
\varphi & \mapsto & \int_{s}^{t}U\left(t,r\right)B\left(r\right)\varphi\left(r\right)dr.
\end{eqnarray*}

By $\text{\eqref{eq: immagini uguali Lst}}$, we deduce that the inclusion
$ImU\left(t,s\right)\subseteq ImQ\left(t,s\right)^{\frac{1}{2}}$
holds, as prescribed by Assumption $\text{\ref{ass: controllabilita}}$.
\end{proof}
Now we have to prove that there exist suitable fractional powers of
$A\left(t\right)^{-1}$ complying with the conditions required by
Fact $\text{\ref{fact: per trQ(t,s) finita}}$ . This match will introduce
a constraint on the dimension.
\begin{fact}
If $N\leq3$, then there exists $\theta\in\left(0,1/2\right)$ such
that defining $B\left(t\right)=\left[-A\left(t\right)\right]^{-\theta}$
makes true Assumptions $\text{\ref{assu: B(t) limitati}}$, $\text{\ref{assu: tracce finite}}$,
$\text{\ref{ass: controllabilita}}$ and $\text{\ref{assu: mu_t non degeneri}}$.
\end{fact}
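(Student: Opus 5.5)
Given Fact \ref{fact: per controllabilita}, which already yields Assumptions \ref{assu: B(t) limitati}, \ref{ass: controllabilita} and \ref{assu: mu_t non degeneri} for any $\theta\in\left[0,1/2\right)$, the plan is to find $\theta\in\left(0,1/2\right)$ for which the hypothesis of Fact \ref{fact: per trQ(t,s) finita} also holds. That is, we need some $q\in\left(1,\frac{N}{N-1}\right)$ such that $B\left(t\right)^{*}=\left[-A\left(t\right)\right]^{-\theta}$ extends to a bounded operator from $L^{q}\left(\Omega\right)$ to $X=L^{2}\left(\Omega\right)$, uniformly in $t$. Since $a\left(t\right)$ is symmetric, $A\left(t\right)$ is self-adjoint and $B\left(t\right)^{*}=B\left(t\right)$. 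The problem is therefore a uniform $L^{q}\to L^{2}$ smoothing estimate for negative fractional powers. For $N=1$ we have $\frac{N}{N-1}=+\infty$; there we pick $q=2$ (or any $q\ge 2$), and the claim is trivial by \eqref{eq: stima norma B(t)}.

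For $N\ge2$ we use the integral representation $\left[-A\left(t\right)\right]^{-\theta}=\frac{1}{\Gamma\left(\theta\right)}\int_{0}^{\infty}\sigma^{\theta-1}e^{\sigma A\left(t\right)}d\sigma$ of fractional powers of the generator of an analytic semigroup (\cite{Pazy}, \S2.6). We then bound each semigroup operator $e^{\sigma A\left(t\right)}$ from $L^{q}$ to $L^{2}$. The frozen-coefficient operator $A\left(t\right)+I$ is a uniformly elliptic divergence-form operator with conormal boundary condition. Its semigroup has a kernel with a Gaussian bound of the type \eqref{eq: nucleo}, uniformly in $t$, because it depends only on $\eta$, the sup bound $M$ and $\Omega$ (again via \cite{DAN}, applied to the autonomous problem). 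Combining this with the factor $e^{-\sigma}$ gives
\[
\left\Vert e^{\sigma A\left(t\right)}\right\Vert _{\mathcal{L}\left(L^{q};L^{2}\right)}\le Ce^{-\sigma}\sigma^{-\frac{N}{2}\left(\frac{1}{q}-\frac{1}{2}\right)}.
\]
This follows from Young's inequality for the Gaussian, exactly as in the proof of Fact \ref{fact: per trQ(t,s) finita}. Inserting the bound into the integral, the result converges at $\infty$ thanks to $e^{-\sigma}$, and converges at $0$ provided
\[
\theta>\frac{N}{2}\left(\frac{1}{q}-\frac{1}{2}\right).
\]
The bound is then uniform in $t$. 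Measurability of $B\left(\cdot\right)f$ is already contained in Fact \ref{fact: per controllabilita}.

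It remains to check compatibility. We need $q<\frac{N}{N-1}$ and $\frac{N}{2}\left(\frac{1}{q}-\frac{1}{2}\right)<\theta<\frac12$. As $q\uparrow\frac{N}{N-1}$ we get $\frac1q\downarrow\frac{N-1}{N}$, so the lower bound on $\theta$ tends to $\frac{N}{2}\left(\frac{N-1}{N}-\frac12\right)=\frac{N-2}{4}$. This is $<\frac12$ exactly when $N<4$, which is where the restriction $N\le3$ comes from. For $N=2,3$ we fix $q$ close to $\frac{N}{N-1}$ so that $\frac{N}{2}\left(\frac1q-\frac12\right)<\frac12$, and then choose $\theta$ strictly between this value and $\frac12$. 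Fact \ref{fact: per trQ(t,s) finita} then gives Assumption \ref{assu: tracce finite}.

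The main obstacle is the uniform-in-$t$ Gaussian bound for the frozen semigroups $e^{\sigma A\left(t\right)}$ with conormal boundary conditions, and in particular making sure the constants do not depend on $t$. We would justify this by invoking the kernel estimate of \cite{DAN} with constants depending only on the ellipticity and boundedness data. A fallback is to use the Nash/Sobolev inequality on $H^{1}\left(\Omega\right)$ (valid since $\partial\Omega\in\mathcal{C}^{2}$) to get the $L^{1}\to L^{\infty}$ ultracontractivity bound $C\sigma^{-N/2}$ directly. Riesz--Thorin interpolation with the $L^{2}$ contraction then produces the same $L^{q}\to L^{2}$ decay rate.
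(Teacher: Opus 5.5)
Your proposal is correct, and it reaches the result by a genuinely different route from the paper.

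\textbf{How the paper argues.} The paper works at the level of domains. It shows that $D(A(t))$ is the subspace of $H^{2}(\Omega)$ cut out by the conormal condition. It then applies Triebel's interpolation theorem for normal boundary operators and the identity $D([-A(t)]^{\theta})=[X,D(A(t))]_{\theta}$ to get $D([-A(t)]^{\theta})=H^{2\theta}(\Omega)$. Finally it uses the Sobolev embedding into $L^{2N/(N-4\theta)}(\Omega)$ and duality, which gives $B(t)^{*}\in\mathcal{L}(L^{q}(\Omega);X)$ with exactly $q=2N/(N+4\theta)$. The condition $q<N/(N-1)$ then becomes $N<2+4\theta$.

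\textbf{How you argue, and how the two compare.} You work at the level of the semigroup, using the Gamma-function representation together with heat-kernel smoothing. Since $\frac{N}{2}(\frac{1}{q}-\frac{1}{2})=\theta$ exactly when $q=2N/(N+4\theta)$, the paper's estimate is the endpoint of yours. You lose that endpoint, because integrability at $\sigma=0$ forces a strict inequality. This is harmless, since $q<N/(N-1)$ is an open condition, and both routes give $\theta\in(\frac{N-2}{4},\frac{1}{2})$.

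\textbf{What each route buys.} Your route needs only boundedness and ellipticity of the $a_{ij}$, plus a Sobolev/Nash inequality on $\Omega$ (or the Daners bounds already used for \eqref{eq: nucleo}). Uniformity in $t$ comes for free, since all constants depend only on $\eta$, $M$, $N$, $\Omega$. The paper's route yields the sharp embedding and an identification of the fractional domains. However, the $H^{2}$-regularity and Triebel's theorem implicitly require regularity of the $a_{ij}$ in $x$ that is not among the stated hypotheses. Also, the uniformity in $t$ of the norm equivalence $D([-A(t)]^{\theta})\simeq H^{2\theta}(\Omega)$ is asserted rather than proved.

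\textbf{Two technical points you need to fix.}

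First, your displayed semigroup bound cannot hold for large $\sigma$. By \eqref{eq: forma su costanti}, $e^{\sigma A(t)}$ maps the constant function $1$ to $e^{-\sigma}$, so its $\mathcal{L}(L^{q};L^{2})$ norm is at least $ce^{-\sigma}$. Replace $\sigma^{-\frac{N}{2}(\frac{1}{q}-\frac{1}{2})}$ by $\max\{1,\sigma^{-\frac{N}{2}(\frac{1}{q}-\frac{1}{2})}\}$. For $\sigma\geq1$, write $e^{\sigma A(t)}=e^{(\sigma-1)A(t)}e^{A(t)}$ and use $\|e^{\tau A(t)}\|_{\mathcal{L}(X)}\leq e^{-\tau}$, which follows from \eqref{eq: forma dissipativa}. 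The factor $e^{-\sigma}$ still gives convergence at infinity. The same defect is already present in \eqref{eq: nucleo} as stated.

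Second, your fallback as written loses $N=3$. Riesz--Thorin between the $L^{1}\to L^{\infty}$ bound $C\sigma^{-N/2}$ and the $L^{2}$ contraction gives an $L^{q}\to L^{q'}$ bound of order $\sigma^{-N(\frac{1}{q}-\frac{1}{2})}$, twice the exponent you need. With that bound the count becomes $\frac{N}{2}-1<\frac{1}{2}$, which excludes $N=3$. Instead, first deduce $\|e^{\sigma A(t)}\|_{\mathcal{L}(L^{1};L^{2})}\leq C\sigma^{-N/4}$. By self-adjointness, $\|e^{\sigma A(t)}\|_{\mathcal{L}(L^{1};L^{2})}^{2}=\|e^{2\sigma A(t)}\|_{\mathcal{L}(L^{1};L^{\infty})}$. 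Then interpolate this bound with the $L^{2}$ contraction to get the sharp rate $\sigma^{-\frac{N}{2}(\frac{1}{q}-\frac{1}{2})}$.
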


Having in mind Facts $\text{\ref{fact: per trQ(t,s) finita}}$ and
$\text{\ref{fact: per controllabilita}}$, we look for some $q\in\left(1,\frac{N}{N-1}\right)$
and $\theta\in\left[0,1/2\right)$ such that $\left[\left[-A\left(t\right)\right]^{-\theta}\right]^{*}$can
be continuously extended to $L^{q}\left(\Omega\right)$, and that
the norm of this operator as an element of $\mathcal{L}\left(L^{q}\left(\Omega\right);X\right)$
is bounded above by an absolute constant.

Clearly, $B\left(t\right)=\left[-A\left(t\right)\right]^{-\theta}$
is bounded as an operator from $X$ to $D\left(\left[-A\left(t\right)\right]^{\theta}\right)$.
Therefore, if the latter is continuously embedded in $L^{q'}\left(\Omega\right)$
for some $q'>1$, then $B\left(t\right)\in\mathcal{L}\left(X;L^{q'}\left(\Omega\right)\right)$,
which implies $B\left(t\right)^{*}\in\mathcal{L}\left(L^{q}\left(\Omega\right);X\right)$,
where $q=\frac{q'}{q'-1}$.

In order to prove a useful embedding, we explain how interpolation
theory can be used in order to prove that:
\begin{equation}
D\left(\left[-A\left(t\right)\right]^{\theta}\right)=H^{2\theta}\left(\Omega\right)\quad\forall\theta\in\left(0,\frac{3}{4}\right),\label{eq: D(A(t)^theta) uguale H^2theta}
\end{equation}
an identity between Hilbert spaces. To this end, we refer to the book
\cite{Triebel}.

We need a suitable characterization of $D\left(A\left(t\right)\right)$
before proceeding. Since every $u\in D\left(A\left(t\right)\right)$
is, for some $g\in X$, a weak solution of $\eqref{eq: problema ellittico}$,
the maximal regularity theory for elliptic problems in divergence
form implies $D\left(A\left(t\right)\right)\subseteq H^{2}\left(\Omega\right)$.
Thus, if we fix $u\in D\left(A\left(t\right)\right)$ then we have
for every $v\in H^{1}\left(\Omega\right)$:
\begin{align*}
\left\langle A\left(t\right)u,v\right\rangle _{X}= & \ a\left(t\right)\left(u,v\right)\\
= & \left\langle \sum_{i,j=1}^{N}D_{j}\left[a_{ij}\left(t,\cdot\right)D_{i}u\right]-u,v\right\rangle _{X}\\
 & -\int_{\partial\Omega}\sum_{i,j=1}^{N}D_{i}u\left(x\right)a_{ij}\left(t,x\right)\nu_{j}\left(x\right)v\left(x\right)d_{\sigma}x.
\end{align*}

This holds in particular for every $v\in C_{0}^{1}\left(\Omega\right)$
whose density implies $A\left(t\right)u=\sum_{i,j=1}^{N}D_{j}\left[a_{ij}\left(t,\cdot\right)D_{i}u\right]-u.$
Thus
\[
\int_{\partial\Omega}\sum_{i,j=1}^{N}D_{i}u\left(x\right)a_{ij}\left(t,x\right)\nu_{j}\left(x\right)v\left(x\right)d_{\sigma}x=0\quad\forall v\in H^{1}\left(\Omega\right),
\]
which means that the $L^{2}\left(\partial\Omega\right)$ function
$\sum_{i,j=1}^{N}\left(D_{i}u\right)a_{ij}\left(t,\cdot\right)_{\mid\partial\Omega}\nu_{j}$
belongs to the orthogonal complement of the dense subset $H^{\frac{1}{2}}\left(\partial\Omega\right)$
- and therefore is zero. Vice versa, any $u\in H^{2}\left(\Omega\right)$
such that $\sum_{i,j=1}^{N}\left(D_{i}u\right)a_{ij}\left(t,\cdot\right)_{\mid\partial\Omega}\nu_{j}=0$
is obviously in $D\left(A\left(t\right)\right)$. Hence
\[
D\left(A\left(t\right)\right)=\left\{ u\in H^{2}\left(\Omega\right)\mid\sum_{i,j=1}^{N}\left(D_{i}u\right)a_{ij}\left(t,\cdot\right)_{\mid\partial\Omega}\nu_{j}=0\right\} .
\]
Therefore, $D\left(A\left(t\right)\right)$ is a closed subspace of
$H^{2}\left(\Omega\right)$ defined by a boundary condition. The differential
operator $u\mapsto\sum_{i,j=1}^{N}\left(D_{i}u\right)a_{ij}\left(t,\cdot\right)_{\mid\partial\Omega}\nu_{j}$
forms a normal system of differential boundary operators in the sense
of Definition 4.3.3.1 in \cite{Triebel}, since $\sum_{i,j=1}^{N}\nu_{i}\left(x\right)a_{ij}\left(t,x\right)\nu_{j}\left(x\right)\geq\eta\neq0$
for every $x\in\partial\Omega$, by the strong ellipticity condition
in $\text{\eqref{eq: matrice coerciva}}$. This, together with the
above characterization of $D\left(A\left(t\right)\right)$, allows
to apply Theorem 4.3.3 in \cite{Triebel} infering that
\[
\left[X,D\left(A\left(t\right)\right)\right]_{\theta}=H^{2\theta}\left(\Omega\right)\quad\forall\theta\in\left(0,3/4\right),
\]

where $\left[X,D\left(A\left(t\right)\right)\right]_{\theta}$ denotes
the complex interpolation space between $X$ and $D\left(A\left(t\right)\right)$
and the identity goes along with the equivalence of the respective
norms. Further, since $A\left(t\right)$ is symmetric and positive-definite,
Theorem 1.18.10 in the same book tells us that:
\[
D\left(\left[-A\left(t\right)\right]^{\theta}\right)=\left[X,D\left(A\left(t\right)\right)\right]_{\theta}\quad\forall\theta\in\left(0,1\right).
\]
Again, the latter is intended as an identity between Hilbert spaces.
Thus $\text{\eqref{eq: D(A(t)^theta) uguale H^2theta}}$ is proved.
We deduce, for every $\theta\in\left(0,\frac{3}{4}\right)$ and $N>4\theta$,
that:
\[
D\left(\left[-A\left(t\right)\right]^{\theta}\right)=H^{2\theta}\left(\Omega\right)\overset{\mathcal{C}}{\hookrightarrow}L^{\frac{2N}{N-4\theta}}\left(\Omega\right).
\]
By the considerations above, this implies that $B\left(t\right)^{*}\in\mathcal{L}\left(L^{\frac{2N}{N+4\theta}}\left(\Omega\right);X\right)$
so that we can choose $q=\frac{2N}{N+4\theta}$, and the condition
$q<\frac{N}{N-1}$ becomes $N<2+4\theta$ .

Due to the constraint $\theta<1/2$, we can choose the dimension up
to $N=3$.

Eventually, we note that the family $\left\{ B\left(t\right)\mid t\in\mathbb{R}\right\} $
is bounded in $\mathcal{L}\left(X;L^{\frac{2N}{N-4\theta}}\left(\Omega\right)\right)$.
Indeed, for every $f\in X$:
\begin{align*}
\left|B\left(t\right)f\right|_{L^{\frac{2N}{N-4\theta}}\left(\Omega\right)} & \leq C_{0}\left|B\left(t\right)f\right|_{H^{2\theta}\left(\Omega\right)}\\
 & \leq C_{1}\left|B\left(t\right)f\right|_{D\left(\left[-A\left(t\right)\right]^{\theta}\right)}\\
 & =C_{1}\left(\left|B\left(t\right)f\right|_{X}+\left|f\right|_{X}\right)\\
 & \leq C_{2}\left|f\right|_{X},
\end{align*}
where we have used $\text{\eqref{eq: stima norma B(t)}}$ in the last
inequality.

\appendix

\section{Proofs of the basic results}\label{appendice dim}
\begin{prop}
\label{prop: immagini op Hilbert}Let $X$, $X_{1}$, $X_{2}$ be
Hilbert spaces, and $T_{1}\in\mathcal{L}\left(X_{1};X\right)$, $T_{2}\in\mathcal{L}\left(X_{2};X\right)$.
Then $ImT_{1}\subseteq ImT_{2}$ if and only if there exists $C>0$
such that
\begin{equation}
\left|T_{1}^{*}x\right|_{X_{1}}\leq C\left|T_{2}^{*}x\right|_{X_{2}}\quad\forall x\in X.\label{eq: rel T1 T2}
\end{equation}
\end{prop}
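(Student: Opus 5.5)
The plan is to prove the two implications separately. The easy one is that the norm inequality \eqref{eq: rel T1 T2} forces the image inclusion. The harder one, which we do by a closed graph argument, is that the inclusion $ImT_{1}\subseteq ImT_{2}$ forces the inequality.

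\emph{($\Leftarrow$)} Assume \eqref{eq: rel T1 T2}. Define a linear map $\Phi$ on $ImT_{2}^{*}\subseteq X_{2}$ by $\Phi\left(T_{2}^{*}x\right):=T_{1}^{*}x$.
\begin{itemize}
\item $\Phi$ is well defined and bounded: by \eqref{eq: rel T1 T2}, if $T_{2}^{*}x=T_{2}^{*}x'$ then $T_{1}^{*}x=T_{1}^{*}x'$, and $\left|\Phi\left(T_{2}^{*}x\right)\right|_{X_{1}}\leq C\left|T_{2}^{*}x\right|_{X_{2}}$.
\item Extend $\Phi$ by continuity to $\overline{ImT_{2}^{*}}$, and by zero on $\left(ImT_{2}^{*}\right)^{\perp}=KerT_{2}$. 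This gives $S\in\mathcal{L}\left(X_{2};X_{1}\right)$ with $ST_{2}^{*}=T_{1}^{*}$.
\item Taking adjoints, $T_{1}=T_{2}S^{*}$, so $ImT_{1}\subseteq ImT_{2}$.
\end{itemize}

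\emph{($\Rightarrow$)} Assume $ImT_{1}\subseteq ImT_{2}$, and let $T_{2}^{-1}:ImT_{2}\to\left(KerT_{2}\right)^{\perp}$ be the pseudoinverse described before Proposition \ref{prop: caratt}. Set $G:=T_{2}^{-1}T_{1}:X_{1}\to X_{2}$, which is defined everywhere because of the inclusion. The key step is to show $G$ is bounded.
\begin{itemize}
\item Take $x_{n}\to x$ with $Gx_{n}\to y$.
\item Then $T_{2}Gx_{n}=T_{1}x_{n}\to T_{1}x$, and also $T_{2}Gx_{n}\to T_{2}y$, so $T_{2}y=T_{1}x$.
\item Since $\left(KerT_{2}\right)^{\perp}$ is closed, $y\in\left(KerT_{2}\right)^{\perp}$. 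By the characterization \eqref{eq: caratt pseudoinverso}, this gives $y=T_{2}^{-1}T_{1}x=Gx$.
\item So $G$ is closed, and the Closed Graph Theorem makes it bounded. This mirrors Remark \ref{rem:def Lambda}.
\end{itemize}

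Now $T_{1}=T_{2}G$ with $G\in\mathcal{L}\left(X_{1};X_{2}\right)$. Hence $T_{1}^{*}=G^{*}T_{2}^{*}$, and therefore $\left|T_{1}^{*}x\right|_{X_{1}}\leq\left\Vert G\right\Vert \left|T_{2}^{*}x\right|_{X_{2}}$, which is \eqref{eq: rel T1 T2} with $C=\left\Vert G\right\Vert$.

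The main obstacle is the closedness of $G$. One must use that the pseudoinverse takes values in the closed subspace $\left(KerT_{2}\right)^{\perp}$, so that the limit $y$ is identified as the minimal-norm preimage and not merely some preimage. Everything else is routine adjoint manipulation. The extension by zero in ($\Leftarrow$) is legitimate because $X_{2}=\overline{ImT_{2}^{*}}\oplus KerT_{2}$.
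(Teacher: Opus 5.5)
Your proof is correct. The ($\Rightarrow$) half is the paper's argument: $G=T_{2}^{-1}T_{1}$ is everywhere defined, closed, hence bounded, and $T_{1}^{*}=G^{*}T_{2}^{*}$ gives the inequality. You also actually justify the closedness of $G$, which the paper only asserts.

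For ($\Leftarrow$) you take a different, operator-level route, namely the standard proof of Douglas' range-inclusion lemma. The paper works one vector at a time. For fixed $x_{1}$ it bounds the functional $x_{2}\mapsto\langle T_{1}x_{1},(T_{2}^{*})^{-1}x_{2}\rangle_{X}$ on $\mathrm{Im}\,T_{2}^{*}$, extends it by Hahn--Banach, and represents it by Riesz as $\langle\xi,\cdot\rangle_{X_{2}}$. It then shows that $T_{1}x_{1}-T_{2}\xi\in\mathrm{Ker}\,T_{2}^{*}$, and finally kills that component using $\mathrm{Ker}\,T_{2}^{*}\subseteq\mathrm{Ker}\,T_{1}^{*}$.

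Under the hypothesis, that functional is exactly $\langle x_{1},\Phi(\cdot)\rangle_{X_{1}}$, so the underlying idea is the same. Extending $\Phi$ itself once and for all still buys you several things:
\begin{itemize}
\item you avoid Hahn--Banach and the pseudoinverse of $T_{2}^{*}$;
\item the kernel inclusion is absorbed into the well-definedness of $\Phi$;
\item you get a bounded linear factorization $T_{1}=T_{2}S^{*}$ with $\Vert S\Vert\leq C$, whereas the paper's preimage $\xi(T_{1},T_{2},x_{1})$ depends on an arbitrary extension and need not be linear in $x_{1}$.
\end{itemize}

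Moreover, your $S$ vanishes on $\mathrm{Ker}\,T_{2}$, so $S^{*}$ takes values in $(\mathrm{Ker}\,T_{2})^{\perp}$. Hence $S^{*}=T_{2}^{-1}T_{1}=G$: the two halves of your proof produce the same operator, and $\Vert T_{2}^{-1}T_{1}\Vert$ is the best constant in the inequality.

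One trivial point, shared with the paper: $C=\Vert G\Vert$ is $0$ when $T_{1}=0$, in which case any positive constant works.
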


\begin{proof}
First we prove $\left(\Longleftarrow\right)$. Assume that $\text{\eqref{eq: rel T1 T2}}$
holds and let $x_{1}\in X_{1}\setminus\left\{ 0\right\} $; starting
from the decomposition $X=KerT_{2}^{*}\oplus\overline{ImT_{2}}$ we
show that the orthogonal projection of $T_{1}x_{1}$ onto $\overline{ImT_{2}}$
actually belongs to $ImT_{2}$ and we deduce, \emph{a posteriori},
that $T_{1}x_{1}\in ImT_{2}$. Denote by $\left(T_{2}^{*}\right)^{-1}:ImT_{2}^{*}\subseteq X_{2}\to X$
the pseudoinverse of $T_{2}^{*}$. For every $x_{2}\in ImT_{2}^{*}$
we have:
\begin{align*}
\left\langle T_{1}x_{1},\left(T_{2}^{*}\right)^{-1}x_{2}\right\rangle _{X} & =\left\langle x_{1},T_{1}^{*}\left(T_{2}^{*}\right)^{-1}x_{2}\right\rangle _{X_{1}}\\
 & \leq\left|x_{1}\right|_{X_{1}}\left|T_{1}^{*}\left(T_{2}^{*}\right)^{-1}x_{2}\right|_{X_{1}}\\
 & \leq C\left|x_{1}\right|_{X_{1}}\left|x_{2}\right|_{X_{2}}.
\end{align*}
By the Hahn-Banach theorem, the linear functional $\left\langle T_{1}x_{1},\left(T_{2}^{*}\right)^{-1}\left(\cdot\right)\right\rangle _{X}$
defined on the subspace $ImT_{2}^{*}$ of $X_{2}$ is extended by
a linear functional $f:X_{2}\to\mathbb{R}$ satisfying:
\[
\left|fx_{2}\right|\leq C\left|x_{1}\right|_{X_{1}}\left|x_{2}\right|_{X_{2}}\quad\forall x_{2}\in X_{2}.
\]
This means that $f\in X_{2}^{*}$, thus by the Riesz representation
theorem there exists $\xi\left(T_{1},T_{2},x_{1}\right)\in X_{2}$
representing $f$.

For every $z\in\overline{ImT_{2}}=\left(KerT_{2}^{*}\right)^{\perp}$
we have $z=\left(T_{2}^{*}\right)^{-1}T_{2}^{*}z$. We infer:
\begin{align*}
\left\langle T_{1}x_{1},z\right\rangle _{X} & =\left\langle T_{1}x_{1},\left(T_{2}^{*}\right)^{-1}T_{2}^{*}z\right\rangle _{X}\\
 & =fT_{2}^{*}z\\
 & =\left\langle \xi\left(T_{1},T_{2},x_{1}\right),T_{2}^{*}z\right\rangle _{X_{2}}\\
 & =\left\langle T_{2}\xi\left(T_{1},T_{2},x_{1}\right),z\right\rangle _{X}.
\end{align*}
Hence
\[
T_{1}x_{1}=P_{KerT_{2}^{*}}T_{1}x_{1}+T_{2}\xi\left(T_{1},T_{2},x_{1}\right),
\]
where $P_{KerT_{2}^{*}}$ denotes the orthogonal projection onto $KerT_{2}^{*}$.
Note that, as a direct consequence of the assumption in $\text{\eqref{eq: rel T1 T2}}$
we have $KerT_{2}^{*}\subseteq KerT_{1}^{*}$, which implies $ImT_{1}\subseteq\left(KerT_{2}^{*}\right)^{\perp}$.
Therefore $P_{KerT_{2}^{*}}T_{1}x_{1}=0$,  and $T_{1}x_{1}\in ImT_{2}$.

\vspace{2mm}

Now we prove the implication $\left(\implies\right)$. Since $ImT_{1}\subseteq ImT_{2}$,
the operator $T_{2}^{-1}T_{1}:X_{1}\to X_{2}$ is well defined (on
the whole $X_{1}$); it is easily shown to be closed and, therefore,
bounded. For every $x\in X$, we thus have:
\[
\left|T_{1}^{*}x\right|_{X_{1}}^{2}=\left\langle T_{1}^{*}x,\left(T_{2}^{-1}T_{1}\right)^{*}T_{2}^{*}x\right\rangle _{X_{1}}\leq\left\Vert T_{2}^{-1}T_{1}\right\Vert _{\mathcal{L}\left(X_{1};X_{2}\right)}\left|T_{1}^{*}x\right|_{X_{1}}\left|T_{2}^{*}x\right|_{X_{2}}.
\]
Hence, $\text{\eqref{eq: rel T1 T2}}$ holds.
\end{proof}
\begin{proof}[Proof of Proposition $\text{\ref{prop: op evoluzione}}$]

$ $

i) Fix $s<t$ and $f\in C_{b}\left(X\right).$ Let $x\in X$ and $x_{n}\to x$.
Then:
\begin{align*}
\left[P_{s,t}f\right]\left(x_{n}\right) & =\int_{X}f\left(U\left(t,s\right)x_{n}+y\right)\mathcal{N}\left(0,Q\left(t,s\right)\right)\left(dy\right)\\
 & \overset{n\to\infty}{\to}\int_{X}f\left(U\left(t,s\right)x+y\right)\mathcal{N}\left(0,Q\left(t,s\right)\right)\left(dy\right)\\
 & =\left[P_{s,t}f\right]\left(x\right),
\end{align*}
by dominated convergence, and $P_{s,t}f\in C_{b}\left(X\right)$.
The same argument applies to the Bochner integral that defines $\overrightarrow{P_{s,t}}$.
The contraction property of both $P_{s,t}$ and $\overrightarrow{P_{s,t}}$
is an immediate consequence of the definition.

Now fix $\tau\in\left(s,t\right)$. First we show that:
\begin{align}
\left[P_{s,\tau}\left(P_{\tau,t}f\right)\right]\left(x\right) & =\int_{X}f\left(z\right)\left(\mu_{1}\star\mu_{2}\right)\left(dz\right)\label{eq: O-U composto uguale convoluzione}
\end{align}
where
\begin{align*}
\mu_{1}:= & \mathcal{N}\left(0,Q\left(t,\tau\right)\right),\\
\mu_{2}:= & U\left(t,\tau\right)_{\sharp}\mathcal{N}\left(U\left(\tau,s\right)x,Q\left(\tau,s\right)\right)
\end{align*}
and $\sharp$ denotes the push-forward operator. Secondly we will
prove that
\begin{equation}
\mu_{1}\star\mu_{2}=\mathcal{N}\left(U\left(t,s\right)x,Q\left(t,s\right)\right).\label{eq: conv uguale Gaussiana buona}
\end{equation}
Relations $\text{\eqref{eq: O-U composto uguale convoluzione}}$ and
$\text{ \eqref{eq: conv uguale Gaussiana buona}}$ lead to $\text{\eqref{eq: op evoluzione}}$.

To prove $\text{\eqref{eq: O-U composto uguale convoluzione}}$, observe
that the change of variable $U\left(t,\tau\right)y=v$ lets us write:
\begin{align*}
\left[P_{s,\tau}\left(P_{\tau,t}f\right)\right]\left(x\right) & =\int_{X}\left[P_{\tau,t}f\right]\left(y\right)\mathcal{N}\left(U\left(\tau,s\right)x,Q\left(\tau,s\right)\right)\left(dy\right)\\
 & =\int_{X}\int_{X}f\left(u\right)\mathcal{N}\left(U\left(t,\tau\right)y,Q\left(t,\tau\right)\right)\left(du\right)\mathcal{N}\left(U\left(\tau,s\right)x,Q\left(\tau,s\right)\right)\left(dy\right)\\
 & =\int_{X}\int_{X}f\left(u+v\right)\mu_{1}\left(du\right)\mu_{2}\left(dv\right)\\
 & =\int_{X}f\left(z\right)\left(\mu_{1}\star\mu_{2}\right)\left(dz\right).
\end{align*}
For $\text{\eqref{eq: conv uguale Gaussiana buona}}$ we use the multiplication
property of the Fourier transform with respect to the convolution.
For every $\xi\in X$ we have:
\begin{align*}
\widehat{\mu_{2}}\left(\xi\right) & =\int_{X}e^{i\left\langle \xi,U\left(t,\tau\right)y\right\rangle _{X}}\mathcal{N}\left(U\left(\tau,s\right)x,Q\left(\tau,s\right)\right)\left(dy\right)\\
 & =\widehat{\mathcal{N}\left(U\left(\tau,s\right)x,Q\left(\tau,s\right)\right)}\left(U\left(t,\tau\right)^{*}\xi\right)\\
 & =\exp\left(i\left\langle U\left(t,\tau\right)^{*}\xi,U\left(\tau,s\right)x\right\rangle _{X}-\frac{1}{2}\left\langle U\left(t,\tau\right)^{*}\xi,Q\left(\tau,s\right)U\left(t,\tau\right)^{*}\xi\right\rangle _{X}\right)
\end{align*}

On the other hand, $\widehat{\mu_{1}}\left(\xi\right)=\exp\left(-\frac{1}{2}\left\langle \xi,Q\left(t,\tau\right)\xi\right\rangle _{X}\right)$;
therefore:
\begin{align*}
 & \widehat{\mu_{1}\star\mu_{2}}\left(\xi\right)=\widehat{\mu_{1}}\left(\xi\right)\widehat{\mu_{2}}\left(\xi\right)\\
= & \exp\Biggl(i\left\langle U\left(t,\tau\right)^{*}\xi,U\left(\tau,s\right)x\right\rangle _{X}-\frac{1}{2}\left\langle \xi,\left[Q\left(t,\tau\right)+U\left(t,\tau\right)Q\left(\tau,s\right)U\left(t,\tau\right)^{*}\right]\xi\right\rangle _{X}\Biggr)\\
= & \exp\left(i\left\langle \xi,U\left(t,s\right)x\right\rangle _{X}-\frac{1}{2}\left\langle \xi,Q\left(t,s\right)\xi\right\rangle _{X}\right)\\
= & \widehat{\mathcal{N}\left(U\left(t,s\right)x,Q\left(t,s\right)\right)}\left(\xi\right),
\end{align*}
where we have used the relation $U\left(t,\tau\right)Q\left(\tau,s\right)U\left(t,\tau\right)^{*}=Q\left(t,s\right)-Q\left(t,\tau\right)$,
which follows from $\text{\eqref{eq: def cov Q}}$. This leads to
$\text{\eqref{eq: conv uguale Gaussiana buona}}$, by the characterization
property of the Fourier transform mentioned in Remark $\text{\ref{rem: mis gauss e trasf F}}$,
since $\xi$ is generic.

\vspace{2mm}

ii) Let $\varphi\in C_{b}^{1}\left(X\right)$, $x\in X$, $h\in X\setminus\left\{ 0\right\} $.
We have:
\begin{align*}
 & \left(P_{s,t}\varphi\right)\left(x+h\right)-\left(P_{s,t}\varphi\right)\left(x\right)\\
= & \int_{X}\left[\varphi\left(y+U\left(t,s\right)h\right)-\varphi\left(y\right)\right]\mathcal{N}\left(U\left(t,s\right)x,Q\left(t,s\right)\right)\left(dy\right)\\
= & \int_{X}\left\{ \left\langle \nabla\varphi\left(y\right),U\left(t,s\right)h\right\rangle _{X}+o_{U\left(t,s\right)h\to0}\left(U\left(t,s\right)h\right)\right\} \mathcal{N}\left(U\left(t,s\right)x,Q\left(t,s\right)\right)\left(dy\right)\\
= & \left\langle \int_{X}\nabla\varphi\left(y\right)\mathcal{N}\left(U\left(t,s\right)x,Q\left(t,s\right)\right)\left(dy\right),U\left(t,s\right)h\right\rangle _{X}+o_{h\to0}\left(h\right)\\
= & \left\langle \left[\overrightarrow{P_{s,t}}\left(\nabla\varphi\right)\right]\left(x\right),U\left(t,s\right)h\right\rangle _{X}+o_{h\to0}\left(h\right)
\end{align*}
Thus:
\[
\frac{1}{\left|h\right|_{X}}\left\{ \left(P_{s,t}\varphi\right)\left(x+h\right)-\left(P_{s,t}\varphi\right)\left(x\right)-\left\langle U\left(t,s\right)^{*}\left[\overrightarrow{P_{s,t}}\left(\nabla\varphi\right)\right]\left(x\right),h\right\rangle _{X}\right\} \to0
\]
for $h\to0$, which proves $\text{\eqref{eq: formula grad C1}}$.

iii) Let $f\in C_{b}\left(X\right)$ and $t\in\mathbb{R}$. Observe
that, for every $s\leq t$:
\begin{align*}
\int_{X}\left|y\right|_{X}^{2}\mathcal{N}\left(U\left(t,s\right)x,Q\left(t,s\right)\right)\left(dy\right) & =\int_{X}\left|y+U\left(t,s\right)x\right|_{X}^{2}\mathcal{N}\left(0,Q\left(t,s\right)\right)\left(dy\right)\\
 & =\int_{X}\left|y\right|_{X}^{2}\mathcal{N}\left(0,Q\left(t,s\right)\right)\left(dy\right)+\left|U\left(t,s\right)x\right|_{X}^{2}\\
 & =trQ\left(t,s\right)+\left|U\left(t,s\right)x\right|_{X}^{2},
\end{align*}
and the latter sum converges to $trQ\left(t,-\infty\right)$, i.e.
to $\int_{X}\left|y\right|_{X}^{2}\mathcal{N}\left(0,Q\left(t,-\infty\right)\right)\left(dy\right)$,
for $s\to-\infty$, by Assumption $\text{\ref{assu: omega_0 neg}}$
and by Lemma $\text{\ref{lem: Q(t,-infty) trace class}}$. Since $Q\left(t,s\right)\to Q\left(t,-\infty\right)$
in $\mathcal{L}\left(X\right)$ (by definition of both quantities),
the convergence theorem for Gaussian measures cited in the final part
of Remark $\text{\ref{rem: mis gauss e trasf F}}$ implies that $\mathcal{N}\left(U\left(t,s\right)x,Q\left(t,s\right)\right)$
converges weakly to $\mathcal{N}\left(0,Q\left(t,-\infty\right)\right)$.
Thus, for every $f\in C_{b}\left(X\right)$:
\begin{align*}
\left[P_{s,t}f\right]\left(x\right) & =\int_{X}f\left(y\right)\mathcal{N}\left(U\left(t,s\right)x,Q\left(t,s\right)\right)\left(dy\right)\\
 & \to\int_{X}f\left(y\right)\mathcal{N}\left(0,Q\left(t,-\infty\right)\right)\\
 & \quad\text{for }s\to-\infty,
\end{align*}
which proves relation $\eqref{eq: conv P_s,t s a -infty}$.

Now we turn to the proof of $\text{\eqref{eq: conv P_s,t t a +infty}}$.
For every $s\leq t$, set for simplicity of notation:
\[
\nu_{t,s}:=\mathcal{N}\left(0,U\left(t,s\right)Q\left(s,-\infty\right)U\left(t,s\right)^{*}\right).
\]
Since $Q\left(t,-\infty\right)=Q\left(t,s\right)+U\left(t,s\right)Q\left(s,-\infty\right)U\left(t,s\right)^{*}$,
we find, with a computation similar to that used to prove point i),
that, for every $\xi\in X$:
\begin{align*}
 & \left(\mathcal{N}\left(0,Q\left(t,s\right)\right)\star\nu_{t,s}\right)^{\wedge}\left(\xi\right)=\widehat{\mathcal{N}\left(0,Q\left(t,s\right)\right)}\cdot\widehat{\nu_{t,s}}\left(\xi\right)\\
=\, & \exp\left(-\frac{1}{2}\left\langle \xi,Q\left(t,s\right)+U\left(t,s\right)Q\left(s,-\infty\right)U\left(t,s\right)^{*}\right\rangle _{X}\xi\right)\\
=\, & \widehat{\mathcal{N}\left(0,Q\left(t,-\infty\right)\right)}\left(\xi\right).
\end{align*}
Thus also the measures $\mathcal{N}\left(0,Q\left(t,s\right)\right)\star\nu_{t,s}$
and $\mathcal{N}\left(0,Q\left(t,-\infty\right)\right)$ agree. Therefore,
for every $f\in C_{b}\left(X\right)$ and $x\in X$:
\begin{align*}
 & \left[P_{s,t}f\right]\left(x\right)-\int_{X}f\left(y\right)\mathcal{N}\left(0,Q\left(t,-\infty\right)\right)\left(dy\right)\\
= & \int_{X}f\left(y\right)\mathcal{N}\left(U\left(t,s\right)x,Q\left(t,s\right)\right)\left(dy\right)-\int_{X}f\left(z\right)\mathcal{N}\left(0,Q\left(t,s\right)\right)\star\nu_{t,s}\left(dz\right)\\
= & \int_{X}\int_{X}\left[f\left(y+U\left(t,s\right)x\right)-f\left(y+u\right)\right]\nu_{t,s}\left(du\right)\mathcal{N}\left(0,Q\left(t,s\right)\right)\left(dy\right).
\end{align*}
If $f\in C_{b}^{\alpha}\left(X\right)$ for some fixed $\alpha\in\left(0,1\right)$,
then, recalling the definition of $\nu_{t,s}$:
\begin{align*}
 & \left|\left[P_{s,t}f\right]\left(x\right)-\int_{X}f\left(y\right)\mathcal{N}\left(0,Q\left(t,-\infty\right)\right)\left(dy\right)\right|\\
\leq & \int_{X}\int_{X}\left|f\left(y+U\left(t,s\right)x\right)-f\left(y+u\right)\right|\nu_{t,s}\left(du\right)\mathcal{N}\left(0,Q\left(t,s\right)\right)\left(dy\right)\\
\leq & \left[f\right]_{C^{\alpha}\left(X\right)}\int_{X}\left|U\left(t,s\right)x-u\right|_{X}^{\alpha}\nu_{t,s}\left(du\right)\\
\leq & \left[f\right]_{C^{\alpha}\left(X\right)}\left[\int_{X}\left|U\left(t,s\right)x-u\right|_{X}^{2}\nu_{t,s}\left(du\right)\right]^{\frac{\alpha}{2}}\\
= & \left[f\right]_{C^{\alpha}\left(X\right)}\left[\left|U\left(t,s\right)x\right|_{X}^{2}+tr\nu_{t,s}\right]^{\frac{\alpha}{2}}\\
\leq & \left[f\right]_{C^{\alpha}\left(X\right)}M_{\omega}^{\alpha}e^{\alpha\omega\left(t-s\right)}\left[\left|x\right|_{X}^{2}+trQ\left(s,-\infty\right)\right]^{\frac{\alpha}{2}}\\
\to & \ 0\quad\text{for }t\to\infty,
\end{align*}
where $\omega\in\left(\omega_{0},0\right)$ and $M_{\omega}$ is given
by Assumption $\text{\ref{assu: omega_0 neg}}$.
\end{proof}
\begin{proof}[Proof of Proposition $\text{\ref{prop: invarianza}}$.]

$ $

We begin by proving relation $\text{\eqref{eq: muinv}}$. Fix $s<t$.
Observe that this relation is equivalent to the fact the measures
$\mu_{t}$ and $P_{s,t}^{*}\mu_{s}$, defined on the Borel $\sigma$-algebra
$\mathcal{B}\left(X\right)$ generated by the open subsets of $X$,
coincide. By Remark $\text{\ref{rem: mis gauss e trasf F}}$, it is
sufficient that the equality $\widehat{\mu_{t}}=\widehat{P_{s,t}^{*}\mu_{s}}$
between Fourier transforms holds for the correspondent identity between
measures to be true.

Again by Remark $\text{\ref{rem: mis gauss e trasf F}}$, we have,
for every $\xi,y\in X$:
\begin{align*}
\left[P_{s,t}\left(e^{i\left\langle \xi,\cdot\right\rangle _{X}}\right)\right]\left(y\right) & =\int_{X}e^{i\left\langle \xi,U\left(t,s\right)y+z\right\rangle _{X}}\mathcal{N}\left(0,Q\left(t,s\right)\right)\left(dz\right)\\
 & =e^{i\left\langle \xi,U\left(t,s\right)y\right\rangle _{X}}\int_{X}e^{i\left\langle \xi,z\right\rangle _{X}}\mathcal{N}\left(0,Q\left(t,s\right)\right)\left(dz\right)\\
 & =e^{i\left\langle \xi,U\left(t,s\right)y\right\rangle _{X}}\widehat{\mathcal{N}\left(0,Q\left(t,s\right)\right)}\left(\xi\right)\\
 & =e^{i\left\langle \xi,U\left(t,s\right)y\right\rangle _{X}}e^{-\frac{1}{2}\left\langle \xi,Q\left(t,s\right)\xi\right\rangle _{X}}
\end{align*}
Thus, for every $\xi\in X$:
\begin{align*}
\widehat{P_{s,t}^{*}\mu_{s}}\left(\xi\right) & =\int_{X}e^{i\left\langle \xi,y\right\rangle _{X}}P_{s,t}^{*}\mu_{s}\left(dy\right)\\
 & =\int_{X}\left[P_{s,t}\left(e^{i\left\langle \xi,\cdot\right\rangle _{X}}\right)\right]\left(y\right)\mu_{s}\left(dy\right)\\
 & =e^{-\frac{1}{2}\left\langle \xi,Q\left(t,s\right)\xi\right\rangle _{X}}\int_{X}e^{i\left\langle U\left(t,s\right)^{*}\xi,y\right\rangle _{X}}\mu_{s}\left(dy\right)\\
 & =e^{-\frac{1}{2}\left\langle \xi,Q\left(t,s\right)\xi\right\rangle _{X}}\widehat{\mathcal{N}\left(0,Q\left(s,-\infty\right)\right)}\left(U\left(t,s\right)^{*}\xi\right)\\
 & =e^{-\frac{1}{2}\left\langle \xi,Q\left(t,s\right)\xi\right\rangle _{X}}e^{-\frac{1}{2}\left\langle U\left(t,s\right)^{*}\xi,Q\left(s,-\infty\right)U\left(t,s\right)^{*}\xi\right\rangle _{X}}\\
 & =\exp\left(-\frac{1}{2}\left\langle \xi,Q\left(t,s\right)\xi+U\left(t,s\right)Q\left(s,-\infty\right)U\left(t,s\right)^{*}\xi\right\rangle _{X}\right).
\end{align*}
Remembering the definitions of $Q\left(t,s\right)$ and $Q\left(s,-\infty\right)$
we immediately obtain that
\[
U\left(t,s\right)Q\left(s,-\infty\right)U\left(t,s\right)^{*}=Q\left(t,-\infty\right)-Q\left(t,s\right).
\]
Thus:
\begin{align*}
\widehat{P_{s,t}^{*}\mu_{s}}\left(\xi\right) & =\exp\left(-\frac{1}{2}\left\langle \xi,Q\left(t,-\infty\right)\xi\right\rangle _{X}\right)\\
 & =\widehat{N\left(0,Q\left(t,-\infty\right)\right)}\left(\xi\right)\\
 & =\widehat{\mu_{t}}\left(\left(\xi\right)\right).
\end{align*}

Relation $\text{\eqref{eq: muinv}}$ is thus proven.

Now observe that, as an immediate consequence of the definition of
$\overrightarrow{P_{s,t}}$ given in $\text{\eqref{eq: def O-U vett formula}}$,
we have
\[
\left\langle \overrightarrow{P_{s,t}}\Phi,e_{N}\right\rangle _{X}=P_{s,t}\left\langle \Phi,e_{N}\right\rangle _{X}\quad\forall N\in\mathbb{N},
\]

for every $N\in\mathbb{N}$. Thus relation $\text{\eqref{eq: muinv vettoriale}}$
is implied by $\text{\eqref{eq: muinv}}$, applied to the functions
$\phi=\left\langle \Phi,e_{N}\right\rangle _{X}$.
\end{proof}
\begin{proof}[Proof of Proposition $\text{\ref{prop: estensione P_s,t}}$.]

$ $

Based on relation $\text{\eqref{eq: muinv}}$, we show that, for every
$p\in[1,+\infty)$:
\begin{equation}
\left\Vert P_{s,t}\phi\right\Vert _{L^{p}\left(X,\mu_{s}\right)}\leq\left\Vert \phi\right\Vert _{L^{p}\left(X,\mu_{t}\right)}\quad\forall\phi\in C_{b}\left(X\right).\label{eq: Ps,t cont in Lp su C}
\end{equation}
Fix $\phi\in C_{b}\left(X\right)$ and $p\in[1,+\infty)$. By $\text{\eqref{eq: muinv}}$
we have
\begin{align*}
\int_{X}\left|\phi\left(x\right)\right|^{p}\mu_{t}\left(dx\right) & =\int_{X}\left[P_{s,t}\left|\phi\right|^{p}\right]\left(x\right)\mu_{s}\left(dx\right)\\
 & =\int_{X}\int_{X}\left|\phi\left(y+U\left(t,s\right)x\right)\right|^{p}N\left(0,Q\left(t,s\right)\right)\left(dy\right)\mu_{s}\left(dx\right)\\
 & \geq\int_{X}\left|P_{s,t}\phi\left(x\right)\right|^{p}\mu_{s}\left(dx\right),
\end{align*}
and thus $\text{\eqref{eq: Ps,t cont in Lp su C}}$ holds.

This allows us to define $P_{s,t}^{p}$ in a pretty standard way.
Consider $f\in L^{p}\left(X,\mu_{t}\right)$ and $\left(\phi_{n}\right)_{n}\subseteq C_{b}\left(X\right)$
such that $\phi_{n}\to f\text{ in }L^{p}\left(X,\mu_{t}\right)$.
By relation $\text{\eqref{eq: Ps,t cont in Lp su C}}$, the sequence
$\left(P_{s,t}\phi_{n}\right)_{n}$, which is obviously contained
in the space $L^{p}\left(X,\mu_{s}\right)$, is Cauchy with respect
to the norm of this space. Thus, it has a limit in $L^{p}\left(X,\mu_{s}\right)$,
which we call $P_{s,t}f$. The very same relation $\text{\eqref{eq: Ps,t cont in Lp su C}}$
ensures that the definition:
\begin{eqnarray*}
\text{} & P_{s,t}^{p}f:={\displaystyle \lim_{n\to\infty}}P_{s,t}\phi_{n}\text{ in }L^{p}\left(X,\mu_{s}\right)\\
 & \forall f\in L^{p}\left(X,\mu_{t}\right),\ \left(\phi_{n}\right)_{n}\subseteq C_{b}\left(X\right)\text{ s.t. }\phi_{n}\to f\text{ in }L^{p}\left(X,\mu_{t}\right)
\end{eqnarray*}
is well posed since it does not depend on the choice of $\left(\phi_{n}\right)_{n}$. 

Consider relations $\text{\eqref{eq: muinv}}$ and $\text{\eqref{eq: Ps,t cont in Lp su C}}$
for $\phi=\phi_{n}$: by passing to the limit for $n\to\infty$, we
obtain that they also hold for $\phi=f$ and $P_{s,t}=P_{s,t}^{p}$
(in order to pass to the limit in $\text{\eqref{eq: muinv}}$ it is
convenient to observe that $L^{p}\left(X,\nu\right)$ is continuously
embedded in $L^{1}\left(X,\nu\right)$ when $\nu\left(X\right)<+\infty$).
In particular $P_{s,t}^{p}:L^{p}\left(X,\mu_{t}\right)\to L^{p}\left(X,\mu_{s}\right)$
is a contraction.

The vectorial counterparts of these relations follow. Using the standard
properties of the Bochner integral we can see that the following inequality
holds for $\Phi\in C_{b}\left(X;X\right):$
\[
\left\Vert \overrightarrow{P_{s,t}}\Phi\right\Vert _{L^{p}\left(X,\mu_{s};X\right)}\leq\left\Vert \Phi\right\Vert _{L^{p}\left(X,\mu_{t};X\right)}
\]

in the same way as we did in order to prove $\text{\eqref{eq: Ps,t cont in Lp su C}}$.
Thus, with the same construction, we can extend $\overrightarrow{P_{s,t}}:C_{b}\left(X;X\right)\to C_{b}\left(X;X\right)$
to a contraction operator $\overrightarrow{P_{s,t}^{p}}:L^{p}\left(X,\mu_{t};X\right)\to L^{p}\left(X,\mu_{s};X\right)$.
Upon this construction, $\overrightarrow{P_{s,t}^{p}}F$ is defined
as the $L^{p}\left(X,\mu_{s};X\right)$-limit of the sequence $\left(\overrightarrow{P_{s,t}^{p}}\Phi_{n}\right)_{n}$
where $\left(\Phi_{n}\right)_{n}\subseteq C_{b}\left(X;X\right)$
converges to $F$ in $L^{p}\left(X,\mu_{t};X\right)$; thus we also
have $\overrightarrow{P_{s,t}^{p}}\Phi_{n}\to\overrightarrow{P_{s,t}^{p}}F$
in $L^{1}\left(X,\mu_{s};X\right)$, which implies that relation $\text{\eqref{eq: muinv vettoriale}}$
which has already been established for $\overrightarrow{P_{s,t}}$
and continuous functions, also holds for $\overrightarrow{P_{s,t}^{p}}$
and $L^{p}\left(X,\mu_{t};X\right)$ functions.
\end{proof}
\section{Characterization of the Cameron-Martin space}\label{appendice CM}

In this section we prove Proposition $\text{\ref{prop: caratt}}$.
Note that, in the statement, the operator $Q$ is assumed to be merely
bounded, self-adjoint and non-negative. In particular, no compactness
property is required for this argument to work.

The main tool is the following Proposition.
\begin{prop}
\label{prop: A_ext}Let $\left(X,\,\left\langle \cdot,\cdot\right\rangle \right)$
be a real Hilbert space, $D\left(A\right)$ a linear subspace of $X$
and $A:D\left(A\right)\to\overline{D\left(A\right)}$ a linear operator
(not necessarily bounded nor densely defined). Assume that $A$ is
symmetric in $D\left(A\right)$, namely:
\begin{equation}
\left\langle Ax,y\right\rangle =\left\langle Ay,x\right\rangle \quad\forall x,y\in D\left(A\right).\label{eq: autoagg A}
\end{equation}
Define
\[
E\left(A\right):=\left\{ x\in X\mid\text{the map }y\to\left\langle Ay,x\right\rangle \text{ is continuous in }D\left(A\right)\right\} .
\]
Then $E\left(A\right)$ is a linear subspace of $X$ containing $D\left(A\right)$,
and there exists a linear operator $A_{ext}:E\left(A\right)\to\overline{D\left(A\right)}$
which extends $A$ and satisfies:
\begin{equation}
\left\langle A_{ext}x,y\right\rangle =\left\langle Ay,x\right\rangle \quad\forall x\in E\left(A\right),y\in D\left(A\right).\label{eq: rel A Aext}
\end{equation}
\end{prop}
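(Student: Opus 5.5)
The plan is to build $A_{ext}$ by the Riesz representation theorem, applied inside the closed subspace $\overline{D\left(A\right)}$. Linearity of $E\left(A\right)$ is immediate: if $y\mapsto\left\langle Ay,x_{1}\right\rangle $ and $y\mapsto\left\langle Ay,x_{2}\right\rangle $ are continuous on $D\left(A\right)$, so is every linear combination. The inclusion $D\left(A\right)\subseteq E\left(A\right)$ follows from the symmetry relation \eqref{eq: autoagg A}. Indeed, for $x\in D\left(A\right)$ we have $\left\langle Ay,x\right\rangle =\left\langle Ax,y\right\rangle $, which is bounded by $\left|Ax\right|\left|y\right|$.

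For $x\in E\left(A\right)$, the functional $\ell_{x}(y):=\left\langle Ay,x\right\rangle $ is linear and continuous on $D\left(A\right)$ with respect to the norm of $X$. It is therefore uniformly continuous and extends uniquely, by density, to a bounded linear functional $\overline{\ell_{x}}$ on the Hilbert space $\overline{D\left(A\right)}$. Riesz then gives a unique $z_{x}\in\overline{D\left(A\right)}$ with $\overline{\ell_{x}}(y)=\left\langle z_{x},y\right\rangle $ for all $y\in\overline{D\left(A\right)}$. We set $A_{ext}x:=z_{x}$, and \eqref{eq: rel A Aext} holds by construction.

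It remains to check that $A_{ext}$ is linear and extends $A$. Both follow from uniqueness in Riesz, since the representing vector is required to lie in $\overline{D\left(A\right)}$. For linearity, $\ell_{\alpha x_{1}+\beta x_{2}}=\alpha\ell_{x_{1}}+\beta\ell_{x_{2}}$ on $D\left(A\right)$, so the extensions agree and hence so do the representatives. For $x\in D\left(A\right)$, we have $Ax\in\overline{D\left(A\right)}$ by hypothesis, and $\left\langle Ax,y\right\rangle =\ell_{x}(y)$ for $y\in D\left(A\right)$, hence also for $y\in\overline{D\left(A\right)}$ by continuity. Uniqueness then gives $A_{ext}x=Ax$.

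The only delicate point is that the codomain of $A$ must be $\overline{D\left(A\right)}$ and not all of $X$. Without this, $A_{ext}$ would extend $A$ only up to a component in $D\left(A\right)^{\perp}$, so the representative has to be taken in $\overline{D\left(A\right)}$ rather than in $X$.
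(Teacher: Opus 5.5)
Your proposal is correct and follows the paper's own proof essentially step by step. Like the paper, you extend $y\mapsto\langle Ay,x\rangle$ by continuity to $\overline{D(A)}$, represent it via Riesz inside that closed subspace, and get both linearity and $A_{ext}=A$ on $D(A)$ from uniqueness of the representative in $\overline{D(A)}$, which is exactly where the hypothesis $Ax\in\overline{D(A)}$ is used.
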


\begin{proof}
Define, for every $x\in X$, $f_{x}:D\left(A\right)\to\mathbb{R}$
as $f_{x}\left(y\right):=\left\langle Ay,x\right\rangle $. Clearly
$D\left(A\right)\subseteq E\left(A\right)$ because, for every $x\in D\left(A\right)$,
we have $f_{x}=\left\langle \cdot,Ax\right\rangle $ in virtue of
$\text{\eqref{eq: autoagg A}}$.

The set $E\left(A\right)$ is clearly a vector space. Indeed having
fixed $x_{1},x_{2}\in E\left(A\right)$ and $t,s\in\mathbb{R}$, it
is immediate to verify that $f_{tx_{1}+sx_{2}}=tf_{x_{1}}+sf_{x_{2}}$
in $D\left(A\right)$. Thus, since $f_{x_{1}}$ e $f_{x_{2}}$ are
continuous in $D\left(A\right)$, also $f_{tx_{1}+sx_{2}}$ is such,
namely $tx_{1}+sx_{2}\in E\left(A\right)$.

\vspace{2mm}

Now we define a linear extension of $A$ to $E\left(A\right)$.

For any $x\in E\left(A\right)$, let $\bar{f}_{x}:\overline{D\left(A\right)}\to\mathbb{R}$
be the continuous extension of $f_{x}$ to $\overline{D\left(A\right)}$,
i.e.
\[
\bar{f}_{x}\left(y\right):=\lim_{n\to\infty}f_{x}\left(y_{n}\right)
\]
for every $y\in\overline{D\left(A\right)}$ and $\left(y_{n}\right)_{n}\subseteq D\left(A\right)$
such that $y_{n}\to y$. Note that the above definition is well posed
due to the condition $x\in E\left(A\right)$. Indeed the limit exists
because the sequence $\left(f_{x}\left(y_{n}\right)\right)_{n}\subseteq\mathbb{R}$
is Cauchy, and is unique - namely, it does not depend on $\left(y_{n}\right)_{n}$.
Further, $\bar{f}_{x}$ is continuous in $\overline{D\left(A\right)}$,
which, as a closed linear subspace, inherits the Hilbert space structure
from $X$. Thus, by the Riesz representation theorem, there exists
a vector $A_{ext}x\in\overline{D\left(A\right)}$ such that $\bar{f}_{x}=\left\langle A_{ext}x,\cdot\right\rangle $.

Thus we have, for every $y\in D\left(A\right)$:
\begin{align*}
\left\langle Ay,x\right\rangle  & =f_{x}\left(y\right)=\bar{f}_{x}\left(y\right)=\left\langle A_{ext}x,y\right\rangle ,
\end{align*}
which proves relation $\text{\eqref{eq: rel A Aext}}$.

\vspace{2mm}

We now show that the map $x\to A_{ext}x$ is linear from $E\left(A\right)$
to $\overline{D\left(A\right)}$.

Let $x_{1},x_{2}\in E\left(A\right)$ and $t,s\in\mathbb{R}$. Then
$A_{ext}\left(tx_{1}+sx_{2}\right)\in\overline{D\left(A\right)}$
represents the continuous linear functional $\bar{f}_{tx_{1}+sx_{2}}$.
We have for every $y\in\overline{D\left(A\right)}$, chosen $y_{n}\to y$
with $\left(y_{n}\right)_{n}\subseteq D\left(A\right)$:
\begin{align*}
\left\langle A_{ext}\left(tx_{1}+sx_{2}\right),y\right\rangle  & =\bar{f}_{tx_{1}+sx_{2}}\left(\lim_{n}y_{n}\right)\\
 & =\lim_{n}f_{tx_{1}+sx_{2}}\left(y_{n}\right)\\
 & =t\lim_{n}f_{x_{1}}\left(y_{n}\right)+s\lim_{n}f_{x_{2}}\left(y_{n}\right)\\
 & =t\bar{f}_{x_{1}}\left(y\right)+s\bar{f}_{x_{2}}\left(y\right)\\
 & =\left\langle tA_{ext}x_{1},y\right\rangle +\left\langle sA_{ext}x_{2},y\right\rangle \\
 & =\left\langle tA_{ext}x_{1}+sA_{ext}x_{2},y\right\rangle .
\end{align*}
Therefore $A_{ext}\left(tx_{1}+sx_{2}\right)=tA_{ext}x_{1}+sA_{ext}x_{2}$
because $y$ is generic.

\vspace{2mm}

Finally we verify that $A_{ext}=A$ in $D\left(A\right)$. Fix $x\in D\left(A\right)$,
$y\in\overline{D\left(A\right)}$, and again $y_{n}\to y$ with $\left(y_{n}\right)_{n}\subseteq D\left(A\right)$.
Hence, remembering $\text{\eqref{eq: autoagg A}}$:
\[
\left\langle Ax,y\right\rangle =\lim_{n}\left\langle x,Ay_{n}\right\rangle =\lim_{n}f_{x}\left(y_{n}\right)=\bar{f}_{x}\left(y\right)=\left\langle A_{ext}x,y\right\rangle .
\]
Since $Ax\in\overline{D\left(A\right)}$ by our hypothesis on $A$,
we infer that $Ax=A_{ext}x$.
\end{proof}
\begin{cor}
\label{cor: caratt dominio aggiunto}Let $T\in\mathcal{L}\left(X\right)$
be a self-adjoint operator and let
\[
T^{-1}:Im\left(T\right)\to\overline{ImT}
\]
be its pseudoinverse. Let $E\left(T^{-1}\right)$ and $T_{ext}^{-1}$
as in Proposition $\text{\ref{prop: A_ext}}$. Then:

i) $KerT\subseteq E\left(T^{-1}\right)$ and $T_{ext}^{-1}\left(KerT\right)=\left\{ 0\right\} $,

ii) $E\left(T^{-1}\right)=ImT\oplus KerT$.
\end{cor}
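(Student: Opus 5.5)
Both parts rest on the defining relation of $T_{ext}^{-1}$ from Proposition \ref{prop: A_ext}, together with the orthogonal decomposition $X=KerT\oplus\overline{ImT}$. Here $D(T^{-1})=ImT$ and $T^{-1}$ takes values in $\overline{ImT}=\overline{D(T^{-1})}$. Since $T$ is self-adjoint, $T^{-1}$ is symmetric on $ImT$, as recorded earlier, so Proposition \ref{prop: A_ext} applies. It gives
\[
\left\langle T_{ext}^{-1}x,y\right\rangle =\left\langle T^{-1}y,x\right\rangle
\]
for all $x\in E(T^{-1})$ and $y\in ImT$, with $T_{ext}^{-1}x\in\overline{ImT}$.

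For i), fix $x\in KerT$. For every $y\in ImT$ the vector $T^{-1}y$ lies in $\overline{ImT}=(KerT)^{\perp}$, so $\langle T^{-1}y,x\rangle=0$. The map $y\mapsto\langle T^{-1}y,x\rangle$ is therefore identically zero, hence continuous, and $x\in E(T^{-1})$. The displayed relation then gives $\langle T_{ext}^{-1}x,y\rangle=0$ for every $y\in ImT$. Since $T_{ext}^{-1}x\in\overline{ImT}$, density of $ImT$ in $\overline{ImT}$ forces $T_{ext}^{-1}x=0$.

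For ii), the inclusion $\supseteq$ follows from i), from $ImT=D(T^{-1})\subseteq E(T^{-1})$, and from the fact that $E(T^{-1})$ is a linear subspace. The sum is direct because $ImT\cap KerT\subseteq\overline{ImT}\cap KerT=\{0\}$.

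The substantive step is the inclusion $\subseteq$. Take $x\in E(T^{-1})$ and decompose $x=x_{0}+x_{1}$ with $x_{0}\in KerT$ and $x_{1}\in\overline{ImT}$. By i), $x_{0}\in E(T^{-1})$, so $x_{1}\in E(T^{-1})$ as well, and it remains to show $x_{1}\in ImT$. Set $z:=T_{ext}^{-1}x_{1}\in\overline{ImT}$; I claim $Tz=x_{1}$. For arbitrary $w\in X$, put $y=Tw\in ImT$. Then $T^{-1}y=P_{\overline{ImT}}w$, since both are preimages of $y$ and the pseudoinverse is the one lying in $(KerT)^{\perp}$. Hence
\[
\left\langle Tz,w\right\rangle =\left\langle z,Tw\right\rangle =\left\langle T_{ext}^{-1}x_{1},y\right\rangle =\left\langle T^{-1}y,x_{1}\right\rangle =\left\langle P_{\overline{ImT}}w,x_{1}\right\rangle =\left\langle w,x_{1}\right\rangle ,
\]
where the last equality uses $x_{1}\in\overline{ImT}$. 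As $w$ is arbitrary, $Tz=x_{1}$, so $x_{1}\in ImT$ and $x\in ImT\oplus KerT$. The main obstacle is exactly this identification of $T_{ext}^{-1}x_{1}$ as a genuine preimage; once $y$ is taken of the form $Tw$, the argument reduces to the computation above.
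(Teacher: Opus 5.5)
Your proof is correct and follows essentially the same route as the paper. The key step in both is to test the defining relation $\left\langle T_{ext}^{-1}x,Tw\right\rangle =\left\langle T^{-1}Tw,x\right\rangle$, which shows that $TT_{ext}^{-1}x$ is the orthogonal projection of $x$ onto $\overline{ImT}$.

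The differences are cosmetic. You split off the $KerT$ component first. You also obtain the inclusion $\supseteq$ from part i) together with $ImT\subseteq E(T^{-1})$, whereas the paper computes directly that $\langle T^{-1}y,T\xi+z\rangle=\langle y,\xi\rangle$.
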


\begin{proof}
Since $ImT^{-1}\subseteq\overline{ImT}$, Proposition $\text{\ref{prop: A_ext}}$
can be applied with $A=T^{-1}$ and $D\left(A\right)=ImT$, because
relation $\text{\eqref{eq: autoagg A}}$ is satisfied by this choice
of $A$ and $D\left(A\right)$. We deduce that there exists a linear
extension $T_{ext}^{-1}$ of $T^{-1}$ to the space $E\left(T^{-1}\right)\supseteq ImT$
that satisfies $\text{\eqref{eq: rel A Aext}}$.

i) Fix $x\in KerT$. We have $\overline{ImT}=KerT^{\perp}$ (because
$T$ is self adjoint), so the operator $\left\langle T^{-1}\left(\cdot\right),x\right\rangle $
is null and thus continuous in $ImT$, namely $x\in E\left(T^{-1}\right)$.

Further, relation $\text{\eqref{eq: rel A Aext}}$ implies, for every
$y\in ImT$:
\[
\left\langle T_{ext}^{-1}x,y\right\rangle =\left\langle T^{-1}y,x\right\rangle =0.
\]
Thus, since $Im\left(T_{ext}^{-1}\right)\subseteq\overline{ImT}$
by construction, $T_{ext}^{-1}x=0$.

ii) We prove $\left(\subseteq\right)$. Let $x\in E\left(T^{-1}\right)$.
It follows from $\text{\eqref{eq: rel A Aext}}$ that:
\[
\left\langle T_{ext}^{-1}x,Tz\right\rangle =\left\langle x,T^{-1}Tz\right\rangle \quad\forall z\in X.
\]
Therefore, remembering that $T^{-1}Tz=z$ if $z\in\overline{ImT}$,
we infer:
\begin{equation}
\left\langle TT_{ext}^{-1}x,z\right\rangle =\left\langle x,z\right\rangle \quad\forall z\in\overline{ImT}.\label{eq: proiezione}
\end{equation}
This means precisely that $TT_{ext}^{-1}x$ is the orthogonal projection
of $x$ onto the space $\overline{ImT}$, and thus $x\in ImT+KerT$.

As far as the inclusion $\left(\supseteq\right)$ is concerned, take
$x=T\xi+z$, with $z\in KerT$; then for every $y\in ImT$:
\[
\left\langle T^{-1}y,x\right\rangle =\left\langle T^{-1}y,T\xi\right\rangle +\left\langle T^{-1}y,z\right\rangle =\left\langle y,\xi\right\rangle .
\]
Such expression is continuous in $y$, thus $x\in E\left(T^{-1}\right)$.
\end{proof}
\begin{rem}
The utility of Corollary $\text{\ref{cor: caratt dominio aggiunto}}$
is apparent. The most common way to prove that a vector $v$ belongs
to the image of an operator relies on the fact that this image is
a closed set, then using a density argument. Point ii) in Corollary
$\text{\ref{cor: caratt dominio aggiunto}}$ instead applies to the
image of a bounded, self-adjoint operator without knowing if the image
is closed: it states that a certain object, namely the orthogonal
projection onto $\overline{ImT}$ of a vector $x\in E\left(T^{-1}\right)$,
belongs to $ImT$ without using any density argument. Actually the
projection is written explicitly as $TT_{ext}^{-1}x$ - and this shows
in its turn the importance of the preliminary Proposition $\text{\ref{prop: A_ext}}$.
\end{rem}

This machinery will be used to prove the inclusion $ImQ^{\frac{1}{2}}\supseteq H_{Q}$
in Proposition $\text{\ref{prop: caratt}}$.
\begin{proof}[Proof of Proposition $\text{\ref{prop: caratt}}$]
 Let us begin with the last statement. Take $x\in ImQ^{\frac{1}{2}}$;
we can assume that $x\neq0$, without loss of generality.

Since $x=Q^{\frac{1}{2}}Q^{-\frac{1}{2}}x$ and $Q^{\frac{1}{2}}$
is self-adjoint, we have for every $z\in X$ such that $\left|Q^{\frac{1}{2}}z\right|_{X}\leq1$:
\[
\left\langle x,z\right\rangle _{X}=\left\langle Q^{-\frac{1}{2}}x,Q^{\frac{1}{2}}z\right\rangle _{X}\leq\left|Q^{-\frac{1}{2}}x\right|_{X}.
\]
Thus, the quantity $\left|Q^{-\frac{1}{2}}x\right|_{X}$ is an upper
bound of the set $\mathcal{D}\left(x;Q\right)$.

Furthermore, recall that $Q^{-\frac{1}{2}}x\in\overline{ImQ^{\frac{1}{2}}}$,
by the previous considerations; thus, having taken $\left(z_{n}\right)_{n}\subseteq X$
such that $Q^{\frac{1}{2}}z_{n}\to Q^{-\frac{1}{2}}x$ in $X$ as
$n\to\infty$:
\[
\left\langle x,\frac{z_{n}}{\left|Q^{-\frac{1}{2}}z_{n}\right|_{X}}\right\rangle _{X}=\left\langle Q^{-\frac{1}{2}}x,\frac{Q^{\frac{1}{2}}z_{n}}{\left|Q^{-\frac{1}{2}}z_{n}\right|_{X}}\right\rangle _{X}\overset{n\to\infty}{\to}\left|Q^{-\frac{1}{2}}x\right|_{X}.
\]
Observe that the division by the norm of $Q^{-\frac{1}{2}}z_{n}$
is legitimate since we are assuming that such sequence converges to
a vector which is not null. The above convergence means that $\left|Q^{-\frac{1}{2}}x\right|_{X}\leq\sup\mathcal{D}\left(x;Q\right)$.
Hence, the equality holds.

Observe that this argument proves in particular that $ImQ^{\frac{1}{2}}\subseteq H_{Q}$.

\vspace{2mm}

Now let us turn to the opposite inclusion.

Fix $x\in H_{Q}$, i.e. such that $\sup\mathcal{D}\left(x,Q\right)<+\infty$.
Then it follows from the definition of $\mathcal{D}\left(x,Q\right)$
that:
\[
\left\langle \frac{Q^{-\frac{1}{2}}y}{\left|y\right|_{X}},x\right\rangle \leq\sup\mathcal{D}\left(x,Q\right)\quad\forall y\in ImQ^{\frac{1}{2}}\setminus\left\{ 0\right\} ,
\]
which implies that the linear functional $y\mapsto\left\langle Q^{-\frac{1}{2}}y,x\right\rangle $
is continuous in its domain $ImQ^{\frac{1}{2}}$. In other terms,
$x$ belongs to the domain $E\left(Q^{-\frac{1}{2}}\right)$ of the
adjoint operator $\left(Q^{-\frac{1}{2}}\right)^{*}$. Recall that
$Q^{-\frac{1}{2}}$ is not a bounded self-adjoint operator, but merely
a symmetric operator in its domain $ImQ^{\frac{1}{2}}$, so $E\left(Q^{-\frac{1}{2}}\right)$
does not necessarily coincide with $ImQ^{\frac{1}{2}}$, and is, in
general, bigger. Actually, by point ii) in Corollary $\text{\ref{cor: caratt dominio aggiunto}}$,
$E\left(Q^{-\frac{1}{2}}\right)=ImQ^{\frac{1}{2}}\oplus KerQ^{\frac{1}{2}}$,
and thus
\[
x=Q^{\frac{1}{2}}\xi+z,\quad\text{with }\xi\in X,\ Q^{\frac{1}{2}}z=0.
\]
Now we use again the fact that $\sup\mathcal{D}\left(x,Q\right)<+\infty$
to prove that $z=0$. Since in particular $nz\in Q^{-\frac{1}{2}}\left(\overline{B\left(0,1\right)}\right)$
for every $n\in\mathbb{N}$, we have:
\[
n\left|z\right|_{X}^{2}=\left\langle x,nz\right\rangle _{X}\leq\sup\mathcal{D}\left(x,Q\right)\quad\forall n\in\mathbb{N},
\]
which implies $z=0$. Thus $x\in ImQ^{\frac{1}{2}}$, which concludes
the proof.
\end{proof}


\begin{thebibliography}{10}
\bibitem{Acq - Ev op} \textsc{Acquistapace, P.}, \emph{Evolution
operators and strong solutions of abstract linear parabolic equations},
Differ. Integral Equ. 1(4), 1988, 433-457.

\bibitem{AFT}\textsc{Acquistapace, P.}, \textsc{Flandoli, F.} and \textsc{Terreni, B.},
\emph{Initial boundary value problems and optimal control for nonautonomous
parabolic systems},\emph{ }Siam J. Control and Optimization 29(1),
1991, 89-118.

\bibitem{Add13}\textsc{Addona, D.}, \emph{Nonautonomous Ornstein-Uhlenbeck
operators in weighted spaces of continuous functions}, Semigr. Forum
87 (2013).

\bibitem{AddAngLor17} \textsc{Addona, D.}, \textsc{Angiuli, L.} and \textsc{Lorenzi, L.}, \emph{Hypercontractivity, supercontractivity, ultraboundedness and stability in semilinear problems},
Advances Nonlin. Anal. 8 (2017).

\bibitem{ADDONA-Poinc}\textsc{Addona, D., Muratori, M.} and \textsc{Rossi, M.},
\emph{On the equivalence of Sobolev norms in Malliavin spaces.} J.
Funct. Anal. 283(7) (2022).

\bibitem{AngLorLun13}\textsc{Angiuli, L.} and \textsc{Lorenzi, L.}
and \textsc{Lunardi, A.}, Hypercontractivity and Asymptotic Behavior
in Nonautonomous Kolmogorov Equations. In: Communications in Partial
Differential Equations 38(12) (2013).

\bibitem{AngLor141} \textsc{Angiuli, L.} and \textsc{Lorenzi, L.}. \emph{Non autonomous parabolic problems with unbounded coefficients in unbounded domains},
Advances in Differential Equations, 20 (2014).

\bibitem{AngLor142}\textsc{Angiuli, L.} and \textsc{Lorenzi, L.},
\emph{On improvement of summability properties in nonautonomous Kolmogorov
equations}, Communcations in Pure ans Applied Mathematics (2014).

\bibitem{AngLor16}\textsc{Angiuli, L.} and \textsc{Lorenzi, L.},
\emph{On the estimates of the derivatives of solutions to nonautonomous
Kolmogorov equations and their consequences}, Riv. Math. Univ. Parma
(N.S.)7(2) (2016).

\bibitem{BigDef23}\textsc{Bignamini, D.} and \textsc{De Fazio, P.},
\emph{Log-Sobolev inequalities and hypercontractivity for Ornstein
-- Uhlenbeck evolution operators in infinite dimension, }J. Evol.
Equ. 24, 78 (2024).

\bibitem{BOG;GM}\textsc{Bogachev, V.}, Gaussian Measures. Mathematical
Surveys and Monographs, Vol. 62. American Mathematical Society (1991).

\bibitem{Cer21} \textsc{Cerrai, S.}, and \textsc{Lunardi, A.},
\emph{Smoothing effects and maximal Hölder regularity for non-autonomous
Kolmogorov equations in infinite dimension}, J. Differ. Equ. 434,
113245 (2025)

\bibitem{DAN} \textsc{Daners, D.}, Heat Kernel Estimates for Operators
with Boundary Conditions, Math. Nachr. 217 (2000), 13 - 41.

\bibitem{Def23}\textsc{De Fazio, P.}\emph{, On smoothing in non
autonomous ornstein-uhlenbeck equations in infinite dimensions, }arXiv:2212.05559
{[}math.AP{]} (2023)

\bibitem{DAP-LUN}\textsc{Da Prato, G.} and \textsc{Lunardi, A.},
\emph{Ornstein-Uhlenbeck operators with time periodic coefficients},
J. evol. equ. 7 (2007), 587-614.

\bibitem{DAP-ZAB}\textsc{Da Prato, G.} and \textsc{Zabczyk, J.},
Second Order Partial Differential Equations in Hilbert Spaces. Cambridge
University Press, Cambridge (2004).

\bibitem{DAP-ZAB2}\textsc{Da Prato, G.} and \textsc{Zabczyk, J.},
Stochastic Equations in Infinite Dimensions. 2nd ed. Cambridge University
Press, Cambridge (2014). 

\bibitem{GEISS-LUN_0}\textsc{Geissert, M.} and \textsc{Lunardi, A.},
\emph{Invariant measures and maximal $L^{2}$ regularity for nonautonomous
Ornstein--Uhlenbeck equations}, J. London Math. Soc. 77(3) (2008),
719-740.

\bibitem{GEISS-LUN}\textsc{Geissert, M.} and \textsc{Lunardi, A.},
\emph{Asymptotic behavior and hypercontractivity in non-autonomous
Ornstein--Uhlenbeck equations}, J. London Math. Soc. 79(1) (2009),
85-106.

\bibitem{GIL-TRUD}\textsc{Gilbarg, D.} and \textsc{Trudinger, N. S.},
Elliptic Partial Differential Equations of Second Order, Reprint of
the 1998 Edition. Springer-Verlag Berlin Heidelberg New York (1998).

\bibitem{KNA}\textsc{Kn\"able, F.}, \emph{Ornstein--Uhlenbeck equations
with time-dependent coefficients and Lévy noise in finite and infinite
dimensions}, J. Evol. Equ. 11 (2011), 959-993.

\bibitem{KunLorLun10}\textsc{Kunze, M.}, \textsc{Lorenzi, L.},
and \textsc{Lunardi, A.}, \emph{Nonautonomous Kolmogorov parabolic
equations with unbounded coefficients}, Transactions of the American
Mathematical Society 362(1) (2010), 169-198.

\bibitem{KunLorRha14}\textsc{Kunze, M.}, \textsc{Lorenzi, L.},
and \textsc{Rhandi, A.}, \emph{Kernel estimates for nonautonomous
Kolmogorov equations with potential term} (English summary), New prospects
in direct, inverse and control problems for evolution equations.

\bibitem{KunLorRha16}\textsc{Kunze, M.}, \textsc{Lorenzi, L.},
and \textsc{Rhandi, A.}, \emph{Kernel estimates for nonautonomous
Kolmogorov equations.}

\bibitem{LorLunZam10}\textsc{Lorenzi, L.}, \textsc{Lunardi, A.},
and \textsc{Zamboni, A}., \emph{Asymptotic behavior in time periodic
parabolic problems with unbounded coefficients} (English summary),
J. Differ. Equ. 249(12) (2010).

\bibitem{Lor11}\textsc{Lorenzi, L.}, \emph{Optimal H\"older regularity
for nonautonomous Kolmogorov equations}, Discrete Contin. Dyn. Syst.
Ser. S4(1) (2011), 169-191.

\bibitem{LorLunSch16}\textsc{Lorenzi, L.}, \textsc{Lunardi, A.},
\textsc{Schnaubelt, R.}, \emph{Strong convergence of solutions to
nonautonomous Kolmogorov equations}, Proc. Amer. Math. Soc.144(9)
(2016).

\bibitem{Luna int}\textsc{Lunardi, A.}, Interpolation Theory, 3d
edition. Scuola Normale Superiore, Pisa (2018).

\bibitem{OuyRoc}\textsc{Ouyang, S.}, and \textsc{R\"ockner, M.},
\emph{Time inhomogeneous generalized Mehler semigroups and skew convolution
equations,} Forum Math. 28(2) (2016), 339-376.

\bibitem{Pazy}\textsc{Pazy, A.}, Semigroups of Linear Operators
and Applications to Partial Differential Equations. Springer-Verlag
New York (1983).

\bibitem{ROL}\textsc{Schnaubelt, R.}, \emph{Asymptotic behaviour
of parabolic nonautonomous evolution equations,} chapter in Functional
Analytic Methods for Evolution Equations by Da Prato, G., Kunstmann,
P. C., Wies, L., Lasiecka, I., Lunardi A., Schnaubelt, R. Springer
Berlin, Heidelberg (2004).

\bibitem{Triebel}\textsc{Triebel, H.}, Interpolation Theory, Function
Spaces, Differential Operators. North Holland Amsterdam New York Oxford
(1978).

\end{thebibliography}
\end{document}